\theoremstyle{definition}
\newtheorem{THM}{Theorem}
\newtheorem{LEM}[THM]{Lemma}
\newtheorem{PROP}[THM]{Proposition}
\newtheorem{DEF}[THM]{Definition}
\newtheorem{RMK}[THM]{Remark}
\newtheorem*{THM*}{Theorem}
\newtheorem*{LEM*}{Lemma}
\newtheorem*{PROP*}{Proposition}
\newtheorem*{COR*}{Corollary}
\newtheorem*{DEF*}{Definition}
\newtheorem*{RMK*}{Remark}
\newtheorem*{EX*}{Example}
\numberwithin{figure}{section}
\numberwithin{equation}{section}
\numberwithin{THM}{section}
\title{The $\mathfrak{sl}_3$ colored Jones polynomials for $2$-bridge links}
\author{Wataru Yuasa}
\address{Department of Mathematics\\
  Tokyo Institute of Technology\\
  2-12-1 Ookayama, Meguro-ku, Tokyo 152-8551, Japan}
\email[]{yuasa.w.aa@m.titech.ac.jp}
\date{\today}
\subjclass[2010]{57M25, 57M27}
\keywords{colored Jones polynomials; skein relations; $2$-bridge links.}
\begin{document}
\begin{abstract}
Kuperberg introduced web spaces for some Lie algebras which are generalizations of the Kauffman bracket skein module on a disk. 
We derive some formulas for $A_1$ and $A_2$ clasped web spaces by graphical calculus using skein theory. 
These formulas are colored version of skein relations, twist formulas and bubble skein expansion formulas.
We calculate the $\mathfrak{sl}_2$ and $\mathfrak{sl}_3$ colored Jones polynomials of $2$-bridge knots and links explicitly using twist formulas.
\end{abstract}
\maketitle
\tikzset{->-/.style={decoration={
  markings,
  mark=at position #1 with {\arrow[black,thin]{>}}},postaction={decorate}}}
\tikzset{-<-/.style={decoration={
  markings,
  mark=at position #1 with {\arrow[black,thin]{<}}},postaction={decorate}}}
\tikzset{
    triple/.style args={[#1] in [#2] in [#3]}{
        #1,preaction={preaction={draw,#3},draw,#2}
    }
}

\section{Introduction}
After Kauffman~\cite{Kauffman87} gave a reformulation of the Jones polynomial~\cite{Jones85} of a link using the Kauffman bracket,
the linear skein theory has developed in various directions.
This theory is related to quantum representations and treats modules whose elements are diagrams in a disk such as graphs, tangles, webs and so on. 
Lickorish~\cite{Lickorish91, Lickorish93, Lickorish92} introduced the linear skein theory based on quantum $\mathfrak{sl}_2$ representations. 
He constructed the quantum $SU(2)$ invariants of closed $3$-manifolds suggested by Witten~\cite{Witten89}, 
rigorously defined by Reshetikhin and Turaev~\cite{ReshetikhinTuraev91}. 
This linear skein theory is developed based on the Kauffman bracket~\cite{Kauffman87} and the Kauffman bracket skein module. 
The Kauffman bracket also gives polynomial invariants of knots and links called the colored Jones polynomials. 
(We call it the $\mathfrak{sl}_2$ colored Jones polynomials in this paper.)
Through the linear skein theory for the Kauffman bracket, 
we can define and calculate quantum $SU(2)$ invariants of closed $3$-manifolds and links graphically (see details in \cite{KauffmanLins94}, Chapter~$13,14$ of \cite{Lickorish97}).

Many quantum invariants and corresponding skein theory have been constructed for other quantum groups.
Kuperberg~\cite{Kuperberg94} defined brackets for Lie algebras $A_2$, $G_2$ and $C_2$ and corresponding quantum invariants of regular isotopy classes of link diagrams.
He also introduced web spaces for simple Lie algebra of rank $2$ in \cite{Kuperberg96}.
Web spaces are generalizations of the Kauffman bracket skein module. 
The Kauffman bracket skein module correspond to the $A_1$ web space.
The linear skein theory associated with $A_n$ was also introduced by Murakami, Ohtsuki and Yamada~\cite{MurakamiOhtsukiYamada98} and Sikora~\cite{Sikora05}. 
It gives a reformulation of the HOMFLY polynomial~\cite{HOMFLY85, PT88}. 
The quantum invariant of $3$-manifolds was also given by using these linear skein theories in a similar approach to Lickorish~\cite{Lickorish91, Lickorish93} (see Ohtsuki and Yamada~\cite{OhtsukiYamada97} for $SU(3)$ and Yokota~\cite{Yokota97} for $SU(n)$).

We will treat the skein theory for $A_1$ and $A_2$.
$A_1$ web spaces have particular elements called the Jones-Wenzl idempotents defined in \cite{Jones83, Wenzl87}. 
The Jones-Wenzl idempotents play an important role to construct the quantum $SU(2)$ invariants of $3$-manifolds and the $\mathfrak{sl}_2$ colored Jones polynomials of knots and links.
These idempotents are generalized to the $A_2$ case (see \cite{Kuperberg96} and \cite{OhtsukiYamada97}). 
We call them the $A_2$ clasps.
The $A_2$ clasps also play an important role to construct quantum $SU(3)$ invariants of $3$-manifolds and the $\mathfrak{sl}_3$ colored Jones polynomials of knots and links. 
Many people have calculated clasped $A_1$ web spaces and gave explicit formulas of $\mathfrak{sl}_2$ colored Jones polynomials for some knots and links by graphical calculus. 
These explicit formulas are useful for case studies of conjectures related to quantum invariants of knots and links: 
the volume conjecture~\cite{Kashaev97, MurakamiMurakami01}, the AJ conjecture~\cite{Garoufalidis04}, the slope conjecture~\cite{Garoufalidis11} and so on.

On the other hand, 
there are few examples, see Kim~\cite{Kim06, Kim07}, of graphical calculus for clasped $A_2$ web spaces. 
Only two explicit formulas of the $\mathfrak{sl}_3$ colored Jones polynomial were obtained using the representation theory of quantum groups. 
The formula for trefoil knot was given by Lawrence~\cite{Lawrence03}, more generally for the torus knot $T(2,k)$ was given by Garoufalidis, Morton and Vuong~\cite{GaroufalidisMortonVuong13}.
As far as the author knows, 
there is no example of graphical calculus of the $\mathfrak{sl}_3$ colored Jones polynomial for a non-trivial link. 

In this paper, 
we will give some formulas for clasped $A_1$ and $A_2$ web spaces. 
These formulas explicitly give the $\mathfrak{sl}_2$ colored Jones polynomials for a $2$-bridge link and the $\mathfrak{sl}_3$ colored Jones polynomials of type $(n,0)$ for it. 
We remark that the $\mathfrak{sl}_3$ colored Jones polynomial treated in \cite{Lawrence03} and \cite{GaroufalidisMortonVuong13} is type $(n,m)$.

The paper is organized as follows. 
We firstly introduce the $A_1$ and $A_2$ web spaces and clasps based on Kuperberg~\cite{Kuperberg96} in section~$2$. 
The colored skein relations and twist formulas for $A_1$ and $A_2$ web spaces are given in section~$3$. 
The $A_1$ web space is the Kauffman bracket skein module. 
In this case, 
Hajij gave the colored Kauffman bracket skein relation in \cite{Hajij14.2}. 
We will show it in another method using a lemma from the theory of integer partitions.
This method is used to show the colored $A_2$ bracket skein relations and twist formulas.
In section~$4$, 
we will give the $A_2$ bracket bubble skein expansion formula.
This formula is an $A_2$ version of the Kauffman bracket bubble skein expansion formula in \cite{Hajij14}.
In section~$5$, 
we give an explicit formulas for the $\mathfrak{sl}_2$ and $\mathfrak{sl}_3$ colored Jones polynomials for $2$-bridge knots and links. 
In this paper, 
we only treat the $\mathfrak{sl}_3$ colored Jones polynomials of type $(n,0)$. 

\section{Preliminaries}
In this section, 
we review definitions of two vector spaces, 
$A_1$ web spaces and $A_2$ web spaces, 
introduced by Kuperberg~\cite{Kuperberg96}. 
For each web space, 
we inductively define a particular element called a clasp according to~\cite{Kuperberg96, OhtsukiYamada97}.
\subsection{Quantum integers and $q$-Pochhammer symbol}
First, 
we organize notations of quantum integers and the $q$-Pochhammer symbol which we use in this paper. 

Let $k$ be an integer and $q$ an indeterminate. 
We denote $q^{\frac{k}{2}}-q^{-\frac{k}{2}}$ by $\{k\}$.
A quantum integer is defined by 
\[
 \left[k\right]=\frac{\{k\}}{\{1\}}.
\]
Let us denote $\{k\}!=\prod_{l=1}^{k}\{l\}$ and $\left[k\right]!=\prod_{l=1}^k\left[l\right]$. 
Let $n$ be a non-negative integer. 
Then, a version of $q$-binomial coefficient is defined by
\[
 {n \brack k}=\frac{\left[n\right]!}{\left[k\right]!\left[n-k\right]!}=\frac{\{n\}!}{\{k\}!\{n-k\}!}
\]
for $k\leq n$.
If $k>n$,
we define it by $0$.

A $q$-Pochhammer symbol is defined by 
\[
 (q;q)_k=\prod_{l=1}^{k}(1-q^l).
\]
We sometimes abbreviate it as $(q)_k$.
Another version of $q$-binomial coefficient is defined by
\[
 {n\choose k}_q=\frac{(q;q)_n}{(q;q)_k(q;q)_{n-k}}
\]
for $k\leq n$.
If $k>n$,
we define it by $0$.
We also define a $q$-multinomial coefficient as
\[
 {n\choose n_1,n_2,\dots,n_m}_q=\frac{(q)_n}{(q)_{n_1}(q)_{n_2}\cdots(q)_{n_m}},
\]
where $n_1, n_2,\dots, n_m$ are non-negative integers such that $n_1+n_2+\dots+n_m=n$.

It is easy to show the following transformation formulas:
\begin{itemize}
\item $\{k\}!=(-1)^kq^{-k(k+1)/4}(q;q)_k$,
\item ${n\brack k}=q^{-(n-k)k/2}{n\choose k}_q$,
\item $(q;q)_k=(-1)^kq'^{-\frac{1}{2}k(k+1)}(q';q')_k$,
\item ${n\choose k}_q=q'^{k^2-nk}{n\choose k}_{q'}$,
\end{itemize}
where $q'=q^{-1}$.
We use the following formulas for quantum integers in this paper.
\begin{LEM}\label{qinteger}
For any integers $a,b$ and $c$,
\begin{enumerate}
\item 
$\left[a\right]\left[b\right]
=\sum_{i=1}^{a}\left[a+b-(2i-1)\right]
=\sum_{i=1}^{b}\left[a+b-(2i-1)\right]$,
\item
$\left[a\right]\left[b\right]-\left[a-c\right]\left[b-c\right]
=\left[a+b-c\right]\left[c\right]$,
\item 
$\left[a\right]\left[b-c\right]+\left[c\right]\left[a-b\right]=\left[b\right]\left[a-c\right]$
\end{enumerate}
\end{LEM}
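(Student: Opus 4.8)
The plan is to reduce all three identities to elementary Laurent-polynomial identities in the variable $v=q^{1/2}$. Writing $\{k\}=v^{k}-v^{-k}$ and $[k]=\{k\}/\{1\}=(v^{k}-v^{-k})/(v-v^{-1})$, every quantity occurring in (1)--(3) becomes a Laurent polynomial in $v$ divided by the common factor $\{1\}$ or $\{1\}^{2}$. After clearing these denominators each claim is an identity between finite $\mathbb{Z}$-linear combinations of monomials $v^{m}$, checkable by expanding and comparing coefficients. Since a Laurent-polynomial identity that holds after substitution holds for all integer values of the parameters, it suffices to verify the monomial bookkeeping once; this already disposes of the ``any integers $a,b,c$'' clause, so I will not separately fuss over sign conventions for sums with nonpositive upper limits.

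For (1) I would evaluate the right-hand sum directly. Multiplying by $\{1\}$, the summand $[a+b-(2i-1)]$ contributes $v^{a+b-2i+1}-v^{-(a+b-2i+1)}$, and the two resulting finite geometric series $\sum_{i=1}^{a}v^{a+b+1-2i}$ and $\sum_{i=1}^{a}v^{-(a+b+1-2i)}$ are summed using $1-v^{-2}=v^{-1}\{1\}$ to give
\[
 \sum_{i=1}^{a}\{a+b-(2i-1)\}=\frac{(v^{a}-v^{-a})(v^{b}-v^{-b})}{v-v^{-1}}=\{1\}\,[a][b].
\]
Dividing by $\{1\}$ yields the first equality. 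The second follows immediately, since the closed form $(v^{a}-v^{-a})(v^{b}-v^{-b})/\{1\}^{2}$ is symmetric under $a\leftrightarrow b$, so the sum running to $b$ evaluates to the same product.

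For (2) I would give a structural argument from (1) rather than re-expanding. Applying (1) to the pair $(c,a+b-c)$ gives $[c][a+b-c]=\sum_{i=1}^{c}[a+b-(2i-1)]$, while applying it to the pair $(a-c,b-c)$ and reindexing by $j=i+c$ gives $[a-c][b-c]=\sum_{j=c+1}^{a}[a+b-(2j-1)]$. Adding these reconstitutes $\sum_{i=1}^{a}[a+b-(2i-1)]=[a][b]$ by (1) again, which is exactly $[a][b]-[a-c][b-c]=[a+b-c][c]$ (in the range $0\le c\le a$; the general case is the Laurent-polynomial identity of the first paragraph, or a two-line direct expansion). For (3), a three-term relation, the quickest route is direct expansion: after clearing $\{1\}^{2}$ one checks that $\{a\}\{b-c\}+\{c\}\{a-b\}$ and $\{b\}\{a-c\}$ both expand to the same four-term combination $v^{a+b-c}+v^{-(a+b-c)}-v^{a-b-c}-v^{-(a-b-c)}$, the two cross terms in the first expression cancelling. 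The main point here is not a genuine obstacle but simply keeping the reindexing in (2) and the monomial cancellations in (1) and (3) straight; once the substitution $[k]=(v^{k}-v^{-k})/(v-v^{-1})$ is in place, none of the three requires anything beyond finite geometric series and coefficient comparison.
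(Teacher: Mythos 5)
Your proposal is correct, and it is essentially the paper's own (unspelled-out) proof: the paper simply says "by easy calculation using the definition of quantum integers," and your substitution $[k]=(v^{k}-v^{-k})/(v-v^{-1})$ followed by geometric-series summation and coefficient comparison is exactly that calculation, with all three monomial checks verifying. The structural derivation of (2) from (1) via reindexing is a nice bonus but not a different method in any substantive sense.
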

\begin{proof}
By easy calculation using the definition of quantum integers.
\end{proof}
\subsection{The $A_1$ web spaces}
Let $D_m$ denote the disk $\{\, z\in\mathbb{C} \mid \left|z\right|\leq 1\, \}$ with the set $E_m=\{\, \exp(2\pi\sqrt{-1}/m)^{j-1}\mid j=1,2,\dots,m\,\}$ of marked points on the boundary. 
\begin{center}
\begin{tikzpicture}
\draw [thin, fill=lightgray!50] (0,0) circle [radius=1];
\foreach \i in {0,1,...,11} \draw[fill=cyan] ($(0,0) !1! \i*30:(1,0)$) circle [radius=1pt];
\node (0,0) {$D_m$};
\end{tikzpicture}
\end{center}
An {\em $A_1$ basis web} on $D_m$ is the boundary-fixing isotopy class of a proper embedding of arcs into $D_m$ with no intersection points such that each endpoint lies in $E_m$. 
For any point $p$ on the arcs, 
its neighborhood is either
$\,\tikz[baseline=-.6ex]{
\draw [thin, dashed, fill=lightgray!50] (0,0) circle [radius=.5];
\draw (-.5,0)--(.5,0);
\node (p) at (0,0) [above]{$p$};
\fill (0,0) circle [radius=1pt];
}\,$
or
$\,\tikz[baseline=-.6ex]{
\draw [thin, dashed] (0,0) circle [radius=.5];
\clip (0,0) circle [radius=.5];
\draw [thin, fill=lightgray!50] (0,-.5) rectangle (.5,.5);
\draw (0,0)--(.5,0);
\node (p) at (0,0) [left]{$p$};
\draw [fill=cyan] (0,0) circle [radius=1pt];
}\,$.

Let $B_m$ be the set of $A_1$ basis webs on $D_m$. 
We consider $B_0$ has a single element $\emptyset$ called the empty disk. 
For example,
$B_6$ consists of the following $A_1$ basis webs:
\[
\,\begin{tikzpicture}
\draw [thin, fill=lightgray!50] (0,0) circle [radius=.5];
\draw (0:.5) -- (180:.5);
\draw (60:.5) to[out=-120, in=-60] (120:.5);
\draw (240:.5) to[out=60, in=120] (300:.5);
\foreach \i in {0,1,...,6} \draw[fill=cyan] ($(0,0) !1! \i*60:(.5,0)$) circle [radius=1pt];
\end{tikzpicture}\,,
\,\begin{tikzpicture}[rotate=60]
\draw [thin, fill=lightgray!50] (0,0) circle [radius=.5];
\draw (0:.5) -- (180:.5);
\draw (60:.5) to[out=-120, in=-60] (120:.5);
\draw (240:.5) to[out=60, in=120] (300:.5);
\foreach \i in {0,1,...,6} \draw[fill=cyan] ($(0,0) !1! \i*60:(.5,0)$) circle [radius=1pt];
\end{tikzpicture}\,,
\,\begin{tikzpicture}[rotate=-60]
\draw [thin, fill=lightgray!50] (0,0) circle [radius=.5];
\draw (0:.5) -- (180:.5);
\draw (60:.5) to[out=-120, in=-60] (120:.5);
\draw (240:.5) to[out=60, in=120] (300:.5);
\foreach \i in {0,1,...,6} \draw[fill=cyan] ($(0,0) !1! \i*60:(.5,0)$) circle [radius=1pt];
\end{tikzpicture}\,,
\,\begin{tikzpicture}
\draw [thin, fill=lightgray!50] (0,0) circle [radius=.5];
\draw (0:.5) to[out=180, in=-120] (60:.5);
\draw[rotate=120] (0:.5) to[out=180, in=-120] (60:.5);
\draw[rotate=240] (0:.5) to[out=180, in=-120] (60:.5);
\foreach \i in {0,1,...,6} \draw[fill=cyan] ($(0,0) !1! \i*60:(.5,0)$) circle [radius=1pt];
\end{tikzpicture}\,
\text{and}
\,\begin{tikzpicture}[rotate=60]
\draw [thin, fill=lightgray!50] (0,0) circle [radius=.5];
\draw (0:.5) to[out=180, in=-120] (60:.5);
\draw[rotate=120] (0:.5) to[out=180, in=-120] (60:.5);
\draw[rotate=240] (0:.5) to[out=180, in=-120] (60:.5);
\foreach \i in {0,1,...,6} \draw[fill=cyan] ($(0,0) !1! \i*60:(.5,0)$) circle [radius=1pt];
\end{tikzpicture}\,.
\]
The {\em $A_1$ web space $W_m$} is the $\mathbb{Q}(q^{\frac{1}{4}})$-vector space spanned by $B_m$, 
where $\mathbb{Q}(q^{\frac{1}{4}})$ is the field of rational functions in one variable $q^{\frac{1}{4}}$. 
We next define tangles in $D_m$ and the $A_1$ bracket, also known as the Kauffman bracket.
A {\em tangle diagram} in $D_m$ is a proper immersion of $1$-manifolds into $D_m$ such that any intersection point is transverse double points with crossing data. 
For any point $p$ on a tangle diagram, 
it has one of the following neighborhoods:
\[
\,\begin{tikzpicture}
\draw [thin, dashed, fill=lightgray!50] (0,0) circle [radius=.5];
\draw (-.5,0)--(.5,0);
\node (p) at (0,0) [above]{$p$};
\fill (0,0) circle [radius=1pt];
\end{tikzpicture}\,
,
\,\begin{tikzpicture}
\draw [thin, dashed] (0,0) circle [radius=.5];
\clip (0,0) circle [radius=.5];
\draw [thin, fill=lightgray!50] (0,-.5) rectangle (.5,.5);
\draw (0,0)--(.5,0);
\node (p) at (0,0) [left]{$p$};
\draw [fill=cyan] (0,0) circle [radius=1pt];
\end{tikzpicture}\,
\text{or}
\,\begin{tikzpicture}
\draw [thin, dashed, fill=lightgray!50] (0,0) circle [radius=.5];
\draw (135:.5) -- (-45:.5);
\draw [lightgray!50, double=black, double distance=0.4pt, ultra thick] (-135:.5) -- (45:.5);
\node (p) at (0,0) [above]{$p$};
\end{tikzpicture}\,
.
\]
In particular, 
a tangle diagram in $D_0$ is a link diagram in the disk.
A tangle diagram $G$ is {\em regularly isotopic} to $G'$ if $G$ is obtained from $G'$ by a finite sequence of boundary-fixing isotopies of $D_m$ and the following moves:

\begin{description}
\item[(R1')]
\tikz[baseline=-.6ex]{
\draw [thin, dashed, fill=lightgray!50] (0,0) circle [radius=.5];
\draw (.3,-.2)
to[out=south, in=east] (.2,-.3)
to[out=west, in=south] (.0,.0)
to[out=north, in=west] (.2,.3)
to[out=east, in=north] (.3,.2);
\draw[lightgray!50, double=black, double distance=0.4pt, ultra thick] (0,-.5) 
to[out=north, in=west] (.2,-.1)
to[out=east, in=north] (.3,-.2);
\draw[lightgray!50, double=black, double distance=0.4pt, ultra thick] (.3,.2)
to[out=south, in=east] (.2,.1)
to[out=west, in=south] (0,.5);
}
\tikz[baseline=-.6ex]{
\draw [<->, xshift=1.5cm] (1,0)--(2,0);
}
\tikz[baseline=-.6ex]{
\draw[xshift=3cm, thin, dashed, fill=lightgray!50] (0,0) circle [radius=.5];
\draw[xshift=3cm] (90:.5) to (-90:.5);
},
\item[(R2)]
\tikz[baseline=-.6ex]{
\draw [thin, dashed, fill=lightgray!50] (0,0) circle [radius=.5];
\draw (135:.5) to [out=south east, in=west](0,-.2) to [out=east, in=south west](45:.5);
\draw [lightgray!50, double=black, double distance=0.4pt, ultra thick](-135:.5) to [out=north east, in=left](0,.2) to [out=right, in=north west] (-45:.5);
}
\tikz[baseline=-.6ex]{
\draw [<->, xshift=1.5cm] (1,0)--(2,0);
}
\tikz[baseline=-.6ex]{
\draw[xshift=3cm, thin, dashed, fill=lightgray!50] (0,0) circle [radius=.5];
\draw[xshift=3cm] (135:.5) to [out=south east, in=west](0,.2) to [out=east, in=south west](45:.5);
\draw[xshift=3cm] (-135:.5) to [out=north east, in=left](0,-.2) to [out=right, in=north west] (-45:.5);
},
\item[(R3)]
\tikz[baseline=-.6ex]{
\draw [thin, dashed, fill=lightgray!50] (0,0) circle [radius=.5];
\draw (-135:.5) -- (45:.5);
\draw [lightgray!50, double=black, double distance=0.4pt, ultra thick] (135:.5) -- (-45:.5);
\draw[lightgray!50, double=black, double distance=0.4pt, ultra thick](180:.5) to [out=right, in=left](0,.3) to [out=right, in=left] (-0:.5);
}
\tikz[baseline=-.6ex]{
\draw[<->, xshift=1.5cm] (1,0)--(2,0);
}
\tikz[baseline=-.6ex]{
\draw [xshift=3cm, thin, dashed, fill=lightgray!50] (0,0) circle [radius=.5];
\draw [xshift=3cm] (-135:.5) -- (45:.5);
\draw [xshift=3cm, lightgray!50, double=black, double distance=0.4pt, ultra thick] (135:.5) -- (-45:.5);
\draw[xshift=3cm, lightgray!50, double=black, double distance=0.4pt, ultra thick](180:.5) to [out=right, in=left](0,-.3) to [out=right, in=left] (-0:.5);
}.
\end{description}

In the above pictures, 
the outside of the left tangle diagram is coincide with the outside of the right tangle diagram.
A {\em tangle} is the regular isotopy class of a tangle diagram and $T_m$ denotes the set of tangles in $D_m$.
The diagram below is an example of a tangle diagram in $D_6$:
\begin{center}
\begin{tikzpicture}
\draw [thin, fill=lightgray!50] (0,0) circle [radius=1];
\draw (0:1) to[out=west, in=south east] (120:1);
\draw [lightgray!50, double=black, double distance=0.4pt, ultra thick]
(60:1) to[out=south west, in=east] (180:1);
\draw (.0,.1) to[out=west, in=north] (-.2,-.2) to[out=south, in=west] (0,-.5);
\draw[lightgray!50, double=black, double distance=0.4pt, ultra thick] 
(-120:1) to[out=north east, in=west] 
(0,-.2) to[out=east, in=north west] 
(-60:1);
\draw[lightgray!50, double=black, double distance=0.4pt, ultra thick] (.0,.1) to[out=east, in=north] (.2,-.2) to[out=south, in=east] (0,-.5);
\foreach \i in {0,1,...,6} \draw[fill=cyan] ($(0,0) !1! \i*60:(1,0)$) circle [radius=1pt];
\end{tikzpicture}.
\end{center}

\begin{DEF}[The Kauffman bracket]
We define a $\mathbb{Q}(q^{\frac{1}{4}})$-linear map $\langle\,\cdot\,\rangle_2\colon \mathbb{Q}(q^{\frac{1}{4}})T_m\to W_m$ by the following.
\begin{itemize}
\item 
$
\Big\langle\,\tikz[baseline=-.6ex]{
\draw [thin, dashed, fill=lightgray!50] (0,0) circle [radius=.5];
\draw[lightgray!50, double=black, double distance=0.4pt, ultra thick] 
(135:.5) -- (-45:.5);
\draw[lightgray!50, double=black, double distance=0.4pt, ultra thick] 
(-135:.5) -- (45:.5);
}\,\Big\rangle_{\! 2}
=q^{\frac{1}{4}}
\Big\langle\,\tikz[rotate=90, baseline=-.6ex]{
\draw[thin, dashed, fill=lightgray!50] (0,0) circle [radius=.5];
\draw (135:.5) to [out=south east, in=west](0,.2) to [out=east, in=south west](45:.5);
\draw (-135:.5) to [out=north east, in=left](0,-.2) to [out=right, in=north west] (-45:.5);
}\,\Big\rangle_{\! 2}
+q^{-\frac{1}{4}}
\Big\langle\,\tikz[baseline=-.6ex]{
\draw[thin, dashed, fill=lightgray!50] (0,0) circle [radius=.5];
\draw (135:.5) to [out=south east, in=west](0,.2) to [out=east, in=south west](45:.5);
\draw (-135:.5) to [out=north east, in=left](0,-.2) to [out=right, in=north west] (-45:.5);
}\,\Big\rangle_{\! 2}
$,
\item 
$\Big\langle G\sqcup
\,\tikz[baseline=-.6ex]{
\draw[thin, dashed, fill=lightgray!50] (0,0) circle [radius=.5];
\draw (0,0) circle [radius=.3];
}\,\Big\rangle_{\! 2}
=-\left[2\right] \langle G\rangle_{\! 2}$,
\end{itemize}
where $G$ is any tangle in $T_m$.
We call this linear map the $A_1$ bracket or the Kauffman bracket.
\end{DEF}
For any tangle diagram, 
we can obtain a sum of tangles with no crossings by using the Kauffman bracket relation and eliminate trivial loops from these tangles.
Therefore, 
we can define the map from the set of tangle diagrams to $W_m$.
We can easily confirm that this map doesn't change under the moves of regular isotopy.

We next define $A_1$ clasps which also called {\em Jones-Wenzl projectors}, {\em magic elements} etc.
We consider an $A_1$ web space $W_{n+n}=W_{2n}$. 
We use a tangle diagram whose components are decorated with non-negative integers.
A components decorated by $n$ means $n$ parallelization of it.
A tangle diagram with decoration is defined by the following local pictures:
\[
\,\tikz[baseline=-.6ex]{
\draw [thin, dashed, fill=lightgray!50] (0,0) circle [radius=.5];
\draw (-.5,0)--(.5,0) node at (0,0) [above]{$n$};
}\,
=
\,\tikz[baseline=-.6ex]{
\draw [thin, dashed, fill=lightgray!50] (0,0) circle [radius=.5];
\draw (150:.5) -- (30:.5) (160:.5) -- (20:.5);
\node[rotate=90] at (0,0) {${\scriptstyle \cdots}$};
\draw (210:.5) -- (-30:.5);
\node at (.7,.0) {$\Big\}n$};
}\,
,
\,\tikz[baseline=-.6ex]{
\draw [thin, dashed] (0,0) circle [radius=.5];
\clip (0,0) circle [radius=.5];
\draw [thin, fill=lightgray!50] (0,-.5) rectangle (.5,.5);
\draw (0,0)--(.5,0);
\node at (.2,0) [above]{$n$};
\draw [fill=cyan] (0,0) circle [radius=1pt];
}\,
=
\,\tikz[baseline=-.6ex]{
\begin{scope}
\draw [thin, dashed] (0,0) circle [radius=.5];
\clip (0,0) circle [radius=.5];
\draw [thin, fill=lightgray!50] (0,-.5) rectangle (.5,.5);
\draw (0,.2) -- (.5,.2) (0,.1) -- (.5,.1) (0,-.2) -- (.5,-.2);
\end{scope}
\draw [fill=cyan] (0,.2) circle [radius=1pt] 
(0,.1) circle [radius=1pt] 
(0,-.2) circle [radius=1pt];
\node at (.1,0) [left]{$n\Big\{$};
\node[rotate=90] at (.2,-.05) {${\scriptstyle \cdots}$};
}\,
,
\,\tikz[baseline=-.6ex]{
\draw [thin, dashed, fill=lightgray!50] (0,0) circle [radius=.5];
\draw (135:.5) -- (-45:.5);
\draw [lightgray!50, double=black, double distance=0.4pt, ultra thick] (-135:.5) -- (45:.5);
\node at (45:.5) [right]{$k$} node at (135:.5) [left]{$l$};
}\,
=
\,\tikz[baseline=-.6ex]{
\draw [thin, dashed, fill=lightgray!50] (0,0) circle [radius=.5];
\draw (165:.5) -- (-75:.5) (150:.5) -- (-60:.5) (120:.5) -- (-30:.5);
\node[rotate=45] at (-.3,.3) {${\scriptstyle{\cdots}}$};
\node[rotate=45] at (.3,-.3) {${\scriptstyle{\cdots}}$};
\draw [lightgray!50, double=black, double distance=0.4pt, thick] (-150:.5) -- (60:.5) (-120:.5) -- (30:.5) (-105:.5) -- (15:.5);
\node[rotate=-45] at (.3,.3) {${\scriptstyle{\cdots}}$};
\node[rotate=-45] at (-.3,-.3) {${\scriptstyle{\cdots}}$};
\node at (0,0) {${\scriptstyle{\cdots}}$};
\node at (45:.5) [above right]{$k$} node at (135:.5) [above left]{$l$};
\node[rotate=-135] at (40:.6) {$\Big\{$};
\node[rotate=-45] at (140:.6) {$\Big\{$};
}\, .
\]

We omit the boundary of the disk $D_m$ from tangle diagrams in $D_m$ later in the paper. 
For example, 
a tangle $\tikz[baseline=-.6ex]{
\draw [thin,fill=lightgray!50] (0,0) circle [radius=.5];
\draw[lightgray!50, double=black, double distance=0.4pt, ultra thick] 
(135:.5) -- (-45:.5);
\draw[lightgray!50, double=black, double distance=0.4pt, ultra thick] 
(-135:.5) -- (45:.5);
\draw[fill=cyan] (135:.5) circle[radius=1pt]
(-135:.5) circle[radius=1pt]
(45:.5) circle[radius=1pt]
(-45:.5) circle[radius=1pt];
}
$
is denoted by
$
\tikz[baseline=-.6ex]{
\draw[white, double=black, double distance=0.4pt, ultra thick] 
(135:.5) -- (-45:.5);
\draw[white, double=black, double distance=0.4pt, ultra thick] 
(-135:.5) -- (45:.5);
}
$.

We define an $A_1$ clasp
$\tikz[baseline=-.6ex]{
\draw (-.5,0) -- (.5,0);
\draw[fill=white] (-.1,-.3) rectangle (.1,.3);
\node at (.1,0) [above right]{${\scriptstyle n}$};
}\,\in W_{n+n}
$
inductively by the following.
\begin{DEF}[The $A_1$ clasps, The Jones-Wenzl idempotents etc.]
\begin{align}
\tikz[baseline=-.6ex]{
\draw (-.5,0) -- (.5,0);
\draw[fill=white] (-.1,-.3) rectangle (.1,.3);
\node at (.1,0) [above right]{${\scriptstyle 1}$};
}\,
&= 
\,\tikz[baseline=-.6ex]{
\draw (-.5,0) -- (.5,0) node at (0,0) [above]{${\scriptstyle 1}$};
}\,\in W_{1+1}\notag\\
\tikz[baseline=-.6ex]{
\draw (-.5,0) -- (.5,0);
\draw[fill=white] (-.1,-.3) rectangle (.1,.3);
\node at (.1,0) [above right]{${\scriptstyle n}$};
}\,
&=
\bigg\langle\,\tikz[baseline=-.6ex]{
\draw (-.5,.1) -- (.5,.1) (-.5,-.4) -- (.5,-.4);
\draw[fill=white] (-.1,-.2) rectangle (.1,.4);
\node at (0,0) [above right]{${\scriptstyle n-1}$};
\node at (.2,-.4) [above right]{${\scriptstyle 1}$};
}\,\bigg\rangle_{\! 2}
+\frac{\left[n-1\right]}{\left[n\right]}
\bigg\langle\,\tikz[baseline=-.6ex]{
\draw (-.9,.1) -- (-.4,.1) (-.3,.2) -- (.3,.2) (.4,.1) -- (.9,.1);
\draw (-.9,-.4) 
to[out=east, in=west] (-.3,-.4) 
to[out=east, in=south] (-.1,-.2) 
to[out=north, in=east] (-.3,0);
\draw (.9,-.4) 
to[out=west, in=east] (.3,-.4)
to[out=west, in=south] (.1,-.2)
to[out=north, in=west] (.3,0);
\draw[fill=white] (-.4,-.2) rectangle (-.3,.4);
\draw[fill=white] (.3,-.2) rectangle (.4,.4);
\node at (-.3,0) [above left]{${\scriptscriptstyle n-1}$};
\node at (.3,0) [above right]{${\scriptscriptstyle n-1}$};
\node at (0,.1) [above]{${\scriptscriptstyle n-2}$};
\node at (-.6,-.3) {${\scriptscriptstyle 1}$};
\node at (.6,-.3) {${\scriptscriptstyle 1}$};
}\,\bigg\rangle_{\! 2} \in W_{n+n}
\end{align}
\end{DEF}
The $A_1$ clasp has the following properties.
\begin{LEM}[Kauffman-Lins~\cite{KauffmanLins94} etc.]
For any positive integer $n$,
\begin{itemize} 
\item $\Big\langle\,\tikz[baseline=-.6ex]{
\draw (-.6,0) -- (.6,0);
\draw[fill=white] (-.4,-.3) rectangle (-.2,.3);
\draw[fill=white] (.2,-.3) rectangle (.4,.3);
\node at (-.3,0) [above right]{${\scriptstyle n}$};
\node at (.3,0) [above right]{${\scriptstyle n}$};
}\,\Big\rangle_{\! 2}
=
\,\tikz[baseline=-.6ex]{
\draw (-.5,0) -- (.5,0);
\draw[fill=white] (-.1,-.3) rectangle (.1,.3);
\node at (0,0) [above right] {${\scriptstyle n}$};
}\,
$,
\item $\Big\langle\,\tikz[baseline=-.6ex]{
\draw (-.5,0) -- (-.1,0) (0,.2) -- (.5,.2) (0,-.2) -- (.5,-.2);
\draw (0,-.1) arc (-90:90:.1);
\draw[fill=white] (-.2,-.3) rectangle (0,.3);
\node at (0,0) [right] {${\scriptstyle 1}$};
\node at (.4,.2) [right] {${\scriptstyle n-k-2}$};
\node at (.4,-.2) [right] {${\scriptstyle k}$};
}\,\Big\rangle_{\! 2}=0$\quad ($k=0,1,\dots,n-2$).
\end{itemize}
\end{LEM}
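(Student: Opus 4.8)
The plan is to establish the two bullets together by induction on $n$, reading off everything from the recursive definition of the clasp. Write $p_n$ for the clasp on $n$ strands, let $U = p_{n-1}\otimes\mathbf 1$ denote the first web in the recursion (the clasp $p_{n-1}$ on the top $n-1$ strands beside one straight strand), and let $V$ denote the second web (two copies of $p_{n-1}$ joined by a turnback $e_{n-1}$ on the last two strands), so that the definition reads $p_n = U + \frac{[n-1]}{[n]}\,V$ with $V = U\,e_{n-1}\,U$. The base case $n=1$ is immediate: the idempotence identity is the defining equality $p_1=\mathbf 1$, and the annihilation identity is vacuous since the index range $k=0,\dots,n-2$ is empty.

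For the inductive step I would prove the annihilation property (second bullet) first, splitting on the position of the cap. When $k\ge 1$ the cap joins two strands among the first $n-1$, so in $U$ it caps two adjacent outputs of $p_{n-1}$ and in $V$ it caps two adjacent outputs of the right-hand $p_{n-1}$; by the inductive annihilation hypothesis for $p_{n-1}$ both terms vanish, hence $p_n$ composed with the cap is $0$. The remaining case $k=0$, where the cap closes the last two strands, is the crux. Capping $U$ simply bends the last output of $p_{n-1}$ into the adjacent straight strand, producing a web $X$; capping $V$ forces a partial trace of the right-hand $p_{n-1}$ over its last strand. Here I would invoke the partial-trace (``bubble'') value $\operatorname{tr}_{\mathrm{last}} p_{m} = -\frac{[m+1]}{[m]}\,p_{m-1}$, which I derive for $m=n-1$ by applying the partial trace to the recursion itself: tracing the straight strand of $p_{m-1}\otimes\mathbf 1$ yields a closed loop, contributing $-[2]\,p_{m-1}$, while tracing the turnback term collapses to $p_{m-1}\circ p_{m-1}=p_{m-1}$ by the inductive idempotence of $p_{m-1}$; the scalar $-[2]+\frac{[m-1]}{[m]}$ then simplifies to $-\frac{[m+1]}{[m]}$ using the loop value and the identity $[2][m]=[m+1]+[m-1]$ from Lemma~\ref{qinteger}(1). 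Using the nesting relation $(p_{n-2}\otimes\mathbf 1)\,p_{n-1}=p_{n-1}$ (itself a one-line consequence of the recursion and idempotence of $p_{n-2}$) one checks that the capped $V$ equals $\lambda_{n-1}X$ with $\lambda_{n-1}=-\frac{[n]}{[n-1]}$, so the total coefficient is $1+\frac{[n-1]}{[n]}\lambda_{n-1}=0$ and $p_n$ is annihilated.

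Granting annihilation for $p_n$, idempotence (first bullet) follows quickly. Expanding the right factor of $p_n\circ p_n$ by the recursion gives $p_n\circ p_n = p_n\circ U + \frac{[n-1]}{[n]}\,p_n\circ V$. The first term equals $p_n$: expanding the outer $p_n$ and using idempotence of $p_{n-1}$ gives $U\circ U=U$ and $V\circ U=V$, so $p_n\circ U = U+\frac{[n-1]}{[n]}V=p_n$. The second term vanishes because $V=U\,e_{n-1}\,U$ begins with $U$, whence $p_n\circ V=(p_n\circ U)\,e_{n-1}\,U=p_n\,e_{n-1}\,U=0$ by the annihilation just established (the cap of $e_{n-1}$ closes the last two strands of $p_n$). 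Therefore $p_n\circ p_n=p_n$, closing the induction.

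The main obstacle is the $k=0$ case of annihilation: one must correctly carry out the two skein manipulations (the bend in $U$ and the partial closure in $V$), recognize both capped webs as scalar multiples of the \emph{same} web $X$ via the nesting relation, and pin down the partial-trace scalar $-\frac{[m+1]}{[m]}$. Everything else---the $k\ge 1$ caps and the idempotence---reduces mechanically to the inductive hypotheses, the loop value $-[2]$, and the quantum-integer identities of Lemma~\ref{qinteger}.
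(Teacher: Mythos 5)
The paper offers no proof of this lemma---it is quoted from Kauffman--Lins---so there is no internal argument to compare against; your proposal is precisely the standard simultaneous induction via the Wenzl recursion found in that source. I checked the two places where something could go wrong and both are right: the partial-trace scalar comes out as $-\left[2\right]+\frac{\left[m-1\right]}{\left[m\right]}=-\frac{\left[m+1\right]}{\left[m\right]}$ via Lemma~\ref{qinteger}~(1), and in the $k=0$ cap the two contributions combine to $1+\frac{\left[n-1\right]}{\left[n\right]}\cdot\bigl(-\frac{\left[n\right]}{\left[n-1\right]}\bigr)=0$ once the nesting relation identifies both capped webs with the same $X$, so the argument is correct and complete.
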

It is easy to calculate the following.
\begin{LEM}\label{A1clasplem}\ 
For $k=0,1,\dots,n$,
\begin{itemize}
\item $\Big\langle\,\tikz[baseline=-.6ex]{
\draw (-.5,0) -- (-.1,0);
\draw[white, double=black, double distance=0.4pt, ultra thick] 
(0,-.2) to[out=right, in=left] (.5,.2);
\draw[white, double=black, double distance=0.4pt, ultra thick] 
(0,.2) to[out=right, in=left] (.5,-.2);
\draw[fill=white] (-.2,-.3) rectangle (0,.3);
\node at (-.1,.0) [above left]{${\scriptstyle n}$};
\node at (.4,.2) [right] {${\scriptstyle k}$};
\node at (.4,-.2) [right] {${\scriptstyle n-k}$};
}\,\Big\rangle_{\! 2}
=q^{\frac{k(n-k)}{4}}
\Big\langle\,\tikz[baseline=-.6ex]{
\draw (-.5,0) -- (.5,0);
\draw[fill=white] (-.1,-.3) rectangle (.1,.3);
\node at (.1,0) [above right]{${\scriptstyle n}$};
}\,\Big\rangle_{\! 2}$,  
$\Big\langle\,\tikz[baseline=-.6ex]{
\draw (-.5,0) -- (-.1,0);
\draw[white, double=black, double distance=0.4pt, ultra thick] 
(0,.2) to[out=right, in=left] (.5,-.2);
\draw[white, double=black, double distance=0.4pt, ultra thick] 
(0,-.2) to[out=right, in=left] (.5,.2);
\draw[fill=white] (-.2,-.3) rectangle (0,.3);
\node at (-.1,.0) [above left]{${\scriptstyle n}$};
\node at (.4,.2) [right] {${\scriptstyle k}$};
\node at (.4,-.2) [right] {${\scriptstyle n-k}$};
}\,\Big\rangle_{\! 2}
=q^{-\frac{k(n-k)}{4}}
\Big\langle\,\tikz[baseline=-.6ex]{
\draw (-.5,0) -- (.5,0);
\draw[fill=white] (-.1,-.3) rectangle (.1,.3);
\node at (.1,0) [above right]{${\scriptstyle n}$};
}\,\Big\rangle_{\! 2},$
\item 
$\Big\langle\,\tikz[baseline=-.6ex]{
\draw (-.4,.2) -- (.4,.2);
\draw[rounded corners=.1cm] (-.2,-.2) rectangle (.2,.1);
\draw[fill=white] (-.05,.0) rectangle (.05,.3);
\node at (.0,.3)[above]{$\scriptstyle{n}$};
\node at (.4,.2)[right]{$\scriptstyle{n-k}$};
\node at (.0,-.2)[below]{$\scriptstyle{k}$};
}\,\Big\rangle_{\! 2}
=(-1)^k\frac{\left[n+1\right]}{\left[n-k+1\right]}
\Big\langle\,\tikz[baseline=-.6ex]{
\draw (-.4,.0) -- (.4,.0);
\draw[fill=white] (-.05,-.2) rectangle (.05,.2);
\node at (.0,.2)[above]{$\scriptstyle{n-k}$};
}\,\Big\rangle_{\! 2},$
\item 
$\Big\langle\,\tikz[baseline=-.6ex]{
\draw (-.4,.1) -- (.0,.1);
\draw[white, double=black, double distance=0.4pt, ultra thick] 
(.2,-.2) to[out=west, in=west]
(.2,.1) to[out=east, in=west]
(.5,.1);
\draw[white, double=black, double distance=0.4pt, ultra thick] 
(.0,.1) to[out=east, in=west]
(.2,.1) to[out=east, in=east]
(.2,-.2);
\draw[fill=white] (-.1,-.1) rectangle (.0,.3);
\node at (-.1,-.1)[below]{$\scriptstyle{n}$};
}\,\Big\rangle_{\! 2}
=(-1)^nq^{\frac{n^2+2n}{4}}
\Big\langle\,\tikz[baseline=-.6ex]{
\draw (-.4,.0) -- (.4,.0);
\draw[fill=white] (-.05,-.2) rectangle (.05,.2);
\node at (.0,.2)[above]{$\scriptstyle{n}$};
}\,\Big\rangle_{\! 2}$, 
$\Big\langle\,\tikz[baseline=-.6ex]{
\draw (-.4,.1) -- (.0,.1);
\draw[white, double=black, double distance=0.4pt, ultra thick] 
(.0,.1) to[out=east, in=west]
(.2,.1) to[out=east, in=east]
(.2,-.2);
\draw[white, double=black, double distance=0.4pt, ultra thick] 
(.2,-.2) to[out=west, in=west]
(.2,.1) to[out=east, in=west]
(.5,.1);
\draw[fill=white] (-.1,-.1) rectangle (.0,.3);
\node at (-.1,-.1)[below]{$\scriptstyle{n}$};
}\,\Big\rangle_{\! 2}
=(-1)^nq^{-\frac{n^2+2n}{4}}
\Big\langle\,\tikz[baseline=-.6ex]{
\draw (-.4,.0) -- (.4,.0);
\draw[fill=white] (-.05,-.2) rectangle (.05,.2);
\node at (.0,.2)[above]{$\scriptstyle{n}$};
}\,\Big\rangle_{\! 2}.$
\end{itemize}
\end{LEM}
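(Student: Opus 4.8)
The plan is to prove the three items separately, in each case reducing to the Kauffman bracket relation, the recursive definition of the clasp, and the two properties recorded in the previous lemma: idempotency of the clasp and the vanishing of any diagram in which a turnback (a cap joining two adjacent strands) abuts the clasp. Throughout I write $f_n$ for the $A_1$ clasp on $n$ strands. For the \emph{braiding formula} (first item) I would resolve the $k(n-k)$ crossings of the braiding by the Kauffman bracket relation one at a time, always choosing the crossing nearest the clasp. For such a crossing, one smoothing is the ``through'' smoothing, which reproduces the same picture with one crossing deleted, while the other is the ``turnback'' smoothing, which places a cap on two adjacent strands meeting $f_n$ and therefore contributes $0$ by the previous lemma. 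Hence each crossing survives only through the first smoothing, with coefficient $q^{1/4}$ (respectively $q^{-1/4}$ for the reversed crossing), and after deleting it the next crossing of the braid is again adjacent to $f_n$. Iterating over all $k(n-k)$ crossings gives the factor $q^{\pm k(n-k)/4}$.

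For the \emph{partial-trace formula} (second item), let $\tau$ denote the operation that closes off one outermost strand. I would first establish the single-strand case $\tau(f_n)=-\frac{[n+1]}{[n]}f_{n-1}$ and then iterate. To get the single-strand case, apply the recursive definition $f_n=(f_{n-1}\otimes\,|\,)+\frac{[n-1]}{[n]}E$, where $E$ is the hook term appearing in the definition. Closing the last strand turns the bare strand of the first summand into a disjoint circle, contributing $-[2]\,f_{n-1}$, while the same closure merges the two copies of $f_{n-1}$ inside $E$ along the cap--cup and, by idempotency, produces $f_{n-1}$; thus $\tau(f_n)=\bigl(-[2]+\frac{[n-1]}{[n]}\bigr)f_{n-1}$. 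Applying Lemma~\ref{qinteger}(1) in the form $[2][n]=[n+1]+[n-1]$ simplifies the scalar to $-\frac{[n+1]}{[n]}$. Closing $k$ strands successively multiplies these scalars, and the resulting product $\prod_{j=0}^{k-1}\bigl(-\frac{[n+1-j]}{[n-j]}\bigr)$ telescopes to $(-1)^k\frac{[n+1]}{[n-k+1]}$, which is the claimed value; the companion closure on the other side is identical by symmetry.

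For the \emph{curl formula} (third item), write $\theta_n$ for the scalar produced by a positive curl on the $n$-cable carrying $f_n$, and argue by induction on $n$ after peeling off the outermost strand. The inner $(n-1)$-cable retains its own curl, contributing $\theta_{n-1}$; the outermost strand acquires a single Reidemeister~I curl contributing $-q^{3/4}$, computed directly from the Kauffman bracket relation together with the circle value $-[2]$; and in going around the loop this strand crosses the inner block positively $2(n-1)$ times, each surviving crossing contributing $q^{1/4}$ exactly as in the braiding formula, for a total factor $q^{(n-1)/2}$. Multiplying yields the recursion $\theta_n=-q^{(2n+1)/4}\,\theta_{n-1}$ with base case $\theta_1=-q^{3/4}$, whose solution is $\theta_n=(-1)^n q^{(n^2+2n)/4}$; the opposite curl is handled by reversing every crossing. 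I expect this last item to be the main obstacle: unlike the first two, which fall out cleanly from the previous lemma, the recursive definition, and Lemma~\ref{qinteger}, the curl requires careful bookkeeping of crossing signs and of exactly which turnbacks die against the clasp once the outermost strand is separated from the cable, and it is there that an honest verification of the count $2(n-1)$ and of the R1 coefficient must be pinned down.
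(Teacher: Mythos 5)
The paper states this lemma without proof (it is introduced only with ``It is easy to calculate the following''), so there is no argument of the author's to compare yours against; the question is only whether your proof is sound, and it is. Item by item: for the braiding formula, resolving the crossing nearest the clasp does produce one through-smoothing with coefficient $q^{\pm 1/4}$ and one turnback abutting the clasp, which vanishes by the preceding lemma, and iterating over the $k(n-k)$ crossings gives $q^{\pm k(n-k)/4}$. For the partial trace, your computation $\tau(f_n)=\bigl(-[2]+\frac{[n-1]}{[n]}\bigr)f_{n-1}=-\frac{[n+1]}{[n]}f_{n-1}$ from the Wenzl recursion is exactly right (the closure of the hook term collapses to $f_{n-1}$ by idempotency, and $[2][n]=[n+1]+[n-1]$ is Lemma~\ref{qinteger}~(1) with $a=2$), and the telescoping product gives $(-1)^k[n+1]/[n-k+1]$. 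For the curl, the recursion $\theta_n=-q^{(2n+1)/4}\theta_{n-1}$ with $\theta_1=-q^{3/4}$ does solve to $(-1)^nq^{(n^2+2n)/4}$, and your crossing count $n^2-(n-1)^2=1+2(n-1)$ is correct. The one point you rightly flag --- that the turnbacks arising from the $2(n-1)$ mixed crossings in the curl are not literally adjacent to the clasp until earlier crossings have been smoothed --- is most cleanly dispatched by first inserting $f_{n-1}\otimes f_1$ via absorption and then isotoping the curl into (braiding of $1$ over $n-1$) followed by (curl on the inner $(n-1)$-cable) followed by (kink on the single outer strand) followed by (braiding of $1$ over $n-1$), so that the two braidings are each literally the configuration of your first item and contribute $q^{(n-1)/4}$ apiece; this yields the same recursion $\theta_n=(-q^{3/4})\,q^{(n-1)/2}\,\theta_{n-1}$ without any delicate ordering of resolutions.
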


Let $N=(n_1,n_2,\dots,n_k)$ be a $k$-tuple of positive integers.
\begin{DEF}[Clasped $A_1$ web spaces]
We define a subspace $W_N$ of $W_{n_1+n_2+\dots+n_k}$ called a clasped $A_1$ web space as the following:
\[
 W_N=\Bigg\{\,
\bigg\langle\tikz[baseline=-.6ex]{
\draw [thin, fill=lightgray!50] (0,0) circle [radius=1];
\draw (0,0) -- (-30:1);
\draw (0,0) -- (0:1);
\draw (0,0) -- (30:1);
\draw (0,0) -- (60:1);
\coordinate (a) at ($(0,0)!.8!-30:(-12:1)$);
\coordinate (b) at ($(0,0)!.9!-30:(12:1)$);
\coordinate (a0) at ($(0,0)!.8!0:(-12:1)$);
\coordinate (b0) at ($(0,0)!.9!0:(12:1)$);
\coordinate (a1) at ($(0,0)!.8!30:(-12:1)$);
\coordinate (b1) at ($(0,0)!.9!30:(12:1)$);
\coordinate (a2) at ($(0,0)!.8!60:(-12:1)$);
\coordinate (b2) at ($(0,0)!.9!60:(12:1)$);
\draw[fill=white,rotate=-30] (a) rectangle (b);
\draw[fill=white,rotate=0] (a0) rectangle (b0);
\draw[fill=white,rotate=30] (a1) rectangle (b1);
\draw[fill=white,rotate=60] (a2) rectangle (b2);
\foreach \i in {0,1,...,11} \draw[fill=cyan] ($(0,0) !1! \i*30:(1,0)$) circle [radius=1pt];
\draw[fill=cyan] (-30:1) circle [radius=1pt] node [right]{${\scriptstyle n_k}$};
\draw[fill=cyan] (0:1) circle [radius=1pt] node [right]{${\scriptstyle n_1}$};
\draw[fill=cyan] (30:1) circle [radius=1pt] node [right]{${\scriptstyle n_2}$};
\draw[fill=cyan] (60:1) circle [radius=1pt] node [right]{${\scriptstyle n_3}$};
\node[rotate=10] at (100:.8){$\cdots$} node[rotate=25] at (-65:.8){$\cdots$} node[rotate=-60] at (-150:.8){$\cdots$};
\node[circle, draw, fill=lightgray!30] (0,0) {\quad$w$\quad\quad};
}\bigg\rangle_{\! 2}
\;{\Bigg\vert}\; w\in W_{n_1+n_2+\dots+n_k}\,\Bigg\}.
\]
\end{DEF}
For example, 
if $k=3, n_1=1, n_2=2, n_3=3$ and 
$w=
\Big\langle\,\tikz[baseline=-.6ex]{
\draw [thin, fill=lightgray!30] (0,0) circle [radius=.5];
\draw [lightgray!30, double=black, double distance=0.4pt, ultra thick]
(140:.5) to[out=south east, in=north east] (-140:.5);
\draw [lightgray!30, double=black, double distance=0.4pt, ultra thick]
(100:.5) to[out=south east, in=north east] (-100:.5);
\draw[lightgray!30, double=black, double distance=0.4pt, ultra thick] (0:.5) to[out=west, in=north east] (-120:.5);
\draw[fill=cyan] 
(0:.5) circle [radius=1pt]
(100:.5) circle [radius=1pt]
(140:.5) circle [radius=1pt]
(-100:.5) circle [radius=1pt]
(-120:.5) circle [radius=1pt]
(-140:.5) circle [radius=1pt];
}\,\Big\rangle_{\! 2}
=q^{\frac{1}{4}}\,\tikz[baseline=-.6ex]{
\draw [thin, fill=lightgray!30] (0,0) circle [radius=.5];
\draw (0:.5) to[out=west, in=north] (-100:.5);
\draw (100:.5) to[out=south, in=north east] (-120:.5);
\draw (140:.5) to[out=south east, in=north east] (-140:.5);
\draw[fill=cyan] 
(0:.5) circle [radius=1pt]
(100:.5) circle [radius=1pt]
(140:.5) circle [radius=1pt]
(-100:.5) circle [radius=1pt]
(-120:.5) circle [radius=1pt]
(-140:.5) circle [radius=1pt];
}\,
+q^{-\frac{1}{4}}\,\tikz[baseline=-.6ex]{
\draw [thin, fill=lightgray!30] (0,0) circle [radius=.5];
\draw (0:.5) to[out=west, in=south] (100:.5);
\draw (-120:.5) 
to[out=north east, in=west] (-110:.3) 
to[out=east, in=north east]
(-100:.5);
\draw (140:.5) to[out=south east, in=north east] (-140:.5);
\draw[fill=cyan] 
(0:.5) circle [radius=1pt]
(100:.5) circle [radius=1pt]
(140:.5) circle [radius=1pt]
(-100:.5) circle [radius=1pt]
(-120:.5) circle [radius=1pt]
(-140:.5) circle [radius=1pt];
}\,
$,
then the above diagram means
\[
\Bigg\langle\,\tikz[baseline=-.6ex]{
\draw [thin, fill=lightgray!50] (0,0) circle [radius=1];
\coordinate (a0) at ($(0,0)!.7!0:(-30:1)$);
\coordinate (b0) at ($(0,0)!.8!0:(30:1)$);
\coordinate (a1) at ($(0,0)!.7!120:(-30:1)$);
\coordinate (b1) at ($(0,0)!.8!120:(30:1)$);
\coordinate (a2) at ($(0,0)!.7!-120:(-30:1)$);
\coordinate (b2) at ($(0,0)!.8!-120:(30:1)$);
\fill [lightgray!50] (0,0) circle [radius=.5];
\draw [lightgray!50, double=black, double distance=0.4pt, ultra thick]
(130:1) to[out=south east, in=north east] (-130:1);
\draw [lightgray!50, double=black, double distance=0.4pt, ultra thick]
(110:1) to[out=south east, in=north east] (-110:1);
\draw[lightgray!50, double=black, double distance=0.4pt, ultra thick] (0:1) to[out=west, in=north east] (-120:1);
\draw[fill=white,rotate=0] (a0) rectangle (b0);
\draw[fill=white,rotate=30] (a1) rectangle (b1);
\draw[fill=white,rotate=-30] (a2) rectangle (b2);
\draw[fill=cyan]
(0:1) circle [radius=1pt]
(110:1) circle [radius=1pt]
(130:1) circle [radius=1pt]
(-110:1) circle [radius=1pt]
(-120:1) circle [radius=1pt]
(-130:1) circle [radius=1pt];
}\,\Bigg\rangle_{\! 2}
=
q^{\frac{1}{4}}\Bigg\langle\,\tikz[baseline=-.6ex]{
\draw [thin, fill=lightgray!50] (0,0) circle [radius=1];
\coordinate (a0) at ($(0,0)!.7!0:(-30:1)$);
\coordinate (b0) at ($(0,0)!.8!0:(30:1)$);
\coordinate (a1) at ($(0,0)!.7!120:(-30:1)$);
\coordinate (b1) at ($(0,0)!.8!120:(30:1)$);
\coordinate (a2) at ($(0,0)!.7!-120:(-30:1)$);
\coordinate (b2) at ($(0,0)!.8!-120:(30:1)$);
\fill [lightgray!50] (0,0) circle [radius=.5];
\draw (130:1) to[out=south east, in=north east] (-130:1);
\draw (110:1) to[out=south east, in=north east] (-120:1);
\draw (0:1) to[out=west, in=north east] (-110:1);
\draw[fill=white,rotate=0] (a0) rectangle (b0);
\draw[fill=white,rotate=30] (a1) rectangle (b1);
\draw[fill=white,rotate=-30] (a2) rectangle (b2);
\draw[fill=cyan]
(0:1) circle [radius=1pt]
(110:1) circle [radius=1pt]
(130:1) circle [radius=1pt]
(-110:1) circle [radius=1pt]
(-120:1) circle [radius=1pt]
(-130:1) circle [radius=1pt];
}\,\Bigg\rangle_{\! 2}
+q^{-\frac{1}{4}}\Bigg\langle\,\tikz[baseline=-.6ex]{
\draw [thin, fill=lightgray!50] (0,0) circle [radius=1];
\coordinate (a0) at ($(0,0)!.7!0:(-30:1)$);
\coordinate (b0) at ($(0,0)!.8!0:(30:1)$);
\coordinate (a1) at ($(0,0)!.7!120:(-30:1)$);
\coordinate (b1) at ($(0,0)!.8!120:(30:1)$);
\coordinate (a2) at ($(0,0)!.7!-120:(-30:1)$);
\coordinate (b2) at ($(0,0)!.8!-120:(30:1)$);
\fill [lightgray!50] (0,0) circle [radius=.5];
\draw (130:1) to[out=south east, in=north east] (-130:1);
\draw (110:1) to[out=south east, in=west] (0:1);
\draw (-120:1) 
to[out=north east, in=south west] (-115:.4)
to[out=north east, in=north east] (-110:1);
\draw[fill=white,rotate=0] (a0) rectangle (b0);
\draw[fill=white,rotate=30] (a1) rectangle (b1);
\draw[fill=white,rotate=-30] (a2) rectangle (b2);
\draw[fill=cyan]
(0:1) circle [radius=1pt]
(110:1) circle [radius=1pt]
(130:1) circle [radius=1pt]
(-110:1) circle [radius=1pt]
(-120:1) circle [radius=1pt]
(-130:1) circle [radius=1pt];
}\,\Bigg\rangle_{\! 2}
=q^{\frac{1}{4}}\,\tikz[baseline=-.6ex]{
\draw [thin, fill=lightgray!50] (0,0) circle [radius=1];
\coordinate (a0) at ($(0,0)!.7!0:(-30:1)$);
\coordinate (b0) at ($(0,0)!.8!0:(30:1)$);
\coordinate (a1) at ($(0,0)!.7!120:(-30:1)$);
\coordinate (b1) at ($(0,0)!.8!120:(30:1)$);
\coordinate (a2) at ($(0,0)!.7!-120:(-30:1)$);
\coordinate (b2) at ($(0,0)!.8!-120:(30:1)$);
\fill [lightgray!50] (0,0) circle [radius=.5];
\draw (130:1) to[out=south east, in=north east] (-130:1);
\draw (110:1) to[out=south east, in=north east] (-120:1);
\draw (0:1) to[out=west, in=north east] (-110:1);
\draw[fill=white,rotate=0] (a0) rectangle (b0);
\draw[fill=white,rotate=30] (a1) rectangle (b1);
\draw[fill=white,rotate=-30] (a2) rectangle (b2);
\draw[fill=cyan]
(0:1) circle [radius=1pt]
(110:1) circle [radius=1pt]
(130:1) circle [radius=1pt]
(-110:1) circle [radius=1pt]
(-120:1) circle [radius=1pt]
(-130:1) circle [radius=1pt];
}\,.
\]
\subsection{The $A_2$ web spaces}
We define the $A_2$ web spaces.
Let $\varepsilon=(\varepsilon_1,\varepsilon_2,\dots,\varepsilon_m)$ be a $m$-tuple of signs ${+}$ or ${-}$. 
Let $D_\varepsilon$ denote $D_m$ whose marked point $\exp(2\pi\sqrt{-1}/m)^{j-1}$ is decorated by $\varepsilon_j$ for $j=1,2,\dots,m$.
A {\em bipartite uni-trivalent graph} $G$ is a directed graph such that every vertex is either trivalent or univalent and these vertices are divided into sinks or sources. 
A sink (resp. source) is a vertex such that all edges adjoining to the vertex point into (resp. away from) it.
A {\em bipartite trivalent graph} $G$ in $D_{\varepsilon}$ is an embedding of a uni-trivalent graph into $D_\varepsilon$ such that for any vertex $v$ has the following neighborhoods:
\begin{itemize}
\item \,\tikz[baseline=-.6ex]{
\draw [thin, dashed, fill=lightgray!50] (0,0) circle [radius=.5];
\draw[-<-=.5] (0:0) -- (90:.5); 
\draw[-<-=.5] (0:0) -- (210:.5); 
\draw[-<-=.5] (0:0) -- (-30:.5);
\node (v) at (0,0) [above right]{$v$};
\fill (0,0) circle [radius=1pt];
}\, 
or 
\,\tikz[baseline=-.6ex]{
\draw [thin, dashed] (0,0) circle [radius=.5];
\clip (0,0) circle [radius=.5];
\draw [thin, fill=lightgray!50] (0,-.5) rectangle (.5,.5);
\draw[-<-=.5] (0,0)--(.5,0);
\node (p) at (0,0) [left]{${+}$};
\node at (0,0) [above right]{$v$};
\draw [fill=cyan] (0,0) circle [radius=1pt];
}\, if $v$ is a sink,
\item \tikz[baseline=-.6ex]{
\draw [thin, dashed, fill=lightgray!50] (0,0) circle [radius=.5];
\draw[->-=.5] (0:0) -- (90:.5); 
\draw[->-=.5] (0:0) -- (210:.5); 
\draw[->-=.5] (0:0) -- (-30:.5);
\node (v) at (0,0) [above right]{$v$};
\fill (0,0) circle [radius=1pt];
}\,
or \,\tikz[baseline=-.6ex]{
\draw [thin, dashed] (0,0) circle [radius=.5];
\clip (0,0) circle [radius=.5];
\draw [thin, fill=lightgray!50] (0,-.5) rectangle (.5,.5);
\draw[->-=.5] (0,0)--(.5,0);
\node (p) at (0,0) [left]{${-}$};
\node at (0,0) [above right]{$v$};
\draw [fill=cyan] (0,0) circle [radius=1pt];
}\, if $v$ is a source.
\end{itemize}
An {\em $A_2$ basis web} is the boundary-fixing isotopy class of a bipartite trivalent graph $G$ in $D_{\varepsilon}$, 
where any internal face of $D\setminus G$ has at least six sides. 
Let us denote $B_\varepsilon$ is the set of $A_2$ basis webs in $D_{\varepsilon}$.
For example,
$B_{(+,-,+,-,+,-)}$ has the following $A_2$ basis webs:
\[
\,\begin{tikzpicture}
\draw [thin, fill=lightgray!50] (0,0) circle [radius=.5];
\draw[-<-=.5] (0:.5) -- (180:.5);
\draw[->-=.5] (60:.5) to[out=-120, in=-60] (120:.5);
\draw[-<-=.5] (240:.5) to[out=60, in=120] (300:.5);
\foreach \i in {0,1,...,6} \draw[fill=cyan] ($(0,0) !1! \i*60:(.5,0)$) circle [radius=1pt];
\node at (0:.5) [right]{$\scriptstyle{+}$};
\node at (60:.5) [right]{$\scriptstyle{-}$};
\node at (120:.5) [left]{$\scriptstyle{+}$};
\node at (180:.5) [left]{$\scriptstyle{-}$};
\node at (240:.5) [left]{$\scriptstyle{+}$};
\node at (300:.5) [right]{$\scriptstyle{-}$};
\end{tikzpicture}\,,
\,\begin{tikzpicture}
\begin{scope}[rotate=60]
\draw [thin, fill=lightgray!50] (0,0) circle [radius=.5];
\draw[->-=.5] (0:.5) -- (180:.5);
\draw[-<-=.5] (60:.5) to[out=-120, in=-60] (120:.5);
\draw[->-=.5] (240:.5) to[out=60, in=120] (300:.5);
\foreach \i in {0,1,...,6} \draw[fill=cyan] ($(0,0) !1! \i*60:(.5,0)$) circle [radius=1pt];
\end{scope}
\node at (0:.5) [right]{$\scriptstyle{+}$};
\node at (60:.5) [right]{$\scriptstyle{-}$};
\node at (120:.5) [left]{$\scriptstyle{+}$};
\node at (180:.5) [left]{$\scriptstyle{-}$};
\node at (240:.5) [left]{$\scriptstyle{+}$};
\node at (300:.5) [right]{$\scriptstyle{-}$};
\end{tikzpicture}\,,
\,\begin{tikzpicture}
\begin{scope}[rotate=-60]
\draw [thin, fill=lightgray!50] (0,0) circle [radius=.5];
\draw[->-=.5] (0:.5) -- (180:.5);
\draw[-<-=.5] (60:.5) to[out=-120, in=-60] (120:.5);
\draw[->-=.5] (240:.5) to[out=60, in=120] (300:.5);
\foreach \i in {0,1,...,6} \draw[fill=cyan] ($(0,0) !1! \i*60:(.5,0)$) circle [radius=1pt];
\end{scope}
\node at (0:.5) [right]{$\scriptstyle{+}$};
\node at (60:.5) [right]{$\scriptstyle{-}$};
\node at (120:.5) [left]{$\scriptstyle{+}$};
\node at (180:.5) [left]{$\scriptstyle{-}$};
\node at (240:.5) [left]{$\scriptstyle{+}$};
\node at (300:.5) [right]{$\scriptstyle{-}$};
\end{tikzpicture}\,,
\,\begin{tikzpicture}
\draw [thin, fill=lightgray!50] (0,0) circle [radius=.5];
\draw[-<-=.5] (0:.5) to[out=180, in=-120] (60:.5);
\draw[rotate=120, -<-=.5] (0:.5) to[out=180, in=-120] (60:.5);
\draw[rotate=240, -<-=.5] (0:.5) to[out=180, in=-120] (60:.5);
\foreach \i in {0,1,...,6} \draw[fill=cyan] ($(0,0) !1! \i*60:(.5,0)$) circle [radius=1pt];
\node at (0:.5) [right]{$\scriptstyle{+}$};
\node at (60:.5) [right]{$\scriptstyle{-}$};
\node at (120:.5) [left]{$\scriptstyle{+}$};
\node at (180:.5) [left]{$\scriptstyle{-}$};
\node at (240:.5) [left]{$\scriptstyle{+}$};
\node at (300:.5) [right]{$\scriptstyle{-}$};
\end{tikzpicture}\,,
\,\begin{tikzpicture}
\begin{scope}[rotate=60]
\draw [thin, fill=lightgray!50] (0,0) circle [radius=.5];
\draw[->-=.5] (0:.5) to[out=180, in=-120] (60:.5);
\draw[rotate=120, ->-=.5] (0:.5) to[out=180, in=-120] (60:.5);
\draw[rotate=240, ->-=.5] (0:.5) to[out=180, in=-120] (60:.5);
\foreach \i in {0,1,...,6} \draw[fill=cyan] ($(0,0) !1! \i*60:(.5,0)$) circle [radius=1pt];
\end{scope}
\node at (0:.5) [right]{$\scriptstyle{+}$};
\node at (60:.5) [right]{$\scriptstyle{-}$};
\node at (120:.5) [left]{$\scriptstyle{+}$};
\node at (180:.5) [left]{$\scriptstyle{-}$};
\node at (240:.5) [left]{$\scriptstyle{+}$};
\node at (300:.5) [right]{$\scriptstyle{-}$};
\end{tikzpicture}\,,
\,\begin{tikzpicture}
\draw [thin, fill=lightgray!50] (0,0) circle [radius=.5];
\draw[-<-=.5] (0:.5) -- (0:.3);
\draw[rotate=60, ->-=.5] (0:.5) -- (0:.3);
\draw[rotate=120, -<-=.5] (0:.5) -- (0:.3);
\draw[rotate=180, ->-=.5] (0:.5) -- (0:.3);
\draw[rotate=240, -<-=.5] (0:.5) -- (0:.3);
\draw[rotate=300, ->-=.5] (0:.5) -- (0:.3);
\draw[->-=.5] (0:.3) -- (60:.3);
\draw[rotate=60, -<-=.5] (0:.3) -- (60:.3);
\draw[rotate=120, ->-=.5] (0:.3) -- (60:.3);
\draw[rotate=180, -<-=.5] (0:.3) -- (60:.3);
\draw[rotate=240, ->-=.5] (0:.3) -- (60:.3);
\draw[rotate=300, -<-=.5] (0:.3) -- (60:.3);
\foreach \i in {0,1,...,6} \draw[fill=cyan] ($(0,0) !1! \i*60:(.5,0)$) circle [radius=1pt];
\node at (0:.5) [right]{$\scriptstyle{+}$};
\node at (60:.5) [right]{$\scriptstyle{-}$};
\node at (120:.5) [left]{$\scriptstyle{+}$};
\node at (180:.5) [left]{$\scriptstyle{-}$};
\node at (240:.5) [left]{$\scriptstyle{+}$};
\node at (300:.5) [right]{$\scriptstyle{-}$};
\end{tikzpicture}\,.
\]
The {\em $A_2$ web space $W_\varepsilon$} is the $\mathbb{Q}(q^{\frac{1}{6}})$-vector space spanned by $B_\varepsilon$. 
A {\em tangled trivalent graph diagram} in $D_\varepsilon$ is an immersed bipartite uni-trivalent graph in $D_\varepsilon$ whose intersection points are only transverse double points of edges with crossing data 
\,\tikz[baseline=-.6ex, scale=.8]{
\draw [thin, dashed, fill=lightgray!50] (0,0) circle [radius=.5];
\draw[->-=.8] (-45:.5) -- (135:.5);
\draw[->-=.8, lightgray!50, double=black, double distance=0.4pt, ultra thick] (-135:.5) -- (45:.5);
}\, or 
\,\tikz[baseline=-.6ex, scale=.8]{
\draw [thin, dashed, fill=lightgray!50] (0,0) circle [radius=.5];
\draw[->-=.8] (-135:.5) -- (45:.5);
\draw[->-=.8, lightgray!50, double=black, double distance=0.4pt, ultra thick] (-45:.5) -- (135:.5);
}\, .
Tangled trivalent graph diagrams $G$ and $G'$ are regularly isotopic if $G$ is obtained from $G'$ by a finite sequence of boundary-fixing isotopies and (R1'), (R2), (R3) and (R4) moves with some direction of edges.
\begin{description}
\item[(R4)]
\tikz[baseline=-.6ex]{
\draw [thin, dashed, fill=lightgray!50] (0,0) circle [radius=.5];
\draw (0:0) -- (90:.5); 
\draw (0:0) -- (210:.5); 
\draw (0:0) -- (-30:.5);
\draw[lightgray!50, double=black, double distance=0.4pt, ultra thick](180:.5) to [out=right, in=left](0,.3) to [out=right, in=left] (-0:.5);
}
\tikz[baseline=-.6ex]{
\draw[<->, xshift=1.5cm] (1,0)--(2,0);
}
\tikz[baseline=-.6ex]{
\draw [thin, dashed, fill=lightgray!50] (0,0) circle [radius=.5];
\draw (0:0) -- (90:.5); 
\draw (0:0) -- (210:.5); 
\draw (0:0) -- (-30:.5);
\draw[lightgray!50, double=black, double distance=0.4pt, ultra thick](180:.5) to [out=right, in=left](0,-.3) to [out=right, in=left] (-0:.5);
}\,, 
\tikz[baseline=-.6ex]{
\draw [thin, dashed, fill=lightgray!50] (0,0) circle [radius=.5];
\draw[lightgray!50, double=black, double distance=0.4pt, ultra thick](180:.5) to [out=right, in=left](0,.3) to [out=right, in=left] (-0:.5);
\draw[lightgray!50, double=black, double distance=0.4pt, ultra thick] (0:0) -- (90:.5); 
\draw (0:0) -- (210:.5); 
\draw (0:0) -- (-30:.5);
}
\tikz[baseline=-.6ex]{
\draw[<->, xshift=1.5cm] (1,0)--(2,0);
}
\tikz[baseline=-.6ex]{
\draw [thin, dashed, fill=lightgray!50] (0,0) circle [radius=.5];
\draw[lightgray!50, double=black, double distance=0.4pt, ultra thick](180:.5) to [out=right, in=left](0,-.3) to [out=right, in=left] (-0:.5);
\draw (0:0) -- (90:.5); 
\draw[lightgray!50, double=black, double distance=0.4pt, ultra thick] (0:0) -- (210:.5); 
\draw[lightgray!50, double=black, double distance=0.4pt, ultra thick] (0:0) -- (-30:.5);
}\, .
\end{description}
{\em Tangled trivalent graphs} in $D_\varepsilon$ are regular isotopy classes of tangled trivalent graph diagrams in $D_\varepsilon$. 
We denote $T_\varepsilon$ the set of tangled trivalent graphs in $D_\varepsilon$.
The diagram below is an example of a tangled trivalent graph diagram in $D_{({+},{-},{+},{+},{+})}$. 
\begin{center}
\begin{tikzpicture}
\draw [thin, fill=lightgray!50] (0,0) circle [radius=1];
\draw[-<-=.5] (0:1) -- (.7,0);
\draw[->-=.8] (.7,0) to[out=west, in=south east](144:1);
\draw[->-=.8] [lightgray!50, double=black, double distance=0.4pt, ultra thick]
(72:1) -- (216:1);
\draw[->, rotate=-90] (.3,.4) to[out=west, in=north] (.1,.1) to[out=south, in=west] (.3,-.2);
\draw[lightgray!50, double=black, double distance=0.4pt, ultra thick] 
(288:1) to[out=north west, in=west] (.4,.3);
\draw[-<-=.2] (.4,.3) to[out=east, in=north] (.7,0);
\draw[rotate=-90, lightgray!50, double=black, double distance=0.4pt, ultra thick] (.3,.4) to[out=east, in=north] (.5,.1) to[out=south, in=east] (.3,-.2);
\foreach \i in {0,1,...,5} 
\draw[fill=cyan] ($(0,0) !1! \i*72:(1,0)$) circle [radius=1pt];
\node at (0:1) [right]{$\scriptstyle{+}$}
node at (72:1) [above]{$\scriptstyle{-}$} 
node at (144:1) [above left]{$\scriptstyle{+}$} 
node at (216:1) [below left]{$\scriptstyle{+}$} 
node at (288:1) [below]{$\scriptstyle{+}$};
\end{tikzpicture}
\end{center}
\begin{DEF}[The $A_2$ bracket~\cite{Kuperberg96}]
We define a $\mathbb{Q}(q^{\frac{1}{6}})$-linear map $\langle\,\cdot\,\rangle_3\colon\mathbb{Q}(q^{\frac{1}{6}})T_\varepsilon\to W_\varepsilon$ by the following.
\begin{itemize}
\item 
$\Big\langle\,\tikz[baseline=-.6ex]{
\draw [thin, dashed, fill=lightgray!50] (0,0) circle [radius=.5];
\draw[->-=.8] (-45:.5) -- (135:.5);
\draw[->-=.8, lightgray!50, double=black, double distance=0.4pt, ultra thick] (-135:.5) -- (45:.5);
}\,\Big\rangle_{\! 3}
=
q^{\frac{1}{3}}\Big\langle\,\tikz[baseline=-.6ex]{
\draw[thin, dashed, fill=lightgray!50] (0,0) circle [radius=.5];
\draw[->-=.5] (-45:.5) to [out=north west, in=south](.2,0) to [out=north, in=south west](45:.5);
\draw[->-=.5] (-135:.5) to [out=north east, in=south](-.2,0) to [out=north, in=south east] (135:.5);
}\,\Big\rangle_{\! 3}
-
q^{-\frac{1}{6}}\Big\langle\,\tikz[baseline=-.6ex]{
\draw[thin, dashed, fill=lightgray!50] (0,0) circle [radius=.5];
\draw[->-=.5] (-45:.5) -- (0,-.2);
\draw[->-=.5] (-135:.5) -- (0,-.2);
\draw[-<-=.5] (0,-.2) -- (0,.2);
\draw[-<-=.5] (45:.5) -- (0,.2);
\draw[-<-=.5] (135:.5) -- (0,.2);
}\,\Big\rangle_{\! 3}
$,
\item 
$\Big\langle\,\tikz[baseline=-.6ex]{
\draw [thin, dashed, fill=lightgray!50] (0,0) circle [radius=.5];
\draw[->-=.8] (-135:.5) -- (45:.5);
\draw[->-=.8, lightgray!50, double=black, double distance=0.4pt, ultra thick] (-45:.5) -- (135:.5);
}\,\Big\rangle_{\! 3}
=
q^{-\frac{1}{3}}\Big\langle\,\tikz[baseline=-.6ex]{
\draw[thin, dashed, fill=lightgray!50] (0,0) circle [radius=.5];
\draw[->-=.5] (-45:.5) to [out=north west, in=south](.2,0) to [out=north, in=south west](45:.5);
\draw[->-=.5] (-135:.5) to [out=north east, in=south](-.2,0) to [out=north, in=south east] (135:.5);
}\,\Big\rangle_{\! 3}
-
q^{\frac{1}{6}}\Big\langle\,\tikz[baseline=-.6ex]{
\draw[thin, dashed, fill=lightgray!50] (0,0) circle [radius=.5];
\draw[->-=.5] (-45:.5) -- (0,-.2);
\draw[->-=.5] (-135:.5) -- (0,-.2);
\draw[-<-=.5] (0,-.2) -- (0,.2);
\draw[-<-=.5] (45:.5) -- (0,.2);
\draw[-<-=.5] (135:.5) -- (0,.2);
}\,\Big\rangle_{\! 3}
$,
\item 
$\Big\langle\,\tikz[baseline=-.6ex, rotate=90]{
\draw[thin, dashed, fill=lightgray!50] (0,0) circle [radius=.5];
\draw[->-=.6] (-45:.5) -- (-45:.3);
\draw[->-=.6] (-135:.5) -- (-135:.3);
\draw[-<-=.6] (45:.5) -- (45:.3);
\draw[->-=.6] (135:.5) -- (135:.3);
\draw[->-=.5] (45:.3) -- (135:.3);
\draw[-<-=.5] (-45:.3) -- (-135:.3);
\draw[->-=.5] (45:.3) -- (-45:.3);
\draw[-<-=.5] (135:.3) -- (-135:.3);
}\,\Big\rangle_{\! 3}
=
\Big\langle\,\tikz[baseline=-.6ex, rotate=90]{
\draw[thin, dashed, fill=lightgray!50] (0,0) circle [radius=.5];
\draw[->-=.5] (-45:.5) to [out=north west, in=south](.2,0) to [out=north, in=south west](45:.5);
\draw[-<-=.5] (-135:.5) to [out=north east, in=south](-.2,0) to [out=north, in=south east] (135:.5);
}\,\Big\rangle_{\! 3}
+
\Big\langle\,\tikz[rotate=90, baseline=-.6ex, rotate=90]{
\draw[thin, dashed, fill=lightgray!50] (0,0) circle [radius=.5];
\draw[-<-=.5] (-45:.5) to [out=north west, in=south](.2,0) to [out=north, in=south west](45:.5);
\draw[->-=.5] (-135:.5) to [out=north east, in=south](-.2,0) to [out=north, in=south east] (135:.5);
}\,\Big\rangle_{\! 3}
$,
\item 
$\Big\langle\,\tikz[baseline=-.6ex, rotate=-90]{
\draw[thin, dashed, fill=lightgray!50] (0,0) circle [radius=.5];
\draw[->-=.5] (0,-.5) -- (0,-.25);
\draw[->-=.5] (0,.25) -- (0,.5);
\draw[-<-=.5] (0,-.25) to [out=60, in=120, relative](0,.25);
\draw[-<-=.5] (0,-.25) to [out=-60, in=-120, relative](0,.25);
}\,\Big\rangle_{\! 3}
=
\left[2\right]\Big\langle\,\tikz[baseline=-.6ex, rotate=-90]{
\draw[thin, dashed, fill=lightgray!50] (0,0) circle [radius=.5];
\draw[->-=.5] (0,-.5) -- (0,.5);
}\,\Big\rangle_{\! 3}
$,
\item 
$
\Big\langle G\sqcup
\,\tikz[baseline=-.6ex]{
\draw[thin, dashed, fill=lightgray!50] (0,0) circle [radius=.5];
\draw[->-=.5] (0,0) circle [radius=.3];
}\,\Big\rangle_{\! 3}
=
\left[3\right] \langle G\rangle_{3}
$.
\end{itemize}
We can confirm that this map is well-defined as with the Kauffman bracket.
\end{DEF}
We next consider $A_2$ web space $W_{n^{+}+n^{-}}=W_{({+},{+},\dots,{+},{-},{-},\dots,{-})}$ whose first $n$ marked points are decorated with ${+}$ and next $n$ marked points are decorated with ${-}$.
We define $A_2$ clasps 
$\tikz[baseline=-.6ex]{
\draw[->-=.8] (-.5,0) -- (.5,0);
\draw[fill=white] (-.1,-.3) rectangle (.1,.3);
\node at (.1,0) [above right]{${\scriptstyle n}$};
}\,\in W_{n^{+}+n^{-}}
$ 
inductively by the following. 
\begin{DEF}(The $A_2$ clasps)
\begin{align}
\tikz[baseline=-.6ex]{
\draw[->-=.8] (-.5,0) -- (.5,0);
\draw[fill=white] (-.1,-.3) rectangle (.1,.3);
\node at (.1,0) [above right]{${\scriptstyle 1}$};
}\,
&= 
\,\tikz[baseline=-.6ex]{
\draw[->-=.5] (-.5,0) -- (.5,0) node at (0,0) [above]{${\scriptstyle 1}$};
}\,\in W_{1^{+}+1^{-}}\notag\\
\tikz[baseline=-.6ex]{
\draw[->-=.8] (-.5,0) -- (.5,0);
\draw[fill=white] (-.1,-.3) rectangle (.1,.3);
\node at (.1,0) [above right]{${\scriptstyle n}$};
}\,
&=
\bigg\langle\,\tikz[baseline=-.6ex]{
\draw[->-=.8] (-.5,.1) -- (.5,.1);
\draw[->-=.5] (-.5,-.4) -- (.5,-.4);
\draw[fill=white] (-.1,-.2) rectangle (.1,.4);
\node at (0,0) [above right]{${\scriptstyle n-1}$};
\node at (.2,-.4) [above right]{${\scriptstyle 1}$};
}\,\bigg\rangle_{\! 3}
-\frac{\left[n-1\right]}{\left[n\right]}
\bigg\langle\,\tikz[baseline=-.6ex]{
\draw[->-=.5] (-.9,.1) -- (-.4,.1);
\draw[->-=.5] (-.3,.2) -- (.3,.2);
\draw[->-=.5] (.4,.1) -- (.9,.1);
\draw[->-=.5] (-.9,-.4) 
to[out=east, in=west] (-.3,-.4) 
to[out=east, in=south] (-.1,-.2);
\draw[-<-=.8] (-.1,-.2) 
to[out=north, in=east] (-.3,0);
\draw[-<-=.5] (.9,-.4) 
to[out=west, in=east] (.3,-.4)
to[out=west, in=south] (.1,-.2);
\draw[->-=.8] (.1,-.2)
to[out=north, in=west] (.3,0);
\draw[fill=white] (-.4,-.2) rectangle (-.3,.4);
\draw[fill=white] (.3,-.2) rectangle (.4,.4);
\draw[-<-=.5] (-.1,-.2) -- (.1,-.2);
\node at (-.3,0) [above left]{${\scriptscriptstyle n-1}$};
\node at (.3,0) [above right]{${\scriptscriptstyle n-1}$};
\node at (0,.1) [above]{${\scriptscriptstyle n-2}$};
\node at (-.6,-.3) {${\scriptscriptstyle 1}$};
\node at (.6,-.3) {${\scriptscriptstyle 1}$};
\node at (0,-.2) [below]{${\scriptscriptstyle 1}$};
\node at (-.3,0) [right]{${\scriptscriptstyle 1}$};
\node at (.3,0) [left]{${\scriptscriptstyle 1}$};
}\,\bigg\rangle_{\! 3} \in W_{n^{+}+n^{-}}
\end{align}
\end{DEF}
$A_2$ clasps have the following properties.
\begin{LEM}[Properties of $A_2$ clasps]
For any positive integer $n$,
\begin{itemize} 
\item $\Big\langle\,\tikz[baseline=-.6ex]{
\draw[->-=.5] (-.6,0) -- (.6,0);
\draw[fill=white] (-.4,-.3) rectangle (-.2,.3);
\draw[fill=white] (.2,-.3) rectangle (.4,.3);
\node at (-.3,0) [above right]{${\scriptstyle n}$};
\node at (.3,0) [above right]{${\scriptstyle n}$};
}\,\Big\rangle_{\! 3}
=
\,\tikz[baseline=-.6ex]{
\draw[->-=.8] (-.5,0) -- (.5,0);
\draw[fill=white] (-.1,-.3) rectangle (.1,.3);
\node at (0,0) [above right] {${\scriptstyle n}$};
}\,
$,
\item $\Big\langle\,\tikz[baseline=-.6ex]{
\draw[->-=.5] (-.5,0) -- (-.1,0);
\draw[->-=.5] (0,.2) -- (.5,.2);
\draw[->-=.5] (0,-.2) -- (.5,-.2);
\draw[->-=.5] (0,.1) 
to[out=east, in=north] (.3,0);
\draw[->-=.5] (0,-.1)
to[out=east, in=south] (.3,0);
\draw[-<-=.5] (.3,0) -- (.5,0);
\draw[fill=white] (-.2,-.3) rectangle (0,.3);
\node at (.4,0) [right] {${\scriptstyle 1}$};
\node at (.4,.2) [right] {${\scriptstyle n-k-2}$};
\node at (.4,-.2) [right] {${\scriptstyle k}$};
}\,\Big\rangle_{\! 3}=0$\quad ($k=0,1,\dots,n-2$).
\end{itemize}
\end{LEM}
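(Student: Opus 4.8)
The plan is to prove both assertions simultaneously by induction on $n$, establishing the annihilation property (the second bullet) first and then deducing idempotency (the first bullet) from it. Write $P_n$ for the $A_2$ $n$-clasp. The base case $n=1$ is immediate: the annihilation statement is vacuous since the index range $k=0,\dots,n-2$ is empty, and $P_1\circ P_1=P_1$ holds because the $1$-clasp is a single strand. For the inductive step I assume that both properties hold for $P_{n-1}$ and expand $P_n$ through its defining recursion
\[
P_n=\big\langle P_{n-1}\otimes 1\big\rangle_3-\frac{[n-1]}{[n]}\,\langle C\rangle_3,
\]
where $C$ is the correction web built from two copies of $P_{n-1}$ joined along $n-2$ strands, carrying the elementary trivalent ``turn-back'' on the last strand and the extra strand. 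I will use the turn-back whose orientation matches the one appearing in $C$, so that it is compatible with the recursion.

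For the annihilation property I attach the merging trivalent vertex to the output side of $P_n$ at position $k$ (merging strands $k+1$ and $k+2$) and split into two cases according to whether this turn-back meets the last strand. If $k\le n-3$, the two merged strands lie among the first $n-1$, so in \emph{both} terms of the recursion the turn-back is applied to the output of a copy of $P_{n-1}$ at one of its internal positions; the inductive annihilation hypothesis makes each term vanish, hence the whole expression is zero. The remaining case $k=n-2$, where the turn-back merges strand $n-1$ with the extra strand $n$, is the crux of the argument.

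In that case I substitute the recursion and reduce each resulting web to standard form using the $A_2$ spider relations built into the $A_2$ bracket: the bigon reduction (a bigon evaluates to $[2]$), the square relation $\langle\,\text{square}\,\rangle_3=\langle\,\cdot\,\rangle_3+\langle\,\cdot\,\rangle_3$, and the closed-loop value $[3]$. The leading term $\big\langle P_{n-1}\otimes 1\big\rangle_3$ with the turn-back attached collapses, after applying the inductive hypotheses on $P_{n-1}$, to a multiple of $P_{n-1}$ with a single appended strand; the correction term $\langle C\rangle_3$ produces, after removing the bigons and squares created by the new trivalent vertex and absorbing the adjacent $P_{n-1}$'s, another multiple of the same web. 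The coefficient $\frac{[n-1]}{[n]}$ is exactly what is needed to cancel these two contributions, and verifying the cancellation reduces to a quantum-integer identity that I expect to follow from Lemma~\ref{qinteger}. This cancellation is the main obstacle: one must track the webs carefully through the spider reductions and confirm that every absorbed clasp collapses in the manner predicted by the inductive hypothesis, so that the surviving scalars match.

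Finally, idempotency for $P_n$ follows quickly from the annihilation property together with the inductive hypothesis. Writing one of the two factors through the recursion, the correction term $C$ ends in the turn-back, which is annihilated when composed with the second copy of $P_n$ by the property just proved; thus only the leading term $\big\langle P_{n-1}\otimes 1\big\rangle_3$ survives. Expanding the remaining factor once more and using idempotency of $P_{n-1}$ (inductive hypothesis) together with the fact that $P_{n-1}\otimes 1$ acts as the identity on the image of $P_n$, one obtains $P_n\circ P_n=P_n$, completing the induction.
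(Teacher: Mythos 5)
The paper itself offers no proof of this lemma: it is quoted as a standard property of $A_2$ clasps (going back to Kuperberg and Ohtsuki--Yamada), so there is nothing internal to compare your argument against. Judged on its own terms, your proposal follows the standard inductive route and its skeleton is sound: the vacuous base case, the case split between $k\le n-3$ (where both terms of the Wenzl-type recursion die by the inductive annihilation hypothesis) and $k=n-2$, the cancellation of the two surviving terms against the coefficient $[n-1]/[n]$, and the derivation of idempotency from annihilation together with the absorption identity $P_n\circ(P_{n-1}\otimes 1)=P_n$ (which, as you implicitly use, is a one-line consequence of the recursion and idempotency of $P_{n-1}$).

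The genuine gap is in the crux case $k=n-2$. You claim the correction term reduces "after removing the bigons and squares created by the new trivalent vertex," but no literal bigon or square is present in that diagram: the split vertex of the middle $E$-piece and the newly attached merge vertex are separated by the right-hand copy of $P_{n-1}$, so the $A_2$ spider relations cannot be applied directly. What is actually required is an auxiliary partial-trace (bubble) identity, namely that capping the last strand of $P_{n-1}\otimes 1$ with the split vertex below and the merge vertex above yields $\frac{[n]}{[n-1]}$ times $P_{n-2}$ tensored with the single oppositely oriented edge; only after this identity is established does the correction term become $\frac{[n]}{[n-1]}$ times the leading term, and the coefficient $[n-1]/[n]$ cancels it exactly, with no further quantum-integer identity needed at that stage. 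That auxiliary identity must be carried along as a third statement in your simultaneous induction (or proved separately by expanding $P_{n-1}$ once more), and it is there --- not in the final cancellation --- that an identity such as $[2][n-1]-[n-2]=[n]$ from Lemma~\ref{qinteger} enters. You correctly sense that this is "the main obstacle," but the proposal neither states the needed lemma nor supplies the computation, and the mechanism you do describe (direct bigon/square reduction) would fail as written.
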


We can easily calculate the following.
\begin{LEM}\label{A2clasplem}\ 
For $k=0,1,\dots,n$,
\begin{itemize}
\item $\Big\langle\,\tikz[baseline=-.6ex]{
\draw[->-=.5] (-.5,0) -- (-.1,0);
\draw[->-=.8, white, double=black, double distance=0.4pt, ultra thick] 
(0,-.2) to[out=right, in=left] (.5,.2);
\draw[->-=.8, white, double=black, double distance=0.4pt, ultra thick] 
(0,.2) to[out=right, in=left] (.5,-.2);
\draw[fill=white] (-.2,-.3) rectangle (0,.3);
\node at (-.1,.0) [above left]{${\scriptstyle n}$};
\node at (.4,.2) [right] {${\scriptstyle k}$};
\node at (.4,-.2) [right] {${\scriptstyle n-k}$};
}\,\Big\rangle_{\! 3}
=q^{\frac{k(n-k)}{3}}
\Big\langle\,\tikz[baseline=-.6ex]{
\draw[->-=.2] (-.5,0) -- (.5,0);
\draw[fill=white] (-.1,-.3) rectangle (.1,.3);
\node at (.1,0) [above right]{${\scriptstyle n}$};
}\,\Big\rangle_{\! 3}$,  
$\Big\langle\,\tikz[baseline=-.6ex]{
\draw[->-=.5] (-.5,0) -- (-.1,0);
\draw[->-=.8, white, double=black, double distance=0.4pt, ultra thick] 
(0,.2) to[out=right, in=left] (.5,-.2);
\draw[->-=.8, white, double=black, double distance=0.4pt, ultra thick] 
(0,-.2) to[out=right, in=left] (.5,.2);
\draw[fill=white] (-.2,-.3) rectangle (0,.3);
\node at (-.1,.0) [above left]{${\scriptstyle n}$};
\node at (.4,.2) [right] {${\scriptstyle k}$};
\node at (.4,-.2) [right] {${\scriptstyle n-k}$};
}\,\Big\rangle_{\! 3}
=q^{-\frac{k(n-k)}{3}}
\Big\langle\,\tikz[baseline=-.6ex]{
\draw[->-=.2] (-.5,0) -- (.5,0);
\draw[fill=white] (-.1,-.3) rectangle (.1,.3);
\node at (.1,0) [above right]{${\scriptstyle n}$};
}\,\Big\rangle_{\! 3},$
\item 
$\Big\langle\,\tikz[baseline=-.6ex]{
\draw[->-=.2] (-.4,.2) -- (.4,.2);
\draw[->-=.8, rounded corners=.1cm] (-.2,-.2) rectangle (.2,.1);
\draw[fill=white] (-.05,.0) rectangle (.05,.3);
\node at (.0,.3)[above]{$\scriptstyle{n}$};
\node at (.4,.2)[right]{$\scriptstyle{n-k}$};
\node at (.0,-.2)[below]{$\scriptstyle{k}$};
}\,\Big\rangle_{\! 3}
=\frac{\left[n+1\right]\left[n+2\right]}{\left[n-k+1\right]\left[n-k+2\right]}
\Big\langle\,\tikz[baseline=-.6ex]{
\draw[->-=.2] (-.4,.0) -- (.4,.0);
\draw[fill=white] (-.05,-.2) rectangle (.05,.2);
\node at (.0,.2)[above]{$\scriptstyle{n-k}$};
}\,\Big\rangle_{\! 3},$
\item 
$\Big\langle\,\tikz[baseline=-.6ex]{
\draw[->-=.5] (-.4,.1) -- (.0,.1);
\draw[white, double=black, double distance=0.4pt, ultra thick] 
(.2,-.2) to[out=west, in=west]
(.2,.1) to[out=east, in=west]
(.5,.1);
\draw[->-=.8, white, double=black, double distance=0.4pt, ultra thick] 
(.0,.1) to[out=east, in=west]
(.2,.1) to[out=east, in=east]
(.2,-.2);
\draw[fill=white] (-.1,-.1) rectangle (.0,.3);
\node at (-.1,-.1)[below]{$\scriptstyle{n}$};
}\,\Big\rangle_{\! 3}
=q^{\frac{n^2+3n}{3}}
\Big\langle\,\tikz[baseline=-.6ex]{
\draw[->-=.2] (-.4,.0) -- (.4,.0);
\draw[fill=white] (-.05,-.2) rectangle (.05,.2);
\node at (.0,.2)[above]{$\scriptstyle{n}$};
}\,\Big\rangle_{\! 3}$, 
$\Big\langle\,\tikz[baseline=-.6ex]{
\draw (-.4,.1) -- (.0,.1);
\draw[white, double=black, double distance=0.4pt, ultra thick] 
(.0,.1) to[out=east, in=west]
(.2,.1) to[out=east, in=east]
(.2,-.2);
\draw[->-=.8, white, double=black, double distance=0.4pt, ultra thick] 
(.2,-.2) to[out=west, in=west]
(.2,.1) to[out=east, in=west]
(.5,.1);
\draw[fill=white] (-.1,-.1) rectangle (.0,.3);
\node at (-.1,-.1)[below]{$\scriptstyle{n}$};
}\,\Big\rangle_{\! 3}
=q^{-\frac{n^2+3n}{3}}
\Big\langle\,\tikz[baseline=-.6ex]{
\draw[->-=.2] (-.4,.0) -- (.4,.0);
\draw[fill=white] (-.05,-.2) rectangle (.05,.2);
\node at (.0,.2)[above]{$\scriptstyle{n}$};
}\,\Big\rangle_{\! 3}.$
\end{itemize}
\end{LEM}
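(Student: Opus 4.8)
The plan is to deduce all three identities from the two defining properties of the $A_2$ clasp recorded in the preceding lemma --- invariance under stacking (idempotency) and annihilation of turnbacks --- together with the $A_2$ bracket skein relations. The common mechanism is that whenever a crossing is resolved next to a clasp, every resolution containing the $H$-shaped (fusion) term produces two strands that leave the clasp and immediately merge at a trivalent vertex, which is exactly the turnback configuration annihilated by the second property of that lemma. Only the ``parallel'' (identity) resolutions survive, each carrying a fixed power of $q$ read off from the skein relations.

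For the first (twist) identity I would regroup the $n$ strands leaving the clasp into a block of $k$ and a block of $n-k$ and resolve the $k(n-k)$ elementary crossings one at a time by the positive crossing relation $q^{\frac13}(\text{parallel})-q^{-\frac16}(H)$. Expanding the product, every term containing at least one $H$ yields a strand turning back into the clasp and vanishes by annihilation (after inserting a second copy of the clasp by idempotency), so only the all-parallel term remains, reassembling the clasp with coefficient $(q^{\frac13})^{k(n-k)}=q^{k(n-k)/3}$. The opposite crossing is identical with the parallel coefficient $q^{\frac13}$ replaced by $q^{-\frac13}$, giving $q^{-k(n-k)/3}$.

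For the second (partial-closure) identity I would induct on $k$. Closing off a single strand of the clasp on $m$ strands produces an element of $W_{(m-1)^{+}+(m-1)^{-}}$ that is still annihilated by every turnback (each such turnback pushes through the closure to a turnback on the clasp on $m$ strands), hence equals $\lambda_m$ times the clasp on $m-1$ strands for a scalar $\lambda_m$; substituting the recursive definition of the clasp into this closure and discarding the terms killed by annihilation evaluates $\lambda_m=[m+2]/[m]$. Iterating the single-strand closure $k$ times telescopes to
\[
\prod_{j=0}^{k-1}\frac{[n-j+2]}{[n-j]}
=\frac{[n+1][n+2]}{[n-k+1][n-k+2]},
\]
which is the stated coefficient.

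For the third (framing) identity I would write the positively curled clasp on $n$ strands as $c_n$ times the clasp and compute $c_n$ by induction on $n$. The base case $c_1=q^{4/3}$ is a direct resolution of one self-crossing. For the step I peel off a single strand: it performs its own curl (contributing $c_1=q^{4/3}$) and wraps once around the remaining block of $n-1$ strands, crossing it twice, which by the first identity contributes $q^{(n-1)/3}$ each time, while the remaining strands carry $c_{n-1}$. Hence $c_n=c_{n-1}\,q^{2(n-1)/3}\,q^{4/3}=c_{n-1}\,q^{2(n+1)/3}$, which solves to $c_n=q^{(n^2+3n)/3}$; the negative curl gives the inverse power. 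The main obstacle throughout is the bookkeeping that certifies every $H$ or turnback resolution genuinely meets a clasp and is annihilated --- in particular inserting clasps by idempotency in the right places --- and, for the second identity, extracting the scalar $\lambda_m$ cleanly from the recursive definition.
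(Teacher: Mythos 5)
Your proposal is correct, and since the paper leaves this lemma as an unproved ``easy calculation,'' your argument is exactly the standard one it has in mind: kill every $H$-resolution against the clasp's turnback-annihilation property so only the parallel smoothings survive ($q^{\pm k(n-k)/3}$), extract $\lambda_m=[3]-\tfrac{[m-1]}{[m]}[2]=[m+2]/[m]$ from the recursive definition and telescope, and decompose the cabled curl via $n^2=(n-1)^2+2(n-1)+1$. All three coefficients check out, including the base case $c_1=q^{1/3}[3]-q^{-1/6}[2]=q^{4/3}$.
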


Let $n^{+}$ be an $n$-tuple of ${+}$ and $n^{-}$ an $n$-tuple of ${-}$.
For a $k$-tuple of signs $\varepsilon=(\varepsilon_1,\varepsilon_2,\dots,\varepsilon_k)$, 
We define $N^{\varepsilon}=(n_1^{\varepsilon_1},n_2^{\varepsilon_2},\dots,n_k^{\varepsilon_k})$. 
We use a notation $n_1^{\varepsilon_1}+n_2^{\varepsilon_2}+\dots+n_k^{\varepsilon_k}$ to represent $(n_1+n_2+\dots+n_k)$-tuple of signs. 
It is defined in the following manner: the first $n_1$ signs are $\varepsilon_1$, the next $n_2$ signs are $\varepsilon_2$, $\ldots$ , and the last $n_k$ signs are $\varepsilon_k$.
\begin{DEF}[A clasped $A_2$ web space]
We define a subspace $W_{N^{\varepsilon}}$ of $W_{n_1^{\varepsilon_1}+n_2^{\varepsilon_2}+\dots+n_k^{\varepsilon_k}}$ called the {\em clasped $A_2$ web space} as follows:
\[
W_{N^{\varepsilon}}
=\Bigg\{\,
\bigg\langle\tikz[baseline=-.6ex]{
\draw [thin, fill=lightgray!50] (0,0) circle [radius=1];
\draw (0,0) -- (-30:1);
\draw (0,0) -- (0:1);
\draw (0,0) -- (30:1);
\draw (0,0) -- (60:1);
\coordinate (a) at ($(0,0)!.8!-30:(-12:1)$);
\coordinate (b) at ($(0,0)!.9!-30:(12:1)$);
\coordinate (a0) at ($(0,0)!.8!0:(-12:1)$);
\coordinate (b0) at ($(0,0)!.9!0:(12:1)$);
\coordinate (a1) at ($(0,0)!.8!30:(-12:1)$);
\coordinate (b1) at ($(0,0)!.9!30:(12:1)$);
\coordinate (a2) at ($(0,0)!.8!60:(-12:1)$);
\coordinate (b2) at ($(0,0)!.9!60:(12:1)$);
\draw[fill=white,rotate=-30] (a) rectangle (b);
\draw[fill=white,rotate=0] (a0) rectangle (b0);
\draw[fill=white,rotate=30] (a1) rectangle (b1);
\draw[fill=white,rotate=60] (a2) rectangle (b2);
\foreach \i in {0,1,...,11} \draw[fill=cyan] ($(0,0) !1! \i*30:(1,0)$) circle [radius=1pt];
\draw[fill=cyan] (-30:1) circle [radius=1pt] node [right]{${\scriptstyle n_k^{\varepsilon_k}}$};
\draw[fill=cyan] (0:1) circle [radius=1pt] node [right]{${\scriptstyle n_1^{\varepsilon_1}}$};
\draw[fill=cyan] (30:1) circle [radius=1pt] node [right]{${\scriptstyle n_2^{\varepsilon_2}}$};
\draw[fill=cyan] (60:1) circle [radius=1pt] node [right]{${\scriptstyle n_3^{\varepsilon_3}}$};
\node[rotate=10] at (100:.8){$\cdots$} node[rotate=25] at (-65:.8){$\cdots$} node[rotate=-60] at (-150:.8){$\cdots$};
\node[circle, draw, fill=lightgray!30] (0,0) {\quad$w$\quad\quad};
}\bigg\rangle_{\! 2}
\;{\Bigg\vert}\; w\in W_{n_1^{\varepsilon_1}+n_2^{\varepsilon_2}+\dots+n_k^{\varepsilon_k}}\,\Bigg\}
\]
\end{DEF}

\section{Colored skein relations}
In this section,
we introduce colored skein relations for clasped $A_1$ and $A_2$ web spaces. 
Although the formula for a clasped $A_1$ web space is already known by Yamada~\cite{Yamada89,Yamada92} and Hajij~\cite{Hajij14.2}, 
we prove this formula by another method using the theory of integer partitions. 
We use the same method to prove colored skein relations for clasped $A_2$ web spaces.

\subsection{Colored Kauffman bracket skein relations}
We review the colored Kauffman bracket skein relations and give another proof by using theory of integer partitions. 
Let us consider clasped $A_1$ web spaces.
\begin{PROP}\label{coloredA1} Let $n$ be non-negative integers.  
\begin{enumerate}
\item 
$\displaystyle
\Bigg\langle\,\tikz[baseline=-.6ex]{
\draw
(-.4,-.4) -- +(-.2,0)
(.4,.4) -- +(.2,0)
(-.4,.4) -- +(-.2,0)
(.4,-.4) -- +(.2,0);
\draw[white, double=black, double distance=0.4pt, ultra thick] 
(-.4,-.4) to[out=east, in=west] (.4,.4);
\draw[white, double=black, double distance=0.4pt, ultra thick] 
(-.4,.4) to[out=east, in=west] (.4,-.4);
\draw[fill=white] (.4,-.6) rectangle +(.1,.4);
\draw[fill=white] (-.4,-.6) rectangle +(-.1,.4);
\draw[fill=white] (.4,.6) rectangle +(.1,-.4);
\draw[fill=white] (-.4,.6) rectangle +(-.1,-.4);
\node at (.4,-.6)[right]{$\scriptstyle{n}$};
\node at (-.4,-.6)[left]{$\scriptstyle{n}$};
\node at (.4,.6)[right]{$\scriptstyle{n}$};
\node at (-.4,.6)[left]{$\scriptstyle{n}$};
}\,\Bigg\rangle_{\! 2}
=\sum_{k=0}^{n} q^{\frac{-n^2+2k^2}{4}}{n \choose k}_q
\Bigg\langle\,\tikz[baseline=-.6ex]{
\draw 
(-.4,.4) -- +(-.2,0)
(.4,-.4) -- +(.2,0)
(-.4,-.4) -- +(-.2,0)
(.4,.4) -- +(.2,0)
(-.4,.5) -- (.4,.5)
(-.4,-.5) -- (.4,-.5)
(-.4,.3) to[out=east, in=east] (-.4,-.3)
(.4,.3) to[out=west, in=west] (.4,-.3);
\draw[fill=white] (.4,-.6) rectangle +(.1,.4);
\draw[fill=white] (-.4,-.6) rectangle +(-.1,.4);
\draw[fill=white] (.4,.6) rectangle +(.1,-.4);
\draw[fill=white] (-.4,.6) rectangle +(-.1,-.4);
\node at (.4,-.6)[right]{$\scriptstyle{n}$};
\node at (-.4,-.6)[left]{$\scriptstyle{n}$};
\node at (.4,.6)[right]{$\scriptstyle{n}$};
\node at (-.4,.6)[left]{$\scriptstyle{n}$};
\node at (0,.5)[above]{$\scriptstyle{k}$};
\node at (0,-.5)[below]{$\scriptstyle{k}$};
\node at (-.2,0)[left]{$\scriptstyle{n-k}$};
\node at (.2,0)[right]{$\scriptstyle{n-k}$};
}\,\Bigg\rangle_{\! 2}
$
\quad(the {\em colored Kauffman bracket skein relation} by Hajij~\cite{Hajij14.2})
\item
$
\Big\langle\,\tikz[baseline=-.6ex]{
\draw (0,0) circle [radius=.3];
\draw[fill=white] (.1,-.05) rectangle (.5,.05);
\node at (.3,0)[above right]{$\scriptstyle{n}$};
}\,\Big\rangle_{\! 2}
=(-1)^{n}\left[n+1\right] \emptyset
$
\end{enumerate}
\end{PROP}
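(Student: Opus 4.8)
The plan is to handle the two parts separately, dispatching (2) in one line and reserving the real work for the skein relation (1).

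For part (2) I would not argue from scratch. The second identity of Lemma~\ref{A1clasplem} already computes the partial closure of the $n$-clasp over $k$ of its strands as $(-1)^{k}\frac{[n+1]}{[n-k+1]}$ times the $(n-k)$-clasp. Specializing to $k=n$ closes \emph{all} $n$ strands, which is exactly the $n$-colored unknot on the left-hand side of (2), while the right-hand side becomes $(-1)^{n}\frac{[n+1]}{[1]}$ times the $0$-clasp. Since $[1]=1$ and the $0$-clasp is the empty diagram $\emptyset$, this equals $(-1)^{n}[n+1]\,\emptyset$. Thus (2) is an immediate corollary.

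For part (1) I would first realize the $n$-colored crossing on the left as an honest braid: the positive block transposition $\beta_{n}$ on $2n$ strands in which the first bundle of $n$ strands passes over the second, sandwiched between the four $n$-clasps. Expanding each of the $n^{2}$ elementary crossings by the defining Kauffman relation replaces $\beta_{n}$ by a sum over smoothing patterns, each weighted by a monomial $q^{(\#\,\mathrm{positive}-\#\,\mathrm{negative})/4}$, every term being a planar Temperley--Lieb tangle capped by the clasps. The Jones--Wenzl property kills every pattern producing a turnback that lands on a clasp, and idempotence of the clasp lets me absorb the boxes. What should survive is precisely the family of $(n+1)$ standard diagrams on the right-hand side, indexed by the number $k$ of through-strands.

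Then comes the combinatorial core. For fixed $k$ I would show that the surviving smoothing patterns contributing to the $k$-th diagram are in weight-preserving bijection with Young diagrams $\lambda\subseteq (n-k)^{k}$ (equivalently, monotone lattice paths in a $k\times(n-k)$ box), the $q$-exponent of the term indexed by $\lambda$ being $|\lambda|$ plus a quantity depending only on $n$ and $k$. Summing over all such $\lambda$ invokes the integer-partition lemma $\sum_{\lambda\subseteq(n-k)^{k}}q^{|\lambda|}=\binom{n}{k}_{q}$, the Gaussian binomial realized as the generating function for partitions in a box, which assembles the coefficient $q^{(-n^{2}+2k^{2})/4}\binom{n}{k}_{q}$; the transformation formulas of the preliminaries and Lemma~\ref{qinteger} let me move freely between the $\{\cdot\}$-, bracket- and $q$-Pochhammer normalizations while verifying the exponents. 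The hard part will be the bookkeeping behind these two paragraphs: confirming that no closed loops survive the reduction (so that no stray $-[2]$ factors or sign changes appear), that each surviving pattern reduces to its standard diagram with coefficient exactly $1$, and that the assignment of smoothing patterns to box partitions is genuinely a weight-preserving bijection. As a fallback I would keep an induction on $n$ through the clasp recursion, matching the Pascal rule $\binom{n}{k}_{q}=q^{k}\binom{n-1}{k}_{q}+\binom{n-1}{k-1}_{q}$; but the direct expansion above is cleaner and is the route that actually uses integer partitions.
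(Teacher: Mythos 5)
Your treatment of part (2) is fine: specializing the second bullet of Lemma~\ref{A1clasplem} to $k=n$ does give $(-1)^{n}[n+1]/[1]$ times the empty diagram, and the paper itself offers no separate argument for (2). The problem is with your main route to part (1). The step that fails is the hoped-for claim that \emph{no closed loops survive the reduction}. Already for $n=2$, write the block transposition as $\sigma_2\sigma_1\sigma_3\sigma_2$ and take the Kauffman state in which both $\sigma_2$-crossings are smoothed horizontally and $\sigma_1,\sigma_3$ vertically: the resulting planar diagram has a cap joining bottom points $2$ and $3$, a cup joining top points $2$ and $3$, two vertical through-strands, \emph{and one closed loop}. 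None of the four turnback conditions at the clasps is triggered (each cap/cup connects two \emph{different} clasps), so this state survives and contributes a factor $-[2]$ on top of its $q^{\pm 1/4}$ monomials. Consequently the surviving states do not all carry monomial weights, and a weight-preserving bijection with partitions $\lambda\subseteq(n-k)^{k}$ weighted by $q^{|\lambda|}$ is impossible as stated; you would have to expand the $(-[2])^{\#\text{loops}}$ factors and regroup, at which point the ``combinatorial core'' is no longer the partition lemma applied to a clean state set. This is a missing idea, not bookkeeping.

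What actually works --- and what the paper does --- is essentially your fallback. One peels off a single strand from each $n$-clasp, resolves only the one resulting crossing of $1$-colored strands, and immediately absorbs everything back into clasps using Lemma~\ref{A1clasplem}; this produces a two-term recursion among intermediate webs $\langle\sigma(k,l;n)\rangle_2$ (with $k$ through-strands and $l$ turnbacks already created), in which no closed loop can ever form because the diagram is simplified after every single resolution. Iterating gives a binary tree whose leaves at depth $n$ are indexed by lattice paths from $(0,0)$ to $(k,l)$ with $k+l=n$; the observation that shifting a path past a unit box multiplies its weight by $q$ identifies the sum of path weights with $q^{(\ast)}\sum_{\lambda\in\mathcal{P}(k,l)}q^{|\lambda|}$, and Lemma~\ref{partition} then yields the Gaussian binomial ${n\choose k}_q$. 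So the partition lemma enters through the recursion's lattice paths, not through a global Kauffman state sum. If you want to keep your two-paragraph structure, replace the all-at-once expansion by this one-strand-at-a-time recursion and your argument becomes the paper's proof.
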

We prove the colored Kauffman bracket skein relation by using a well-known identity from the theory of integer partitions. 
Let $\lambda=(\lambda_1, \lambda_2, \dots, \lambda_s)$ be a partition of an integer into $s$ parts, 
that is, 
$\lambda$ is the $s$-tuples of positive integers such that $\lambda_i\geq\lambda_{i+1}$ for $i=1,2,\dots,s-1$, and $\left|\lambda\right|$ denotes $\lambda_1+\lambda_2+\dots+\lambda_s$. 
For given non-negative integers $k$ and $l$, 
$\mathcal{P}(k,l)$ denotes the set of partitions $\lambda$ such that $0\leq s\leq k$ and $0\leq \lambda_1\leq l$.
\begin{LEM}[Andrews and Eriksson~\cite{AndrewsEriksson04} etc.]\label{partition}
\[
{k+l \choose k}_q=\sum_{\lambda\in\mathcal{P}(k,l)}q^{\left|\lambda\right|}.
\]
\end{LEM}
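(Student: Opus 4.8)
The plan is to prove the identity by induction, establishing that both sides obey the same two-variable recurrence and agree on the boundary. I would write $G_{k,l}(q) = \sum_{\lambda \in \mathcal{P}(k,l)} q^{\left|\lambda\right|}$ for the left-hand generating function, thought of as the sum over partitions fitting inside a $k \times l$ box. The base cases are immediate: when $k = 0$ the only admissible partition is the empty one, so $G_{0,l} = 1 = {l \choose 0}_q$, and when $l = 0$ the constraint $\lambda_1 \leq 0$ again forces the empty partition, giving $G_{k,0} = 1 = {k \choose k}_q$.

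For the inductive step I would split $\mathcal{P}(k,l)$ according to the size of the largest part. Those $\lambda$ with $\lambda_1 \leq l-1$ are exactly the elements of $\mathcal{P}(k,l-1)$ and contribute $G_{k,l-1}$. For those with $\lambda_1 = l$, deleting the first part produces $\mu = (\lambda_2, \dots, \lambda_s) \in \mathcal{P}(k-1,l)$ with $\left|\lambda\right| = l + \left|\mu\right|$, so this class contributes $q^l G_{k-1,l}$. This yields the recurrence $G_{k,l} = G_{k,l-1} + q^l G_{k-1,l}$.

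On the right-hand side, a short computation from the definition ${n \choose k}_q = \frac{(q;q)_n}{(q;q)_k (q;q)_{n-k}}$ produces the $q$-Pascal recurrence ${n \choose k}_q = {n-1 \choose k}_q + q^{n-k} {n-1 \choose k-1}_q$; the mechanism is that factoring out the common ratio leaves the bracket $(1 - q^{n-k}) + q^{n-k}(1 - q^k) = 1 - q^n$, which restores the numerator. Setting $n = k+l$ rewrites this as ${k+l \choose k}_q = {k+l-1 \choose k}_q + q^l {k+l-1 \choose k-1}_q$, and reading ${k+l-1 \choose k}_q$ as the $k \times (l-1)$ box and ${k+l-1 \choose k-1}_q$ as the $(k-1) \times l$ box makes this identical to the recurrence for $G_{k,l}$. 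Induction on $k + l$ then finishes the proof.

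The calculation is routine, and the only place that genuinely demands care is the matching of conventions. There are two standard forms of $q$-Pascal differing in where the power of $q$ sits, and only the one weighting the ${n-1 \choose k-1}_q$ term by $q^{n-k}$ lines up with the combinatorial operation of removing the largest part, which strips off weight exactly $l = n-k$. Choosing the other form, or passing to conjugate partitions so that one counts parts instead of part sizes, would force an awkward compensating power of $q$; settling these conventions at the outset --- and noting explicitly that $\mathcal{P}(k,l)$ contains the empty partition so the boundary values read off correctly --- is what I expect to be the main thing to get right.
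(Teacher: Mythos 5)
Your proposal is correct and complete. Note that the paper itself does not prove Lemma~\ref{partition} at all: it is stated with a citation to Andrews--Eriksson and used as a black box, so there is no ``paper proof'' to compare against. Your argument is the standard one: the decomposition of $\mathcal{P}(k,l)$ according to whether $\lambda_1\leq l-1$ or $\lambda_1=l$ gives $G_{k,l}=G_{k,l-1}+q^{l}G_{k-1,l}$, the algebraic manipulation $(1-q^{n-k})+q^{n-k}(1-q^{k})=1-q^{n}$ correctly yields the $q$-Pascal recurrence ${n\choose k}_q={n-1\choose k}_q+q^{n-k}{n-1\choose k-1}_q$, and with $n=k+l$ the two recurrences coincide, so induction on $k+l$ together with the boundary values $G_{0,l}=G_{k,0}=1$ finishes the proof. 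Your cautionary remark about the two forms of $q$-Pascal is well taken --- the other form ${n\choose k}_q={n-1\choose k-1}_q+q^{k}{n-1\choose k}_q$ would instead match the decomposition by number of parts (equivalently, by conjugating partitions), which also works but requires relabelling --- and your version is the one that lines up with stripping off the largest part of weight $l=n-k$. No gaps.
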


\begin{proof}[Proof of Proposition~\ref{coloredA1}~$(1)$]
\begin{align}
\Bigg\langle\,\tikz[baseline=-.6ex]{
\draw
(-.4,-.4) -- +(-.2,0)
(.4,.4) -- +(.2,0)
(-.4,.4) -- +(-.2,0)
(.4,-.4) -- +(.2,0);
\draw[white, double=black, double distance=0.4pt, ultra thick] 
(-.4,-.4) to[out=east, in=west] (.4,.4);
\draw[white, double=black, double distance=0.4pt, ultra thick] 
(-.4,.4) to[out=east, in=west] (.4,-.4);
\draw[fill=white] (.4,-.6) rectangle +(.1,.4);
\draw[fill=white] (-.4,-.6) rectangle +(-.1,.4);
\draw[fill=white] (.4,.6) rectangle +(.1,-.4);
\draw[fill=white] (-.4,.6) rectangle +(-.1,-.4);
\node at (.4,-.6)[right]{$\scriptstyle{n}$};
\node at (-.4,-.6)[left]{$\scriptstyle{n}$};
\node at (.4,.6)[right]{$\scriptstyle{n}$};
\node at (-.4,.6)[left]{$\scriptstyle{n}$};
}\,\Bigg\rangle_{\! 2}
&=
\Bigg\langle\,\tikz[baseline=-.6ex]{
\draw 
(-.6,-.4) -- +(-.2,0)
(.6,.4) -- +(.2,0)
(-.6,.4) -- +(-.2,0)
(.6,-.4) -- +(.2,0);
\draw[triple={[line width=1.6pt, white] in [line width=2.4pt, black] in [line width=5.6pt, white]}]
(-.6,-.4) to[out=east, in=west] (.6,.4);
\draw[triple={[line width=1.6pt, white] in [line width=2.4pt, black] in [line width=5.6pt, white]}]
(-.6,.4) to[out=east, in=west] (.6,-.4);
\draw[fill=white] (.6,-.6) rectangle +(.1,.4);
\draw[fill=white] (-.6,-.6) rectangle +(-.1,.4);
\draw[fill=white] (.6,.6) rectangle +(.1,-.4);
\draw[fill=white] (-.6,.6) rectangle +(-.1,-.4);
\node at (.6,-.6)[right]{$\scriptstyle{n}$};
\node at (-.6,-.6)[left]{$\scriptstyle{n}$};
\node at (.6,.6)[right]{$\scriptstyle{n}$};
\node at (-.6,.6)[left]{$\scriptstyle{n}$};
\node at (-.7,-.6)[right]{$\scriptscriptstyle{n-1}$};
\node at (-.7,.6)[right]{$\scriptscriptstyle{n-1}$};
\node at (-.7,-.2)[right]{$\scriptscriptstyle{1}$};
\node at (-.7,.2)[right]{$\scriptscriptstyle{1}$};
}\,\Bigg\rangle_{\! 2}\notag\\ 
&=q^{\frac{1}{4}}\Bigg\langle\,\tikz[baseline=-.6ex]{
\draw
(-.4,-.4) -- +(-.2,0)
(.4,.4) -- +(.2,0)
(-.4,.4) -- +(-.2,0)
(.4,-.4) -- +(.2,0);
\draw[white, double=black, double distance=0.4pt, ultra thick] 
(-.4,-.5) to[out=east, in=west] (.4,.3);
\draw[white, double=black, double distance=0.4pt, ultra thick] 
(-.4,-.3) to[out=east, in=west] (.4,-.5);
\draw[white, double=black, double distance=0.4pt, ultra thick] 
(-.4,.3) to[out=east, in=west] (.4,.5);
\draw[white, double=black, double distance=0.4pt, ultra thick] 
(-.4,.5) to[out=east, in=west] (.4,-.3);
\draw[fill=white] (.4,-.6) rectangle +(.1,.4);
\draw[fill=white] (-.4,-.6) rectangle +(-.1,.4);
\draw[fill=white] (.4,.6) rectangle +(.1,-.4);
\draw[fill=white] (-.4,.6) rectangle +(-.1,-.4);
\node at (.4,-.6)[right]{$\scriptstyle{n}$};
\node at (-.4,-.6)[left]{$\scriptstyle{n}$};
\node at (.4,.6)[right]{$\scriptstyle{n}$};
\node at (-.4,.6)[left]{$\scriptstyle{n}$};
\node at (-.5,-.6)[right]{$\scriptscriptstyle{n-1}$};
\node at (-.5,.6)[right]{$\scriptscriptstyle{n-1}$};
\node at (-.5,-.2)[right]{$\scriptscriptstyle{1}$};
\node at (-.5,.2)[right]{$\scriptscriptstyle{1}$};
}\,\Bigg\rangle_{\! 2}
+q^{-\frac{1}{4}}\Bigg\langle\,\tikz[baseline=-.6ex]{
\draw
(-.4,-.4) -- +(-.2,0)
(.4,.4) -- +(.2,0)
(-.4,.4) -- +(-.2,0)
(.4,-.4) -- +(.2,0);
\draw[white, double=black, double distance=0.4pt, ultra thick] 
(-.4,-.5) to[out=east, in=west] (.4,.3);
\draw[white, double=black, double distance=0.4pt, ultra thick] 
(-.4,-.3) 
to[out=east, in=south] (-.3,0)
to[out=north, in=east](-.4,.3);
\draw[white, double=black, double distance=0.4pt, ultra thick] 
(.4,-.5)
to[out=west, in=south] (.2,0)
to[out=north, in=west] (.4,.5);
\draw[white, double=black, double distance=0.4pt, ultra thick] 
(-.4,.5) to[out=east, in=west] (.4,-.3);
\draw[fill=white] (.4,-.6) rectangle +(.1,.4);
\draw[fill=white] (-.4,-.6) rectangle +(-.1,.4);
\draw[fill=white] (.4,.6) rectangle +(.1,-.4);
\draw[fill=white] (-.4,.6) rectangle +(-.1,-.4);
\node at (.4,-.6)[right]{$\scriptstyle{n}$};
\node at (-.4,-.6)[left]{$\scriptstyle{n}$};
\node at (.4,.6)[right]{$\scriptstyle{n}$};
\node at (-.4,.6)[left]{$\scriptstyle{n}$};
\node at (-.5,-.6)[right]{$\scriptscriptstyle{n-1}$};
\node at (-.5,.6)[right]{$\scriptscriptstyle{n-1}$};
\node at (-.3,0)[left]{$\scriptscriptstyle{1}$};
\node at (.2,0)[right]{$\scriptscriptstyle{1}$};
}\,\Bigg\rangle_{\! 2}\label{coloredA1eq1}\\
&=q^{\frac{2n-1}{4}}\Bigg\langle\,\tikz[baseline=-.6ex]{
\draw
(-.4,-.4) -- +(-.2,0)
(.4,.4) -- +(.2,0)
(-.4,.4) -- +(-.2,0)
(.4,-.4) -- +(.2,0);
\draw[white, double=black, double distance=0.4pt, ultra thick] 
(-.4,-.3) to[out=east, in=west] (.4,.3);
\draw[white, double=black, double distance=0.4pt, ultra thick] 
(-.4,-.5) to[out=east, in=west] (.4,-.5);
\draw[white, double=black, double distance=0.4pt, ultra thick] 
(-.4,.5) to[out=east, in=west] (.4,.5);
\draw[white, double=black, double distance=0.4pt, ultra thick] 
(-.4,.3) to[out=east, in=west] (.4,-.3);
\draw[fill=white] (.4,-.6) rectangle +(.1,.4);
\draw[fill=white] (-.4,-.6) rectangle +(-.1,.4);
\draw[fill=white] (.4,.6) rectangle +(.1,-.4);
\draw[fill=white] (-.4,.6) rectangle +(-.1,-.4);
\node at (.4,-.6)[right]{$\scriptstyle{n}$};
\node at (-.4,-.6)[left]{$\scriptstyle{n}$};
\node at (.4,.6)[right]{$\scriptstyle{n}$};
\node at (-.4,.6)[left]{$\scriptstyle{n}$};
\node at (0,-.6){$\scriptscriptstyle{1}$};
\node at (0,.6){$\scriptscriptstyle{1}$};
\node at (-.4,.3)[below]{$\scriptscriptstyle{n-1}$};
\node at (.4,.3)[below]{$\scriptscriptstyle{n-1}$};
}\,\Bigg\rangle_{\! 2}
+q^{-\frac{2n-1}{4}}\Bigg\langle\,\tikz[baseline=-.6ex]{
\draw
(-.4,-.4) -- +(-.2,0)
(.4,.4) -- +(.2,0)
(-.4,.4) -- +(-.2,0)
(.4,-.4) -- +(.2,0);
\draw[white, double=black, double distance=0.4pt, ultra thick] 
(-.4,-.5) to[out=east, in=west] (.4,.5);
\draw[white, double=black, double distance=0.4pt, ultra thick] 
(-.4,-.3) 
to[out=east, in=south] (-.3,0)
to[out=north, in=east](-.4,.3);
\draw[white, double=black, double distance=0.4pt, ultra thick] 
(.4,-.3)
to[out=west, in=south] (.3,0)
to[out=north, in=west] (.4,.3);
\draw[white, double=black, double distance=0.4pt, ultra thick] 
(-.4,.5) to[out=east, in=west] (.4,-.5);
\draw[fill=white] (.4,-.6) rectangle +(.1,.4);
\draw[fill=white] (-.4,-.6) rectangle +(-.1,.4);
\draw[fill=white] (.4,.6) rectangle +(.1,-.4);
\draw[fill=white] (-.4,.6) rectangle +(-.1,-.4);
\node at (.4,-.6)[right]{$\scriptstyle{n}$};
\node at (-.4,-.6)[left]{$\scriptstyle{n}$};
\node at (.4,.6)[right]{$\scriptstyle{n}$};
\node at (-.4,.6)[left]{$\scriptstyle{n}$};
\node at (-.5,-.6)[right]{$\scriptscriptstyle{n-1}$};
\node at (-.5,.6)[right]{$\scriptscriptstyle{n-1}$};
\node at (-.3,0)[left]{$\scriptscriptstyle{1}$};
\node at (.3,0)[right]{$\scriptscriptstyle{1}$};
}\,\Bigg\rangle_{\! 2}\label{coloredA1eq2}
\end{align}
We used the Kauffman bracket skein relation in $(\ref{coloredA1eq1})$ and  Lemma~\ref{A1clasplem} in $(\ref{coloredA1eq2})$. 
We define a clasped $A_1$ web $\langle\sigma(k,l;n)\rangle_2$ as follows:
\[
\langle\sigma(k,l;n)\rangle_2
=\Bigg\langle\,\tikz[baseline=-.6ex]{
\draw
(-.4,-.4) -- +(-.2,0)
(.4,.4) -- +(.2,0)
(-.4,.4) -- +(-.2,0)
(.4,-.4) -- +(.2,0);
\draw[white, double=black, double distance=0.4pt, ultra thick] 
(-.4,-.4) to[out=east, in=west] (.4,.4);
\draw[white, double=black, double distance=0.4pt, ultra thick] 
(-.4,-.3) 
to[out=east, in=south] (-.3,0)
to[out=north, in=east](-.4,.3);
\draw[white, double=black, double distance=0.4pt, ultra thick] 
(.4,-.3)
to[out=west, in=south] (.3,0)
to[out=north, in=west] (.4,.3);
\draw[white, double=black, double distance=0.4pt, ultra thick] 
(-.4,.4) to[out=east, in=west] (.4,-.4);
\draw (-.4,.5) -- (.4,.5);
\draw (-.4,-.5) -- (.4,-.5);
\draw[fill=white] (.4,-.6) rectangle +(.1,.4);
\draw[fill=white] (-.4,-.6) rectangle +(-.1,.4);
\draw[fill=white] (.4,.6) rectangle +(.1,-.4);
\draw[fill=white] (-.4,.6) rectangle +(-.1,-.4);
\node at (.4,-.6)[right]{$\scriptstyle{n}$};
\node at (-.4,-.6)[left]{$\scriptstyle{n}$};
\node at (.4,.6)[right]{$\scriptstyle{n}$};
\node at (-.4,.6)[left]{$\scriptstyle{n}$};
\node at (0,-.6){$\scriptscriptstyle{k}$};
\node at (0,.6){$\scriptscriptstyle{k}$};
\node at (-.3,0)[left]{$\scriptscriptstyle{l}$};
\node at (.3,0)[right]{$\scriptscriptstyle{l}$};
}\,\Bigg\rangle_{\! 2}.
\]
By the above calculation,
\begin{equation}\label{A1halfres}
 \langle\sigma(k,l;n)\rangle_2=q^{\frac{2(n-k-l)-1}{4}}\langle\sigma(k+1,l;n)\rangle_2
+q^{-\frac{2(n-k-l)-1}{4}}\langle\sigma(k,l+1;n)\rangle_2.
\end{equation}
We make $\langle\sigma(k,l;n)\rangle_2$ correspond to a lattice point $(k,l)$ in $\mathbb{Z}\times\mathbb{Z}$ for each non-negative integers $k$ and $l$ such that $0\leq k+l\leq n$. 
We decorate vectors $(1,0)$ and $(0,1)$ from $(k,l)$ with coefficients of $\langle\sigma(k+1,l;n)\rangle_2$ and $\langle\sigma(k,l+1;n)\rangle_2$ of (\ref{A1halfres}), respectively. 
The left-hand side of the colored Kauffman bracket skein relation is $\langle\sigma(0,0;n)\rangle_2$ 
and clasped $A_1$ webs appear in the right-hand side are $\langle\sigma(k,l;n)\rangle_2$ such that $k+l=n$.
Fix a pair of $k$ and $l$ such that $k+l=n$,
then the coefficient of $\langle\sigma(k,l;n)\rangle_2$ of the right-hand side of the colored Kauffman bracket skein relation is determined as follows:
\begin{enumerate}
\item Take a lattice path from $(0,0)$ to $(k,l)$ constructed from vectors $(1,0)$ and $(0,1)$. 
\item Product all decorations of vectors appearing in the path.
\item Sum up the product of decorations for all paths from $(0,0)$ to $(k,l)$.
\end{enumerate}
A Young diagram is obtained by cutting out the upper side of a path from $(0,0)$ to $(k,l)$ from a rectangle $\{(x,y)\mid 0\leq x\leq k, 0\leq y\leq l \}$.
The Young diagram corresponds to a partition of an integer (see two examples in Figure~\ref{Young}). 
\begin{figure}
\centering
\begin{tikzpicture}
\foreach \i in {0,...,4}
{
\draw (\i*.4,.0) -- (\i*.4,2-.4*\i);
\draw (.0,\i*.4) -- (2-.4*\i,\i*.4);
}
\draw[->, very thick, cyan]
(.0,.0) -- (.0,.4) -- (.4,.4) -- (.4,.8) -- (.8,.8) -- (.8,1.2);
\fill (.8,1.2) circle (1.2pt);
\fill (.0,.0) circle (1.2pt);
\node at (.8,1.2) [right]{$\scriptstyle{(k,l)}$};
\node at (.0,.0) [left]{$\scriptstyle{(0,0)}$};
\node at (.8,-.2) [below]{path from $(0,0)$ to $(k,l)$};
\end{tikzpicture}
\begin{tikzpicture}
\fill[magenta!50] 
(.0,.4) -- (.0,1.2) -- (.8,1.2) -- (.8,.8) -- (.4,.8) -- (.4,.4) -- cycle;
\foreach \i in {0,...,4}
{
\draw (\i*.4,.0) -- (\i*.4,2-.4*\i);
\draw (.0,\i*.4) -- (2-.4*\i,\i*.4);
}
\draw[->,very thick,cyan]
(.0,.0) -- (.0,.4) -- (.4,.4) -- (.4,.8) -- (.8,.8) -- (.8,1.2);
\fill (.8,1.2) circle (1.2pt);
\fill (.0,.0) circle (1.2pt);
\node at (.8,1.2) [right]{$\scriptstyle{(k,l)}$};
\node at (.0,.0) [left]{$\scriptstyle{(0,0)}$};
\node at (.8,-.2) [below]{Young diagram $\lambda_1$};
\end{tikzpicture}

This Young diagram $\lambda_1$ corresponds to a partition $(2,1)$.

\begin{tikzpicture}
\foreach \i in {0,...,4}
{
\draw (\i*.4,.0) -- (\i*.4,2-.4*\i);
\draw (.0,\i*.4) -- (2-.4*\i,\i*.4);
}
\draw[->,very thick,cyan]
(.0,.0) -- (.4,.0) -- (.4,1.2) -- (.8,1.2);
\fill (.8,1.2) circle (1.2pt);
\fill (.0,.0) circle (1.2pt);
\node at (.8,1.2) [right]{$\scriptstyle{(k,l)}$};
\node at (.0,.0) [left]{$\scriptstyle{(0,0)}$};
\node at (.8,-.2) [below]{path from $(0,0)$ to $(k,l)$};
\end{tikzpicture}
\begin{tikzpicture}
\fill[magenta!50] 
(.0,.0) -- (.4,.0) -- (.4,1.2) -- (.0,1.2) -- cycle;
\foreach \i in {0,...,4}
{
\draw (\i*.4,.0) -- (\i*.4,2-.4*\i);
\draw (.0,\i*.4) -- (2-.4*\i,\i*.4);
}
\draw[->, very thick,cyan]
(.0,.0) -- (.4,.0) -- (.4,1.2) -- (.8,1.2);
\fill (.8,1.2) circle (1.2pt);
\fill (.0,.0) circle (1.2pt);
\node at (.8,1.2) [right]{$\scriptstyle{(k,l)}$};
\node at (.0,.0) [left]{$\scriptstyle{(0,0)}$};
\node at (.8,-.2) [below]{Young diagram $\lambda_2$};
\end{tikzpicture}

This Young diagram $\lambda_2$ corresponds to a partition $(1,1,1)$.

\caption{two examples for $n=5$, $(k,l)=(2,3)$}
\label{Young}
\end{figure}
If a path from $(0,0)$ to $(k,l)$ with $k+l=n$ shifts downward by a box, 
then the product of decorations is multiplied by $q$.
In fact, 
a product of decorations of $(k,l)\to (k,l+1)\to(k+1,l+1)$ is $q^{-\frac{1}{2}}$ 
and $(k,l)\to (k+1,l)\to(k+1,l+1)$ is $q^{\frac{1}{2}}$ (see Figure~\ref{A1halfshift}).
\begin{figure}
\centering
\begin{tikzpicture}
\draw[->-=.5] (0,0) -- (2,0);
\node at (1,0)[below]{$\scriptstyle{q^{\frac{2(n-k-l)-1}{4}}}$};
\draw[->-=.5] (2,0) -- (2,2);
\node at (2,1)[right]{$\scriptstyle{q^{-\frac{2(n-(k+1)-l)-1}{4}}}$};
\draw[->-=.5] (0,0) -- (0,2);
\node at (0,1)[left]{$\scriptstyle{q^{-\frac{2(n-k-l)-1}{4}}}$};
\draw[->-=.5] (0,2) -- (2,2);
\node at (1,2)[above]{$\scriptstyle{q^{\frac{2(n-k-(l+1))-l}{4}}}$};
\draw[->, very thick, magenta](.5,1.5) -- (1.5,.5);
\node at (1,1)[right]{$\times q$};
\node at (0,0)[below left]{$\scriptstyle{(k,l)}$};
\node at (2,0)[below right]{$\scriptstyle{(k+1,l)}$};
\node at (0,2)[above left]{$\scriptstyle{(k,l+1)}$};
\node at (2,2)[above right]{$\scriptstyle{(k,l)}$};
\fill 
(0,0) circle (1.2pt)
(2,0) circle (1.2pt)
(2,2) circle (1.2pt)
(0,2) circle (1.2pt);
\end{tikzpicture}
\caption{shift of a path}
\label{A1halfshift}
\end{figure}
The product of decorations of the top path is $\prod_{i=0}^{l-1}q^{-\frac{2(n-i)-1}{4}}\prod_{j=0}^{k-1}q^{\frac{2(n-l-j)-1}{4}}$.
Therefore,
the coefficient of $\langle \sigma(k,l;n)\rangle_2$ with $k+l=n$ is
\[
\prod_{i=0}^{l-1}q^{-\frac{2(n-i)-1}{4}}\prod_{j=0}^{k-1}q^{\frac{2(n-l-j)-1}{4}}
\sum_{\lambda\in\mathcal{P}(k,l)}q^{\left|\lambda\right|}=q^{\frac{-n^2+2k^2}{4}}{n\choose k}_{q}
\]
by Lemma~\ref{partition}.
\end{proof}

The colored Kauffman bracket skein relation is the expansion of a half twist of two strands colored by $n$.
We also give the expansion of a full twist of two strands colored by $n$.
\begin{PROP}[The full twist formula~{\cite[Lemma~$4.1$]{Masbaum03}}]\label{A1full}
\[
\Bigg\langle\,\tikz[baseline=-.6ex]{
\draw 
(-.5,.4) -- +(-.2,0)
(.5,-.4) -- +(.2,0)
(-.5,-.4) -- +(-.2,0)
(.5,.4) -- +(.2,0);
\draw[white, double=black, double distance=0.4pt, ultra thick] 
(-.5,-.4) to[out=east, in=west] (.0,.4);
\draw[white, double=black, double distance=0.4pt, ultra thick] 
(-.5,.4) to[out=east, in=west] (.0,-.4);
\draw[white, double=black, double distance=0.4pt, ultra thick] 
(0,-.4) to[out=east, in=west] (.5,.4);
\draw[white, double=black, double distance=0.4pt, ultra thick] 
(0,.4) to[out=east, in=west] (.5,-.4);
\draw[fill=white] (.5,-.6) rectangle +(.1,.4);
\draw[fill=white] (-.5,-.6) rectangle +(-.1,.4);
\draw[fill=white] (.5,.6) rectangle +(.1,-.4);
\draw[fill=white] (-.5,.6) rectangle +(-.1,-.4);
\node at (.5,-.6)[right]{$\scriptstyle{n}$};
\node at (-.5,-.6)[left]{$\scriptstyle{n}$};
\node at (.5,.6)[right]{$\scriptstyle{n}$};
\node at (-.5,.6)[left]{$\scriptstyle{n}$};
}\,\Bigg\rangle_{\! 2}
=\sum_{k=0}^{n} (-1)^{n-k}q^{\frac{2k^2-n^2+k-n}{2}}\frac{(q)_n}{(q)_{k}}{n \choose k}_q
\Bigg\langle\,\tikz[baseline=-.6ex]{
\draw 
(-.4,.4) -- +(-.2,0)
(.4,-.4) -- +(.2,0)
(-.4,-.4) -- +(-.2,0)
(.4,.4) -- +(.2,0)
(-.4,.5) -- (.4,.5)
(-.4,-.5) -- (.4,-.5)
(-.4,.3) to[out=east, in=east] (-.4,-.3)
(.4,.3) to[out=west, in=west] (.4,-.3);
\draw[fill=white] (.4,-.6) rectangle +(.1,.4);
\draw[fill=white] (-.4,-.6) rectangle +(-.1,.4);
\draw[fill=white] (.4,.6) rectangle +(.1,-.4);
\draw[fill=white] (-.4,.6) rectangle +(-.1,-.4);
\node at (.4,-.6)[right]{$\scriptstyle{n}$};
\node at (-.4,-.6)[left]{$\scriptstyle{n}$};
\node at (.4,.6)[right]{$\scriptstyle{n}$};
\node at (-.4,.6)[left]{$\scriptstyle{n}$};
\node at (0,.5)[above]{$\scriptstyle{k}$};
\node at (0,-.5)[below]{$\scriptstyle{k}$};
\node at (-.2,0)[left]{$\scriptstyle{n-k}$};
\node at (.2,0)[right]{$\scriptstyle{n-k}$};
}\,\Bigg\rangle_{\! 2}
\]
\end{PROP}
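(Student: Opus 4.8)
The plan is to bootstrap from the half-twist formula, Proposition~\ref{coloredA1}~(1), which is already available. The full twist of two $n$-colored strands contains two crossings of $n$-cables, so applying the colored Kauffman bracket skein relation to each of them rewrites the left-hand side as a double sum
\[
\sum_{k=0}^{n}\sum_{l=0}^{n} q^{\frac{-n^2+2k^2}{4}+\frac{-n^2+2l^2}{4}}{n\choose k}_q{n\choose l}_q\,\langle D_{k,l}\rangle_2,
\]
where each $D_{k,l}$ is the crossing-free web obtained by replacing the two crossings with the turnback webs of Proposition~\ref{coloredA1}~(1), namely $\sigma(k,n-k;n)$ and $\sigma(l,n-l;n)$. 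Thus the whole computation reduces to evaluating the diagrams $D_{k,l}$ in the clasped web space $W_{n+n}$.

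First I would evaluate $D_{k,l}$. Replacing both crossings by turnbacks merges the interior $n$-clasps (idempotency of the clasp) and creates, between the clasps, a closed region bounded by nested caps and cups, i.e.\ a bubble whose two sides carry the colors coming from the two resolutions. I would reduce this bubble with Lemma~\ref{A1clasplem}: the cap-absorption identity supplies the sign $(-1)^{n-k}$ together with a ratio of quantum integers, and every closed loop contributes the scalar $-[2]$. I expect the bubble to collapse to a single turnback web, whose surviving label is determined by $k$ and $l$, times a coefficient that is a ratio of quantum factorials; rewriting that ratio through the formula $\{m\}!=(-1)^{m}q^{-m(m+1)/4}(q;q)_m$ recorded before Lemma~\ref{qinteger} should produce precisely the factor $(q)_n/(q)_k$ of the statement. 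Grouping the terms by the surviving label then leaves, for each output, an inner sum over the remaining index.

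The final step is to match powers of $q$ and the residual inner sum with the claimed coefficient $(-1)^{n-k}q^{\frac{2k^2-n^2+k-n}{2}}\frac{(q)_n}{(q)_k}{n\choose k}_q$. Here I would collect the exponents contributed by the two half-twist expansions, by the quantum-integer ratios, and by the crossing-slide factors of Lemma~\ref{A1clasplem}, convert the $q$-binomials into $(q;q)$-form, and evaluate the remaining inner $q$-sum with a $q$-binomial summation identity of $q$-Vandermonde type, in the same spirit as Lemma~\ref{partition}. The main obstacle is the bubble evaluation in the middle step: one must keep careful track of which colors meet along the nested caps and cups of $D_{k,l}$ and verify that the accompanying quantum-integer ratios telescope to exactly $(q)_n/(q)_k$. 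A more self-contained alternative, parallel to the proof of Proposition~\ref{coloredA1}, would be to run the strand-peeling recursion through both crossings simultaneously and read off the coefficient as a weighted count of lattice paths; but the twofold application of the half-twist formula is the most economical route, since it reuses Proposition~\ref{coloredA1} directly.
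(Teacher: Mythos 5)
Your primary route has a genuine gap at the bubble-evaluation step. When you stack the two half-twist expansions, the interior $n$-clasps on each cable merge, and the resulting diagram $D_{k,l}$ contains two interior $n$-clasps joined by \emph{two} parallel bands, of sizes $k$ and $l$ (the bottom band of the upper resolution and the top band of the lower one), attached to opposite faces of each clasp. This is exactly the two-banded bubble configuration of Theorem~\ref{A2bubble}'s $A_1$ analogue, Theorem~\ref{A1bubble} (with $m=n$), and \emph{not} the partial-trace configuration of Lemma~\ref{A1clasplem}, whose loop identity only removes a band closing around a single clasp. The bubble expansion produces a \emph{sum} over $t$ running from $\max\{k,l\}$ to $\min\{k+l,n\}$; it collapses to a single turnback web only in degenerate cases (one of $k,l$ equal to $0$ or $n$). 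So your expression for the full twist is really a triple sum over $k$, $l$ and $t$, and the step you have not supplied --- resumming that triple sum against Hajij's bubble coefficients to recover the single-sum coefficient $(-1)^{n-k}q^{\frac{2k^2-n^2+k-n}{2}}\frac{(q)_n}{(q)_k}{n\choose k}_q$ --- is where the real work lies. Note also that Theorem~\ref{A1bubble} only appears in Section~4, after this proposition, so this route inverts the paper's logical order (though the $A_1$ bubble formula is external, due to Hajij, so it is a dependency rather than a circularity).

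The ``self-contained alternative'' you mention only in passing is in fact what the paper does, and it avoids the bubble entirely. One peels a single strand off one cable and pushes it through the whole full twist; the Kauffman bracket relation and Lemma~\ref{A1clasplem} then give the telescoping identity (\ref{A1n1full}), whose sum over $i$ yields a \emph{two-term} expansion of the $(1,n)$ full twist. Iterating this over the strands of the second cable produces the two-term recursion (\ref{A1fullres}) for $\langle\sigma^2(k,l;n)\rangle_2$, and the coefficient of each $\sigma^2(k,l;n)$ with $k+l=n$ is then read off by the same lattice-path/partition count (Lemma~\ref{partition}, Figure~\ref{A1fullshift}) already used for Proposition~\ref{coloredA1}. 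I would promote that recursion to your main argument; as written, your preferred route is not the ``most economical'' one but the one with the unresolved combinatorial core.
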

\begin{proof}
We prove this formula by the same argument as the proof of Proposition~\ref{coloredA1}.
First,
the following equation is obtained by using the Kauffman bracket skein relation and Lemma~\ref{A1clasplem}.
\begin{align*}
\Bigg\langle\,\tikz[baseline=-.6ex]{
\draw 
(-.5,.4) -- +(-.2,0)
(.5,-.4) -- +(.2,0)
(-.5,-.4) -- +(-.2,0)
(.5,.4) -- +(.2,0);
\draw[white, double=black, double distance=0.4pt, ultra thick] 
(-.5,-.4) to[out=east, in=west] (.0,.4);
\draw[white, double=black, double distance=0.4pt, ultra thick] 
(-.5,.4) to[out=east, in=west] (.0,-.4);
\draw[white, double=black, double distance=0.4pt, ultra thick] 
(0,-.4) to[out=east, in=west] (.5,.4);
\draw[white, double=black, double distance=0.4pt, ultra thick] 
(0,.4) to[out=east, in=west] (.5,-.4);
\draw[fill=white] (.5,-.6) rectangle +(.1,.4);
\draw[fill=white] (-.5,-.6) rectangle +(-.1,.4);
\draw[fill=white] (.5,.6) rectangle +(.1,-.4);
\draw[fill=white] (-.5,.6) rectangle +(-.1,-.4);
\node at (.5,-.6)[right]{$\scriptstyle{1}$};
\node at (-.5,-.6)[left]{$\scriptstyle{1}$};
\node at (.5,.6)[right]{$\scriptstyle{n}$};
\node at (-.5,.6)[left]{$\scriptstyle{n}$};
}\,\Bigg\rangle_{\! 2}
&=
\Bigg\langle\,\tikz[baseline=-.6ex]{
\draw 
(-.7,.4) -- +(-.2,0)
(.7,-.4) -- +(.2,0)
(-.7,-.4) -- +(-.2,0)
(.7,.4) -- +(.2,0);
\draw[white, double=black, double distance=0.4pt, ultra thick] 
(-.7,-.4) to[out=east, in=west] (.0,.4);
\draw[triple={[line width=1.6pt, white] in [line width=2.4pt, black] in [line width=5.6pt, white]}] 
(-.7,.4) to[out=east, in=west] (.0,-.4);
\draw[triple={[line width=1.6pt, white] in [line width=2.4pt, black] in [line width=5.6pt, white]}] 
(0,-.4) to[out=east, in=west] (.7,.4);
\draw[white, double=black, double distance=0.4pt, ultra thick] 
(0,.4) to[out=east, in=west] (.7,-.4);
\draw[fill=white] (.7,-.6) rectangle +(.1,.4);
\draw[fill=white] (-.7,-.6) rectangle +(-.1,.4);
\draw[fill=white] (.7,.6) rectangle +(.1,-.4);
\draw[fill=white] (-.7,.6) rectangle +(-.1,-.4);
\node at (.7,-.6)[right]{$\scriptstyle{1}$};
\node at (-.7,-.6)[left]{$\scriptstyle{1}$};
\node at (.7,.6)[right]{$\scriptstyle{n}$};
\node at (-.7,.6)[left]{$\scriptstyle{n}$};
\node at (.0,-.4)[above]{$\scriptstyle{1}$};
\node at (.0,-.4)[below]{$\scriptstyle{n-1}$};
}\,\Bigg\rangle_{\! 2}\\
&=q^{\frac{1}{4}}
\Bigg\langle\,\tikz[baseline=-.6ex]{
\draw 
(-.5,.4) -- +(-.2,0)
(.5,-.4) -- +(.2,0)
(-.5,-.4) -- +(-.2,0)
(.5,.4) -- +(.2,0);
\draw[white, double=black, double distance=0.4pt, ultra thick] 
(-.5,-.4) to[out=east, in=west] (.0,-.1);
\draw[white, double=black, double distance=0.4pt, ultra thick] 
(-.5,.3) to[out=east, in=west] (.0,-.4);
\draw[white, double=black, double distance=0.4pt, ultra thick] 
(0,-.4) to[out=east, in=west] (.5,.3);
\draw[white, double=black, double distance=0.4pt, ultra thick] 
(0,-.1) to[out=east, in=west] (.5,.5);
\draw[white, double=black, double distance=0.4pt, ultra thick] 
(-.5,.5) to[out=east, in=west] (.0,.1);
\draw[white, double=black, double distance=0.4pt, ultra thick] 
(0,.1) to[out=east, in=west] (.5,-.4);
\draw[fill=white] (.5,-.6) rectangle +(.1,.4);
\draw[fill=white] (-.5,-.6) rectangle +(-.1,.4);
\draw[fill=white] (.5,.6) rectangle +(.1,-.4);
\draw[fill=white] (-.5,.6) rectangle +(-.1,-.4);
\node at (.5,-.6)[right]{$\scriptstyle{1}$};
\node at (-.5,-.6)[left]{$\scriptstyle{1}$};
\node at (.5,.6)[right]{$\scriptstyle{n}$};
\node at (-.5,.6)[left]{$\scriptstyle{n}$};
\node at (.0,-.4)[below]{$\scriptstyle{n-1}$};
}\,\Bigg\rangle_{\! 2}
+q^{-\frac{1}{4}}
\Bigg\langle\,\tikz[baseline=-.6ex]{
\draw 
(-.5,.4) -- +(-.2,0)
(.5,-.4) -- +(.2,0)
(-.5,-.4) -- +(-.2,0)
(.5,.4) -- +(.2,0);
\draw[white, double=black, double distance=0.4pt, ultra thick] 
(.5,.5) to[out=west, in=east] 
(-.1,.0) to[out=west, in=west] (-.1,.4);
\draw[white, double=black, double distance=0.4pt, ultra thick] 
(-.1,.4) 
to[out=east, in=west] (.5,-.4);
\draw[white, double=black, double distance=0.4pt, ultra thick] 
(-.5,.5) 
to[out=east, in=north] (-.3,.0)
to[out=south, in=east] (-.5,-.4);
\draw[white, double=black, double distance=0.4pt, ultra thick] 
(-.5,.3) to[out=east, in=west] (.0,-.4);
\draw[white, double=black, double distance=0.4pt, ultra thick] 
(0,-.4) to[out=east, in=west] (.5,.3);
\draw[fill=white] (.5,-.6) rectangle +(.1,.4);
\draw[fill=white] (-.5,-.6) rectangle +(-.1,.4);
\draw[fill=white] (.5,.6) rectangle +(.1,-.4);
\draw[fill=white] (-.5,.6) rectangle +(-.1,-.4);
\node at (.5,-.6)[right]{$\scriptstyle{1}$};
\node at (-.5,-.6)[left]{$\scriptstyle{1}$};
\node at (.5,.6)[right]{$\scriptstyle{n}$};
\node at (-.5,.6)[left]{$\scriptstyle{n}$};
\node at (.0,-.4)[below]{$\scriptstyle{n-1}$};
}\,\Bigg\rangle_{\! 2}\\
&=q^{\frac{1}{2}}
\Bigg\langle\,\tikz[baseline=-.6ex]{
\draw 
(-.5,.4) -- +(-.2,0)
(.5,-.4) -- +(.2,0)
(-.5,-.4) -- +(-.2,0)
(.5,.4) -- +(.2,0);
\draw (-.5,.5) -- (.5,.5);
\draw[white, double=black, double distance=0.4pt, ultra thick] 
(-.5,-.4) to[out=east, in=west] (.0,.3);
\draw[white, double=black, double distance=0.4pt, ultra thick] 
(-.5,.3) to[out=east, in=west] (.0,-.3);
\draw[white, double=black, double distance=0.4pt, ultra thick] 
(0,-.3) to[out=east, in=west] (.5,.3);
\draw[white, double=black, double distance=0.4pt, ultra thick] 
(0,.3) to[out=east, in=west] (.5,-.4);
\draw[fill=white] (.5,-.6) rectangle +(.1,.4);
\draw[fill=white] (-.5,-.6) rectangle +(-.1,.4);
\draw[fill=white] (.5,.6) rectangle +(.1,-.4);
\draw[fill=white] (-.5,.6) rectangle +(-.1,-.4);
\node at (.5,-.6)[right]{$\scriptstyle{1}$};
\node at (-.5,-.6)[left]{$\scriptstyle{1}$};
\node at (.5,.6)[right]{$\scriptstyle{n}$};
\node at (-.5,.6)[left]{$\scriptstyle{n}$};
\node at (.0,.5)[above]{$\scriptstyle{1}$};
\node at (.0,-.3)[below]{$\scriptstyle{n-1}$};
}\,\Bigg\rangle_{\! 2}
+(1-q^{-1})q^{-\frac{1}{2}(n-1)}
\Bigg\langle\,\tikz[baseline=-.6ex]{
\draw 
(-.5,.4) -- +(-.2,0)
(.5,-.4) -- +(.2,0)
(-.5,-.4) -- +(-.2,0)
(.5,.4) -- +(.2,0);
\draw (-.5,.5) -- (.5,.5);
\draw[white, double=black, double distance=0.4pt, ultra thick] 
(-.5,-.4) to[out=east, in=south] 
(-.3,.0) to[out=north, in=east] (-.5,.3);
\draw[white, double=black, double distance=0.4pt, ultra thick] 
(.5,.3) to[out=west, in=north] (.3,.0)
to[out=south, in=west] (.5,-.4);
\draw[fill=white] (.5,-.6) rectangle +(.1,.4);
\draw[fill=white] (-.5,-.6) rectangle +(-.1,.4);
\draw[fill=white] (.5,.6) rectangle +(.1,-.4);
\draw[fill=white] (-.5,.6) rectangle +(-.1,-.4);
\node at (.5,-.6)[right]{$\scriptstyle{1}$};
\node at (-.5,-.6)[left]{$\scriptstyle{1}$};
\node at (.5,.6)[right]{$\scriptstyle{n}$};
\node at (-.5,.6)[left]{$\scriptstyle{n}$};
\node at (.0,.5)[above]{$\scriptstyle{n-1}$};
\node at (-.3,.0)[left]{$\scriptstyle{1}$};
\node at (.3,.0)[right]{$\scriptstyle{1}$};
}\,\Bigg\rangle_{\! 2}.
\end{align*}

This equation implies
\begin{equation}\label{A1n1full}
q^{-\frac{1}{2}(n-i-1)}
\Bigg\langle\,\tikz[baseline=-.6ex]{
\draw 
(-.5,.4) -- +(-.2,0)
(.5,-.4) -- +(.2,0)
(-.5,-.4) -- +(-.2,0)
(.5,.4) -- +(.2,0);
\draw (-.5,.5) -- (.5,.5);
\draw[white, double=black, double distance=0.4pt, ultra thick] 
(-.5,-.4) to[out=east, in=west] (.0,.3);
\draw[white, double=black, double distance=0.4pt, ultra thick] 
(-.5,.3) to[out=east, in=west] (.0,-.3);
\draw[white, double=black, double distance=0.4pt, ultra thick] 
(0,-.3) to[out=east, in=west] (.5,.3);
\draw[white, double=black, double distance=0.4pt, ultra thick] 
(0,.3) to[out=east, in=west] (.5,-.4);
\draw[fill=white] (.5,-.6) rectangle +(.1,.4);
\draw[fill=white] (-.5,-.6) rectangle +(-.1,.4);
\draw[fill=white] (.5,.6) rectangle +(.1,-.4);
\draw[fill=white] (-.5,.6) rectangle +(-.1,-.4);
\node at (.5,-.6)[right]{$\scriptstyle{1}$};
\node at (-.5,-.6)[left]{$\scriptstyle{1}$};
\node at (.5,.6)[right]{$\scriptstyle{n}$};
\node at (-.5,.6)[left]{$\scriptstyle{n}$};
\node at (.0,.5)[above]{$\scriptstyle{i+1}$};
\node at (.0,-.3)[below]{$\scriptstyle{n-i-1}$};
}\,\Bigg\rangle_{\! 2}
-q^{-\frac{1}{2}(n-i)}
\Bigg\langle\,\tikz[baseline=-.6ex]{
\draw 
(-.5,.4) -- +(-.2,0)
(.5,-.4) -- +(.2,0)
(-.5,-.4) -- +(-.2,0)
(.5,.4) -- +(.2,0);
\draw (-.5,.5) -- (.5,.5);
\draw[white, double=black, double distance=0.4pt, ultra thick] 
(-.5,-.4) to[out=east, in=west] (.0,.3);
\draw[white, double=black, double distance=0.4pt, ultra thick] 
(-.5,.3) to[out=east, in=west] (.0,-.3);
\draw[white, double=black, double distance=0.4pt, ultra thick] 
(0,-.3) to[out=east, in=west] (.5,.3);
\draw[white, double=black, double distance=0.4pt, ultra thick] 
(0,.3) to[out=east, in=west] (.5,-.4);
\draw[fill=white] (.5,-.6) rectangle +(.1,.4);
\draw[fill=white] (-.5,-.6) rectangle +(-.1,.4);
\draw[fill=white] (.5,.6) rectangle +(.1,-.4);
\draw[fill=white] (-.5,.6) rectangle +(-.1,-.4);
\node at (.5,-.6)[right]{$\scriptstyle{1}$};
\node at (-.5,-.6)[left]{$\scriptstyle{1}$};
\node at (.5,.6)[right]{$\scriptstyle{n}$};
\node at (-.5,.6)[left]{$\scriptstyle{n}$};
\node at (.0,.5)[above]{$\scriptstyle{i}$};
\node at (.0,-.3)[below]{$\scriptstyle{n-i}$};
}\,\Bigg\rangle_{\! 2}
=(1-q)q^{-n-\frac{1}{2}}q^{i}
\Bigg\langle\,\tikz[baseline=-.6ex]{
\draw 
(-.5,.4) -- +(-.2,0)
(.5,-.4) -- +(.2,0)
(-.5,-.4) -- +(-.2,0)
(.5,.4) -- +(.2,0);
\draw (-.5,.5) -- (.5,.5);
\draw[white, double=black, double distance=0.4pt, ultra thick] 
(-.5,-.4) to[out=east, in=south] 
(-.3,.0) to[out=north, in=east] (-.5,.3);
\draw[white, double=black, double distance=0.4pt, ultra thick] 
(.5,.3) to[out=west, in=north] (.3,.0)
to[out=south, in=west] (.5,-.4);
\draw[fill=white] (.5,-.6) rectangle +(.1,.4);
\draw[fill=white] (-.5,-.6) rectangle +(-.1,.4);
\draw[fill=white] (.5,.6) rectangle +(.1,-.4);
\draw[fill=white] (-.5,.6) rectangle +(-.1,-.4);
\node at (.5,-.6)[right]{$\scriptstyle{1}$};
\node at (-.5,-.6)[left]{$\scriptstyle{1}$};
\node at (.5,.6)[right]{$\scriptstyle{n}$};
\node at (-.5,.6)[left]{$\scriptstyle{n}$};
\node at (.0,.5)[above]{$\scriptstyle{n-1}$};
\node at (-.3,.0)[left]{$\scriptstyle{1}$};
\node at (.3,.0)[right]{$\scriptstyle{1}$};
}\,\Bigg\rangle_{\! 2}
\end{equation}
for $i=0,1,\dots,n-1$.
Thus, 
we obtain
\[
\Bigg\langle\,\tikz[baseline=-.6ex]{
\draw 
(-.5,.4) -- +(-.2,0)
(.5,-.4) -- +(.2,0)
(-.5,-.4) -- +(-.2,0)
(.5,.4) -- +(.2,0);
\draw[white, double=black, double distance=0.4pt, ultra thick] 
(-.5,-.4) to[out=east, in=west] (.0,.3);
\draw[white, double=black, double distance=0.4pt, ultra thick] 
(-.5,.4) to[out=east, in=west] (.0,-.3);
\draw[white, double=black, double distance=0.4pt, ultra thick] 
(0,-.3) to[out=east, in=west] (.5,.4);
\draw[white, double=black, double distance=0.4pt, ultra thick] 
(0,.3) to[out=east, in=west] (.5,-.4);
\draw[fill=white] (.5,-.6) rectangle +(.1,.4);
\draw[fill=white] (-.5,-.6) rectangle +(-.1,.4);
\draw[fill=white] (.5,.6) rectangle +(.1,-.4);
\draw[fill=white] (-.5,.6) rectangle +(-.1,-.4);
\node at (.5,-.6)[right]{$\scriptstyle{1}$};
\node at (-.5,-.6)[left]{$\scriptstyle{1}$};
\node at (.5,.6)[right]{$\scriptstyle{n}$};
\node at (-.5,.6)[left]{$\scriptstyle{n}$};
}\,\Bigg\rangle_{\! 2}
=q^{\frac{n}{2}}
\Bigg\langle\,\tikz[baseline=-.6ex]{
\draw 
(-.5,.4) -- +(-.2,0)
(.5,-.4) -- +(.2,0)
(-.5,-.4) -- +(-.2,0)
(.5,.4) -- +(.2,0);
\draw (-.5,.4) -- (.5,.4);
\draw (-.5,-.4) -- (.5,-.4);
\draw[fill=white] (.5,-.6) rectangle +(.1,.4);
\draw[fill=white] (-.5,-.6) rectangle +(-.1,.4);
\draw[fill=white] (.5,.6) rectangle +(.1,-.4);
\draw[fill=white] (-.5,.6) rectangle +(-.1,-.4);
\node at (.5,-.6)[right]{$\scriptstyle{1}$};
\node at (-.5,-.6)[left]{$\scriptstyle{1}$};
\node at (.5,.6)[right]{$\scriptstyle{n}$};
\node at (-.5,.6)[left]{$\scriptstyle{n}$};
}\,\Bigg\rangle_{\! 2}
-(1-q^n)q^{-\frac{n+1}{2}}
\Bigg\langle\,\tikz[baseline=-.6ex]{
\draw 
(-.5,.4) -- +(-.2,0)
(.5,-.4) -- +(.2,0)
(-.5,-.4) -- +(-.2,0)
(.5,.4) -- +(.2,0);
\draw (-.5,.5) -- (.5,.5);
\draw[white, double=black, double distance=0.4pt, ultra thick] 
(-.5,-.4) to[out=east, in=south] 
(-.3,.0) to[out=north, in=east] (-.5,.3);
\draw[white, double=black, double distance=0.4pt, ultra thick] 
(.5,.3) to[out=west, in=north] (.3,.0)
to[out=south, in=west] (.5,-.4);
\draw[fill=white] (.5,-.6) rectangle +(.1,.4);
\draw[fill=white] (-.5,-.6) rectangle +(-.1,.4);
\draw[fill=white] (.5,.6) rectangle +(.1,-.4);
\draw[fill=white] (-.5,.6) rectangle +(-.1,-.4);
\node at (.5,-.6)[right]{$\scriptstyle{1}$};
\node at (-.5,-.6)[left]{$\scriptstyle{1}$};
\node at (.5,.6)[right]{$\scriptstyle{n}$};
\node at (-.5,.6)[left]{$\scriptstyle{n}$};
\node at (.0,.5)[above]{$\scriptstyle{n-1}$};
\node at (-.3,.0)[left]{$\scriptstyle{1}$};
\node at (.3,.0)[right]{$\scriptstyle{1}$};
}\,\Bigg\rangle_{\! 2}
\]
by taking the sum of both sides of (\ref{A1n1full}) for $i=0,1,\dots,n-1$.
Also, 
\begin{align*}
\Bigg\langle\,\tikz[baseline=-.6ex]{
\draw 
(-.5,.4) -- +(-.2,0)
(.5,-.4) -- +(.2,0)
(-.5,-.4) -- +(-.2,0)
(.5,.4) -- +(.2,0);
\draw[triple={[line width=1.6pt, white] in [line width=2.4pt, black] in [line width=5.6pt, white]}] 
(-.5,-.4) to[out=east, in=west] (.0,.4);
\draw[white, double=black, double distance=0.4pt, ultra thick] 
(-.5,.4) to[out=east, in=west] (.0,-.4);
\draw[white, double=black, double distance=0.4pt, ultra thick] 
(0,-.4) to[out=east, in=west] (.5,.4);
\draw[triple={[line width=1.6pt, white] in [line width=2.4pt, black] in [line width=5.6pt, white]}] 
(0,.4) to[out=east, in=west] (.5,-.4);
\draw[fill=white] (.5,-.6) rectangle +(.1,.4);
\draw[fill=white] (-.5,-.6) rectangle +(-.1,.4);
\draw[fill=white] (.5,.6) rectangle +(.1,-.4);
\draw[fill=white] (-.5,.6) rectangle +(-.1,-.4);
\node at (.5,-.6)[right]{$\scriptstyle{i}$};
\node at (-.5,-.6)[left]{$\scriptstyle{i}$};
\node at (.5,.6)[right]{$\scriptstyle{j}$};
\node at (-.5,.6)[left]{$\scriptstyle{j}$};
\node at (.0,.4)[above]{$\scriptstyle{i-1}$};
\node at (.0,.4)[below]{$\scriptstyle{1}$};
}\,\Bigg\rangle_{\! 2}
=
q^{\frac{j}{2}}\Bigg\langle\,\tikz[baseline=-.6ex]{
\draw 
(-.5,.4) -- +(-.2,0)
(.5,-.4) -- +(.2,0)
(-.5,-.4) -- +(-.2,0)
(.5,.4) -- +(.2,0);
\draw (-.5,-.5) -- (.5,-.5);
\draw[white, double=black, double distance=0.4pt, ultra thick] 
(-.5,-.3) to[out=east, in=west] (.0,.4);
\draw[white, double=black, double distance=0.4pt, ultra thick] 
(-.5,.4) to[out=east, in=west] (.0,-.4);
\draw[white, double=black, double distance=0.4pt, ultra thick] 
(0,-.4) to[out=east, in=west] (.5,.4);
\draw[white, double=black, double distance=0.4pt, ultra thick] 
(0,.4) to[out=east, in=west] (.5,-.3);
\draw[fill=white] (.5,-.6) rectangle +(.1,.4);
\draw[fill=white] (-.5,-.6) rectangle +(-.1,.4);
\draw[fill=white] (.5,.6) rectangle +(.1,-.4);
\draw[fill=white] (-.5,.6) rectangle +(-.1,-.4);
\node at (.5,-.6)[right]{$\scriptstyle{i}$};
\node at (-.5,-.6)[left]{$\scriptstyle{i}$};
\node at (.5,.6)[right]{$\scriptstyle{j}$};
\node at (-.5,.6)[left]{$\scriptstyle{j}$};
\node at (.0,.4)[above]{$\scriptstyle{i-1}$};
\node at (.0,-.5)[below]{$\scriptstyle{1}$};
}\,\Bigg\rangle_{\! 2}
-(1-q^j)q^{-\frac{i+j}{2}}
\Bigg\langle\,\tikz[baseline=-.6ex]{
\draw 
(-.6,.4) -- +(-.2,0)
(.6,-.4) -- +(.2,0)
(-.6,-.4) -- +(-.2,0)
(.6,.4) -- +(.2,0);
\draw[white, double=black, double distance=0.4pt, ultra thick] 
(-.6,-.5) to[out=east, in=west] (.0,.4);
\draw[white, double=black, double distance=0.4pt, ultra thick] 
(-.6,.5) to[out=east, in=west] (.0,-.4);
\draw[white, double=black, double distance=0.4pt, ultra thick] 
(0,-.4) to[out=east, in=west] (.6,.5);
\draw[white, double=black, double distance=0.4pt, ultra thick] 
(0,.4) to[out=east, in=west] (.6,-.5);
\draw (-.6,.3) to[out=east, in=north] 
(-.4,.0) to[out=south, in=east] (-.6,-.3);
\draw (.6,.3) to[out=west, in=north] 
(.4,.0) to[out=south, in=west](.6,-.3);
\draw[fill=white] (.6,-.6) rectangle +(.1,.4);
\draw[fill=white] (-.6,-.6) rectangle +(-.1,.4);
\draw[fill=white] (.6,.6) rectangle +(.1,-.4);
\draw[fill=white] (-.6,.6) rectangle +(-.1,-.4);
\node at (.6,-.6)[right]{$\scriptstyle{i}$};
\node at (-.6,-.6)[left]{$\scriptstyle{i}$};
\node at (.6,.6)[right]{$\scriptstyle{j}$};
\node at (-.6,.6)[left]{$\scriptstyle{j}$};
\node at (.0,.4)[above]{$\scriptstyle{i-1}$};
\node at (.0,-.4)[below]{$\scriptstyle{j-1}$};
\node at (-.4,.0)[left]{$\scriptstyle{1}$};
\node at (.4,.0)[right]{$\scriptstyle{1}$};
}\,\Bigg\rangle_{\! 2}
\end{align*}
for any non-negative integers $i$ and $j$.
We define a clasped $A_1$ web $\langle\sigma^2(k,l;n)\rangle_2$ as follows:
\[
\langle\sigma^2(k,l;n)\rangle_2=
\Bigg\langle\,\tikz[baseline=-.6ex]{
\draw 
(-.6,.4) -- +(-.2,0)
(.6,-.4) -- +(.2,0)
(-.6,-.4) -- +(-.2,0)
(.6,.4) -- +(.2,0);
\draw
(-.6,-.5) -- (.6,-.5);
\draw[white, double=black, double distance=0.4pt, ultra thick] 
(-.6,-.4) to[out=east, in=west] (.0,.4);
\draw[white, double=black, double distance=0.4pt, ultra thick] 
(-.6,.5) to[out=east, in=west] (.0,-.3);
\draw[white, double=black, double distance=0.4pt, ultra thick] 
(0,-.3) to[out=east, in=west] (.6,.5);
\draw[white, double=black, double distance=0.4pt, ultra thick] 
(0,.4) to[out=east, in=west] (.6,-.4);
\draw (-.6,.3) to[out=east, in=north] 
(-.5,.0) to[out=south, in=east] (-.6,-.3);
\draw (.6,.3) to[out=west, in=north] 
(.5,.0) to[out=south, in=west](.6,-.3);
\draw[fill=white] (.6,-.6) rectangle +(.1,.4);
\draw[fill=white] (-.6,-.6) rectangle +(-.1,.4);
\draw[fill=white] (.6,.6) rectangle +(.1,-.4);
\draw[fill=white] (-.6,.6) rectangle +(-.1,-.4);
\node at (.6,-.6)[right]{$\scriptstyle{n}$};
\node at (-.6,-.6)[left]{$\scriptstyle{n}$};
\node at (.6,.6)[right]{$\scriptstyle{n}$};
\node at (-.6,.6)[left]{$\scriptstyle{n}$};
\node at (.0,.4)[above]{$\scriptstyle{n-k-l}$};
\node at (.0,-.6)[above]{$\scriptstyle{n-l}$};
\node at (.0,-.5)[below]{$\scriptstyle{k}$};
\node at (-.5,.0)[left]{$\scriptstyle{l}$};
\node at (.5,.0)[right]{$\scriptstyle{l}$};
}\,\Bigg\rangle_{\! 2}.
\]
Then,
we obtain
\begin{equation}\label{A1fullres}
\langle\sigma^2(k,l;n)\rangle_2
=q^{\frac{n-l}{2}}\langle\sigma^2(k+1,l;n)\rangle_2
-(1-q^{n-l})q^{-(n-l)}q^{\frac{k}{2}}\langle\sigma^2(k,l+1;n)\rangle_2
\end{equation}
for non-negative integers $k$ and $l$ such that $k+l\leq n$.
In the same way as the proof of Proposition~\ref{coloredA1}~$(1)$, 
we make $\langle\sigma^2(k,l;n)\rangle_2$ correspond to a lattice point $(k,l)$ and decorate edges with coefficients by using the resolution $(\ref{A1fullres})$.
If a path from $(0,0)$ to $(k,l)$ with $k+l=n$ shifts downward by a box, 
then the product of decorations is multiplied by $q$ (see Figure~\ref{A1fullshift}).
\begin{figure}
\centering
\begin{tikzpicture}
\draw[->-=.5] (0,0) -- (2,0);
\node at (1,0)[below]{$\scriptstyle{q^{\frac{n-l}{2}}}$};
\draw[->-=.5] (2,0) -- (2,2);
\node at (2,1)[right]{$\scriptstyle{-(1-q^{n-l})q^{-(n-l)}q^{\frac{k+1}{2}}}$};
\draw[->-=.5] (0,0) -- (0,2);
\node at (0,1)[left]{$\scriptstyle{-(1-q^{n-l})q^{-(n-l)}q^{\frac{k}{2}}}$};
\draw[->-=.5] (0,2) -- (2,2);
\draw[->, very thick, magenta](.5,1.5) -- (1.5,.5);
\node at (1,1)[right]{$\times q$};
\node at (1,2)[above]{$\scriptstyle{q^{\frac{n-(l+1)}{2}}}$};
\node at (0,0)[below left]{$\scriptstyle{(k,l)}$};
\node at (2,0)[below right]{$\scriptstyle{(k+1,l)}$};
\node at (0,2)[above left]{$\scriptstyle{(k,l+1)}$};
\node at (2,2)[above right]{$\scriptstyle{(k,l)}$};
\fill 
(0,0) circle (1.2pt)
(2,0) circle (1.2pt)
(2,2) circle (1.2pt)
(0,2) circle (1.2pt);
\end{tikzpicture}
\caption{shift of a path}
\label{A1fullshift}
\end{figure}
Therefore,
the coefficient of $\langle\sigma^2(k,l;n)\rangle_2$ with $k+l=n$ for the expansion of a full twist is 
\[
\prod_{i=0}^{l-1}-(1-q^{n-l})q^{-(n-i)}\prod_{j=0}^{k-1}q^{\frac{n-l}{2}}{n\choose k}_q=(-1)^lq^{\frac{2k^2-n^2+k-n}{2}}\frac{(q;q)_n}{(q;q)_{n-l}}{n\choose k}_q.
\]
\end{proof}

We next give expansions of $m$ half twists and $m$ full twists of two strands colored by $n$.
\begin{LEM}[Masbaum~{\cite[Lemma~$4.3$]{Masbaum03}}]\label{A1slide}
For any non-negative integers $k\neq n$,
\[
\Bigg\langle\,\tikz[baseline=-.6ex]{
\draw 
(-.5,.4) -- +(-.2,0)
(.0,-.5) -- +(.4,0)
(-.5,-.4) -- +(-.2,0)
(.0,.5) -- +(.4,0);
\draw[white, double=black, double distance=0.4pt, ultra thick] 
(-.5,-.4) to[out=east, in=west] (.0,.4);
\draw[white, double=black, double distance=0.4pt, ultra thick] 
(-.5,.4) to[out=east, in=west] (.0,-.4);
\draw (.1,.4) to[out=east, in=east] (.1,-.4);
\draw[fill=white] (.0,-.6) rectangle +(.1,.4);
\draw[fill=white] (-.5,-.6) rectangle +(-.1,.4);
\draw[fill=white] (.0,.6) rectangle +(.1,-.4);
\draw[fill=white] (-.5,.6) rectangle +(-.1,-.4);
\node at (.4,-.6)[right]{$\scriptstyle{k}$};
\node at (-.5,-.6)[left]{$\scriptstyle{n}$};
\node at (.4,.6)[right]{$\scriptstyle{k}$};
\node at (-.5,.6)[left]{$\scriptstyle{n}$};
\node at (.3,0)[right]{$\scriptstyle{n-k}$};
\node at (-.1,.4)[above]{$\scriptstyle{n}$};
\node at (-.1,-.4)[below]{$\scriptstyle{n}$};
}\,\Bigg\rangle_{\! 2}
=(-1)^{n-k}q^{-\frac{n^2-k^2+2n-2k}{4}}
\Bigg\langle\,\tikz[baseline=-.6ex]{
\draw[white, double=black, double distance=0.4pt, ultra thick] 
(-.4,-.4) -- +(-.2,0)
(-.4,-.5) to[out=east, in=west] (.4,.4)
(.4,.4) -- +(.2,0);
\draw[white, double=black, double distance=0.4pt, ultra thick] 
(-.4,.4) -- +(-.2,0)
(-.4,.5) to[out=east, in=west] (.4,-.4)
(.4,-.4) -- +(.2,0);
\draw (-.4,.3) to[out=east, in=east] (-.4,-.3);
\draw[fill=white] (.4,-.6) rectangle +(.1,.4);
\draw[fill=white] (-.4,-.6) rectangle +(-.1,.4);
\draw[fill=white] (.4,.6) rectangle +(.1,-.4);
\draw[fill=white] (-.4,.6) rectangle +(-.1,-.4);
\node at (.4,-.6)[right]{$\scriptstyle{k}$};
\node at (-.4,-.6)[left]{$\scriptstyle{n}$};
\node at (.4,.6)[right]{$\scriptstyle{k}$};
\node at (-.4,.6)[left]{$\scriptstyle{n}$};
\node at (-.4,0)[left]{$\scriptstyle{n-k}$};
}\,\Bigg\rangle_{\! 2}
\]
\end{LEM}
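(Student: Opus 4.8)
The plan is to reduce the left-hand diagram to the right-hand one by a single framed isotopy, sliding the $(n-k)$-colored turn-back through the crossing, and then to evaluate the resulting kinks with Lemma~\ref{A1clasplem}. Since $k\neq n$, the turn-back carries the positive color $n-k$, so this slide is genuine. The guiding computation is the factorization of the target scalar: writing $n^2-k^2+2n-2k=(n-k)(n+k+2)=(n-k)(n-k+2)+2k(n-k)$, we have
\[
(-1)^{n-k}q^{-\frac{n^2-k^2+2n-2k}{4}}=\Big((-1)^{n-k}q^{-\frac{(n-k)(n-k+2)}{4}}\Big)\Big(q^{-\frac{k(n-k)}{4}}\Big)^{2},
\]
which is exactly one negative curl of color $n-k$ (third item of Lemma~\ref{A1clasplem}) times two negative crossing-absorptions of a $k$-bundle past an $(n-k)$-bundle (first item of Lemma~\ref{A1clasplem}). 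This tells us precisely which local moves the isotopy must produce.

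Concretely, in the left-hand picture the two $n$-clasped bundles cross once; group each bundle into its outgoing $k$-part and its turning-back $(n-k)$-part. The single width-$n$ crossing is then the superposition of four sub-crossings: the $k$-$k$ crossing of the two outgoing bundles, the $(n-k)$-$(n-k)$ self-crossing of the two turn-back bundles, and the two $k$-$(n-k)$ crossings between an outgoing bundle and the opposite turn-back. Because every bundle here is part of a clasp, the defining property of the clasp that annihilates turn-backs of fewer than $n$ strands forces all non-through resolutions to vanish, so these four sub-crossings may be treated independently. Sliding the turn-back to the left of the crossing keeps the $k$-$k$ crossing intact, and this is exactly the width-$k$ crossing of the right-hand picture, while the $(n-k)$-$(n-k)$ self-crossing becomes a curl on the $(n-k)$-colored turn-back and the two $k$-$(n-k)$ crossings become crossing-absorptions. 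Evaluating the curl by the third item and the two absorptions by the first item of Lemma~\ref{A1clasplem} (all in the negative branch dictated by the crossing in the figure) yields precisely the monomial displayed above times the right-hand diagram.

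The main obstacle is not the algebra but making the slide rigorous and fixing signs. First I would justify the independence of the four sub-crossings carefully: although the first item of Lemma~\ref{A1clasplem} is stated for the two sub-bundles of a single clasp, the $k$-$(n-k)$ crossings here join sub-bundles of two different clasps, so I must reprove the absorption in that situation, again using that the clasp kills short turn-backs. Second, I must verify that the crossing in the figure really produces the negative branch in all three evaluations simultaneously (one negative curl and two negative absorptions, rather than a mixture), and that the colors are as claimed; the mirror crossing would give the mirror formula. I would pin down these conventions by checking the two extreme cases $k=0$, where the turn-back is the whole $n$-bundle and the identity collapses to a single use of the negative-curl formula, and $k=n-1$, and then confirm the general local isotopy near the crossing.
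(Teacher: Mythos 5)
Your proposal is correct and is essentially the paper's own argument: the paper's proof consists of the single instruction ``slide the two right-side clasps to the left and evaluate the result with Lemma~\ref{A1clasplem},'' and your elaboration — the factorization $(-1)^{n-k}q^{-\frac{(n-k)(n-k+2)}{4}}\cdot\bigl(q^{-\frac{k(n-k)}{4}}\bigr)^{2}$ realized as one negative curl on the $(n-k)$-bundle plus two negative $k$-versus-$(n-k)$ crossing absorptions — is exactly the computation that instruction entails, with the arithmetic checking out. The points you flag for care (locality of the four sub-crossings, consistency of signs) are genuine but routine, and the paper does not address them either.
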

\begin{proof}
Slide two right-side $A_1$ clasps to the left and calculate resulting clasped $A_1$ web using Lemma~\ref{A1clasplem}.
\end{proof}

Let $n,m$ be non-negative integers.
\begin{PROP}[$m$ half twists formula]\label{A1mhalf}
\begin{align*}
&\Bigg\langle\,\tikz[baseline=-.6ex]{
\begin{scope}[xshift=-.5cm]
\draw 
(-.5,.4) -- +(-.2,0)
(-.5,-.4) -- +(-.2,0);
\draw[white, double=black, double distance=0.4pt, ultra thick] 
(-.5,-.4) to[out=east, in=west] (.0,.4);
\draw[white, double=black, double distance=0.4pt, ultra thick] 
(-.5,.4) to[out=east, in=west] (.0,-.4);
\draw[fill=white] (-.5,-.6) rectangle +(-.1,.4);
\draw[fill=white] (-.5,.6) rectangle +(-.1,-.4);
\node at (-.5,-.6)[left]{$\scriptstyle{n}$};
\node at (-.5,.6)[left]{$\scriptstyle{n}$};
\end{scope}
\node at (.0,.0){$\cdots$};
\node at (.0,-.4)[below]{$\scriptstyle{m\text{ half twists}}$};
\begin{scope}[xshift=.5cm]
\draw
(.5,-.4) -- +(.2,0)
(.5,.4) -- +(.2,0);
\draw[white, double=black, double distance=0.4pt, ultra thick] 
(0,-.4) to[out=east, in=west] (.5,.4);
\draw[white, double=black, double distance=0.4pt, ultra thick] 
(0,.4) to[out=east, in=west] (.5,-.4);
\draw[fill=white] (.5,-.6) rectangle +(.1,.4);
\draw[fill=white] (.5,.6) rectangle +(.1,-.4);
\node at (.5,-.6)[right]{$\scriptstyle{n}$};
\node at (.5,.6)[right]{$\scriptstyle{n}$};
\end{scope}
}\,\Bigg\rangle_{\! 2}
=(-1)^{mn}q^{-\frac{m}{4}(n^2+2n)}\\
&\times\!\sum_{0\leq k_m\leq \cdots\leq k_1\leq n}
(-1)^{n-k_m}q^{\frac{n-k_m}{2}}
(-1)^{\sum_{i=1}^mk_i}q^{\frac{1}{2}\sum_{i=1}^{m}(k_i^2+k_i)}
{n \choose k_1',k_2',\dots,k_m',k_m}_{q}
\Bigg\langle\,\tikz[baseline=-.6ex]{
\draw 
(-.4,.4) -- +(-.2,0)
(.4,-.4) -- +(.2,0)
(-.4,-.4) -- +(-.2,0)
(.4,.4) -- +(.2,0)
(-.4,.5) -- (.4,.5)
(-.4,-.5) -- (.4,-.5)
(-.4,.3) to[out=east, in=east] (-.4,-.3)
(.4,.3) to[out=west, in=west] (.4,-.3);
\draw[fill=white] (.4,-.6) rectangle +(.1,.4);
\draw[fill=white] (-.4,-.6) rectangle +(-.1,.4);
\draw[fill=white] (.4,.6) rectangle +(.1,-.4);
\draw[fill=white] (-.4,.6) rectangle +(-.1,-.4);
\node at (.4,-.6)[right]{$\scriptstyle{n}$};
\node at (-.4,-.6)[left]{$\scriptstyle{n}$};
\node at (.4,.6)[right]{$\scriptstyle{n}$};
\node at (-.4,.6)[left]{$\scriptstyle{n}$};
\node at (0,.6){$\scriptstyle{k_m}$};
\node at (0,-.6){$\scriptstyle{k_m}$};
\node at (-.2,0)[left]{$\scriptstyle{n-k_m}$};
\node at (.2,0)[right]{$\scriptstyle{n-k_m}$};
}\,\Bigg\rangle_{\! 2},
\end{align*}
where $k_i, k_i'$ are integers such that $k_0=n$, $k_{i+1}'=k_i-k_{i+1}$ for $i=0,1,\dots,m-1$.
\end{PROP}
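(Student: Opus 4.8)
The plan is to prove the formula by induction on the number $m$ of half twists, taking Proposition~\ref{coloredA1} (the single half twist expansion) both as the base case and as the engine of the inductive step, with Lemma~\ref{A1slide} serving as the device that lowers the color of the two cables one step at a time. Throughout, let $E_k^{(n)}$ denote the clasped $A_1$ web appearing on the right-hand side of the statement, i.e.\ the ladder with a rung of color $k$ and turnbacks of color $n-k$, and write the $m$-half-twist element as $\sum_k A_m(k)\,E_k^{(n)}$. The base case $m=1$ is exactly Proposition~\ref{coloredA1}~$(1)$: one checks that the coefficient claimed here, namely $(-1)^{n}q^{-\frac14(n^2+2n)}(-1)^{n-k_1}q^{\frac{n-k_1}{2}}(-1)^{k_1}q^{\frac12(k_1^2+k_1)}{n\choose k_1}_q$, collapses after cancellation of signs (using $(-1)^n(-1)^{n-k_1}(-1)^{k_1}=1$) and $q$-powers to the coefficient $q^{\frac{-n^2+2k_1^2}{4}}{n\choose k_1}_q$ of the colored Kauffman bracket skein relation.

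For the inductive step I would isolate the effect of a single additional half twist on one basis ladder $E_{k}^{(n)}$. Reading the extra half twist as a crossing of the two $n$-cables that is immediately followed, through the ladder, by a turnback of $n-k$ strands, Lemma~\ref{A1slide} applies verbatim: it converts \emph{(half twist of $n$-cables, then turnback to $k$)} into $(-1)^{n-k}q^{-(n^2-k^2+2n-2k)/4}$ times \emph{(turnback to $k$, then half twist of $k$-cables)}. The remaining half twist now lives between two $k$-colored cables, so Proposition~\ref{coloredA1}~$(1)$ applies again with $n$ replaced by $k$, producing $\sum_{j=0}^{k}q^{(-k^2+2j^2)/4}{k\choose j}_q$ times the rung-$j$ ladder in the $k$-cable world. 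Finally the outer turnback of color $n-k$ merges with the inner turnback of color $k-j$ into a single turnback of color $n-j$, identifying the result as $E_j^{(n)}$. Collecting the two factors gives the one-step identity
\[
R\cdot E_{k}^{(n)}=\sum_{j=0}^{k}(-1)^{n-k}q^{-\frac{n^2-k^2+2n-2k}{4}}\,q^{\frac{-k^2+2j^2}{4}}{k\choose j}_q\,E_j^{(n)},
\]
where $R$ denotes one half twist; note that ${k\choose j}_q=0$ for $j>k$ reproduces the ordering constraint $k_m\le\cdots\le k_1$ automatically.

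Writing $A_m(k_m)=\sum_{k_{m-1}\ge k_m}S(k_{m-1},k_m)\,A_{m-1}(k_{m-1})$ with $S$ the displayed one-step coefficient, it remains to check that substituting the induction hypothesis for $A_{m-1}(k_{m-1})$ reproduces the claimed closed form. The decisive simplification is the $q$-multinomial factorization
\[
{k_{m-1}\choose k_m}_q\frac{(q)_n}{(q)_{n-k_1}\cdots(q)_{k_{m-2}-k_{m-1}}(q)_{k_{m-1}}}
=\frac{(q)_n}{(q)_{n-k_1}\cdots(q)_{k_{m-2}-k_{m-1}}(q)_{k_{m-1}-k_m}(q)_{k_m}},
\]
in which the factor $(q)_{k_{m-1}}$ coming from ${k_{m-1}\choose k_m}_q=(q)_{k_{m-1}}/((q)_{k_{m-1}-k_m}(q)_{k_m})$ cancels, so that the multinomial ${n\choose k_1',\dots,k_m',k_m}_q$ is built up one column at a time exactly as the twists are peeled off. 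The accompanying signs and $q$-powers then match: using $(-1)^{n-k_{m-1}}=(-1)^{n+k_{m-1}}$, the prefactor $(-1)^{mn}q^{-\frac m4(n^2+2n)}$ and the $k_m$-dependent factor $(-1)^{n-k_m}q^{\frac{n-k_m}{2}}$ of the target are recovered after a routine rearrangement of the exponents $\frac12\sum_i(k_i^2+k_i)$.

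The main obstacle is the one-step identity of the second paragraph, i.e.\ the precise graphical computation of $R\cdot E_{k}^{(n)}$. Two points require care: first, that Lemma~\ref{A1slide} is applied in exactly the configuration for which it was proved, so that the half twist is adjacent to the turnback, which may require a preliminary isotopy of the ladder; and second, that the merging of the two nested turnbacks into one---together with the absorption of any resulting bubble against the clasps---contributes the scalar $1$ rather than a nontrivial quantum-integer ratio, which is where Lemma~\ref{A1clasplem} is invoked. Once the one-step coefficient is pinned down with the correct sign and $q$-power, the induction and the final reconciliation of exponents are routine bookkeeping driven by the multinomial factorization above.
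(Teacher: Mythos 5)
Your proposal is correct and follows essentially the same route as the paper: both arguments peel off one half twist at a time, expand it with Proposition~\ref{coloredA1}~$(1)$, use Lemma~\ref{A1slide} to move a crossing of cables past a turnback, and then obtain the closed form from the telescoping of the quantum binomials ${k_{i-1}\choose k_i}_q$ into the multinomial together with a routine exponent-and-sign computation (which your recursion does reproduce). The only organizational difference is that you slide the single newly added crossing past the one turnback of the already-expanded ladder, so the slide factor of Lemma~\ref{A1slide} enters once per step at colors $(n,k)$, whereas the paper expands the rightmost crossing first and transports the resulting turnback through the remaining $m-i-1$ crossings, so its slide factor enters with multiplicity $m-i-1$ at colors $(k_i,k_{i+1})$; the per-step coefficients therefore look different, but the products along a chain $n=k_0\geq k_1\geq\cdots\geq k_m$ agree.
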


\begin{proof}
Let $k_i, k_{i+1}$ as above.
We define a clasped $A_1$ web $\langle\sigma^{m-i}(k_i;n)\rangle_2$ for any $k_i=0,1,\dots,n$ as
\[
\langle\sigma^{m-i}(k_i;n)\rangle_2
=
\Bigg\langle\,\tikz[baseline=-.6ex]{
\begin{scope}[xshift=-.7cm]
\draw 
(-.6,.4) -- +(-.2,0)
(-.6,-.4) -- +(-.2,0);
\draw 
(-.6,.3) to[out=east, in=north] 
(-.5,.0) to[out=south, in=east] (-.6,-.3);
\draw[white, double=black, double distance=0.4pt, ultra thick] 
(-.6,-.5) to[out=east, in=west] (.0,.5);
\draw[white, double=black, double distance=0.4pt, ultra thick] 
(-.6,.5) to[out=east, in=west] (.0,-.5);
\draw[fill=white] (-.6,-.6) rectangle +(-.1,.4);
\draw[fill=white] (-.6,.6) rectangle +(-.1,-.4);
\node at (-.6,-.6)[left]{$\scriptstyle{n}$};
\node at (-.6,.6)[left]{$\scriptstyle{n}$};
\node at (-.5,.0)[left]{$\scriptstyle{n-k_i}$};
\end{scope}
\node at (.0,.0){$\cdots$};
\node at (.0,-.5)[below]{$\scriptstyle{m-i\text{ half twists}}$};
\begin{scope}[xshift=.7cm]
\draw
(.6,-.4) -- +(.2,0)
(.6,.4) -- +(.2,0);
\draw 
(.6,.3) to[out=west, in=north] 
(.5,.0) to[out=south, in=west] (.6,-.3);
\draw[white, double=black, double distance=0.4pt, ultra thick] 
(0,-.5) to[out=east, in=west] (.6,.5);
\draw[white, double=black, double distance=0.4pt, ultra thick] 
(0,.5) to[out=east, in=west] (.6,-.5);
\draw[fill=white] (.6,-.6) rectangle +(.1,.4);
\draw[fill=white] (.6,.6) rectangle +(.1,-.4);
\node at (.6,-.6)[right]{$\scriptstyle{n}$};
\node at (.6,.6)[right]{$\scriptstyle{n}$};
\node at (.5,.0)[right]{$\scriptstyle{n-k_i}$};
\end{scope}
}\,\Bigg\rangle_{\! 2}.
\]
We apply Proposition~\ref{coloredA1}~$(1)$ to a rightmost half twist and use Lemma~\ref{A1slide} $m-i-1$ times.
Thus,
we obtain
\begin{align*}
&\langle\sigma^{m-i}(k_i;n)\rangle_2\\
&\quad=
\sum_{k_{i+1}=0}^{k_i}
(-1)^{k_i-k_{i+1}}q^{\frac{1}{4}(-k_i^2+2k_{i+1}^2)}q^{\frac{m-i-1}{4}(k_{i+1}^2-k_i^2+2k_{i+1}-2k_i)}
{k_i \choose k_{i+1}}_q
\langle\sigma^{m-i}(k_i;n)\rangle_2 .\notag
\end{align*}
The right hand side of the $m$ half twist formula is $\langle\sigma^{m}(k_0;n)\rangle_2$.
Therefore,
we can obtain the $m$ half twist formula by using the above equation for $i=0,1,\dots,m-1$ in turn and calculation of the exponent sum of $q$. 
\end{proof}

\begin{PROP}[$m$ full twists formula~\cite{Masbaum03}]\label{A1mfull}
\begin{align*}
\Bigg\langle\,\tikz[baseline=-.6ex]{
\begin{scope}[xshift=-1cm]
\draw 
(-.5,.4) -- +(-.2,0)
(-.5,-.4) -- +(-.2,0);
\draw[white, double=black, double distance=0.4pt, ultra thick] 
(-.5,-.4) to[out=east, in=west] (.0,.4);
\draw[white, double=black, double distance=0.4pt, ultra thick] 
(-.5,.4) to[out=east, in=west] (.0,-.4);
\draw[white, double=black, double distance=0.4pt, ultra thick] 
(0,-.4) to[out=east, in=west] (.5,.4);
\draw[white, double=black, double distance=0.4pt, ultra thick] 
(0,.4) to[out=east, in=west] (.5,-.4);
\draw[fill=white] (-.5,-.6) rectangle +(-.1,.4);
\draw[fill=white] (-.5,.6) rectangle +(-.1,-.4);
\node at (-.5,-.6)[left]{$\scriptstyle{n}$};
\node at (-.5,.6)[left]{$\scriptstyle{n}$};
\end{scope}
\node at (.0,.0){$\cdots$};
\node at (.0,-.4)[below]{$\scriptstyle{m\text{ full twists}}$};
\begin{scope}[xshift=1cm]
\draw
(.5,-.4) -- +(.2,0)
(.5,.4) -- +(.2,0);
\draw[white, double=black, double distance=0.4pt, ultra thick] 
(-.5,-.4) to[out=east, in=west] (.0,.4);
\draw[white, double=black, double distance=0.4pt, ultra thick] 
(-.5,.4) to[out=east, in=west] (.0,-.4);
\draw[white, double=black, double distance=0.4pt, ultra thick] 
(0,-.4) to[out=east, in=west] (.5,.4);
\draw[white, double=black, double distance=0.4pt, ultra thick] 
(0,.4) to[out=east, in=west] (.5,-.4);
\draw[fill=white] (.5,-.6) rectangle +(.1,.4);
\draw[fill=white] (.5,.6) rectangle +(.1,-.4);
\node at (.5,-.6)[right]{$\scriptstyle{n}$};
\node at (.5,.6)[right]{$\scriptstyle{n}$};
\end{scope}
}\,\Bigg\rangle_{\! 2}
=&q^{-\frac{m}{2}(n^2+2n)}
\sum_{0\leq k_m\leq \cdots\leq k_1\leq n}
(-1)^{n-k_m}q^{\frac{n-k_m}{2}}
q^{\sum_{i=1}^m(k_i^2+k_i)}\\
&\quad\times\frac{(q)_n}{(q)_{k_m}}
{n \choose k_1',k_2',\dots,k_m',k_m}_{q}
\Bigg\langle\,\tikz[baseline=-.6ex]{
\draw
(-.4,.4) -- +(-.2,0)
(.4,-.4) -- +(.2,0)
(-.4,-.4) -- +(-.2,0)
(.4,.4) -- +(.2,0)
(-.4,.5) -- (.4,.5)
(-.4,-.5) -- (.4,-.5)
(-.4,.3) to[out=east, in=east] (-.4,-.3)
(.4,.3) to[out=west, in=west] (.4,-.3);
\draw[fill=white] (.4,-.6) rectangle +(.1,.4);
\draw[fill=white] (-.4,-.6) rectangle +(-.1,.4);
\draw[fill=white] (.4,.6) rectangle +(.1,-.4);
\draw[fill=white] (-.4,.6) rectangle +(-.1,-.4);
\node at (.4,-.6)[right]{$\scriptstyle{n}$};
\node at (-.4,-.6)[left]{$\scriptstyle{n}$};
\node at (.4,.6)[right]{$\scriptstyle{n}$};
\node at (-.4,.6)[left]{$\scriptstyle{n}$};
\node at (0,.6){$\scriptstyle{k_m}$};
\node at (0,-.6){$\scriptstyle{k_m}$};
\node at (-.2,0)[left]{$\scriptstyle{n-k_m}$};
\node at (.2,0)[right]{$\scriptstyle{n-k_m}$};
}\,\Bigg\rangle_{\! 2}
\end{align*}
\end{PROP}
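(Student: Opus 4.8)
The plan is to imitate the proof of Proposition~\ref{A1mhalf}, replacing the single half twist expansion of Proposition~\ref{coloredA1}~$(1)$ by the single full twist formula of Proposition~\ref{A1full}. For each $i=0,1,\dots,m$ and each $k_i=0,1,\dots,n$ I would introduce an intermediate clasped $A_1$ web $\langle\sigma^{m-i}(k_i;n)\rangle_2$ consisting of $m-i$ consecutive full twists of two $n$-colored strands, capped on both the left and the right ends by a turnback of color $n-k_i$ joining the two $n$-clasps, so that exactly $k_i$ strands run through the twist region. With this convention $\langle\sigma^m(n;n)\rangle_2$ (where $k_0=n$, so the turnbacks are empty) is the left-hand side of the desired formula, while $\langle\sigma^0(k_m;n)\rangle_2$ is precisely the clasped web appearing in the summand on the right-hand side.

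First I would establish a one-step recursion expressing $\langle\sigma^{m-i}(k_i;n)\rangle_2$ in terms of the webs $\langle\sigma^{m-i-1}(k_{i+1};n)\rangle_2$ with $0\le k_{i+1}\le k_i$. To do this, apply Proposition~\ref{A1full} to the rightmost full twist, noting that the formula is valid for two bundles of any color, here the through-color $k_i$; this produces the sum over $k_{i+1}$ with coefficient $(-1)^{k_i-k_{i+1}}q^{\frac{2k_{i+1}^2-k_i^2+k_{i+1}-k_i}{2}}\frac{(q)_{k_i}}{(q)_{k_{i+1}}}{k_i\choose k_{i+1}}_q$. The resolved diagram carries a newly created turnback of color $k_i-k_{i+1}=k_{i+1}'$ sitting to the right of the remaining $m-i-1$ full twists; I would then push this turnback together with its $k_{i+1}$-clasp leftward through those twists by applying Lemma~\ref{A1slide} (twice per full twist, since a full twist is two half twists), collecting a $q$-power that I expect to be the $(m-i-1)$-th power of a fixed monomial in $k_i,k_{i+1}$. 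Since each such slide contributes the square of the factor of Lemma~\ref{A1slide}, its sign $(-1)^{k_i-k_{i+1}}$ is squared away, so the slides carry no net sign. After the slide the diagram is exactly $\langle\sigma^{m-i-1}(k_{i+1};n)\rangle_2$, which yields the recursion.

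Iterating the recursion for $i=0,1,\dots,m-1$ in turn, the final coefficient of $\langle\sigma^0(k_m;n)\rangle_2$ becomes a sum over chains $n=k_0\ge k_1\ge\cdots\ge k_m\ge 0$ of products of the one-step coefficients, and three pieces assemble cleanly. The signs multiply to $\prod_{i=0}^{m-1}(-1)^{k_i-k_{i+1}}=(-1)^{n-k_m}$; the Pochhammer prefactors telescope as $\prod_{i=0}^{m-1}\frac{(q)_{k_i}}{(q)_{k_{i+1}}}=\frac{(q)_n}{(q)_{k_m}}$; and the product of $q$-binomials collapses to the $q$-multinomial via $\prod_{i=0}^{m-1}{k_i\choose k_{i+1}}_q={n\choose k_1',k_2',\dots,k_m',k_m}_q$, using $k_1'+\cdots+k_m'+k_m=n$. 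These reproduce the non-exponential part of the claimed formula exactly, and a direct check for $m=1$ recovers Proposition~\ref{A1full}.

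The main obstacle will be the bookkeeping of the exponent of $q$. I would need to sum, over $i=0,\dots,m-1$, the full-twist exponent $\frac{2k_{i+1}^2-k_i^2+k_{i+1}-k_i}{2}$ together with the iterated slide exponent, the latter appearing with multiplicity $m-i-1$, and verify by telescoping that the total equals $-\frac{m}{2}(n^2+2n)+\frac{n-k_m}{2}+\sum_{i=1}^m(k_i^2+k_i)$. The terms $k_i^2$ enter adjacent steps with opposite signs and should cancel in cascade, leaving the diagonal $\sum(k_i^2+k_i)$ and the overall normalization $q^{-\frac{m}{2}(n^2+2n)}$; confirming this cancellation with the correct power $m-i-1$ attached to each slide is the delicate computation and the one place where care is genuinely required.
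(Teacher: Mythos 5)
Your proposal is correct and takes essentially the same approach as the paper: the paper's proof of Proposition~\ref{A1mfull} consists of the single remark that one repeats the argument of Proposition~\ref{A1mhalf} with Proposition~\ref{A1full} substituted for the single half-twist expansion, which is precisely the recursion-plus-slide scheme you lay out. The details you supply that the paper leaves implicit --- that each remaining full twist costs two applications of Lemma~\ref{A1slide} so the slide signs square away, leaving only the $(-1)^{n-k_m}$ from the full-twist expansions, and the telescoping of the exponent to $-\frac{m}{2}(n^2+2n)+\frac{n-k_m}{2}+\sum_{i=1}^m(k_i^2+k_i)$ --- all check out.
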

\begin{proof}
We can prove this formula by the same way as the proof of Proposition~\ref{A1mhalf}.
We only have to use Proposition~\ref{A1full} instead of Proposition~\ref{coloredA1}~$(1)$.
\end{proof}
\begin{RMK}
\ 
\begin{itemize}
\item Twist formulas in this section treat only right-handed twists.
Left-handed versions of twist formulas can be obtained by substituting $q^{-1}$ for $q$.
\item We can easily calculate twist formulas for an $n$-colored strand and $m$-colored strand by using twist formulas for two $n$-colored strands.
\end{itemize}
\end{RMK}
\subsection{Colored $A_2$ bracket skein relations}
Let us consider clasped $A_2$ web spaces.
We use the following graphical notations to represent certain $A_2$ webs.
\begin{DEF}
For positive integers $n$ and $m$, 
a {\em colored $4$-valent vertex}
\[
\,\tikz[baseline=-.6ex]{
\draw[->-=.1,->-=.9] (-.6,0) -- (.6,0);
\draw[->-=.1,->-=.9] (0,-.6) -- (0,.6);
\draw[fill=white] (-.15,-.3) rectangle (.15,.3);
\draw (-.15,.3) -- (.15,-.3);
\node at (-.4,0)[above]{$\scriptstyle{n}$};
\node at (.4,0)[above]{$\scriptstyle{n}$};
\node at (0,.5)[right]{$\scriptstyle{m}$};
\node at (0,-.5)[right]{$\scriptstyle{m}$};
}\,\in W_{n^{+}+m^{+}+n^{-}+m^{-}}
\]
is defined as follows:
$\,\tikz[baseline=-.6ex]{
\draw[->-=.1,->-=.9] (-.5,0) -- (.5,0);
\draw[->-=.1,->-=.9] (0,-.6) -- (0,.6);
\draw[fill=white] (-.1,-.3) rectangle (.1,.3);
\draw (-.1,.3) -- (.1,-.3);
\node at (-.3,0)[above]{$\scriptstyle{n}$}; 
\node at (.3,0)[above]{$\scriptstyle{n}$};
}\,
=
\,\tikz[baseline=-.6ex]{
\draw[->-=.5] (-.6,-.4) -- (.6,-.4);
\draw[->-=.6] (-.6,-.2) -- (.6,-.2);
\node[rotate=90] at (-.6,.1){$\scriptstyle{\cdots}$};
\node[rotate=90] at (.6,.1){$\scriptstyle{\cdots}$};
\draw[->-=.5] (-.6,.4) -- (.6,.4);
\draw[->-=.5] (-.4,-.6) -- (-.4,-.4);
\draw[->-=.5] (-.3,-.4) -- (-.3,-.2);
\draw[->-=.5] (-.2,-.2) -- (-.2,0);
\node at (0,.1){$\scriptstyle{\cdots}$};
\draw[->-=.5] (.3,.2) -- (.3,.4);
\draw[->-=.5] (.4,.4) -- (.4,.6);
}\,\in W_{n^{+}+1^{+}+n^{-}+1^{+}}$ for $m=1$,
$\,\tikz[baseline=-.6ex]{
\draw[->-=.1,->-=.9] (-.6,0) -- (.6,0);
\draw[->-=.1,->-=.9] (0,-.6) -- (0,.6);
\draw[fill=white] (-.15,-.3) rectangle (.15,.3);
\draw (-.15,.3) -- (.15,-.3);
\node at (-.4,0)[above]{$\scriptstyle{n}$};
\node at (.4,0)[above]{$\scriptstyle{n}$};
\node at (0,.5)[right]{$\scriptstyle{m}$};
\node at (0,-.5)[right]{$\scriptstyle{m}$};
}\,
=
\,\tikz[baseline=-.6ex]{
\draw[->-=.1] (-.6,0) -- (.9,0);
\draw[->-=.1,->-=.9] (0,-.6) -- (0,.6);
\draw[fill=white] (-.15,-.3) rectangle (.15,.3);
\draw (-.15,.3) -- (.15,-.3);
\node at (-.4,0)[above]{$\scriptstyle{n}$};
\node at (0,.5)[left]{$\scriptstyle{m-1}$};
\node at (0,-.5)[left]{$\scriptstyle{m-1}$};
\begin{scope}[xshift=.5cm]
\draw[->-=.1,->-=.9] (-.2,0) -- (.5,0);
\draw[->-=.1,->-=.9] (.1,-.6) -- (.1,.6);
\draw[fill=white] (0,-.3) rectangle (.2,.3);
\draw (0,.3) -- (.2,-.3);
\node at (-.2,0)[above]{$\scriptstyle{n}$}; 
\node at (.4,0)[above]{$\scriptstyle{n}$};
\end{scope}
}\,
$ for $m>1$.
We also define
$
\,\tikz[baseline=-.6ex]{
\draw[->-=.1,->-=.9] (-.6,0) -- (.6,0);
\draw[-<-=.1,-<-=.9] (0,-.6) -- (0,.6);
\draw[fill=white] (-.15,-.3) rectangle (.15,.3);
\draw (-.15,-.3) -- (.15,.3);
\node at (-.4,0)[above]{$\scriptstyle{n}$};
\node at (.4,0)[above]{$\scriptstyle{n}$};
\node at (0,.5)[right]{$\scriptstyle{m}$};
\node at (0,-.5)[right]{$\scriptstyle{m}$};
}\,\in W_{n^{+}+m^{-}+n^{-}+m^{+}}
$
in the same way.
\end{DEF}
\begin{DEF}
For positive integer $n$,
a {\em colored trivalent vertex}
\[
\,\tikz[baseline=-.6ex]{
\draw (30:.5) -- (0,0);
\draw (150:.5) -- (0,0);
\draw[-<-=.2] (270:.5) -- (0,0);
\draw[->-=.2] (30:.5)
to[out=30, in=west] +(.3,.1);
\draw[->-=.2] (150:.5)
to[out=150, in=east] +(-.3,.1);
\draw[fill=white] (-30:.5) -- (90:.5) -- (210:.5) -- cycle;
\node at (30:.5) [above]{$\scriptstyle{n}$};
\node at (150:.5) [above]{$\scriptstyle{n}$};
\node at (270:.5) [left]{$\scriptstyle{n}$};
}\,\in W_{n^{+}+n^{+}+n^{+}}
\]
is defined as follows:
$\,\tikz[baseline=-.6ex]{
\draw[-<-=.5] (30:.4) -- (0,0);
\draw[-<-=.5] (150:.4) -- (0,0);
\draw[-<-=.5] (270:.4) -- (0,0);
}\,$ for $n=1$, 
$
\,\tikz[baseline=-.6ex]{
\draw (30:.4) -- (0,0);
\draw (150:.4) -- (0,0);
\draw[-<-=.2] (270:.4) -- (0,0);
\draw[->-=.2] (30:.4)
to[out=30, in=west] +(.3,.1);
\draw[->-=.2] (150:.4)
to[out=150, in=east] +(-.3,.1);
\draw[fill=white] (-30:.4) -- (90:.4) -- (210:.4) -- cycle;
\node at (30:.4) [above]{$\scriptstyle{n}$};
\node at (150:.4) [above]{$\scriptstyle{n}$};
\node at (270:.4) [left]{$\scriptstyle{n}$};
}\,
=
\,\tikz[baseline=-.6ex, xscale=-1]{
\draw (30:.2) -- (0,0);
\draw[->-=.8] (0,0) to[out=150, in=east] (170:.5);
\draw[-<-=.2] (270:.3) -- (0,0);
\draw[->-=.2, ->-=.9] (30:.2) -- +(.9,0);
\draw[fill=white] (-30:.3) -- (90:.3) -- (210:.3) -- cycle;
\draw[-<-=.2] (.6,-.4) -- (.6,.4);
\draw[-<-=.4, ->-=.9] (-.5,.4) -- (1.0,.4);
\draw[fill=white] (.5,-.2) rectangle (.7,.3);
\draw (.5,-.2) -- (.7,.3);
\node at (1,0) [left]{$\scriptstyle{n-1}$};
\node at (150:.5) [below right]{$\scriptstyle{n-1}$};
\node at (270:.3) [below]{$\scriptstyle{n-1}$};
\node at (150:.5) [above right]{$\scriptstyle{1}$};
\node at (1,.4) [left]{$\scriptstyle{1}$};
}\,$
for $n>1$.
We also define 
$\,\tikz[baseline=-.6ex]{
\draw (30:.4) -- (0,0);
\draw (150:.4) -- (0,0);
\draw[->-=.2] (270:.4) -- (0,0);
\draw[-<-=.2] (30:.4)
to[out=30, in=west] +(.3,.1);
\draw[-<-=.2] (150:.4)
to[out=150, in=east] +(-.3,.1);
\draw[fill=white] (-30:.4) -- (90:.4) -- (210:.4) -- cycle;
\node at (30:.4) [above]{$\scriptstyle{n}$};
\node at (150:.4) [above]{$\scriptstyle{n}$};
\node at (270:.4) [left]{$\scriptstyle{n}$};
}\,\in W_{n^{-}+n^{-}+n^{-}}$ in the same way.
\end{DEF}
We sometimes omit directions of edges of $A_2$ webs.
Then we take compatible directions for colored tri-, $4$-valent vertices and $A_2$ clasps.

\begin{LEM}\label{coloredvertex}\  
\begin{enumerate}
\item
$\,\tikz[baseline=-.6ex]{
\draw (-.6,.0) -- (-.3,.0);
\draw (-.3,.0) -- (.3,.0);
\draw (.3,.0) -- (.6,.0);
\draw (-.3,-.4) -- (-.3,.4);
\draw (.3,-.4) -- (.3,.4);
\draw[fill=white] (-.4,.2) rectangle +(.2,-.4);
\draw (-.4,.2) -- +(.2,-.4);
\draw[fill=white] (.2,.2) rectangle +(.2,-.4);
\draw (.2,.2) -- +(.2,-.4);
\node at (-.6,0)[left]{$\scriptstyle{n}$};
\node at (0,0)[above]{$\scriptstyle{n}$};
\node at (.6,0)[right]{$\scriptstyle{n}$};
\node at (-.3,.4)[above]{$\scriptstyle{m_1}$};
\node at (-.3,-.4)[below]{$\scriptstyle{m_1}$};
\node at (.3,.4)[above]{$\scriptstyle{m_2}$};
\node at (.3,-.4)[below]{$\scriptstyle{m_2}$};
}\,
=
\,\tikz[baseline=-.6ex]{
\draw (-.3,0) -- (.3,0);
\draw (0,-.4) -- (0,.4);
\draw[fill=white] (-.1,-.2) rectangle (.1,.2);
\draw (-.1,.2) -- (.1,-.2);
\node at (-.3,0)[left]{$\scriptstyle{n}$};
\node at (.3,0)[right]{$\scriptstyle{n}$};
\node at (0,.4)[above]{$\scriptstyle{m_1+m_2}$};
\node at (0,-.4)[below]{$\scriptstyle{m_1+m_2}$};
}\,
$
\item
$\,\tikz[baseline=-.6ex]{
\draw (-.4,0) -- (.4,0);
\draw (0,-.4) -- (0,.4);
\draw[fill=white] (-.2,-.2) rectangle +(.4,.4);
\draw (-.2,.2) -- +(.4,-.4);
\node at (-.4,0)[left]{$\scriptstyle{n}$};
\node at (0,-.4)[below]{$\scriptstyle{n}$};
}\,
=
\,\tikz[baseline=-.6ex]{
\draw (-.6,.0) -- (-.3,.0);
\draw (-.3,.0) -- (.3,.0);
\draw (.3,.0) -- (.6,.0);
\draw (-.3,-.4) -- (-.3,.0);
\draw (.3,.0) -- (.3,.4);
\draw[fill=white] (-.5,-.2) -- (-.1,-.2) -- (-.1,.2) -- cycle;
\draw[fill=white, rotate=180] (-.5,-.2) -- (-.1,-.2) -- (-.1,.2) -- cycle;
\node at (-.6,0)[left]{$\scriptstyle{n}$};
\node at (.6,0)[right]{$\scriptstyle{n}$};
\node at (.3,.4)[above]{$\scriptstyle{n}$};
\node at (-.3,-.4)[below]{$\scriptstyle{n}$};
}$
\item
$\,\tikz[baseline=-.6ex]{
\draw (.0,.4) -- (.6,.4);
\draw (.0,.0) -- (.6,.0);
\draw (.3,.3) [rounded corners]-- (.3,-.3) -- (.6,-.3);
\draw[fill=white] (.2,.3) -- (.4,.3) -- (.4,.5) -- cycle;
\draw[fill=white] (.2,.2) rectangle +(.2,-.4);
\draw (.2,.2) -- +(.2,-.4);
\node at (.6,.4)[right]{$\scriptstyle{m}$};
\node at (.6,.0)[right]{$\scriptstyle{n}$};
\node at (.6,-.3)[right]{$\scriptstyle{m}$};
}\,
=
\,\tikz[baseline=-.6ex, yscale=-1]{
\draw (.0,.4) -- (.6,.4);
\draw (.0,.0) -- (.6,.0);
\draw (.3,.3) [rounded corners]-- (.3,-.3) -- (.6,-.3);
\draw[fill=white] (.2,.3) -- (.4,.3) -- (.4,.5) -- cycle;
\draw[fill=white] (.2,.2) rectangle +(.2,-.4);
\draw (.2,.2) -- +(.2,-.4);
\node at (.6,.4)[right]{$\scriptstyle{m}$};
\node at (.6,.0)[right]{$\scriptstyle{n}$};
\node at (.6,-.3)[right]{$\scriptstyle{m}$};
}\,$
\item
$\,\tikz[baseline=-.6ex]{
\draw (.0,.4) -- (.4,.4) -- (.4,-.4);
\draw (.3,.4) -- (.3,-.4);
\draw (.0,.0) -- (.7,0);
\draw[fill=white] (.2,.2) rectangle +(.3,-.4);
\draw (.2,.2) -- +(.3,-.4);
\node at (.0,.0)[left]{$\scriptstyle{n}$};
\node at (.0,.4)[left]{$\scriptstyle{1}$};
}\,
=
\,\tikz[baseline=-.6ex]{
\draw (.0,-.1) -- (.3,-.1) -- (.3,-.3);
\draw (.2,-.1) -- (.2,-.3);
\draw (.0,.1) -- (.5,.1);
\node at (.0,.1)[left]{$\scriptstyle{n}$};
\node at (.0,-.1)[left]{$\scriptstyle{1}$};
}\,
+\sum_{i=0}^{n-1}
\,\tikz[baseline=-.6ex]{
\draw (.0,.4) -- (.7,.4);
\draw (.0,-.05) -- (.7,-.05);
\draw (.0,.3) -- (.3,.3) -- (.3,-.4);
\draw (.0,.2) -- (.3,.2);
\draw (.4,-.4) -- (.4,.2) -- (.7,.2);
\draw[fill=white] (.2,.1) rectangle +(.3,-.3);
\draw (.2,.1) -- +(.3,-.3);
\node at (.7,.4)[right]{$\scriptstyle{n-i-1}$};
\node at (.7,.2)[right]{$\scriptstyle{1}$};
\node at (.7,-.05)[right]{$\scriptstyle{i}$};
}$
\item
$\,\tikz[baseline=-.6ex]{
\draw (.0,.2) -- +(-.2,.0);
\draw (.6,.4) -- +(.2,.0);
\draw (.6,.0) -- +(.2,.0);
\draw (.0,.4) -- (.6,.4);
\draw (.0,.0) -- (.6,.0);
\draw (.3,.3) [rounded corners]-- (.3,-.3) -- (.8,-.3);
\draw[fill=white] (.2,.3) -- (.4,.3) -- (.4,.5) -- cycle;
\draw[fill=white] (.2,.2) rectangle +(.2,-.4);
\draw (.2,.2) -- +(.2,-.4);
\draw[fill=white] (.0,.5) rectangle (-.1,-.2);
\draw[fill=white] (.6,.2) rectangle +(.1,-.4);
\node at (-.2,.2)[left]{$\scriptstyle{n+m}$};
\node at (.8,.4)[right]{$\scriptstyle{m}$};
\node at (.8,.0)[right]{$\scriptstyle{n}$};
\node at (.8,-.3)[right]{$\scriptstyle{m}$};
}\,
=\,\tikz[baseline=-.6ex]{
\draw (.0,.2) -- +(-.2,.0);
\draw (.0,.4) -- (.6,.4);
\draw (.0,.0) -- (.6,.0);
\draw (.3,.3) [rounded corners]-- (.3,-.3) -- (.6,-.3);
\draw[fill=white] (.2,.3) -- (.4,.3) -- (.4,.5) -- cycle;
\draw[fill=white] (.2,.2) rectangle +(.2,-.4);
\draw (.2,.2) -- +(.2,-.4);
\draw[fill=white] (.0,.5) rectangle (-.1,-.2);
\node at (-.2,.2)[left]{$\scriptstyle{n+m}$};
\node at (.6,.4)[right]{$\scriptstyle{m}$};
\node at (.6,.0)[right]{$\scriptstyle{n}$};
\node at (.6,-.3)[right]{$\scriptstyle{m}$};
}\,$
\item
$\,\tikz[baseline=-.6ex]{
\draw (-.4,.0) -- +(-.2,.0);
\draw (.4,.0) -- +(.2,.0);
\draw (.0,-.3) -- +(.0,-.2);
\draw (-.3,.0) -- (.0,.0);
\draw (.3,.0) -- (.0,.0);
\draw (.0,-.3) -- (.0,.0);
\draw[fill=white] (-.4,-.2) rectangle (-.3,.2);
\draw[fill=white] (.4,-.2) rectangle (.3,.2);
\draw[fill=white] (-.2,-.3) rectangle (.2,-.4);
\draw[fill=white] (-.2,-.2) -- (.2,-.2) -- (.2,.2) -- cycle;
\node at (-.6,0)[left]{$\scriptstyle{n}$};
\node at (.6,0)[right]{$\scriptstyle{n}$};
\node at (.0,-.6)[below]{$\scriptstyle{n}$};
}
=\,\tikz[baseline=-.6ex]{
\draw (-.4,.0) -- +(-.2,.0);
\draw (.4,.0) -- +(.2,.0);
\draw (.0,-.3) -- +(.0,-.2);
\draw (-.3,.0) -- (.0,.0);
\draw (.4,.0) -- (.0,.0);
\draw (.0,-.3) -- (.0,.0);
\draw[fill=white] (-.4,-.2) rectangle (-.3,.2);
\draw[fill=white] (-.2,-.3) rectangle (.2,-.4);
\draw[fill=white] (-.2,-.2) -- (.2,-.2) -- (.2,.2) -- cycle;
\node at (-.6,0)[left]{$\scriptstyle{n}$};
\node at (.6,0)[right]{$\scriptstyle{n}$};
\node at (.0,-.6)[below]{$\scriptstyle{n}$};
}$
\end{enumerate}
\end{LEM}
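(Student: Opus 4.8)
The plan is to treat the six identities as structural properties of the colored trivalent and $4$-valent vertices, each provable by unwinding the relevant inductive definition and then invoking the two basic properties of $A_2$ clasps (idempotence under stacking and annihilation of webs that factor through a turnback) together with Lemma~\ref{A2clasplem}. I would establish them in the order (1), (2), (3), (5), (6), (4), postponing the expansion formula (4) to the end since it is the only genuinely quantitative statement.

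First I would prove the vertical fusion (1) by induction on $m_2$. The base case $m_2=1$ is nothing but the inductive clause defining the colored $4$-valent vertex for $m>1$, which peels off a single vertical strand from a color-$m$ crossing; the inductive step peels off one more strand and applies the induction hypothesis. Identity (2) follows by induction on $n$ from the inductive definition of the colored trivalent vertex: for $n=1$ both sides reduce to the basic crossing and the basic trivalent vertex, and the step uses Lemma~\ref{A2clasplem} to absorb the single strand that is split off. For the reflection symmetry (3) I would simply apply the vertical reflection to the defining picture; since the $A_2$ clasp and the colored trivalent vertex are defined by pictures invariant under this reflection (the clasp is symmetric, and the trivalent vertex is symmetric once one fixes compatible orientations), the two sides are literally mirror images of one another and hence equal.

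The two absorption identities (5) and (6) I would obtain from the defining properties of the $A_2$ clasp. In (5) the color-$(n+m)$ clasp on the left meets a sub-web that is built out of the color-$n$ and color-$m$ strands through trivalent vertices; because this sub-web factors through the smaller $n^{+}+m^{+}$ pattern, clasp idempotence lets the outer $(n+m)$-clasp absorb it, and the annihilation property removes any turnback that would otherwise appear. Identity (6) is of the same type, and I would either reduce it to (5) after an isotopy or repeat the same absorption argument; no new input is needed beyond the two clasp properties.

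The hard part will be the expansion (4). Here I would argue by induction on $n$, sliding the color-$1$ strand across the clasp one elementary crossing at a time using the $A_2$ bracket skein relations and the definition of the colored trivalent vertex. Each slide produces a leading ``straight-through'' term together with a correction term supported on a smaller trivalent vertex, and iterating generates the sum over $i=0,\dots,n-1$. The delicate point is the bookkeeping: one must check that each correction term survives only when the strand turns back at height $i$, using the annihilation property to kill the remaining contributions, and one must confirm the coefficients using the capping ratios of Lemma~\ref{A2clasplem}. I expect verifying that these coefficients telescope correctly, rather than any single diagrammatic move, to be the main obstacle, exactly as the analogous coefficient computation was in the proof of Proposition~\ref{coloredA1}.
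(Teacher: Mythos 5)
Your plan for (1), (2), (4) and (5) matches the paper's proof in substance: (1) and (2) are indeed read off from the recursive definitions, (5) is an easy induction on $m$, and (4) is proved by induction on $n$ exactly as you describe, by peeling one strand off the colored $4$-valent vertex and accumulating the correction terms (though note that in (4) every term occurs with coefficient $1$, so there is no delicate telescoping of coefficients and no appeal to the capping ratios of Lemma~\ref{A2clasplem} is needed; the bookkeeping you flag as the main obstacle is in fact trivial here).

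There are, however, two genuine gaps. First, (3) is not a tautology: the colored trivalent and $4$-valent vertices are defined by recursions that peel strands off from one specific side, so their defining pictures are \emph{not} manifestly invariant under the vertical reflection, and the identity (3) is precisely the assertion that the resulting elements nevertheless are. The paper proves this by induction on $m$: one expands the color-$m$ vertex one step, merges the two adjacent clasps into one, applies the reflected statement for $m-1$, and reassembles. Your one-line symmetry argument skips exactly the content of the statement. Second, and more seriously, your proposed order $(1),(2),(3),(5),(6),(4)$ is unworkable, because the proof of (6) needs (4). Reducing (6) to the clasp's annihilation property requires showing that the web composed with the projection onto $k+1^{+}+1^{+}+(n-k-2)^{+}$ vanishes for each $k$, and to exhibit the turnback that kills it one must first push the single strand through the colored $4$-valent vertex using the expansion (4) (together with (5)). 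So ``no new input beyond the two clasp properties'' is false for (6); you must prove (4) and (5) first and then feed both into (6).
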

\begin{proof}
$(1)$ and $(2)$ are obtained by definitions of colored tri- and $4$-valent vertices. 
Let us show $(3)$ by the induction on $m$.
We can easily confirm in the case of $m=1$. 
If $m>1$, then
\begin{align}
\,\tikz[baseline=-.6ex]{
\draw (-.1,.5) -- (.7,.5);
\draw (-.1,-.1) -- (.7,-.1);
\draw (.3,.3) [rounded corners]-- (.3,-.5) -- (.7,-.5);
\draw[fill=white] (.1,.3) -- (.5,.3) -- (.5,.7) -- cycle;
\draw[fill=white] (.1,.2) rectangle +(.4,-.6);
\draw (.1,.2) -- +(.4,-.6);
\node at (.7,.5)[right]{$\scriptstyle{m}$};
\node at (.7,-.1)[right]{$\scriptstyle{n}$};
\node at (.7,-.5)[right]{$\scriptstyle{m}$};
}\,
&=
\,\tikz[baseline=-.6ex]{
\draw (-.3,.4) -- (.6,.4);
\draw (-.3,.0) -- (.6,.0);
\draw (-.3,.6) -- (.6,.6);
\draw (.0,.6) [rounded corners] -- (.0,-.4) -- (.6,-.4);
\draw (.3,.3) [rounded corners]-- (.3,-.3) -- (.6,-.3);
\draw[fill=white] (.2,.3) -- (.4,.3) -- (.4,.5) -- cycle;
\draw[fill=white] (.2,.2) rectangle +(.2,-.4);
\draw (.2,.2) -- +(.2,-.4);
\draw[fill=white] (-.1,.5) rectangle +(.2,-.2);
\draw (-.1,.5) -- +(.2,-.2);
\draw[fill=white] (-.1,.2) rectangle +(.2,-.4);
\draw (-.1,.2) -- +(.2,-.4);
\node at (.6,.4)[right]{$\scriptstyle{m-1}$};
\node at (.6,.0)[right]{$\scriptstyle{n}$};
\node at (.6,-.3)[right]{$\scriptstyle{m-1}$};
}\,
=
\,\tikz[baseline=-.6ex]{
\draw (-.3,.4) -- (.6,.4);
\draw (-.3,.0) -- (.6,.0);
\draw (-.3,.6) -- (.6,.6);
\draw (.0,.6) [rounded corners] -- (.0,-.4) -- (.6,-.4);
\draw (.3,.3) [rounded corners]-- (.3,-.3) -- (.6,-.3);
\draw[fill=white] (.2,.3) -- (.4,.3) -- (.4,.5) -- cycle;
\draw[fill=white] (.2,.2) rectangle +(.2,-.4);
\draw (.2,.2) -- +(.2,-.4);
\draw[fill=white] (-.1,.5) rectangle +(.2,-.7);
\draw (-.1,.5) -- +(.2,-.7);
\node at (.6,.4)[right]{$\scriptstyle{m-1}$};
\node at (.6,.0)[right]{$\scriptstyle{n}$};
\node at (.6,-.3)[right]{$\scriptstyle{m-1}$};
}\,\notag\\
&=
\,\tikz[baseline=-.6ex, yscale=-1]{
\draw (-.3,.4) -- (.6,.4);
\draw (-.3,.0) -- (.6,.0);
\draw (-.3,.6) -- (.6,.6);
\draw (.0,.6) [rounded corners] -- (.0,-.4) -- (.6,-.4);
\draw (.3,.3) [rounded corners]-- (.3,-.3) -- (.6,-.3);
\draw[fill=white] (.2,.3) -- (.4,.3) -- (.4,.5) -- cycle;
\draw[fill=white] (.2,.2) rectangle +(.2,-.4);
\draw (.2,.2) -- +(.2,-.4);
\draw[fill=white] (-.1,.5) rectangle +(.2,-.7);
\draw (-.1,.5) -- +(.2,-.7);
\node at (.6,.4)[right]{$\scriptstyle{m-1}$};
\node at (.6,.0)[right]{$\scriptstyle{n}$};
\node at (.6,-.3)[right]{$\scriptstyle{m-1}$};
}\,
=\,\tikz[baseline=-.6ex, yscale=-1]{
\draw (-.1,.5) -- (.7,.5);
\draw (-.1,-.1) -- (.7,-.1);
\draw (.3,.3) [rounded corners]-- (.3,-.5) -- (.7,-.5);
\draw[fill=white] (.1,.3) -- (.5,.3) -- (.5,.7) -- cycle;
\draw[fill=white] (.1,.2) rectangle +(.4,-.6);
\draw (.1,.2) -- +(.4,-.6);
\node at (.7,.5)[right]{$\scriptstyle{m}$};
\node at (.7,-.1)[right]{$\scriptstyle{n}$};
\node at (.7,-.5)[right]{$\scriptstyle{m}$};
}\,.\label{indhypm}
\end{align}
We used the induction hypothesis in $(\ref{indhypm})$.
We can show $(4)$ by an easy calculation for $n=1$.
If $n>1$, then
\begin{align*}
\,\tikz[baseline=-.6ex]{
\draw (.0,.4) -- (.4,.4) -- (.4,-.4);
\draw (.3,.4) -- (.3,-.4);
\draw (.0,.0) -- (.7,0);
\draw[fill=white] (.2,.2) rectangle +(.3,-.4);
\draw (.2,.2) -- +(.3,-.4);
\node at (.0,.0)[left]{$\scriptstyle{n}$};
\node at (.0,.4)[left]{$\scriptstyle{1}$};
}\,
&=
\,\tikz[baseline=-.6ex]{
\draw (.0,.5) -- (.4,.5) -- (.4,-.5);
\draw (.3,.5) -- (.3,-.5);
\draw (.0,.25) -- (.7,.25);
\draw (.0,-.1) -- (.7,-.1);
\draw[fill=white] (.2,.3) rectangle +(.3,-.1);
\draw (.2,.3) -- +(.3,-.1);
\draw[fill=white] (.2,.1) rectangle +(.3,-.4);
\draw (.2,.1) -- +(.3,-.4);
\node at (.0,.25)[left]{$\scriptstyle{1}$};
\node at (.0,-.1)[left]{$\scriptstyle{n-1}$};
\node at (.0,.5)[left]{$\scriptstyle{1}$};
}\,
=
\,\tikz[baseline=-.6ex]{
\draw (.0,.4) -- (.7,.4);
\draw (.0,.3) -- (.4,.3) -- (.4,-.4);
\draw (.3,.3) -- (.3,-.4);
\draw (.0,.0) -- (.7,0);
\draw[fill=white] (.2,.2) rectangle +(.3,-.4);
\draw (.2,.2) -- +(.3,-.4);
\node at (.0,.0)[left]{$\scriptstyle{n-1}$};
\node at (.7,.4)[right]{$\scriptstyle{1}$};
\node at (.0,.3)[left]{$\scriptstyle{1}$};
}\,
+
\,\tikz[baseline=-.6ex]{
\draw (.0,-.05) -- (.7,-.05);
\draw (.0,.3) -- (.3,.3) -- (.3,-.4);
\draw (.0,.2) -- (.3,.2);
\draw (.4,-.4) -- (.4,.2) -- (.7,.2);
\draw[fill=white] (.2,.1) rectangle +(.3,-.3);
\draw (.2,.1) -- +(.3,-.3);
\node at (.7,.2)[right]{$\scriptstyle{1}$};
\node at (.7,-.05)[right]{$\scriptstyle{n-1}$};
}\,\\
&=
\,\tikz[baseline=-.6ex]{
\draw (.0,.3) -- (.5,.3);
\draw (.0,-.1) -- (.3,-.1) -- (.3,-.3);
\draw (.2,-.1) -- (.2,-.3);
\draw (.0,.1) -- (.5,.1);
\node at (.5,.1)[right]{$\scriptstyle{n-1}$};
\node at (.0,-.1)[left]{$\scriptstyle{1}$};
\node at (.5,.3)[right]{$\scriptstyle{1}$};
}\,
+\sum_{i=0}^{n-2}
\,\tikz[baseline=-.6ex]{
\draw (.0,.6) -- (.7,.6);
\draw (.0,.4) -- (.7,.4);
\draw (.0,-.05) -- (.7,-.05);
\draw (.0,.3) -- (.3,.3) -- (.3,-.4);
\draw (.0,.2) -- (.3,.2);
\draw (.4,-.4) -- (.4,.2) -- (.7,.2);
\draw[fill=white] (.2,.1) rectangle +(.3,-.3);
\draw (.2,.1) -- +(.3,-.3);
\node at (.7,.6)[right]{$\scriptstyle{1}$};
\node at (.7,.4)[right]{$\scriptstyle{n-i-2}$};
\node at (.7,.2)[right]{$\scriptstyle{1}$};
\node at (.7,-.05)[right]{$\scriptstyle{i}$};
}
+
\,\tikz[baseline=-.6ex]{
\draw (.0,-.05) -- (.7,-.05);
\draw (.0,.3) -- (.3,.3) -- (.3,-.4);
\draw (.0,.2) -- (.3,.2);
\draw (.4,-.4) -- (.4,.2) -- (.7,.2);
\draw[fill=white] (.2,.1) rectangle +(.3,-.3);
\draw (.2,.1) -- +(.3,-.3);
\node at (.7,.2)[right]{$\scriptstyle{1}$};
\node at (.7,-.05)[right]{$\scriptstyle{n-1}$};
}\,
\end{align*}
by the induction on $n$.
$(5)$ is easily showed by the induction on $m$.
In order to prove $(6)$, 
we only have to show the following:
\[
\,\tikz[baseline=-.6ex]{
\draw (-.4,.0) -- +(-.2,.0);
\draw (.0,-.3) -- +(.0,-.2);
\draw (-.3,.0) -- (.0,.0);
\draw (.0,-.3) -- (.0,.0);
\draw (.2,.05) -- (.4,.05) -- (.4,-.05);
\draw (.2,.-.05) -- (.6,-.05);
\draw (.2,.15) -- (.6,.15);
\draw (.2,-.15) -- (.6,-.15);
\draw[fill=white] (-.4,-.2) rectangle (-.3,.2);
\draw[fill=white] (-.2,-.3) rectangle (.2,-.4);
\draw[fill=white] (-.2,-.2) -- (.2,-.2) -- (.2,.2) -- cycle;
\node at (-.6,0)[left]{$\scriptstyle{n}$};
\node at (.0,-.6)[below]{$\scriptstyle{n}$};
\node at (.6,.15)[above right]{$\scriptstyle{k}$};
\node at (.6,-.05)[right]{$\scriptstyle{1}$};
\node at (.6,-.15)[below right]{$\scriptstyle{n-k-2}$};
}=0
\]
for $k=1,2,\dots, n-2$ through the recursive definition of $A_2$ clasps.
Furthermore, 
\[
\,\tikz[baseline=-.6ex]{
\draw (-.4,.0) -- +(-.2,.0);
\draw (.0,-.3) -- +(.0,-.2);
\draw (-.3,.0) -- (.0,.0);
\draw (.0,-.3) -- (.0,.0);
\draw (.2,.05) -- (.4,.05) -- (.4,-.05);
\draw (.2,.-.05) -- (.6,-.05);
\draw (.2,.15) -- (.6,.15);
\draw (.2,-.15) -- (.6,-.15);
\draw[fill=white] (-.4,-.2) rectangle (-.3,.2);
\draw[fill=white] (-.2,-.3) rectangle (.2,-.4);
\draw[fill=white] (-.2,-.2) -- (.2,-.2) -- (.2,.2) -- cycle;
\node at (-.6,0)[left]{$\scriptstyle{n}$};
\node at (.0,-.6)[below]{$\scriptstyle{n}$};
\node at (.6,.15)[above right]{$\scriptstyle{k}$};
\node at (.6,-.05)[right]{$\scriptstyle{1}$};
\node at (.6,-.15)[below right]{$\scriptstyle{n-k-2}$};
}
=
\,\tikz[baseline=-.6ex]{
\draw (.0,.2) -- +(-.2,.0);
\draw (.0,.4) -- (1.3,.4);
\draw (.0,.0) -- (.9,.0);
\draw (.3,.3) -- (.3,-.6);
\draw (.7,-.2) -- (.7,-.6);
\draw (.9,.1) -- (1.1,.1) -- (1.1,.0);
\draw (.9,.0) -- (1.3,.0);
\draw (.9,-.1) -- (1.3,-.1);
\draw[fill=white] (.2,.3) -- (.4,.3) -- (.4,.5) -- cycle;
\draw[fill=white] (.5,-.2) -- (.9,-.2) -- (.9,.2) -- cycle;
\draw[fill=white] (.2,.2) rectangle +(.2,-.4);
\draw (.2,.2) -- +(.2,-.4);
\draw[fill=white] (.0,.5) rectangle (-.1,-.2);
\draw[fill=white] (.2,-.3) rectangle +(.7,-.1);
\node at (-.2,.2)[left]{$\scriptstyle{n}$};
\node at (1.3,.4)[right]{$\scriptstyle{k}$};
\node at (1.3,.1)[right]{$\scriptstyle{1}$};
\node at (1.3,-.1)[right]{$\scriptstyle{n-k-2}$};
}\,
=
\,\tikz[baseline=-.6ex]{
\draw (.0,.2) -- +(-.2,.0);
\draw (.0,.4) -- (1.5,.4);
\draw (.0,.0) -- (.9,.0);
\draw (.3,.3) -- (.3,-.6);
\draw (.9,-.2) -- (.9,-.6);
\draw (1.1,.1) -- (1.3,.1) -- (1.3,.0);
\draw (1.1,.0) -- (1.5,.0);
\draw (1.1,-.1) -- (1.5,-.1);
\draw[fill=white] (.2,.3) -- (.4,.3) -- (.4,.5) -- cycle;
\draw[fill=white] (.7,-.2) -- (1.1,-.2) -- (1.1,.2) -- cycle;
\draw[fill=white] (.2,.2) rectangle +(.2,-.4);
\draw (.2,.2) -- +(.2,-.4);
\draw[fill=white] (.0,.5) rectangle (-.1,-.2);
\draw[fill=white] (.5,.2) rectangle (.6,-.2);
\draw[fill=white] (.2,-.3) rectangle +(.9,-.1);
\node at (-.2,.2)[left]{$\scriptstyle{n}$};
\node at (1.5,.4)[right]{$\scriptstyle{k}$};
\node at (1.5,.1)[right]{$\scriptstyle{1}$};
\node at (1.5,-.1)[right]{$\scriptstyle{n-k-2}$};
}\,
\]
by Lemma~{\ref{coloredvertex}}~$(5)$.
\begin{align*}
\,\tikz[baseline=-.6ex]{
\draw (-.2,.0) -- (.5,.0);
\draw (.5,-.3) -- (.5,-.7);
\draw (.8,.2) -- (.9,.2) -- (.9,.1);
\draw (.8,.1) -- (1.0,.1);
\draw (.8,-.1) -- (1.0,-.1);
\draw[fill=white] (.0,.3) rectangle +(.1,-.6);
\draw[fill=white] (.2,-.4) rectangle +(.6,-.1);
\draw[fill=white] (.2,-.3) -- (.8,-.3) -- (.8,.3) -- cycle;
\node at (-.2,0)[left]{$\scriptstyle{n}$};
\node at (.5,-.7)[below]{$\scriptstyle{n}$};
\node at (1.0,.1)[right]{$\scriptstyle{1}$};
\node at (1.0,-.1)[right]{$\scriptstyle{n-k-1}$};
}
&=\,\tikz[baseline=-.6ex]{
\draw (.0,.0) -- +(-.2,.0);
\draw (.5,-.5) -- +(.0,-.2);
\draw (.1,-.2) -- (1.0,-.2);
\draw (.25,.2) -- (.25,-.4);
\draw (.45,.0) -- (.45,-.4);
\draw (.7,-.3) -- (.7,-.4);
\draw (.1,.2) -- (.6,.2) -- (.6,.0);
\draw (.3,.0) -- (1.0,.0);
\draw[fill=white] (.2,.1) rectangle +(.1,-.4);
\draw (.2,.1) -- +(.1,-.4);
\draw[fill=white] (.4,-.1) rectangle +(.1,-.2);
\draw (.4,-.1) -- +(.1,-.2);
\draw[fill=white] (.0,.3) rectangle +(.1,-.6);
\draw[fill=white] (.2,-.4) rectangle +(.6,-.1);
\draw[fill=white] (.6,-.3) -- (.8,-.3) -- (.8,-.1) -- cycle;
\node at (-.2,0)[left]{$\scriptstyle{n}$};
\node at (.5,-.7)[below]{$\scriptstyle{n}$};
\node at (1.0,.0)[right]{$\scriptstyle{1}$};
\node at (1.0,-.2)[right]{$\scriptstyle{n-k-1}$};
}
=\,\tikz[baseline=-.6ex]{
\draw (.0,.0) -- +(-.2,.0);
\draw (.5,-.5) -- +(.0,-.2);
\draw (.7,-.3) -- (.7,-.4);
\draw (.1,.1) -- (1.0,.1);
\draw (.3,.2) -- (.3,.1);
\draw (.25,.1) -- (.25,-.4);
\draw (.35,.1) -- (.35,-.4);
\draw (.1,.2) -- (.5,.2) -- (.5,.1);
\draw (.1,-.15) -- (1.0,-.15);
\draw[fill=white] (.2,.0) rectangle +(.2,-.3);
\draw (.2,.0) -- +(.2,-.3);
\draw[fill=white] (.0,.3) rectangle +(.1,-.6);
\draw[fill=white] (.2,-.4) rectangle +(.6,-.1);
\draw[fill=white] (.5,-.3) -- (.8,-.3) -- (.8,.0) -- cycle;
\node at (-.2,0)[left]{$\scriptstyle{n}$};
\node at (.5,-.7)[below]{$\scriptstyle{n}$};
\node at (1.0,.1)[right]{$\scriptstyle{1}$};
\node at (1.0,-.15)[right]{$\scriptstyle{n-k-1}$};
}\\
&=\,\tikz[baseline=-.6ex]{
\draw (.0,.0) -- +(-.2,.0);
\draw (.5,-.5) -- +(.0,-.2);
\draw (.7,-.3) -- (.7,-.4);
\draw (.1,.2) -- (1.0,.2);
\draw (.1,.1) -- (.35,.1);
\draw (.25,.1) -- (.25,-.4);
\draw (.35,.1) -- (.35,-.4);
\draw (.1,-.15) -- (1.0,-.15);
\draw[fill=white] (.2,.0) rectangle +(.2,-.3);
\draw (.2,.0) -- +(.2,-.3);
\draw[fill=white] (.0,.3) rectangle +(.1,-.6);
\draw[fill=white] (.2,-.4) rectangle +(.6,-.1);
\draw[fill=white] (.5,-.3) -- (.8,-.3) -- (.8,.0) -- cycle;
\node at (-.2,0)[left]{$\scriptstyle{n}$};
\node at (.5,-.7)[below]{$\scriptstyle{n}$};
\node at (1.0,.2)[right]{$\scriptstyle{1}$};
\node at (1.0,-.15)[right]{$\scriptstyle{n-k-1}$};
}
=0
\end{align*}
by Lemma~{\ref{coloredvertex}}~$(4)$.
\end{proof}

\begin{THM}\label{coloredA2}
Let $n$ be a positive integer.
\begin{enumerate}
\item 
$\displaystyle
\Bigg\langle\,\tikz[baseline=-.6ex]{
\draw (-.4,.4) -- +(-.2,0);
\draw[->-=.8, white, double=black, double distance=0.4pt, ultra thick] 
(.4,-.4) to[out=west, in=east] (-.4,.4);
\draw (.4,-.4) -- +(.2,0);
\draw (-.4,-.4) -- +(-.2,0);
\draw[->-=.8, white, double=black, double distance=0.4pt, ultra thick] 
(-.4,-.4) to[out=east, in=west] (.4,.4);
\draw (.4,.4) -- +(.2,0);
\draw[fill=white] (.4,-.6) rectangle +(.1,.4);
\draw[fill=white] (-.4,-.6) rectangle +(-.1,.4);
\draw[fill=white] (.4,.6) rectangle +(.1,-.4);
\draw[fill=white] (-.4,.6) rectangle +(-.1,-.4);
\node at (.4,-.6)[right]{$\scriptstyle{n}$};
\node at (-.4,-.6)[left]{$\scriptstyle{n}$};
\node at (.4,.6)[right]{$\scriptstyle{n}$};
\node at (-.4,.6)[left]{$\scriptstyle{n}$};
}\,\Bigg\rangle_{\! 3}
=\sum_{k=0}^{n} (-1)^kq^{\frac{2n^2-6nk+3k^2}{6}}{n\choose k}_q
\Bigg\langle\,\tikz[baseline=-.6ex]{
\draw 
(-.5,.4) -- +(-.2,0)
(.5,-.4) -- +(.2,0)
(-.5,-.4) -- +(-.2,0)
(.5,.4) -- +(.2,0);
\draw[-<-=.5] (-.5,.3) to[out=east, in=east] (-.5,-.3);
\draw[-<-=.5] (.5,.3) to[out=west, in=west] (.5,-.3);
\draw[-<-=.5] (-.5,.5) to[out=east, in=north west] (.0,.0);
\draw[-<-=.5] (.0,.0) to[out=south east, in=west] (.5,-.5);
\draw[-<-=.5] (.5,.5) to[out=west, in=north east] (.0,.0);
\draw[-<-=.5] (.0,.0) to[out=south west, in=east] (-.5,-.5);
\draw[fill=white] (.2,0) -- (0,.2) -- (-.2,0) -- (0,-.2) -- cycle;
\draw (-.2,0) -- (.2,0);
\draw[fill=white] (.5,-.6) rectangle +(.1,.4);
\draw[fill=white] (-.5,-.6) rectangle +(-.1,.4);
\draw[fill=white] (.5,.6) rectangle +(.1,-.4);
\draw[fill=white] (-.5,.6) rectangle +(-.1,-.4);
\node at (.5,-.6)[right]{$\scriptstyle{n}$};
\node at (-.5,-.6)[left]{$\scriptstyle{n}$};
\node at (.5,.6)[right]{$\scriptstyle{n}$};
\node at (-.5,.6)[left]{$\scriptstyle{n}$};
\node at (-.3,0)[left]{$\scriptstyle{n-k}$};
\node at (.3,0)[right]{$\scriptstyle{n-k}$};
\node at (0,.5)[right]{$\scriptstyle{k}$};
\node at (0,.5)[left]{$\scriptstyle{k}$};
\node at (0,-.5)[right]{$\scriptstyle{k}$};
\node at (0,-.5)[left]{$\scriptstyle{k}$};
}\,\Bigg\rangle_{\! 3}
$
\item 
$\displaystyle
\Bigg\langle\,\tikz[baseline=-.6ex]{
\draw (-.4,-.4) -- +(-.2,0);
\draw[->-=.8, white, double=black, double distance=0.4pt, ultra thick] 
(-.4,-.4) to[out=east, in=west] (.4,.4);
\draw (.4,.4) -- +(.2,0);
\draw (-.4,.4) -- +(-.2,0);
\draw[->-=.8, white, double=black, double distance=0.4pt, ultra thick] 
(.4,-.4) to[out=west, in=east] (-.4,.4);
\draw (.4,-.4) -- +(.2,0);
\draw[fill=white] (.4,-.6) rectangle +(.1,.4);
\draw[fill=white] (-.4,-.6) rectangle +(-.1,.4);
\draw[fill=white] (.4,.6) rectangle +(.1,-.4);
\draw[fill=white] (-.4,.6) rectangle +(-.1,-.4);
\node at (.4,-.6)[right]{$\scriptstyle{n}$};
\node at (-.4,-.6)[left]{$\scriptstyle{n}$};
\node at (.4,.6)[right]{$\scriptstyle{n}$};
\node at (-.4,.6)[left]{$\scriptstyle{n}$};
}\,\Bigg\rangle_{\! 3}
=\sum_{k=0}^{n} (-1)^{k}q^{\frac{-2n^2+3k^2}{6}}{n\choose k}_q
\Bigg\langle\,\tikz[baseline=-.6ex]{
\draw 
(-.5,.4) -- +(-.2,0)
(.5,-.4) -- +(.2,0)
(-.5,-.4) -- +(-.2,0)
(.5,.4) -- +(.2,0);
\draw[-<-=.5] (-.5,.3) to[out=east, in=east] (-.5,-.3);
\draw[-<-=.5] (.5,.3) to[out=west, in=west] (.5,-.3);
\draw[-<-=.5] (-.5,.5) to[out=east, in=north west] (.0,.0);
\draw[-<-=.5] (.0,.0) to[out=south east, in=west] (.5,-.5);
\draw[-<-=.5] (.5,.5) to[out=west, in=north east] (.0,.0);
\draw[-<-=.5] (.0,.0) to[out=south west, in=east] (-.5,-.5);
\draw[fill=white] (.2,0) -- (0,.2) -- (-.2,0) -- (0,-.2) -- cycle;
\draw (-.2,0) -- (.2,0);
\draw[fill=white] (.5,-.6) rectangle +(.1,.4);
\draw[fill=white] (-.5,-.6) rectangle +(-.1,.4);
\draw[fill=white] (.5,.6) rectangle +(.1,-.4);
\draw[fill=white] (-.5,.6) rectangle +(-.1,-.4);
\node at (.5,-.6)[right]{$\scriptstyle{n}$};
\node at (-.5,-.6)[left]{$\scriptstyle{n}$};
\node at (.5,.6)[right]{$\scriptstyle{n}$};
\node at (-.5,.6)[left]{$\scriptstyle{n}$};
\node at (-.3,0)[left]{$\scriptstyle{n-k}$};
\node at (.3,0)[right]{$\scriptstyle{n-k}$};
\node at (0,.5)[right]{$\scriptstyle{k}$};
\node at (0,.5)[left]{$\scriptstyle{k}$};
\node at (0,-.5)[right]{$\scriptstyle{k}$};
\node at (0,-.5)[left]{$\scriptstyle{k}$};
}\,\Bigg\rangle_{\! 3}
$
\item
$\displaystyle
\Bigg\langle\,\tikz[baseline=-.6ex]{
\draw 
(-.4,.4) -- +(-.2,0)
(-.4,-.4) -- +(-.2,0);
\draw[-<-=.5] (-.4,.4) to[out=east, in=north west] (0,.3);
\draw[->-=.7] (0,.3) -- (.4,.3);
\draw[->-=.4, ->-=.8] (0,.3) -- (0,-.3);
\draw[-<-=.7] (0,-.3) -- (.4,-.3);
\draw[->-=.5] (-.4,-.4) to[out=east, in=south west] (0,-.3);
\node at (.4,-.4)[below right]{$\scriptstyle{n}$};
\node at (-.4,-.6)[left]{$\scriptstyle{n}$};
\node at (.4,.4)[above right]{$\scriptstyle{n}$};
\node at (-.4,.6)[left]{$\scriptstyle{n}$};
\draw[fill=white] (-.2,.2) -- (.1,.2) -- (.1,.5) -- cycle;
\draw[fill=white] (-.2,-.2) -- (.1,-.2) -- (.1,-.5) -- cycle;
\begin{scope}[xshift=.9cm]
\draw 
(.4,-.4) -- +(.2,0)
(.4,.4) -- +(.2,0);
\draw[->-=.5] (.4,.4) to[out=east, in=north west] (0,.3);
\draw[-<-=.7] (0,.3) -- (-.4,.3);
\draw[-<-=.3, -<-=.7] (0,.3) -- (0,-.3);
\draw[->-=.7] (0,-.3) -- (-.4,-.3);
\draw[-<-=.5] (.4,-.4) to[out=east, in=south west] (0,-.3);
\draw[fill=white] (.4,-.6) rectangle +(.1,.4);
\draw[fill=white] (.4,.6) rectangle +(.1,-.4);
\node at (.4,-.6)[right]{$\scriptstyle{n}$};
\node at (.4,.6)[right]{$\scriptstyle{n}$};
\draw[fill=white] (.2,.2) -- (-.1,.2) -- (-.1,.5) -- cycle;
\draw[fill=white] (.2,-.2) -- (-.1,-.2) -- (-.1,-.5) -- cycle;
\draw[fill=white] (.2,-.05) rectangle (-.1,.05);
\end{scope}
\draw[fill=white] (.4,-.5) rectangle +(.1,.3);
\draw[fill=white] (-.4,-.6) rectangle +(-.1,.4);
\draw[fill=white] (.4,.5) rectangle +(.1,-.3);
\draw[fill=white] (-.4,.6) rectangle +(-.1,-.4);
\draw[fill=white] (-.2,-.05) rectangle (.1,.05);
}\,\Bigg\rangle_{\! 3}
=\sum_{k=0}^{n}
\Bigg\langle\,\tikz[baseline=-.6ex]{
\draw 
(-.4,.4) -- +(-.2,0)
(.4,-.4) -- +(.2,0)
(-.4,-.4) -- +(-.2,0)
(.4,.4) -- +(.2,0);
\draw[-<-=.5] (-.4,.5) -- (.4,.5);
\draw[->-=.5] (-.4,-.5) -- (.4,-.5);
\draw[-<-=.5] (-.4,.3) to[out=east, in=east] (-.4,-.3);
\draw[->-=.5] (.4,.3) to[out=west, in=west] (.4,-.3);
\draw[fill=white] (.4,-.6) rectangle +(.1,.4);
\draw[fill=white] (-.4,-.6) rectangle +(-.1,.4);
\draw[fill=white] (.4,.6) rectangle +(.1,-.4);
\draw[fill=white] (-.4,.6) rectangle +(-.1,-.4);
\node at (.4,-.6)[right]{$\scriptstyle{n}$};
\node at (-.4,-.6)[left]{$\scriptstyle{n}$};
\node at (.4,.6)[right]{$\scriptstyle{n}$};
\node at (-.4,.6)[left]{$\scriptstyle{n}$};
\node at (0,.5)[above]{$\scriptstyle{n-k}$};
\node at (0,-.5)[below]{$\scriptstyle{n-k}$};
\node at (-.2,0)[left]{$\scriptstyle{k}$};
\node at (.2,0)[right]{$\scriptstyle{k}$};
}\,\Bigg\rangle_{\! 3}
$
\item
$\Big\langle\,\tikz[baseline=-.6ex]{
\draw[->-=.7] (-.9,0) -- (-.4,0);
\draw[-<-=.7] (.9,0) -- (.4,0);
\draw[-<-=.7] (-.4,0) to[out=north, in=west] (0,.25);
\draw[->-=.7] (.4,0) to[out=north, in=east] (0,.25);
\draw[-<-=.7] (-.4,0) to[out=south, in=west] (0,-.25);
\draw[->-=.7] (.4,0) to[out=south, in=east] (0,-.25);
\draw[fill=white] (-.05,.1) rectangle (.05,.4);
\draw[fill=white] (-.05,-.1) rectangle (.05,-.4);
\draw[fill=white, xshift=-.4cm] (0:.2) -- (120:.2) -- (240:.2) -- cycle;
\draw[fill=white, xshift=.4cm] (60:.2) -- (180:.2) -- (300:.2) -- cycle;
\draw[fill=white] (-.8,-.2) rectangle (-.7,.2);
\draw[fill=white] (.8,-.2) rectangle (.7,.2);
\node at (0,.2)[above left]{$\scriptstyle{n}$};
\node at (0,-.2)[below left]{$\scriptstyle{n}$};
\node at (-.4,0)[above left]{$\scriptstyle{n}$};
\node at (.4,0)[above right]{$\scriptstyle{n}$};
}\,\Big\rangle_{\! 3}
=
\left[n+1\right]\Big\langle\,\tikz[baseline=-.6ex]{
\draw[->-=.3, ->-=.8] (-.5,0) -- (.5,0);
\draw[fill=white] (-.05,-.2) rectangle (.05,.2);
\node at (0,0)[above right]{$\scriptstyle{n}$};
}\,\Big\rangle_{\! 3}
$
\item
$\displaystyle\Big\langle\,\tikz[baseline=-.6ex]{
\draw[->-=.5] (0,0) circle [radius=.3];
\draw[fill=white] (.1,-.05) rectangle (.5,.05);
\node at (.3,0)[above right]{$\scriptstyle{n}$};
}\,\Big\rangle_{\! 3}
=\frac{\left[n+1\right]\left[n+2\right]}{\left[2\right]} \emptyset$
\end{enumerate}
\end{THM}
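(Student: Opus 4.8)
The plan is to treat parts (1) and (2) as the $A_2$ counterpart of the colored Kauffman bracket skein relation (Proposition~\ref{coloredA1}(1)) and to prove them by the same lattice-path and partition argument, whereas parts (3)--(5) are shorter consequences of the clasp lemmas and of the colored-vertex manipulations of Lemma~\ref{coloredvertex}.

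For (1) I would first peel a single strand off the lower-left $A_2$ clasp using the recursive definition of the $A_2$ clasp, reducing the left-hand crossing to a crossing of a single strand with the opposite $n$-clasp. Applying the first $A_2$ bracket resolution to that single crossing splits it into the $q^{\frac{1}{3}}$-weighted smoothing in which both strands continue (the colored $4$-valent vertex) and the $-q^{-\frac{1}{6}}$-weighted smoothing in which they turn back through a trivalent pair. Absorbing the stray strand back into the clasp (clasp idempotence) and straightening the twisted strand with Lemma~\ref{A2clasplem} produces a two-term recursion of exactly the shape of (\ref{A1halfres}): writing $\langle\sigma(k,l;n)\rangle_3$ for the web in which $k$ strands run through the central $4$-valent vertex and $l$ turn back on each side, the crossing expands as one $q$-power times $\langle\sigma(k+1,l;n)\rangle_3$ plus another $q$-power times $\langle\sigma(k,l+1;n)\rangle_3$. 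Identifying $\langle\sigma(k,l;n)\rangle_3$ with the lattice point $(k,l)$ and decorating the steps $(k,l)\to(k+1,l)$ and $(k,l)\to(k,l+1)$ with these $q$-powers, the coefficient of each terminal web ($k+l=n$) is the sum over monotone lattice paths from $(0,0)$ to $(k,l)$ of the products of edge decorations; since a downward unit-box shift of a path again multiplies the product by $q$, Lemma~\ref{partition} turns the path sum into ${n\choose k}_q$, and the product of decorations along the boundary path supplies the prefactor $(-1)^kq^{\frac{2n^2-6nk+3k^2}{6}}$. Part (2) is the same computation run with the second $A_2$ bracket resolution (the opposite crossing), which yields $(-1)^kq^{\frac{-2n^2+3k^2}{6}}$ instead.

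Parts (4) and (5) I would dispatch using the clasp lemmas directly. Statement (5) is the $k=n$ specialization of the partial-trace formula (second bullet of Lemma~\ref{A2clasplem}): there $n-k=0$, so the through-strand disappears and the whole picture becomes the closed $n$-colored loop, while the coefficient $\frac{[n+1][n+2]}{[n-k+1][n-k+2]}$ collapses to $\frac{[n+1][n+2]}{[1][2]}=\frac{[n+1][n+2]}{[2]}$ and the residual clasp is the empty diagram. For (4) the left-hand side is a bigon formed by two colored trivalent vertices joined along two clasped $n$-edges; I would open one internal edge with the recursive clasp definition and collapse the resulting triangles and bigons using Lemma~\ref{coloredvertex}, obtaining a recursion in $n$ whose solution is the quantum integer $[n+1]$.

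Statement (3) is a recoupling identity with all coefficients equal to $1$, expressing the glued double-vertex web on the left in the ``H-web'' basis of the clasped space $W_{n^{+}+n^{-}}$ indexed by the number $k$ of strands turned back. I would prove it by induction on $n$: split off a strand with the recursive clasp definition, slide and fuse it through the central vertices using Lemma~\ref{coloredvertex}, and match the resulting terms against the basis. The main obstacle throughout is the $q$-exponent bookkeeping in (1) and (2): one must check that the product of edge decorations along the boundary lattice path telescopes to the claimed closed form, and that the orientation conventions of the two $A_2$ crossings are tracked consistently so that the twist factors of Lemma~\ref{A2clasplem} carry the correct signs into the exponent. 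For (3) the delicate point is instead to arrange the induction so that the coefficients remain exactly $1$ and no spurious quantum-dimension factors accumulate.
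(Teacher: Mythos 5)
Your plan coincides with the paper's proof essentially step for step: the paper also peels a strand off the clasp, resolves the crossing to obtain the two-term recursion $\langle\sigma(k,l;n)\rangle_3=q^{-\frac{1}{3}(2(n-k-l)-1)}\langle\sigma(k,l+1;n)\rangle_3-q^{\frac{1}{6}(2(n-k-l)-1)}\langle\sigma(k+1,l;n)\rangle_3$, and runs the identical lattice-path argument with Lemma~\ref{partition}, while (5) is indeed the $k=n$ specialization of the partial-trace lemma and (3), (4) are handled by the colored-vertex manipulations you describe. The only deviations are cosmetic: the paper proves (2) directly and deduces (1) by the substitution $q\mapsto q^{-1}$, it isolates the strand-sliding step as Lemma~\ref{A2through}, and you have the two smoothings swapped --- it is the trivalent ($-q^{\mp\frac{1}{6}}$) resolution, not the parallel one, that builds the colored $4$-valent vertex and increments $k$ (whence the factor $(-1)^k$), though this bookkeeping slip does not affect the viability of the argument.
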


We can obtain the following by easy calculation.
\begin{LEM}
For $k=0,1,\dots,n$,
\[
\Big\langle\,\tikz[baseline=-.6ex]{
\draw[-<-=.2] (-.4,.2) -- (.4,.2);
\draw[-<-=.8, rounded corners=.1cm] (-.2,-.2) rectangle (.2,.1);
\draw[fill=white] (-.05,.0) rectangle (.05,.3);
\node at (.0,.3)[above]{$\scriptstyle{n}$};
\node at (.4,.2)[right]{$\scriptstyle{n-k}$};
\node at (.0,-.2)[below]{$\scriptstyle{k}$};
}\,\Big\rangle_{\! 3}
=\frac{\left[n+1\right]\left[n+2\right]}{\left[n-k+1\right]\left[n-k+2\right]}
\Big\langle\,\tikz[baseline=-.6ex]{
\draw[-<-=.2] (-.4,.0) -- (.4,.0);
\draw[fill=white] (-.05,-.2) rectangle (.05,.2);
\node at (.0,.2)[above]{$\scriptstyle{n-k}$};
}\,\Big\rangle_{\! 3}.\]
\end{LEM}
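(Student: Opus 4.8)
The plan is to read the left-hand diagram as the \emph{partial closure} (partial trace) of the clasp $f_n$ over $k$ of its $n$ strands, to argue that any such closure is forced to be a scalar multiple of the $(n-k)$-clasp $f_{n-k}$, and then to pin the scalar down by a single quantum-dimension computation. Before doing any work, though, I would point out that this statement is literally the image of Lemma~\ref{A2clasplem}(2) under reversal of every edge orientation: the two diagrams have identical geometry and differ only in that each \texttt{->-} is replaced by \texttt{-<-}. Reversing all orientations is a $\mathbb{Q}(q^{1/6})$-linear isomorphism $W_\varepsilon\to W_{-\varepsilon}$ (where $-\varepsilon$ is the sign sequence with every sign flipped) carrying $A_2$ basis webs to $A_2$ basis webs. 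Because each relation defining $\langle\,\cdot\,\rangle_3$ — the two crossing resolutions, the bigon relation with coefficient $[2]$, the square relation, and the loop value $[3]$ — is symmetric under reversing every arrow, this map intertwines the two $A_2$ brackets, and the same symmetry of the recursive definition shows that it sends the clasp of color $n$ to the clasp of color $n$ with reversed orientations. Applying this involution to Lemma~\ref{A2clasplem}(2) therefore reproduces the stated identity with the identical coefficient $[n+1][n+2]/([n-k+1][n-k+2])$, and nothing further is needed.

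If I instead wanted a self-contained derivation, I would proceed as follows. By the idempotency of the clasp together with its annihilation of turnbacks (the second item in the properties of $A_2$ clasps), any $A_2$ web sandwiched between two copies of $f_{n-k}$ reduces to a multiple of $f_{n-k}$; hence the partial closure $P$ on the left, an element of $W_{(n-k)^{+}+(n-k)^{-}}$ whose $n-k$ free strands emanate from $f_n$, satisfies $P=c\,f_{n-k}$ for a scalar $c\in\mathbb{Q}(q^{1/6})$. To find $c$ I would close up the remaining $n-k$ strands: on one side this closes all $n$ strands of $f_n$, giving the quantum dimension $[n+1][n+2]/[2]$ by Theorem~\ref{coloredA2}(5); on the other side it yields $c$ times the closure of $f_{n-k}$, namely $c\,[n-k+1][n-k+2]/[2]$ by the same part of Theorem~\ref{coloredA2}. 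Equating gives $c=[n+1][n+2]/([n-k+1][n-k+2])$. Equivalently, one may close one strand at a time, picking up the ratio $[m+2]/[m]$ at the clasp $f_m$, so that the total coefficient telescopes as $\prod_{j=0}^{k-1}[n-j+2]/[n-j]=[n+1][n+2]/([n-k+1][n-k+2])$.

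The main obstacle in either route is the proportionality step. In the symmetry route one must verify, relation by relation, that orientation reversal truly preserves every defining relation of $\langle\,\cdot\,\rangle_3$ and the clasp recursion, so that clasps are sent to clasps; this is routine but should be checked explicitly. In the self-contained route the crux is the claim that the $f_{n-k}$-sandwiched webs span a one-dimensional space generated by $f_{n-k}$, which is exactly where the annihilation property does its work: any basis web placed between two clasps either contains a turnback and is killed, or is the identity tangle and contributes $f_{n-k}$, and I would spell out this reduction using the non-crossing structure of $A_2$ basis webs. Once proportionality is established, the coefficient is immediate from the quantum-dimension ratio above, so I would present the symmetry argument as the headline and keep the dimension computation as an independent check.
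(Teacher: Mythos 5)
The paper offers no proof of this lemma at all --- it is introduced with ``We can obtain the following by easy calculation'' --- so there is no argument of the author's to compare yours against; what you have written is a legitimate filling-in, and both of your routes are essentially the standard ones. Your headline observation is correct: the statement is the image of the second item of Lemma~\ref{A2clasplem} under reversal of all orientations, and that reversal does intertwine the $A_2$ bracket (each defining relation, and the clasp recursion, is arrow-symmetric up to a $180^\circ$ rotation of the local pictures). But be aware that this only transfers the burden, since Lemma~\ref{A2clasplem} is itself stated in the paper without proof (``We can easily calculate the following''); as a reduction it is worth recording, as a proof it is not self-contained.

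In the self-contained route there is one genuine issue to fix: you normalize the scalar $c$ by closing all remaining strands and invoking Theorem~\ref{coloredA2}~$(5)$, but the paper derives Theorem~\ref{coloredA2}~$(5)$ \emph{from} this lemma at $k=n$, so within the paper's logical order that step is circular. Your alternative --- closing one strand at a time and telescoping $\prod_{j=0}^{k-1}[n-j+2]/[n-j]$ --- is the right way around this, provided the single-strand coefficient $[m+2]/[m]$ is computed directly from the recursive definition of the clasp: closing the last strand of the two terms in the recursion gives $[3]-[2]\,[m-1]/[m]=[m+2]/[m]$ by Lemma~\ref{qinteger}. Two smaller points: the proportionality $P=c\,f_{n-k}$ is correct, but in the $A_2$ setting ``turnback'' is not quite the right word --- since all free endpoints on a given side of the clasp carry the same sign, no cup can occur there, and what the clasp annihilates is a pair of adjacent strands feeding a common trivalent vertex, which any non-identity basis web must exhibit next to one of the clasps (this is where the at-least-six-sides condition on internal faces does its work); and the orientation-reversal check for the two crossing relations deserves the explicit remark that reversing both arrows and rotating the disk by $180^\circ$ returns each crossing type to itself with the same over/under data, so the coefficients $q^{\pm 1/3}$, $-q^{\mp 1/6}$ match up correctly.
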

This lemma gives Theorem~\ref{coloredA2}~$(5)$ when $k=n$.
Theorem~\ref{coloredA2}~$(1)$ is obtained from a calculation of Theorem~\ref{coloredA2}~$(2)$ substituting $q^{-1}$ with $q$.
Thus, 
we only need to prove Theorem~\ref{coloredA2}~$(2)$--$(4)$.
To prove Theorem~\ref{coloredA2}~$(2)$--$(4)$, 
we prepare lemmas.
\begin{LEM}\label{A2through}
For $n\geq 2$ and $0\leq k\leq n-1$,
\[
\Bigg\langle\,\tikz[baseline=-.6ex]{
\draw 
(-.4,.4) -- +(-.2,0)
(-.4,-.4) -- +(-.2,0)
(.3,.35) -- +(.2,0)
(.3,-.35) -- +(.2,0);
\draw (-.4,.3) -- (.3,.3);
\draw (-.4,-.3) -- (.3,-.3);
\draw (-.4,.6) -- (.5,.6);
\draw (-.4,-.6) -- (.5,-.6);
\draw (-.1,.6) -- (-.1,-.6);
\draw (.3,.2) to[out=east, in=east] (.3,-.2);
\draw[fill=white] (-.2,.5) -- (.0,.5) -- (.0,.7) -- cycle;
\draw[fill=white, yscale=-1] (-.2,.5) -- (.0,.5) -- (.0,.7) -- cycle;
\draw[fill=white] (-.2,-.4) rectangle +(.2,.3);
\draw[fill=white] (-.2,.4) rectangle +(.2,-.3);
\draw (-.2,-.4) -- +(.2,.3);
\draw (-.2,.4) -- +(.2,-.3);
\draw[fill=white] (-.4,-.7) rectangle +(-.1,.6);
\draw[fill=white] (-.4,.7) rectangle +(-.1,-.6);
\draw[fill=white] (.3,-.5) rectangle +(-.1,.4);
\draw[fill=white] (.3,.5) rectangle +(-.1,-.4);
\node at (-.4,-.4)[below left]{$\scriptstyle{n}$};
\node at (-.4,.4)[above left]{$\scriptstyle{n}$};
\node at (.5,-.6)[right]{$\scriptstyle{k}$};
\node at (.5,.6)[right]{$\scriptstyle{k}$};
\node at (.5,-.3)[right]{$\scriptstyle{n-k-1}$};
\node at (.5,.3)[right]{$\scriptstyle{n-k-1}$};
\node at (.3,0)[right]{$\scriptstyle{1}$};
}\,\Bigg\rangle_{\! 3}
=
\Bigg\langle\,\tikz[baseline=-.6ex]{
\draw 
(-.4,.4) -- +(-.2,0)
(-.4,-.4) -- +(-.2,0)
(.3,.3) -- +(.2,0)
(.3,-.3) -- +(.2,0);
\draw (-.4,.3) -- (.3,.3);
\draw (-.4,-.3) -- (.3,-.3);
\draw (-.4,.6) -- (.5,.6);
\draw (-.4,-.6) -- (.5,-.6);
\draw (.0,.6) -- (.0,-.6);
\draw (-.4,.2) to[out=east, in=east] (-.4,-.2);
\draw[fill=white] (-.1,-.5) -- (.1,-.5) -- (.1,-.7) -- cycle;
\draw[fill=white, yscale=-1] (-.1,-.5) -- (.1,-.5) -- (.1,-.7) -- cycle;
\draw[fill=white] (-.1,-.4) rectangle +(.2,.3);
\draw[fill=white] (-.1,.4) rectangle +(.2,-.3);
\draw (-.1,-.4) -- +(.2,.3);
\draw (-.1,.4) -- +(.2,-.3);
\draw[fill=white] (-.4,-.7) rectangle +(-.1,.6);
\draw[fill=white] (-.4,.7) rectangle +(-.1,-.6);
\draw[fill=white] (.4,-.4) rectangle +(-.1,.3);
\draw[fill=white] (.4,.4) rectangle +(-.1,-.3);
\node at (-.4,-.4)[below left]{$\scriptstyle{n}$};
\node at (-.4,.4)[above left]{$\scriptstyle{n}$};
\node at (.5,-.6)[right]{$\scriptstyle{k}$};
\node at (.5,.6)[right]{$\scriptstyle{k}$};
\node at (.5,-.3)[right]{$\scriptstyle{n-k-1}$};
\node at (.5,.3)[right]{$\scriptstyle{n-k-1}$};
\node at (-.4,0)[right]{$\scriptstyle{1}$};
}\,\Bigg\rangle_{\! 3}.
\]
\end{LEM}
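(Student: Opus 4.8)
The plan is to read both diagrams as the same web sandwiched between two $n$-colored $A_2$ clasps, differing only in the side on which the single color-$1$ cap is attached: on the left-hand side it closes off the strand just to the right of the colored $4$-valent/trivalent configuration, and on the right-hand side it closes it off on the far left. So the content of the lemma is that this color-$1$ cap can be transported across the intervening web \emph{cleanly}, i.e.\ with coefficient exactly $1$. First I would reduce the global statement to a purely local one: it suffices to push the cap past a single colored $4$-valent vertex (the box-with-diagonal in its $m=1$ form), and then iterate, using Lemma~\ref{coloredvertex}~(3) and~(6) at the two ends to realign the trivalent caps and to absorb the outer clasps back into the standard shape after each step.

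For the elementary slide I would expand the relevant $4$-valent vertex by its definition into a row of trivalent vertices and then apply the passing expansion Lemma~\ref{coloredvertex}~(4) together with the fusion identity Lemma~\ref{coloredvertex}~(5). This produces exactly one ``pass-through'' term, in which the cap has advanced past the vertex, plus a collection of correction terms, each inserting a cup-cap that routes strictly fewer than $n$ strands into one of the bounding $n$-colored clasps. The crucial observation is that every such correction term is annihilated by the vanishing property of the $A_2$ clasp (the second property in the lemma on properties of $A_2$ clasps): the hypothesis $0\le k\le n-1$ is precisely what forces the color entering the clasp after the insertion to land in the range $0,1,\dots,n-2$ where that property applies. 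Hence only the pass-through term survives, and it is literally the configuration with the cap moved one vertex further along, establishing the single-vertex slide with no coefficient.

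The main obstacle is the bookkeeping in this cancellation step: one must track the three colors $k$, $n-k-1$ and $1$ through each vertex and verify that \emph{every} term output by Lemma~\ref{coloredvertex}~(4) is either the unique pass-through term or else feeds a cup-cap into a clasp in the annihilating manner, so that no residual sign or power of $q$ is left behind, consistent with the target identity having coefficient $1$. I expect to organize the iteration as an induction (on $n$, with the short-web case where one of the through-colors degenerates checked directly from Lemma~\ref{coloredvertex}), and at each stage Lemma~\ref{coloredvertex}~(3) is what restores the trivalent caps on the two ends to their inductive positions so that the hypothesis can be reapplied.
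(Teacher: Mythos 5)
Your plan matches the paper's proof in all essentials: the paper likewise reduces the general statement to an elementary slide via the recursive construction of colored trivalent vertices out of colored $4$-valent ones (it phrases this as "it suffices to treat $k=1$"), performs the local move by resolving the resulting square so that exactly one pass-through term survives, and kills the remaining term by the annihilation property of the $A_2$ clasp, which applies precisely because the inserted cup--cap feeds fewer than $n$ strands into an $n$-colored clasp. The only cosmetic difference is that you package the iteration as an induction and cite Lemma~\ref{coloredvertex}~(4)--(5) for the local expansion where the paper applies the square relation of the $A_2$ bracket directly; the mechanism and the role of the hypothesis $0\le k\le n-1$ are the same.
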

\begin{proof}
Colored trivalent vertices are constructed of colored $4$-valent vertices by its definition. 
This description of colored trivalent vertices implies that we only have to prove in the case of $k=1$.
\begin{align*}
\Bigg\langle\,\tikz[baseline=-.6ex]{
\draw 
(-.4,.4) -- +(-.2,0)
(-.4,-.4) -- +(-.2,0)
(.3,.35) -- +(.2,0)
(.3,-.35) -- +(.2,0);
\draw (-.4,.3) -- (.3,.3);
\draw (-.4,-.3) -- (.3,-.3);
\draw (-.4,.6) -- (.5,.6);
\draw (-.4,-.6) -- (.5,-.6);
\draw (-.1,.6) -- (-.1,-.6);
\draw (.3,.2) to[out=east, in=east] (.3,-.2);
\draw[fill=white] (-.2,-.5) rectangle +(.2,.4);
\draw (-.2,-.5) -- +(.2,.4);
\draw[fill=white, yscale=-1] (-.2,-.5) rectangle +(.2,.4);
\draw[yscale=-1] (-.2,-.5) -- +(.2,.4);
\draw[fill=white] (-.4,-.7) rectangle +(-.1,.6);
\draw[fill=white] (-.4,.7) rectangle +(-.1,-.6);
\draw[fill=white] (.3,-.5) rectangle +(-.1,.4);
\draw[fill=white] (.3,.5) rectangle +(-.1,-.4);
\node at (-.4,-.4)[below left]{$\scriptstyle{n}$};
\node at (-.4,.4)[above left]{$\scriptstyle{n}$};
\node at (.5,-.6)[right]{$\scriptstyle{1}$};
\node at (.5,.6)[right]{$\scriptstyle{1}$};
\node at (.5,-.3)[right]{$\scriptstyle{n-2}$};
\node at (.5,.3)[right]{$\scriptstyle{n-2}$};
\node at (.3,0)[right]{$\scriptstyle{1}$};
}\,\Bigg\rangle_{\! 3}
&=
\Bigg\langle\,\tikz[baseline=-.6ex]{
\draw 
(-.4,.4) -- +(-.2,0)
(-.4,-.4) -- +(-.2,0);
\draw (-.4,.3) -- (.3,.3);
\draw (-.4,-.3) -- (.3,-.3);
\draw (-.4,.6) -- (.3,.6);
\draw (-.4,-.6) -- (.3,-.6);
\draw (-.1,.6) -- (-.1,-.6);
\draw (.0,.2) to[out=east, in=east] (.0,-.2);
\draw[fill=white] (-.2,-.5) rectangle +(.2,.4);
\draw (-.2,-.5) -- +(.2,.4);
\draw[fill=white, yscale=-1] (-.2,-.5) rectangle +(.2,.4);
\draw[yscale=-1] (-.2,-.5) -- +(.2,.4);
\draw[fill=white] (-.4,-.7) rectangle +(-.1,.6);
\draw[fill=white] (-.4,.7) rectangle +(-.1,-.6);
\node at (-.4,-.4)[below left]{$\scriptstyle{n}$};
\node at (-.4,.4)[above left]{$\scriptstyle{n}$};
\node at (.3,-.6)[right]{$\scriptstyle{1}$};
\node at (.3,.6)[right]{$\scriptstyle{1}$};
\node at (.3,-.3)[right]{$\scriptstyle{n-2}$};
\node at (.3,.3)[right]{$\scriptstyle{n-2}$};
\node at (.1,0)[right]{$\scriptstyle{1}$};
}\,\Bigg\rangle_{\! 3}
=
\Bigg\langle\,\tikz[baseline=-.6ex]{
\draw 
(-.4,.4) -- +(-.2,0)
(-.4,-.4) -- +(-.2,0);
\draw (-.4,.4) -- (.3,.4);
\draw (-.4,-.4) -- (.3,-.4);
\draw (-.4,.6) -- (.3,.6);
\draw (-.4,-.6) -- (.3,-.6);
\draw (-.1,.6) -- (-.1,.15);
\draw[yscale=-1] (-.1,.6) -- (-.1,.15);
\draw (-.2,.15) -- (-.2,-.15);
\draw 
(-.4,.15) to[out=east, in=west]
(.0,.15) to[out=east, in=east]
(.0,-.15) to[out=west, in=west] (-.4,-.15);
\draw[fill=white] (-.2,-.5) rectangle +(.2,.25);
\draw (-.2,-.5) -- +(.2,.25);
\draw[fill=white, yscale=-1] (-.2,-.5) rectangle +(.2,.25);
\draw[yscale=-1] (-.2,-.5) -- +(.2,.25);
\draw[fill=white] (-.4,-.7) rectangle +(-.1,.6);
\draw[fill=white] (-.4,.7) rectangle +(-.1,-.6);
\node at (-.4,-.4)[below left]{$\scriptstyle{n}$};
\node at (-.4,.4)[above left]{$\scriptstyle{n}$};
\node at (.3,-.6)[right]{$\scriptstyle{1}$};
\node at (.3,.6)[right]{$\scriptstyle{1}$};
\node at (.3,-.3)[right]{$\scriptstyle{n-2}$};
\node at (.3,.3)[right]{$\scriptstyle{n-2}$};
\node at (.1,0)[right]{$\scriptstyle{1}$};
}\,\Bigg\rangle_{\! 3}\\
&=\Bigg\langle\,\tikz[baseline=-.6ex]{
\draw 
(-.4,.4) -- +(-.2,0)
(-.4,-.4) -- +(-.2,0);
\draw (-.4,.4) -- (.3,.4);
\draw (-.4,-.4) -- (.3,-.4);
\draw (-.4,.6) -- (.3,.6);
\draw (-.4,-.6) -- (.3,-.6);
\draw (-.1,.6) -- (-.1,-.6);
\draw 
(-.4,.15) to[out=east, in=west]
(-.3,.15) to[out=east, in=east]
(-.3,-.15) to[out=west, in=west] (-.4,-.15);
\draw[fill=white] (-.2,-.5) rectangle +(.2,.25);
\draw (-.2,-.5) -- +(.2,.25);
\draw[fill=white, yscale=-1] (-.2,-.5) rectangle +(.2,.25);
\draw[yscale=-1] (-.2,-.5) -- +(.2,.25);
\draw[fill=white] (-.4,-.7) rectangle +(-.1,.6);
\draw[fill=white] (-.4,.7) rectangle +(-.1,-.6);
\node at (-.4,-.4)[below left]{$\scriptstyle{n}$};
\node at (-.4,.4)[above left]{$\scriptstyle{n}$};
\node at (.3,-.6)[right]{$\scriptstyle{1}$};
\node at (.3,.6)[right]{$\scriptstyle{1}$};
\node at (.3,-.3)[right]{$\scriptstyle{n-2}$};
\node at (.3,.3)[right]{$\scriptstyle{n-2}$};
\node at (.1,0)[right]{$\scriptstyle{1}$};
}\,\Bigg\rangle_{\! 3}
+\Bigg\langle\,\tikz[baseline=-.6ex]{
\draw 
(-.4,.4) -- +(-.2,0)
(-.4,-.4) -- +(-.2,0);
\draw (-.4,.4) -- (.3,.4);
\draw (-.4,-.4) -- (.3,-.4);
\draw (-.4,.6) -- (.3,.6);
\draw (-.4,-.6) -- (.3,-.6);
\draw (-.1,.6) -- (-.1,.15);
\draw[yscale=-1] (-.1,.6) -- (-.1,.15);
\draw (-.4,.15) -- (-.1,.15);
\draw[yscale=-1] (-.4,.15) -- (-.1,.15);
\draw[fill=white] (-.2,-.5) rectangle +(.2,.25);
\draw (-.2,-.5) -- +(.2,.25);
\draw[fill=white, yscale=-1] (-.2,-.5) rectangle +(.2,.25);
\draw[yscale=-1] (-.2,-.5) -- +(.2,.25);
\draw[fill=white] (-.4,-.7) rectangle +(-.1,.6);
\draw[fill=white] (-.4,.7) rectangle +(-.1,-.6);
\node at (-.4,-.4)[below left]{$\scriptstyle{n}$};
\node at (-.4,.4)[above left]{$\scriptstyle{n}$};
\node at (.3,-.6)[right]{$\scriptstyle{1}$};
\node at (.3,.6)[right]{$\scriptstyle{1}$};
\node at (.3,-.3)[right]{$\scriptstyle{n-2}$};
\node at (.3,.3)[right]{$\scriptstyle{n-2}$};
\node at (.1,0)[right]{$\scriptstyle{1}$};
}\,\Bigg\rangle_{\! 3},
\end{align*}
and the second term vanishes by a property of $A_2$ clasp.
\end{proof}

\begin{LEM}
For any positive integer $n$,
\[
\Bigg\langle\,\tikz[baseline=-.6ex]{
\draw 
(.0,.3) -- +(-.2,.0)
(.9,.3) -- +(.2,.0)
(.5,-.2) -- +(.0,-.2);
\draw (.0,.3) -- (.8,.3);
\draw (.8,.3) -- (.9,.3);
\draw (.5,.0) -- (.5,-.2);
\draw[fill=white] (.2,.0) -- (.8,.0) -- (.8,.6) -- cycle;
\draw[fill=white] (.0,.0) rectangle (.1,.6);
\draw[fill=white] (.9,.0) rectangle (1.0,.6);
\draw[fill=white] (.2,-.1) rectangle (.8,-.2);
\node at (-.2,.3)[left]{$\scriptstyle{n}$};
\node at (1.1,.3)[right]{$\scriptstyle{n}$};
\node at (.5,-.4)[below]{$\scriptstyle{n}$};
}\,\Bigg\rangle_{\! 3}
=
\Bigg\langle\,\tikz[baseline=-.6ex]{
\draw 
(.0,.3) -- +(-.2,.0)
(1.2,.2) -- +(.2,.0)
(.8,-.2) -- +(.0,-.2);
\draw (.0,.5) -- (1.4,.5);
\draw (.0,.2) -- (1.0,.2);
\draw (1.0,.2) -- (1.2,.2);
\draw (.8,.0) -- (.8,-.2);
\draw (.25,.5) -- (.25,-.4);
\draw[fill=white] (.6,.0) -- (1.0,.0) -- (1.0,.4) -- cycle;
\draw[fill=white] (.2,.0) rectangle (.3,.4);
\draw (.2,.4) -- (.3,.0);
\draw[fill=white] (.0,.0) rectangle (.1,.6);
\draw[fill=white] (.4,.0) rectangle (.5,.4);
\draw[fill=white] (1.1,.0) rectangle (1.2,.4);
\draw[fill=white] (.6,-.1) rectangle (1.0,-.2);
\node at (-.2,.3)[left]{$\scriptstyle{n}$};
\node at (1.4,.2)[right]{$\scriptstyle{n-1}$};
\node at (1.4,.5)[right]{$\scriptstyle{1}$};
\node at (.8,-.4)[below]{$\scriptstyle{n-1}$};
\node at (.25,-.4)[below]{$\scriptstyle{1}$};
}\,\Bigg\rangle_{\! 3}
-\frac{\left[n-1\right]}{\left[n\right]}
\Bigg\langle\,\tikz[baseline=-.6ex]{
\draw 
(.0,.3) -- +(-.2,.0)
(1.0,.2) -- +(.2,.0)
(.5,-.3) -- +(.0,-.2);
\draw (.0,.5) -- (1.2,.5);
\draw (.0,.3) [rounded corners]-- (.4,.3) -- (.7,.0);
\draw (1.0,.2) -- (1.2,.2);
\draw (.55,.0) -- (.55,-.2);
\draw (.7,.15) -- (.9,.15);
\draw (.1,.1) -- (.2,.1) -- (.2,-.5);
\draw (.2,-.1) -- (.4,-.1) -- (.4,-.2);
\draw (.8,.5) -- (.8,.3) -- (.9,.3);
\draw[fill=white] (.4,.0) -- (.7,.0) -- (.7,.3) -- cycle;
\draw[fill=white] (.0,.0) rectangle (.1,.6);
\draw[fill=white] (.9,.0) rectangle (1.0,.4);
\draw[fill=white] (.3,-.2) rectangle (.7,-.3);
\node at (-.2,.3)[left]{$\scriptstyle{n}$};
\node at (1.2,.2)[right]{$\scriptstyle{n-1}$};
\node at (1.2,.5)[right]{$\scriptstyle{1}$};
\node at (.6,-.5)[below]{$\scriptstyle{n-1}$};
\node at (.2,-.5)[below]{$\scriptstyle{1}$};
}\,\Bigg\rangle_{\! 3}.
\]
\end{LEM}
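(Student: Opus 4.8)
The plan is to expand the bottom $n$-colored $A_2$ clasp of the left-hand side by means of its recursive definition. Writing that clasp as the difference of the two webs appearing in the Definition of the $A_2$ clasps produces two terms whose coefficients are $1$ and $-\frac{[n-1]}{[n]}$, which already reproduces the coefficient pattern on the right-hand side. Hence the entire content of the lemma reduces to identifying each of these two terms, diagrammatically, with the corresponding term in the statement; no separate coefficient computation will be needed, since the coefficients are fixed once and for all by the clasp recursion.

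First I would treat the turnback term, the one carrying the factor $-\frac{[n-1]}{[n]}$. Here the single strand that the recursion splits off forms a cap--cup against the trivalent vertex, and the remaining $(n-1)$-colored data is left untouched. After recognizing this turnback I would apply Lemma~\ref{A2through} to slide the resulting single strand past the trivalent vertex into the position drawn in the second diagram on the right, so that this term matches directly.

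Next I would treat the leading term, in which an $(n-1)$-colored clasp runs parallel to the freshly separated single strand feeding into the trivalent vertex. The goal is to route that single strand across the vertex as a colored $4$-valent vertex so as to reach the first diagram on the right. This is precisely the kind of absorption handled by Lemma~\ref{coloredvertex}: parts $(1)$ and $(2)$ let me merge and split clasped strands at the vertex and rewrite a $4$-valent vertex as a pair of trivalent vertices, and part $(4)$ lets the single strand pass through a clasp at the cost of a sum of correction terms. The crucial simplification is that, by the defining annihilation property of the $A_2$ clasp (the second item in the list of properties of $A_2$ clasps), every correction term that reinserts a strand back into the adjacent $n$-clasp vanishes, leaving exactly the single diagram carrying the $4$-valent vertex.

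The main obstacle I expect is the bookkeeping in this last step: keeping the edge orientations compatible with the conventions fixed for the colored tri- and $4$-valent vertices, and verifying that all but one of the terms produced when the single strand crosses the clasp are killed by the clasp. Once this vanishing is confirmed, the two surviving contributions assemble into the claimed right-hand side, completing the proof.
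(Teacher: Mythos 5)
Your overall skeleton matches the paper's: the coefficients $1$ and $-\frac{[n-1]}{[n]}$ do come from a single application of the recursive definition of the $A_2$ clasp, and the remaining work is the diagrammatic identification of the two resulting webs. However, two of your specific steps do not go through as stated. First, before any clasp is expanded, the $n$-colored trivalent vertex itself must be unfolded into an $(n-1)$-colored trivalent vertex together with a colored $4$-valent vertex (this is its recursive definition), since both diagrams on the right-hand side contain an $(n-1)$-colored trivalent vertex; your proposal leaves the $n$-colored vertex intact and never produces that feature. The paper performs this unfolding, together with dropping an adjacent clasp via Lemma~\ref{coloredvertex}~(6), as its very first equality, and only then applies the clasp recursion (to the right-hand clasp; your choice of the bottom clasp is equivalent by the symmetry of Lemma~\ref{coloredvertex}~(3), so that difference is immaterial).

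Second, Lemma~\ref{A2through} does not apply to the turnback term. That lemma moves a single-strand turnback sitting between \emph{two} colored trivalent vertices in a configuration with two $n$-clasps; the turnback produced by expanding a clasp on one leg of a \emph{single} colored trivalent vertex is a different local picture, and there is no second trivalent vertex to ``slide past.'' The paper instead disposes of this term by an explicit multi-step reduction using parts (4), (5) and (6) of Lemma~\ref{coloredvertex}: absorbing and inserting clasps, sliding the single strand through the $(n-1)$-colored data, and expanding it past a clasped bundle so that all but one summand is killed by the clasp annihilation property. Your plan for the leading term --- routing the single strand across the left clasp's flow with Lemma~\ref{coloredvertex}~(4) and killing the correction terms by clasp annihilation --- is sound and is essentially how the paper argues; it is the turnback term that needs the explicit computation rather than an appeal to Lemma~\ref{A2through}.
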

\begin{proof}
By using the definition of an $A_2$ clasp and Lemma~\ref{coloredvertex}~$(6)$, 
\begin{align*}
\Bigg\langle\,\tikz[baseline=-.6ex]{
\draw 
(.0,.3) -- +(-.2,.0)
(.9,.3) -- +(.2,.0)
(.5,-.2) -- +(.0,-.2);
\draw (.0,.3) -- (.8,.3);
\draw (.8,.3) -- (.9,.3);
\draw (.5,.0) -- (.5,-.2);
\draw[fill=white] (.2,.0) -- (.8,.0) -- (.8,.6) -- cycle;
\draw[fill=white] (.0,.0) rectangle (.1,.6);
\draw[fill=white] (.9,.0) rectangle (1.0,.6);
\draw[fill=white] (.2,-.1) rectangle (.8,-.2);
\node at (-.2,.3)[left]{$\scriptstyle{n}$};
\node at (1.1,.3)[right]{$\scriptstyle{n}$};
\node at (.5,-.4)[below]{$\scriptstyle{n}$};
}\,\Bigg\rangle_{\! 3}
&=
\Bigg\langle\,\tikz[baseline=-.6ex]{
\draw 
(.0,.3) -- +(-.2,.0)
(1.2,.2) -- +(.2,.0)
(.8,-.2) -- +(.0,-.2);
\draw (.0,.5) -- (1.4,.5);
\draw (.0,.2) -- (1.0,.2);
\draw (1.0,.2) -- (1.2,.2);
\draw (.8,.0) -- (.8,-.2);
\draw (.25,.5) -- (.25,-.4);
\draw[fill=white] (.6,.0) -- (1.0,.0) -- (1.0,.4) -- cycle;
\draw[fill=white] (.2,.0) rectangle (.3,.4);
\draw (.2,.4) -- (.3,.0);
\draw[fill=white] (.0,.0) rectangle (.1,.6);
\draw[fill=white] (1.1,.0) rectangle (1.2,.6);
\node at (-.2,.3)[left]{$\scriptstyle{n}$};
\node at (1.4,.2)[right]{$\scriptstyle{n-1}$};
\node at (1.4,.5)[right]{$\scriptstyle{1}$};
\node at (.8,-.4)[below]{$\scriptstyle{n-1}$};
\node at (.25,-.4)[below]{$\scriptstyle{1}$};
}\,\Bigg\rangle_{\! 3}\\
&=\Bigg\langle\,\tikz[baseline=-.6ex]{
\draw 
(.0,.3) -- +(-.2,.0)
(1.2,.2) -- +(.2,.0)
(.8,-.2) -- +(.0,-.2);
\draw (.0,.5) -- (1.4,.5);
\draw (.0,.2) -- (1.0,.2);
\draw (1.0,.2) -- (1.2,.2);
\draw (.8,.0) -- (.8,-.2);
\draw (.25,.5) -- (.25,-.4);
\draw[fill=white] (.6,.0) -- (1.0,.0) -- (1.0,.4) -- cycle;
\draw[fill=white] (.2,.0) rectangle (.3,.4);
\draw (.2,.4) -- (.3,.0);
\draw[fill=white] (.0,.0) rectangle (.1,.6);
\draw[fill=white] (1.1,.0) rectangle (1.2,.4);
\node at (-.2,.3)[left]{$\scriptstyle{n}$};
\node at (1.4,.2)[right]{$\scriptstyle{n-1}$};
\node at (1.4,.5)[right]{$\scriptstyle{1}$};
\node at (.8,-.4)[below]{$\scriptstyle{n-1}$};
\node at (.25,-.4)[below]{$\scriptstyle{1}$};
}\,\Bigg\rangle_{\! 3}
-\frac{\left[n-1\right]}{\left[n\right]}
\Bigg\langle\,\tikz[baseline=-.6ex]{
\draw 
(.0,.3) -- +(-.2,.0)
(1.2,.3) -- +(.3,.0)
(1.2,.1) -- +(.3,.0)
(1.5,.2) -- +(.2,.0)
(.8,-.2) -- +(.0,-.2);
\draw (.0,.5) -- (1.3,.5) -- (1.3,.3);
\draw (1.4,.3) -- (1.4,.5) -- (1.7,.5);
\draw (.0,.2) -- (1.0,.2);
\draw (1.0,.2) -- (1.2,.2);
\draw (.8,.0) -- (.8,-.2);
\draw (.25,.5) -- (.25,-.4);
\draw[fill=white] (.6,.0) -- (1.0,.0) -- (1.0,.4) -- cycle;
\draw[fill=white] (.2,.0) rectangle (.3,.4);
\draw (.2,.4) -- (.3,.0);
\draw[fill=white] (.0,.0) rectangle (.1,.6);
\draw[fill=white] (1.1,.0) rectangle (1.2,.4);
\draw[fill=white] (1.5,.0) rectangle (1.6,.4);
\node at (-.2,.3)[left]{$\scriptstyle{n}$};
\node at (1.7,.2)[right]{$\scriptstyle{n-1}$};
\node at (1.7,.5)[right]{$\scriptstyle{1}$};
\node at (.8,-.4)[below]{$\scriptstyle{n-1}$};
\node at (.25,-.4)[below]{$\scriptstyle{1}$};
}\,\Bigg\rangle_{\! 3}.
\end{align*}
The $A_2$ web of the second term is computed as follows:
\begin{align*}
\Bigg\langle\,\tikz[baseline=-.6ex]{
\draw 
(.0,.3) -- +(-.2,.0)
(1.2,.3) -- +(.3,.0)
(1.2,.1) -- +(.3,.0)
(1.5,.2) -- +(.2,.0)
(.8,-.2) -- +(.0,-.2);
\draw (.0,.5) -- (1.3,.5) -- (1.3,.3);
\draw (1.4,.3) -- (1.4,.5) -- (1.7,.5);
\draw (.0,.2) -- (1.0,.2);
\draw (1.0,.2) -- (1.2,.2);
\draw (.8,.0) -- (.8,-.2);
\draw (.25,.5) -- (.25,-.4);
\draw[fill=white] (.6,.0) -- (1.0,.0) -- (1.0,.4) -- cycle;
\draw[fill=white] (.2,.0) rectangle (.3,.4);
\draw (.2,.4) -- (.3,.0);
\draw[fill=white] (.0,.0) rectangle (.1,.6);
\draw[fill=white] (1.1,.0) rectangle (1.2,.4);
\draw[fill=white] (1.5,.0) rectangle (1.6,.4);
\node at (-.2,.3)[left]{$\scriptstyle{n}$};
\node at (1.7,.2)[right]{$\scriptstyle{n-1}$};
\node at (1.7,.5)[right]{$\scriptstyle{1}$};
\node at (.8,-.4)[below]{$\scriptstyle{n-1}$};
\node at (.25,-.4)[below]{$\scriptstyle{1}$};
}\,\Bigg\rangle_{\! 3}
&=\Bigg\langle\,\tikz[baseline=-.6ex]{
\draw 
(.0,.3) -- +(-.2,.0)
(1.2,.3) -- +(.3,.0)
(1.2,.1) -- +(.3,.0)
(1.5,.2) -- +(.2,.0)
(.8,-.2) -- +(.0,-.2);
\draw (.0,.5) -- (1.3,.5) -- (1.3,.3);
\draw (1.4,.3) -- (1.4,.5) -- (1.7,.5);
\draw (.0,.2) -- (1.0,.2);
\draw (1.0,.2) -- (1.2,.2);
\draw (.8,.0) -- (.8,-.2);
\draw (.25,.5) -- (.25,-.4);
\draw[fill=white] (.6,.0) -- (1.0,.0) -- (1.0,.4) -- cycle;
\draw[fill=white] (.2,.0) rectangle (.3,.4);
\draw (.2,.4) -- (.3,.0);
\draw[fill=white] (.4,.0) rectangle +(.1,.4);
\draw[fill=white] (.0,.0) rectangle (.1,.6);
\draw[fill=white] (1.1,.0) rectangle (1.2,.4);
\draw[fill=white] (1.5,.0) rectangle (1.6,.4);
\node at (-.2,.3)[left]{$\scriptstyle{n}$};
\node at (1.7,.2)[right]{$\scriptstyle{n-1}$};
\node at (1.7,.5)[right]{$\scriptstyle{1}$};
\node at (.8,-.4)[below]{$\scriptstyle{n-1}$};
\node at (.25,-.4)[below]{$\scriptstyle{1}$};
}\,\Bigg\rangle_{\! 3}
=\Bigg\langle\,\tikz[baseline=-.6ex]{
\draw 
(.0,.3) -- +(-.2,.0)
(1.0,.3) -- +(.5,.0)
(1.0,.1) -- +(.5,.0)
(1.5,.2) -- +(.2,.0)
(.8,-.2) -- +(.0,-.2);
\draw (.0,.5) -- (1.3,.5) -- (1.3,.3);
\draw (1.4,.3) -- (1.4,.5) -- (1.7,.5);
\draw (.0,.2) -- (1.0,.2);
\draw (.8,.0) -- (.8,-.2);
\draw (.25,.5) -- (.25,-.4);
\draw[fill=white] (.6,.0) -- (1.0,.0) -- (1.0,.4) -- cycle;
\draw[fill=white] (.2,.0) rectangle (.3,.4);
\draw (.2,.4) -- (.3,.0);
\draw[fill=white] (.4,.0) rectangle +(.1,.4);
\draw[fill=white] (.0,.0) rectangle (.1,.6);
\draw[fill=white] (1.5,.0) rectangle (1.6,.4);
\draw[fill=white] (.6,-.2) rectangle +(.4,.1);
\node at (-.2,.3)[left]{$\scriptstyle{n}$};
\node at (1.7,.2)[right]{$\scriptstyle{n-1}$};
\node at (1.7,.5)[right]{$\scriptstyle{1}$};
\node at (.8,-.4)[below]{$\scriptstyle{n-1}$};
\node at (.25,-.4)[below]{$\scriptstyle{1}$};
}\,\Bigg\rangle_{\! 3}\\
&=\Bigg\langle\,\tikz[baseline=-.6ex]{
\draw 
(.0,.3) -- +(-.2,.0)
(1.0,.3) -- +(.5,.0)
(1.0,.1) -- +(.5,.0)
(1.5,.2) -- +(.2,.0)
(.8,-.2) -- +(.0,-.2);
\draw (.0,.5) -- (1.3,.5) -- (1.3,.3);
\draw (1.4,.3) -- (1.4,.5) -- (1.7,.5);
\draw (.0,.2) -- (1.0,.2);
\draw (.8,.0) -- (.8,-.2);
\draw (.25,.5) -- (.25,-.4);
\draw[fill=white] (.6,.0) -- (1.0,.0) -- (1.0,.4) -- cycle;
\draw[fill=white] (.2,.0) rectangle (.3,.4);
\draw (.2,.4) -- (.3,.0);
\draw[fill=white] (.0,.0) rectangle (.1,.6);
\draw[fill=white] (1.5,.0) rectangle (1.6,.4);
\draw[fill=white] (.6,-.2) rectangle +(.4,.1);
\node at (-.2,.3)[left]{$\scriptstyle{n}$};
\node at (1.7,.2)[right]{$\scriptstyle{n-1}$};
\node at (1.7,.5)[right]{$\scriptstyle{1}$};
\node at (.8,-.4)[below]{$\scriptstyle{n-1}$};
\node at (.25,-.4)[below]{$\scriptstyle{1}$};
}\,\Bigg\rangle_{\! 3}
=\Bigg\langle\,\tikz[baseline=-.6ex]{
\draw 
(.0,.3) -- +(-.2,.0)
(.3,.3) -- +(1.2,.0)
(1.0,.1) -- +(.5,.0)
(1.5,.2) -- +(.2,.0)
(.8,-.2) -- +(.0,-.2);
\draw (.0,.5) -- (1.3,.5) -- (1.3,.3);
\draw (1.4,.3) -- (1.4,.5) -- (1.7,.5);
\draw (.0,.2) -- +(.2,.0);
\draw (.3,.1) -- +(.6,.0);
\draw (.9,.0) -- (.9,-.2);
\draw (.25,.5) -- (.25,-.4);
\draw (.65,.3) -- (.65,-.2);
\draw[fill=white] (.8,.0) -- (1.0,.0) -- (1.0,.2) -- cycle;
\draw[fill=white] (.2,.0) rectangle (.3,.4);
\draw (.2,.4) -- (.3,.0);
\draw[fill=white] (.6,.0) rectangle (.7,.2);
\draw (.6,.2) -- (.7,.0);
\draw[fill=white] (.0,.0) rectangle (.1,.6);
\draw[fill=white] (1.5,.0) rectangle (1.6,.4);
\draw[fill=white] (.6,-.2) rectangle +(.4,.1);
\node at (-.2,.3)[left]{$\scriptstyle{n}$};
\node at (1.7,.2)[right]{$\scriptstyle{n-1}$};
\node at (1.7,.5)[right]{$\scriptstyle{1}$};
\node at (.8,-.4)[below]{$\scriptstyle{n-1}$};
\node at (.25,-.4)[below]{$\scriptstyle{1}$};
}\,\Bigg\rangle_{\! 3}\\
&=\Bigg\langle\,\tikz[baseline=-.6ex]{
\draw 
(.0,.3) -- +(-.2,.0)
(.0,.3) -- (.65,.3)
(1.0,.1) -- +(.5,.0)
(1.5,.2) -- +(.2,.0)
(.8,-.2) -- +(.0,-.2);
\draw (.0,.5) -- (1.7,.5);
\draw (1.3,.5) -- (1.3,.3) -- (1.5,.3);
\draw (.0,.1) -- +(.2,.0);
\draw (.3,.1) -- +(.6,.0);
\draw (.9,.0) -- (.9,-.2);
\draw (.25,.3) -- (.25,-.4);
\draw (.65,.3) -- (.65,-.2);
\draw[fill=white] (.8,.0) -- (1.0,.0) -- (1.0,.2) -- cycle;
\draw[fill=white] (.2,.0) rectangle (.3,.2);
\draw (.2,.2) -- (.3,.0);
\draw[fill=white] (.6,.0) rectangle (.7,.2);
\draw (.6,.2) -- (.7,.0);
\draw[fill=white] (.0,.0) rectangle (.1,.6);
\draw[fill=white] (1.5,.0) rectangle (1.6,.4);
\draw[fill=white] (.6,-.2) rectangle +(.4,.1);
\node at (-.2,.3)[left]{$\scriptstyle{n}$};
\node at (1.7,.2)[right]{$\scriptstyle{n-1}$};
\node at (1.7,.5)[right]{$\scriptstyle{1}$};
\node at (.8,-.4)[below]{$\scriptstyle{n-1}$};
\node at (.25,-.4)[below]{$\scriptstyle{1}$};
}\,\Bigg\rangle_{\! 3}
=\Bigg\langle\,\tikz[baseline=-.6ex]{
\draw 
(.0,.3) -- +(-.2,.0)
(.0,.2) -- (.9,.2)
(1.0,.2) -- +(.5,.0)
(1.5,.2) -- +(.2,.0)
(.8,-.2) -- +(.0,-.2);
\draw (.0,.5) -- (1.7,.5);
\draw (1.3,.5) -- (1.3,.3) -- (1.5,.3);
\draw (.9,.0) -- (.9,-.2);
\draw (.1,.1) -- (.6,.1) -- (.6,-.2);
\draw (.3,.1) -- (.3,-.4);
\draw[fill=white] (.7,.0) -- (1.1,.0) -- (1.1,.4) -- cycle;
\draw[fill=white] (.0,.0) rectangle (.1,.6);
\draw[fill=white] (1.5,.0) rectangle (1.6,.4);
\draw[fill=white] (.5,-.2) rectangle +(.6,.1);
\node at (-.2,.3)[left]{$\scriptstyle{n}$};
\node at (1.7,.2)[right]{$\scriptstyle{n-1}$};
\node at (1.7,.5)[right]{$\scriptstyle{1}$};
\node at (.8,-.4)[below]{$\scriptstyle{n-1}$};
\node at (.25,-.4)[below]{$\scriptstyle{1}$};
}\,\Bigg\rangle_{\! 3}.
\end{align*}
We used Lemma~\ref{coloredvertex}~$(5)$ and $(6)$ for the first three equalities, 
Lemma~\ref{coloredvertex}~$(4)$ for the last equality.
\end{proof}

\begin{proof}[Proof of Theorem~\ref{coloredA2}~$(2)$]
We prove Theorem~\ref{coloredA2}~$(2)$ in a similar way to the proof of the colored Kauffman bracket skein relation.
By using skein relations the $A_2$ bracket and properties of $A_2$ clasps, 
we can easily calculate as follows:
\begin{align*}
\Bigg\langle\,\tikz[baseline=-.6ex]{
\draw (-.4,-.4) -- +(-.2,0);
\draw[->-=.8, white, double=black, double distance=0.4pt, ultra thick] 
(-.4,-.4) to[out=east, in=west] (.4,.4);
\draw (.4,.4) -- +(.2,0);
\draw (-.4,.4) -- +(-.2,0);
\draw[->-=.8, white, double=black, double distance=0.4pt, ultra thick] 
(.4,-.4) to[out=west, in=east] (-.4,.4);
\draw (.4,-.4) -- +(.2,0);
\draw[fill=white] (.4,-.6) rectangle +(.1,.4);
\draw[fill=white] (-.4,-.6) rectangle +(-.1,.4);
\draw[fill=white] (.4,.6) rectangle +(.1,-.4);
\draw[fill=white] (-.4,.6) rectangle +(-.1,-.4);
\node at (.4,-.6)[right]{$\scriptstyle{n}$};
\node at (-.4,-.6)[left]{$\scriptstyle{n}$};
\node at (.4,.6)[right]{$\scriptstyle{n}$};
\node at (-.4,.6)[left]{$\scriptstyle{n}$};
}\,\Bigg\rangle_{\! 3}
&=
\Bigg\langle\,\tikz[baseline=-.6ex]{
\draw 
(-.6,-.4) -- +(-.2,0)
(.6,.4) -- +(.2,0)
(-.6,.4) -- +(-.2,0)
(.6,-.4) -- +(.2,0);
\draw[triple={[line width=1.6pt, white] in [line width=2.4pt, black] in [line width=5.6pt, white]}]
(-.6,-.4) to[out=east, in=west] (.6,.4);
\draw[triple={[line width=1.6pt, white] in [line width=2.4pt, black] in [line width=5.6pt, white]}]
(-.6,.4) to[out=east, in=west] (.6,-.4);
\draw[fill=white] (.6,-.6) rectangle +(.1,.4);
\draw[fill=white] (-.6,-.6) rectangle +(-.1,.4);
\draw[fill=white] (.6,.6) rectangle +(.1,-.4);
\draw[fill=white] (-.6,.6) rectangle +(-.1,-.4);
\node at (.6,-.6)[right]{$\scriptstyle{n}$};
\node at (-.6,-.6)[left]{$\scriptstyle{n}$};
\node at (.6,.6)[right]{$\scriptstyle{n}$};
\node at (-.6,.6)[left]{$\scriptstyle{n}$};
\node at (-.7,-.6)[right]{$\scriptscriptstyle{n-1}$};
\node at (-.7,.6)[right]{$\scriptscriptstyle{n-1}$};
\node at (-.7,-.2)[right]{$\scriptscriptstyle{1}$};
\node at (-.7,.2)[right]{$\scriptscriptstyle{1}$};
}\,\Bigg\rangle_{\! 3}\\ 
&=
q^{-\frac{1}{3}}\Bigg\langle\,\tikz[baseline=-.6ex]{
\draw
(-.4,-.4) -- +(-.2,0)
(.4,.4) -- +(.2,0)
(-.4,.4) -- +(-.2,0)
(.4,-.4) -- +(.2,0);
\draw[white, double=black, double distance=0.4pt, ultra thick] 
(-.4,-.5) to[out=east, in=west] (.4,.3);
\draw[white, double=black, double distance=0.4pt, ultra thick] 
(-.4,-.3) 
to[out=east, in=south] (-.3,0)
to[out=north, in=east](-.4,.3);
\draw[white, double=black, double distance=0.4pt, ultra thick] 
(.4,-.5)
to[out=west, in=south] (.2,0)
to[out=north, in=west] (.4,.5);
\draw[white, double=black, double distance=0.4pt, ultra thick] 
(-.4,.5) to[out=east, in=west] (.4,-.3);
\draw[fill=white] (.4,-.6) rectangle +(.1,.4);
\draw[fill=white] (-.4,-.6) rectangle +(-.1,.4);
\draw[fill=white] (.4,.6) rectangle +(.1,-.4);
\draw[fill=white] (-.4,.6) rectangle +(-.1,-.4);
\node at (.4,-.6)[right]{$\scriptstyle{n}$};
\node at (-.4,-.6)[left]{$\scriptstyle{n}$};
\node at (.4,.6)[right]{$\scriptstyle{n}$};
\node at (-.4,.6)[left]{$\scriptstyle{n}$};
\node at (-.5,-.6)[right]{$\scriptscriptstyle{n-1}$};
\node at (-.5,.6)[right]{$\scriptscriptstyle{n-1}$};
\node at (-.3,0)[left]{$\scriptscriptstyle{1}$};
\node at (.2,0)[right]{$\scriptscriptstyle{1}$};
}\,\Bigg\rangle_{\! 3}
-q^{\frac{1}{6}}\Bigg\langle\,\tikz[baseline=-.6ex]{
\draw
(-.4,-.4) -- +(-.2,0)
(.4,.4) -- +(.2,0)
(-.4,.4) -- +(-.2,0)
(.4,-.4) -- +(.2,0);
\draw[white, double=black, double distance=0.4pt, ultra thick] 
(-.4,-.5) 
to[out=east, in=west] (-.3,-.5)
to[out=east, in=west] (.4,.3);
\draw[white, double=black, double distance=0.4pt, ultra thick] 
(-.4,-.3) 
to[out=east, in=west] (-.3,-.3)
to[out=east, in=west] (.4,-.5);
\draw[white, double=black, double distance=0.4pt, ultra thick] 
(-.4,.3) 
to[out=east, in=west] (-.3,.3)
to[out=east, in=west] (.4,.5);
\draw[white, double=black, double distance=0.4pt, ultra thick] 
(-.4,.5) 
to[out=east, in=west] (-.3,.5)
to[out=east, in=west] (.4,-.3);
\draw (-.3,.3) -- (-.3,-.3);
\draw[fill=white] (.4,-.6) rectangle +(.1,.4);
\draw[fill=white] (-.4,-.6) rectangle +(-.1,.4);
\draw[fill=white] (.4,.6) rectangle +(.1,-.4);
\draw[fill=white] (-.4,.6) rectangle +(-.1,-.4);
\node at (.4,-.6)[right]{$\scriptstyle{n}$};
\node at (-.4,-.6)[left]{$\scriptstyle{n}$};
\node at (.4,.6)[right]{$\scriptstyle{n}$};
\node at (-.4,.6)[left]{$\scriptstyle{n}$};
\node at (-.5,-.6)[right]{$\scriptscriptstyle{n-1}$};
\node at (-.5,.6)[right]{$\scriptscriptstyle{n-1}$};
\node at (.5,.5)[above left]{$\scriptscriptstyle{1}$};
\node at (.5,-.5)[below left]{$\scriptscriptstyle{1}$};
}\,\Bigg\rangle_{\! 3}\\
&=q^{-\frac{2n-1}{3}}
\Bigg\langle\,\tikz[baseline=-.6ex]{
\draw
(-.4,-.4) -- +(-.2,0)
(.4,.4) -- +(.2,0)
(-.4,.4) -- +(-.2,0)
(.4,-.4) -- +(.2,0);
\draw[white, double=black, double distance=0.4pt, ultra thick] 
(-.4,-.5) to[out=east, in=west] (.4,.5);
\draw[white, double=black, double distance=0.4pt, ultra thick] 
(-.4,-.3) 
to[out=east, in=south] (-.3,0)
to[out=north, in=east](-.4,.3);
\draw[white, double=black, double distance=0.4pt, ultra thick] 
(.4,-.3)
to[out=west, in=south] (.3,0)
to[out=north, in=west] (.4,.3);
\draw[white, double=black, double distance=0.4pt, ultra thick] 
(-.4,.5) to[out=east, in=west] (.4,-.5);
\draw[fill=white] (.4,-.6) rectangle +(.1,.4);
\draw[fill=white] (-.4,-.6) rectangle +(-.1,.4);
\draw[fill=white] (.4,.6) rectangle +(.1,-.4);
\draw[fill=white] (-.4,.6) rectangle +(-.1,-.4);
\node at (.4,-.6)[right]{$\scriptstyle{n}$};
\node at (-.4,-.6)[left]{$\scriptstyle{n}$};
\node at (.4,.6)[right]{$\scriptstyle{n}$};
\node at (-.4,.6)[left]{$\scriptstyle{n}$};
\node at (-.5,-.6)[right]{$\scriptscriptstyle{n-1}$};
\node at (-.5,.6)[right]{$\scriptscriptstyle{n-1}$};
\node at (-.3,0)[left]{$\scriptscriptstyle{1}$};
\node at (.3,0)[right]{$\scriptscriptstyle{1}$};
}\,\Bigg\rangle_{\! 3}
-q^{\frac{2n-1}{6}}
\Bigg\langle\,\tikz[baseline=-.6ex]{
\draw
(-.4,-.4) -- +(-.2,0)
(.4,.4) -- +(.2,0)
(-.4,.4) -- +(-.2,0)
(.4,-.4) -- +(.2,0);
\draw (-.4,.5) -- (.4,.5);
\draw (-.4,-.5) -- (.4,-.5);
\draw[white, double=black, double distance=0.4pt, ultra thick] 
(-.4,-.3) 
to[out=east, in=west] (-.2,-.3)
to[out=east, in=west] (.4,.3);
\draw[white, double=black, double distance=0.4pt, ultra thick] 
(-.4,.3) 
to[out=east, in=west] (-.2,.3)
to[out=east, in=west] (.4,-.3);
\draw (-.25,.5) -- (-.25,-.5);
\draw[fill=white] (-.3,.2) rectangle +(.1,.2);
\draw (-.3,.4) -- +(.1,-.2);
\draw[fill=white] (-.3,-.2) rectangle +(.1,-.2);
\draw (-.3,-.4) -- +(.1,.2);
\draw[fill=white] (.4,-.6) rectangle +(.1,.4);
\draw[fill=white] (-.4,-.6) rectangle +(-.1,.4);
\draw[fill=white] (.4,.6) rectangle +(.1,-.4);
\draw[fill=white] (-.4,.6) rectangle +(-.1,-.4);
\node at (.4,-.6)[right]{$\scriptstyle{n}$};
\node at (-.4,-.6)[left]{$\scriptstyle{n}$};
\node at (.4,.6)[right]{$\scriptstyle{n}$};
\node at (-.4,.6)[left]{$\scriptstyle{n}$};
\node at (0,-.6){$\scriptscriptstyle{1}$};
\node at (0,.6){$\scriptscriptstyle{1}$};
\node at (.4,.3)[below]{$\scriptscriptstyle{n-1}$};
\node at (.4,-.3)[above]{$\scriptscriptstyle{n-1}$};
}\,\Bigg\rangle_{\! 3}.
\end{align*}
We define clasped $A_2$ webs $\langle\sigma(k,l;n)\rangle_3$ as follows:
\[
\langle\sigma(k,l;n)\rangle_3
=
\Bigg\langle\,\tikz[baseline=-.6ex]{
\draw
(-.6,-.6) -- +(-.2,0)
(.6,.6) -- +(.2,0)
(-.6,.6) -- +(-.2,0)
(.6,-.6) -- +(.2,0);
\draw (-.6,.7) -- (.6,.7);
\draw (-.6,-.7) -- (.6,-.7);
\draw[white, double=black, double distance=0.4pt, ultra thick] 
(-.6,-.4)
to[out=east, in=west] (-.1,-.4)
to[out=east, in=west] (.6,.4);
\draw[white, double=black, double distance=0.4pt, ultra thick] 
(-.6,.4) 
to[out=east, in=west] (-.1,.4)
to[out=east, in=west] (.6,-.4);
\draw
(-.6,.3) to[out=east, in=north]
(-.5,.0) to[out=south, in=east] (-.6,-.3);
\draw
(.6,.3) to[out=west, in=north]
(.5,.0) to[out=south, in=west] (.6,-.3);
\draw (-.3,.7) -- (-.3,-.7);
\draw[fill=white] (-.1,.2) rectangle +(.1,.3);
\draw[fill=white] (-.1,-.2) rectangle +(.1,-.3);
\draw[fill=white] (.6,-.8) rectangle +(.1,.6);
\draw[fill=white] (-.6,-.8) rectangle +(-.1,.6);
\draw[fill=white] (.6,.8) rectangle +(.1,-.6);
\draw[fill=white] (-.6,.8) rectangle +(-.1,-.6);
\draw[fill=white] (-.4,.2) rectangle +(.2,.3);
\draw (-.4,.5) -- +(.2,-.3);
\draw[fill=white] (-.4,-.2) rectangle +(.2,-.3);
\draw (-.4,-.5) -- +(.2,.3);
\draw[fill=white] (-.4,.6) -- (-.2,.6) -- (-.2,.8) -- cycle;
\draw[fill=white] (-.4,-.6) -- (-.2,-.6) -- (-.2,-.8) -- cycle;
\node at (.6,-.8)[right]{$\scriptstyle{n}$};
\node at (-.6,-.8)[left]{$\scriptstyle{n}$};
\node at (.6,.8)[right]{$\scriptstyle{n}$};
\node at (-.6,.8)[left]{$\scriptstyle{n}$};
\node at (0,.6)[above]{$\scriptscriptstyle{k}$};
\node at (0,-.6)[below]{$\scriptscriptstyle{k}$};
\node at (-.4,.6)[above]{$\scriptscriptstyle{k}$};
\node at (-.4,-.6)[below]{$\scriptscriptstyle{k}$};
\node at (-.3,.0)[right]{$\scriptscriptstyle{k}$};
\node at (.5,.0)[right]{$\scriptscriptstyle{l}$};
\node at (-.5,.0)[left]{$\scriptscriptstyle{l}$};
}\,\Bigg\rangle_{\! 3}.
\]
The above calculation and Lemma~\ref{A2through} imply
\[
\langle\sigma(k,l;n)\rangle_3
=q^{-\frac{1}{3}(2(n-k-l)-1)}\langle\sigma(k,l+1;n)\rangle_3
-q^{\frac{1}{6}(2(n-k-l)-1)}\langle\sigma(k+1,l;n)\rangle_3.
\]
We can calculate the coefficient of $\langle\sigma(k,l;n)\rangle_3$ with $k+l=n$:
\[
\prod_{i=0}^{l-1}q^{-\frac{1}{3}(2(n-i)-1)}
\prod_{j=0}^{k-1}(-q^{\frac{1}{6}(2(n-j-l)-1)})
(\sum_{\lambda\in\mathcal{P}(k,l)}q^{\left|\lambda\right|})
=(-1)^kq^{\frac{-2n^2+3k^2}{6}}{n \choose k}_q,
\]
in a similar way to the proof of Proposition~\ref{coloredA1}~$(1)$ (see Figure~\ref{A2halfshift}).
\begin{figure}
\centering
\begin{tikzpicture}
\draw[->-=.5] (0,0) -- (2,0);
\node at (1,0)[below]{$\scriptstyle{-q^{\frac{2(n-k-l)-1}{6}}}$};
\draw[->-=.5] (2,0) -- (2,2);
\node at (2,1)[right]{$\scriptstyle{q^{-\frac{2(n-(k+1)-l)-1}{3}}}$};
\draw[->-=.5] (0,0) -- (0,2);
\node at (0,1)[left]{$\scriptstyle{q^{-\frac{2(n-k-l)-1}{3}}}$};
\draw[->-=.5] (0,2) -- (2,2);
\node at (1,2.2)[above]{$\scriptstyle{-q^{\frac{2(n-k-(l+1)-1)}{6}}}$};
\draw[->, very thick, magenta](.5,1.5) -- (1.5,.5);
\node at (1,1)[right]{$\times q$};
\node at (0,0)[below left]{$\scriptstyle{(k,l)}$};
\node at (2,0)[below right]{$\scriptstyle{(k+1,l)}$};
\node at (0,2)[above left]{$\scriptstyle{(k,l+1)}$};
\node at (2,2)[above right]{$\scriptstyle{(k,l)}$};
\fill 
(0,0) circle (1.2pt)
(2,0) circle (1.2pt)
(2,2) circle (1.2pt)
(0,2) circle (1.2pt);
\end{tikzpicture}
\caption{shift of a path}
\label{A2halfshift}
\end{figure}
\end{proof}

We also give a full twist and $m$ full twists formula for the $A_2$ bracket.
\begin{PROP}[a full twist formula]\label{A2full}
\begin{align*}
\Bigg\langle\,\tikz[baseline=-.6ex]{
\draw 
(-.5,.4) -- +(-.2,0)
(-.5,-.4) -- +(-.2,0);
\draw[->-=1, white, double=black, double distance=0.4pt, ultra thick] 
(-.5,-.4) to[out=east, in=west] (.0,.4);
\draw[-<-=1, white, double=black, double distance=0.4pt, ultra thick] 
(-.5,.4) to[out=east, in=west] (.0,-.4);
\draw[white, double=black, double distance=0.4pt, ultra thick] 
(0,-.4) to[out=east, in=west] (.5,.4);
\draw[white, double=black, double distance=0.4pt, ultra thick] 
(0,.4) to[out=east, in=west] (.5,-.4);
\draw[fill=white] (-.5,-.6) rectangle +(-.1,.4);
\draw[fill=white] (-.5,.6) rectangle +(-.1,-.4);
\node at (-.5,-.6)[left]{$\scriptstyle{n}$};
\node at (-.5,.6)[left]{$\scriptstyle{n}$};
\draw
(.5,-.4) -- +(.2,0)
(.5,.4) -- +(.2,0);
\draw[fill=white] (.5,-.6) rectangle +(.1,.4);
\draw[fill=white] (.5,.6) rectangle +(.1,-.4);
\node at (.5,-.6)[right]{$\scriptstyle{n}$};
\node at (.5,.6)[right]{$\scriptstyle{n}$};
}\,\Bigg\rangle_{\! 3}
=q^{\frac{n^2}{3}}
\sum_{k=0}^n
q^{k^2-n^2+k-n}
\frac{(q)_n}{(q)_k}{n \choose k}_{q}
\Bigg\langle\,\tikz[baseline=-.6ex]{
\draw
(-.4,.4) -- +(-.2,0)
(.4,-.4) -- +(.2,0)
(-.4,-.4) -- +(-.2,0)
(.4,.4) -- +(.2,0);
\draw[-<-=.5] (-.4,.5) -- (.4,.5);
\draw[->-=.5] (-.4,-.5) -- (.4,-.5);
\draw[-<-=.5] (-.4,.3) to[out=east, in=east] (-.4,-.3);
\draw[->-=.5] (.4,.3) to[out=west, in=west] (.4,-.3);
\draw[fill=white] (.4,-.6) rectangle +(.1,.4);
\draw[fill=white] (-.4,-.6) rectangle +(-.1,.4);
\draw[fill=white] (.4,.6) rectangle +(.1,-.4);
\draw[fill=white] (-.4,.6) rectangle +(-.1,-.4);
\node at (.4,-.6)[right]{$\scriptstyle{n}$};
\node at (-.4,-.6)[left]{$\scriptstyle{n}$};
\node at (.4,.6)[right]{$\scriptstyle{n}$};
\node at (-.4,.6)[left]{$\scriptstyle{n}$};
\node at (0,.5)[above]{$\scriptstyle{k}$};
\node at (0,-.5)[below]{$\scriptstyle{k}$};
\node at (-.2,0)[left]{$\scriptstyle{n-k}$};
\node at (.2,0)[right]{$\scriptstyle{n-k}$};
}\,\Bigg\rangle_{\! 3}.
\end{align*}
\end{PROP}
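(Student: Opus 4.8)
The plan is to reproduce, in the $A_2$ setting, the lattice-path argument used for Proposition~\ref{coloredA1}~$(1)$ and Proposition~\ref{A1full}; the remark after Proposition~\ref{A1mfull} makes clear that the single full twist is meant to be handled by its own direct calculation rather than by iterating Theorem~\ref{coloredA2}~$(2)$. Composing two half twists is in fact obstructed: applying Theorem~\ref{coloredA2}~$(2)$ to the lower crossing turns it into a sum of square-vertex webs with no crossing left to feed the formula a second time. So I would instead peel the twisting apart one strand at a time, exactly as in the first display of the proof of Proposition~\ref{A1full}.

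First I would isolate a single $1$-colored strand from one of the two $n$-labelled cables and push it around the full twist, using the $A_2$ bracket skein relations together with the clasp-twist identities of Lemma~\ref{A2clasplem}, the vertex identities of Lemma~\ref{coloredvertex}, and the straightening (sliding) identity of Lemma~\ref{A2through}. I would package the outcome into a family $\langle\sigma^2(k,l;n)\rangle_3$ of clasped $A_2$ webs, the $A_2$ analogue of the webs $\langle\sigma^2(k,l;n)\rangle_2$ appearing in Proposition~\ref{A1full}, interpolating between the full twist and its fully resolved normal form. Lemma~\ref{A2through} is precisely what lets me straighten the peeled strand back through the clasp so that the two webs produced at each step are again of standard $\sigma^2$ shape, and thereby derive a two-term recursion
\[
\langle\sigma^2(k,l;n)\rangle_3
= \alpha_{k,l}\,\langle\sigma^2(k+1,l;n)\rangle_3
+ \beta_{k,l}\,\langle\sigma^2(k,l+1;n)\rangle_3
\]
for non-negative $k,l$ with $k+l\le n$, where $\alpha_{k,l}$ is a Laurent monomial in $q^{1/3}$ and $\beta_{k,l}$ is such a monomial times a factor of the form $1-q^{\,j}$ (mirroring the coefficient $-(1-q^{n-l})q^{-(n-l)}q^{k/2}$ of the $A_1$ recursion).

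Next I would run the same bookkeeping as in Figures~\ref{A1fullshift} and \ref{A2halfshift}: attach $\langle\sigma^2(k,l;n)\rangle_3$ to the lattice point $(k,l)$, decorate the unit steps $(1,0)$ and $(0,1)$ by $\alpha_{k,l}$ and $\beta_{k,l}$, and verify that pushing a monotone path from $(0,0)$ to $(k,l)$ down by one box multiplies the accumulated decoration by exactly $q$. Summing over all monotone paths and invoking the partition identity of Lemma~\ref{partition} converts the path sum into ${n\choose k}_q$ (as in Figure~\ref{Young}), while the product of decorations along the top path supplies the prefactor: the turn-back steps contribute the factors $1-q^{\,j}$ that assemble into $(q)_n/(q)_k$, and the straight-through steps contribute the pure power $q^{n^2/3}q^{\,k^2-n^2+k-n}$. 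The left-hand side is $\langle\sigma^2(0,0;n)\rangle_3$ and the surviving webs are those with $k+l=n$, which are the normal-form webs on the right-hand side.

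The hard part will be the sign and $q$-power bookkeeping, which is genuinely more delicate than for $A_1$. The $A_2$ skein relations are asymmetric (a $q^{1/3}$ against a $-q^{-1/6}$) and carry the extra square-vertex resolution, so each peeling step produces two geometrically distinct webs rather than the clean cup/cap pair of the $A_1$ case; reducing both back to the $\sigma^2$ normal form via Lemmas~\ref{coloredvertex} and \ref{A2through} is where the real work lies. I also expect the vanishing of the overall sign to demand attention: unlike Proposition~\ref{A1full}, the claimed coefficient carries no $(-1)^{n-k}$, consistent with the heuristic that two half twists contribute $(-1)^{k}\cdot(-1)^{k}=(+1)$, but I would verify honestly that the minus signs of the two passes cancel at the level of the recursion. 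Assembling the resulting exponent of $q$ into the compact form $q^{n^2/3}q^{\,k^2-n^2+k-n}$ is the step I would budget the most care for.
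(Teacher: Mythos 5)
Your plan is essentially the paper's own proof: the paper likewise peels off a single $1$-colored strand, derives the two-term recursion $\langle\sigma^2(k,l;n)\rangle_3=q^{\frac{n-l}{3}}\langle\sigma^2(k+1,l;n)\rangle_3+(1-q^{n-l})q^{-\frac{4}{3}(n-l)}q^{\frac{2k-1}{3}}\langle\sigma^2(k,l+1;n)\rangle_3$, and then runs the lattice-path argument with Lemma~\ref{partition} exactly as you describe. The only remaining work is the explicit coefficient computation you already flag as the delicate step, so the proposal is correct and follows the same route.
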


\begin{proof}
We prove the above formula in a similar way to the proof of Proposition~\ref{A1full}. 
\begin{align*}
\Bigg\langle\,\tikz[baseline=-.6ex]{
\draw 
(-.5,.4) -- +(-.2,0)
(.5,-.4) -- +(.2,0)
(-.5,-.4) -- +(-.2,0)
(.5,.4) -- +(.2,0);
\draw[->-=1, white, double=black, double distance=0.4pt, ultra thick] 
(-.5,-.4) to[out=east, in=west] (.0,.4);
\draw[-<-=1, white, double=black, double distance=0.4pt, ultra thick] 
(-.5,.4) to[out=east, in=west] (.0,-.4);
\draw[white, double=black, double distance=0.4pt, ultra thick] 
(0,-.4) to[out=east, in=west] (.5,.4);
\draw[white, double=black, double distance=0.4pt, ultra thick] 
(0,.4) to[out=east, in=west] (.5,-.4);
\draw[fill=white] (.5,-.6) rectangle +(.1,.4);
\draw[fill=white] (-.5,-.6) rectangle +(-.1,.4);
\draw[fill=white] (.5,.6) rectangle +(.1,-.4);
\draw[fill=white] (-.5,.6) rectangle +(-.1,-.4);
\node at (.5,-.6)[right]{$\scriptstyle{1}$};
\node at (-.5,-.6)[left]{$\scriptstyle{1}$};
\node at (.5,.6)[right]{$\scriptstyle{n}$};
\node at (-.5,.6)[left]{$\scriptstyle{n}$};
}\,\Bigg\rangle_{\! 3}
&=
\Bigg\langle\,\tikz[baseline=-.6ex]{
\draw 
(-.7,.4) -- +(-.2,0)
(.7,-.4) -- +(.2,0)
(-.7,-.4) -- +(-.2,0)
(.7,.4) -- +(.2,0);
\draw[white, double=black, double distance=0.4pt, ultra thick] 
(-.7,-.4) to[out=east, in=west] (.0,.4);
\draw[triple={[line width=1.6pt, white] in [line width=2.4pt, black] in [line width=5.6pt, white]}] 
(-.7,.4) to[out=east, in=west] (.0,-.4);
\draw[triple={[line width=1.6pt, white] in [line width=2.4pt, black] in [line width=5.6pt, white]}] 
(0,-.4) to[out=east, in=west] (.7,.4);
\draw[white, double=black, double distance=0.4pt, ultra thick] 
(0,.4) to[out=east, in=west] (.7,-.4);
\draw[fill=white] (.7,-.6) rectangle +(.1,.4);
\draw[fill=white] (-.7,-.6) rectangle +(-.1,.4);
\draw[fill=white] (.7,.6) rectangle +(.1,-.4);
\draw[fill=white] (-.7,.6) rectangle +(-.1,-.4);
\node at (.7,-.6)[right]{$\scriptstyle{1}$};
\node at (-.7,-.6)[left]{$\scriptstyle{1}$};
\node at (.7,.6)[right]{$\scriptstyle{n}$};
\node at (-.7,.6)[left]{$\scriptstyle{n}$};
\node at (.0,-.4)[above]{$\scriptstyle{1}$};
\node at (.0,-.4)[below]{$\scriptstyle{n-1}$};
}\,\Bigg\rangle_{\! 3}\\
&=-q^{\frac{1}{6}}
\Bigg\langle\,\tikz[baseline=-.6ex]{
\draw 
(-.5,.4) -- +(-.2,0)
(.5,-.4) -- +(.2,0)
(-.5,-.4) -- +(-.2,0)
(.5,.4) -- +(.2,0);
\draw[white, double=black, double distance=0.4pt, ultra thick] 
(-.5,-.4) to[out=east, in=west] (-.1,.0);
\draw[white, double=black, double distance=0.4pt, ultra thick] 
(-.5,.3) to[out=east, in=west] (.0,-.4);
\draw[white, double=black, double distance=0.4pt, ultra thick] 
(0,-.4) to[out=east, in=west] (.5,.3);
\draw[white, double=black, double distance=0.4pt, ultra thick] 
(-.1,.0) to[out=east, in=west] (.5,.5);
\draw[white, double=black, double distance=0.4pt, ultra thick] 
(-.5,.5) to[out=east, in=west] (-.1,.3);
\draw[white, double=black, double distance=0.4pt, ultra thick] 
(-.1,.3) to[out=east, in=west] (.5,-.4);
\draw (-.1,.0) -- (-.1,.3);
\draw[fill=white] (.5,-.6) rectangle +(.1,.4);
\draw[fill=white] (-.5,-.6) rectangle +(-.1,.4);
\draw[fill=white] (.5,.6) rectangle +(.1,-.4);
\draw[fill=white] (-.5,.6) rectangle +(-.1,-.4);
\node at (.5,-.6)[right]{$\scriptstyle{1}$};
\node at (-.5,-.6)[left]{$\scriptstyle{1}$};
\node at (.5,.6)[right]{$\scriptstyle{n}$};
\node at (-.5,.6)[left]{$\scriptstyle{n}$};
\node at (.0,-.4)[below]{$\scriptstyle{n-1}$};
}\,\Bigg\rangle_{\! 3}
+q^{-\frac{1}{3}}
\Bigg\langle\,\tikz[baseline=-.6ex]{
\draw 
(-.5,.4) -- +(-.2,0)
(.5,-.4) -- +(.2,0)
(-.5,-.4) -- +(-.2,0)
(.5,.4) -- +(.2,0);
\draw[white, double=black, double distance=0.4pt, ultra thick] 
(.5,.5) to[out=west, in=east] 
(-.1,.0) to[out=west, in=west] (-.1,.4);
\draw[white, double=black, double distance=0.4pt, ultra thick] 
(-.1,.4) 
to[out=east, in=west] (.5,-.4);
\draw[white, double=black, double distance=0.4pt, ultra thick] 
(-.5,.5) 
to[out=east, in=north] (-.3,.0)
to[out=south, in=east] (-.5,-.4);
\draw[white, double=black, double distance=0.4pt, ultra thick] 
(-.5,.3) to[out=east, in=west] (.0,-.4);
\draw[white, double=black, double distance=0.4pt, ultra thick] 
(0,-.4) to[out=east, in=west] (.5,.3);
\draw[fill=white] (.5,-.6) rectangle +(.1,.4);
\draw[fill=white] (-.5,-.6) rectangle +(-.1,.4);
\draw[fill=white] (.5,.6) rectangle +(.1,-.4);
\draw[fill=white] (-.5,.6) rectangle +(-.1,-.4);
\node at (.5,-.6)[right]{$\scriptstyle{1}$};
\node at (-.5,-.6)[left]{$\scriptstyle{1}$};
\node at (.5,.6)[right]{$\scriptstyle{n}$};
\node at (-.5,.6)[left]{$\scriptstyle{n}$};
\node at (.0,-.4)[below]{$\scriptstyle{n-1}$};
}\,\Bigg\rangle_{\! 3}\\
&=q^{\frac{1}{3}}
\Bigg\langle\,\tikz[baseline=-.6ex]{
\draw 
(-.5,.4) -- +(-.2,0)
(.5,-.4) -- +(.2,0)
(-.5,-.4) -- +(-.2,0)
(.5,.4) -- +(.2,0);
\draw[white, double=black, double distance=0.4pt, ultra thick] 
(-.5,-.4) to[out=east, in=west] 
(-.1,.0) to[out=east, in=west] (.1,.0);
\draw[white, double=black, double distance=0.4pt, ultra thick] 
(-.5,.3) to[out=east, in=west] (.0,-.4);
\draw[white, double=black, double distance=0.4pt, ultra thick] 
(0,-.4) to[out=east, in=west] (.5,.3);
\draw[white, double=black, double distance=0.4pt, ultra thick] 
(.1,.0) to[out=east, in=west] (.5,-.4);
\draw[white, double=black, double distance=0.4pt, ultra thick] 
(-.5,.5) to[out=east, in=west] (-.1,.3);
\draw[white, double=black, double distance=0.4pt, ultra thick] 
(-.1,.3) to[out=east, in=west]
(.1,.3) to[out=east, in=west] (.5,.5);
\draw (-.1,.0) -- (-.1,.3);
\draw (.1,.0) -- (.1,.3);
\draw[fill=white] (.5,-.6) rectangle +(.1,.4);
\draw[fill=white] (-.5,-.6) rectangle +(-.1,.4);
\draw[fill=white] (.5,.6) rectangle +(.1,-.4);
\draw[fill=white] (-.5,.6) rectangle +(-.1,-.4);
\node at (.5,-.6)[right]{$\scriptstyle{1}$};
\node at (-.5,-.6)[left]{$\scriptstyle{1}$};
\node at (.5,.6)[right]{$\scriptstyle{n}$};
\node at (-.5,.6)[left]{$\scriptstyle{n}$};
\node at (.0,-.4)[below]{$\scriptstyle{n-1}$};
}\,\Bigg\rangle_{\! 3}
-q^{-\frac{1}{6}}
\Bigg\langle\,\tikz[baseline=-.6ex]{
\draw 
(-.5,.4) -- +(-.2,0)
(.5,-.4) -- +(.2,0)
(-.5,-.4) -- +(-.2,0)
(.5,.4) -- +(.2,0);
\draw[white, double=black, double distance=0.4pt, ultra thick] 
(-.5,-.4) to[out=east, in=west] (-.1,.0);
\draw[white, double=black, double distance=0.4pt, ultra thick] 
(-.5,.3) to[out=east, in=west] (.0,-.4);
\draw[white, double=black, double distance=0.4pt, ultra thick] 
(0,-.4) to[out=east, in=west] (.5,.3);
\draw[white, double=black, double distance=0.4pt, ultra thick] 
(-.5,.5) to[out=east, in=west] (-.1,.3);
\draw[white, double=black, double distance=0.4pt, ultra thick] 
(.5,-.4) to[out=west, in=south]
(.3,.0) to[out=north, in=west] (.5,.5);
\draw (-.1,.0) -- (-.1,.3);
\draw (-.1,.0) 
to[out=east, in=south] (.1,.15)
to[out=north, in=east] (-.1,.3);
\draw[fill=white] (.5,-.6) rectangle +(.1,.4);
\draw[fill=white] (-.5,-.6) rectangle +(-.1,.4);
\draw[fill=white] (.5,.6) rectangle +(.1,-.4);
\draw[fill=white] (-.5,.6) rectangle +(-.1,-.4);
\node at (.5,-.6)[right]{$\scriptstyle{1}$};
\node at (-.5,-.6)[left]{$\scriptstyle{1}$};
\node at (.5,.6)[right]{$\scriptstyle{n}$};
\node at (-.5,.6)[left]{$\scriptstyle{n}$};
\node at (.0,-.4)[below]{$\scriptstyle{n-1}$};
}\,\Bigg\rangle_{\! 3}
+q^{-\frac{5}{3}}q^{-\frac{2}{3}(n-1)}
\Bigg\langle\,\tikz[baseline=-.6ex]{
\draw 
(-.5,.4) -- +(-.2,0)
(.5,-.4) -- +(.2,0)
(-.5,-.4) -- +(-.2,0)
(.5,.4) -- +(.2,0);
\draw (-.5,.5) -- (.5,.5);
\draw[white, double=black, double distance=0.4pt, ultra thick] 
(-.5,-.4) to[out=east, in=south] 
(-.3,.0) to[out=north, in=east] (-.5,.3);
\draw[white, double=black, double distance=0.4pt, ultra thick] 
(.5,.3) to[out=west, in=north] (.3,.0)
to[out=south, in=west] (.5,-.4);
\draw[fill=white] (.5,-.6) rectangle +(.1,.4);
\draw[fill=white] (-.5,-.6) rectangle +(-.1,.4);
\draw[fill=white] (.5,.6) rectangle +(.1,-.4);
\draw[fill=white] (-.5,.6) rectangle +(-.1,-.4);
\node at (.5,-.6)[right]{$\scriptstyle{1}$};
\node at (-.5,-.6)[left]{$\scriptstyle{1}$};
\node at (.5,.6)[right]{$\scriptstyle{n}$};
\node at (-.5,.6)[left]{$\scriptstyle{n}$};
\node at (.0,.5)[above]{$\scriptstyle{n-1}$};
\node at (-.3,.0)[left]{$\scriptstyle{1}$};
\node at (.3,.0)[right]{$\scriptstyle{1}$};
}\,\Bigg\rangle_{\! 3}\\
&=q^{\frac{1}{3}}
\Bigg\langle\,\tikz[baseline=-.6ex]{
\draw 
(-.5,.4) -- +(-.2,0)
(.5,-.4) -- +(.2,0)
(-.5,-.4) -- +(-.2,0)
(.5,.4) -- +(.2,0);
\draw (-.5,.5) -- (.5,.5);
\draw[white, double=black, double distance=0.4pt, ultra thick] 
(-.5,-.4) to[out=east, in=west] (.0,.3);
\draw[white, double=black, double distance=0.4pt, ultra thick] 
(-.5,.3) to[out=east, in=west] (.0,-.3);
\draw[white, double=black, double distance=0.4pt, ultra thick] 
(0,-.3) to[out=east, in=west] (.5,.3);
\draw[white, double=black, double distance=0.4pt, ultra thick] 
(0,.3) to[out=east, in=west] (.5,-.4);
\draw[fill=white] (.5,-.6) rectangle +(.1,.4);
\draw[fill=white] (-.5,-.6) rectangle +(-.1,.4);
\draw[fill=white] (.5,.6) rectangle +(.1,-.4);
\draw[fill=white] (-.5,.6) rectangle +(-.1,-.4);
\node at (.5,-.6)[right]{$\scriptstyle{1}$};
\node at (-.5,-.6)[left]{$\scriptstyle{1}$};
\node at (.5,.6)[right]{$\scriptstyle{n}$};
\node at (-.5,.6)[left]{$\scriptstyle{n}$};
\node at (.0,.5)[above]{$\scriptstyle{1}$};
\node at (.0,-.3)[below]{$\scriptstyle{n-1}$};
}\,\Bigg\rangle_{\! 3}
+q^{-\frac{2}{3}n}(q^{-1}-1)
\Bigg\langle\,\tikz[baseline=-.6ex]{
\draw 
(-.5,.4) -- +(-.2,0)
(.5,-.4) -- +(.2,0)
(-.5,-.4) -- +(-.2,0)
(.5,.4) -- +(.2,0);
\draw (-.5,.5) -- (.5,.5);
\draw[white, double=black, double distance=0.4pt, ultra thick] 
(-.5,-.4) to[out=east, in=south] 
(-.3,.0) to[out=north, in=east] (-.5,.3);
\draw[white, double=black, double distance=0.4pt, ultra thick] 
(.5,.3) to[out=west, in=north] (.3,.0)
to[out=south, in=west] (.5,-.4);
\draw[fill=white] (.5,-.6) rectangle +(.1,.4);
\draw[fill=white] (-.5,-.6) rectangle +(-.1,.4);
\draw[fill=white] (.5,.6) rectangle +(.1,-.4);
\draw[fill=white] (-.5,.6) rectangle +(-.1,-.4);
\node at (.5,-.6)[right]{$\scriptstyle{1}$};
\node at (-.5,-.6)[left]{$\scriptstyle{1}$};
\node at (.5,.6)[right]{$\scriptstyle{n}$};
\node at (-.5,.6)[left]{$\scriptstyle{n}$};
\node at (.0,.5)[above]{$\scriptstyle{n-1}$};
\node at (-.3,.0)[left]{$\scriptstyle{1}$};
\node at (.3,.0)[right]{$\scriptstyle{1}$};
}\,\Bigg\rangle_{\! 3}.
\end{align*}
Then, 
we obtain
\begin{equation}\label{A2n1full}
q^{-\frac{1}{3}(n-i)}
\Bigg\langle\,\tikz[baseline=-.6ex]{
\draw 
(-.5,.4) -- +(-.2,0)
(.5,-.4) -- +(.2,0)
(-.5,-.4) -- +(-.2,0)
(.5,.4) -- +(.2,0);
\draw (-.5,.5) -- (.5,.5);
\draw[white, double=black, double distance=0.4pt, ultra thick] 
(-.5,-.4) to[out=east, in=west] (.0,.3);
\draw[white, double=black, double distance=0.4pt, ultra thick] 
(-.5,.3) to[out=east, in=west] (.0,-.3);
\draw[white, double=black, double distance=0.4pt, ultra thick] 
(0,-.3) to[out=east, in=west] (.5,.3);
\draw[white, double=black, double distance=0.4pt, ultra thick] 
(0,.3) to[out=east, in=west] (.5,-.4);
\draw[fill=white] (.5,-.6) rectangle +(.1,.4);
\draw[fill=white] (-.5,-.6) rectangle +(-.1,.4);
\draw[fill=white] (.5,.6) rectangle +(.1,-.4);
\draw[fill=white] (-.5,.6) rectangle +(-.1,-.4);
\node at (.5,-.6)[right]{$\scriptstyle{1}$};
\node at (-.5,-.6)[left]{$\scriptstyle{1}$};
\node at (.5,.6)[right]{$\scriptstyle{n}$};
\node at (-.5,.6)[left]{$\scriptstyle{n}$};
\node at (.0,.5)[above]{$\scriptstyle{i}$};
\node at (.0,-.3)[below]{$\scriptstyle{n-i}$};
}\,\Bigg\rangle_{\! 3}
-q^{-\frac{1}{3}(n-i-1)}
\Bigg\langle\,\tikz[baseline=-.6ex]{
\draw 
(-.5,.4) -- +(-.2,0)
(.5,-.4) -- +(.2,0)
(-.5,-.4) -- +(-.2,0)
(.5,.4) -- +(.2,0);
\draw (-.5,.5) -- (.5,.5);
\draw[white, double=black, double distance=0.4pt, ultra thick] 
(-.5,-.4) to[out=east, in=west] (.0,.3);
\draw[white, double=black, double distance=0.4pt, ultra thick] 
(-.5,.3) to[out=east, in=west] (.0,-.3);
\draw[white, double=black, double distance=0.4pt, ultra thick] 
(0,-.3) to[out=east, in=west] (.5,.3);
\draw[white, double=black, double distance=0.4pt, ultra thick] 
(0,.3) to[out=east, in=west] (.5,-.4);
\draw[fill=white] (.5,-.6) rectangle +(.1,.4);
\draw[fill=white] (-.5,-.6) rectangle +(-.1,.4);
\draw[fill=white] (.5,.6) rectangle +(.1,-.4);
\draw[fill=white] (-.5,.6) rectangle +(-.1,-.4);
\node at (.5,-.6)[right]{$\scriptstyle{1}$};
\node at (-.5,-.6)[left]{$\scriptstyle{1}$};
\node at (.5,.6)[right]{$\scriptstyle{n}$};
\node at (-.5,.6)[left]{$\scriptstyle{n}$};
\node at (.0,.5)[above]{$\scriptstyle{i}$};
\node at (.0,-.3)[below]{$\scriptstyle{n-i}$};
}\,\Bigg\rangle_{\! 3}
=(1-q)q^{-n-1}q^{i}
\Bigg\langle\,\tikz[baseline=-.6ex]{
\draw 
(-.5,.4) -- +(-.2,0)
(.5,-.4) -- +(.2,0)
(-.5,-.4) -- +(-.2,0)
(.5,.4) -- +(.2,0);
\draw (-.5,.5) -- (.5,.5);
\draw[white, double=black, double distance=0.4pt, ultra thick] 
(-.5,-.4) to[out=east, in=south] 
(-.3,.0) to[out=north, in=east] (-.5,.3);
\draw[white, double=black, double distance=0.4pt, ultra thick] 
(.5,.3) to[out=west, in=north] (.3,.0)
to[out=south, in=west] (.5,-.4);
\draw[fill=white] (.5,-.6) rectangle +(.1,.4);
\draw[fill=white] (-.5,-.6) rectangle +(-.1,.4);
\draw[fill=white] (.5,.6) rectangle +(.1,-.4);
\draw[fill=white] (-.5,.6) rectangle +(-.1,-.4);
\node at (.5,-.6)[right]{$\scriptstyle{1}$};
\node at (-.5,-.6)[left]{$\scriptstyle{1}$};
\node at (.5,.6)[right]{$\scriptstyle{n}$};
\node at (-.5,.6)[left]{$\scriptstyle{n}$};
\node at (.0,.5)[above]{$\scriptstyle{n-1}$};
\node at (-.3,.0)[left]{$\scriptstyle{1}$};
\node at (.3,.0)[right]{$\scriptstyle{1}$};
}\,\Bigg\rangle_{\! 3}
\end{equation}
for $i=0,1,\dots,n-1$.
By taking the sum of both sides of (\ref{A2n1full}) for $i=0,1,\dots,n-1$,
\[
\Bigg\langle\,\tikz[baseline=-.6ex]{
\draw 
(-.5,.4) -- +(-.2,0)
(.5,-.4) -- +(.2,0)
(-.5,-.4) -- +(-.2,0)
(.5,.4) -- +(.2,0);
\draw[->-=1, white, double=black, double distance=0.4pt, ultra thick] 
(-.5,-.4) to[out=east, in=west] (.0,.3);
\draw[-<-=1, white, double=black, double distance=0.4pt, ultra thick] 
(-.5,.4) to[out=east, in=west] (.0,-.3);
\draw[white, double=black, double distance=0.4pt, ultra thick] 
(0,-.3) to[out=east, in=west] (.5,.4);
\draw[white, double=black, double distance=0.4pt, ultra thick] 
(0,.3) to[out=east, in=west] (.5,-.4);
\draw[fill=white] (.5,-.6) rectangle +(.1,.4);
\draw[fill=white] (-.5,-.6) rectangle +(-.1,.4);
\draw[fill=white] (.5,.6) rectangle +(.1,-.4);
\draw[fill=white] (-.5,.6) rectangle +(-.1,-.4);
\node at (.5,-.6)[right]{$\scriptstyle{1}$};
\node at (-.5,-.6)[left]{$\scriptstyle{1}$};
\node at (.5,.6)[right]{$\scriptstyle{n}$};
\node at (-.5,.6)[left]{$\scriptstyle{n}$};
}\,\Bigg\rangle_{\! 3}
=q^{\frac{n}{3}}
\Bigg\langle\,\tikz[baseline=-.6ex]{
\draw 
(-.5,.4) -- +(-.2,0)
(.5,-.4) -- +(.2,0)
(-.5,-.4) -- +(-.2,0)
(.5,.4) -- +(.2,0);
\draw (-.5,.4) -- (.5,.4);
\draw (-.5,-.4) -- (.5,-.4);
\draw[fill=white] (.5,-.6) rectangle +(.1,.4);
\draw[fill=white] (-.5,-.6) rectangle +(-.1,.4);
\draw[fill=white] (.5,.6) rectangle +(.1,-.4);
\draw[fill=white] (-.5,.6) rectangle +(-.1,-.4);
\node at (.5,-.6)[right]{$\scriptstyle{1}$};
\node at (-.5,-.6)[left]{$\scriptstyle{1}$};
\node at (.5,.6)[right]{$\scriptstyle{n}$};
\node at (-.5,.6)[left]{$\scriptstyle{n}$};
}\,\Bigg\rangle_{\! 3}
+(1-q^n)q^{-\frac{2n}{3}-1}
\Bigg\langle\,\tikz[baseline=-.6ex]{
\draw 
(-.5,.4) -- +(-.2,0)
(.5,-.4) -- +(.2,0)
(-.5,-.4) -- +(-.2,0)
(.5,.4) -- +(.2,0);
\draw (-.5,.5) -- (.5,.5);
\draw[white, double=black, double distance=0.4pt, ultra thick] 
(-.5,-.4) to[out=east, in=south] 
(-.3,.0) to[out=north, in=east] (-.5,.3);
\draw[white, double=black, double distance=0.4pt, ultra thick] 
(.5,.3) to[out=west, in=north] (.3,.0)
to[out=south, in=west] (.5,-.4);
\draw[fill=white] (.5,-.6) rectangle +(.1,.4);
\draw[fill=white] (-.5,-.6) rectangle +(-.1,.4);
\draw[fill=white] (.5,.6) rectangle +(.1,-.4);
\draw[fill=white] (-.5,.6) rectangle +(-.1,-.4);
\node at (.5,-.6)[right]{$\scriptstyle{1}$};
\node at (-.5,-.6)[left]{$\scriptstyle{1}$};
\node at (.5,.6)[right]{$\scriptstyle{n}$};
\node at (-.5,.6)[left]{$\scriptstyle{n}$};
\node at (.0,.5)[above]{$\scriptstyle{n-1}$};
\node at (-.3,.0)[left]{$\scriptstyle{1}$};
\node at (.3,.0)[right]{$\scriptstyle{1}$};
}\,\Bigg\rangle_{\! 3}.
\]
From the above equation,
\[
\Bigg\langle\,\tikz[baseline=-.6ex]{
\draw 
(-.5,.4) -- +(-.2,0)
(.5,-.4) -- +(.2,0)
(-.5,-.4) -- +(-.2,0)
(.5,.4) -- +(.2,0);
\draw[->-=1, white, double=black, double distance=0.4pt, ultra thick] 
(-.5,-.4) to[out=east, in=west] (.0,.3);
\draw[-<-=1, white, double=black, double distance=0.4pt, ultra thick] 
(-.5,.4) to[out=east, in=west] (.0,-.3);
\draw[white, double=black, double distance=0.4pt, ultra thick] 
(0,-.3) to[out=east, in=west] (.5,.4);
\draw[white, double=black, double distance=0.4pt, ultra thick] 
(0,.3) to[out=east, in=west] (.5,-.4);
\draw[fill=white] (.5,-.6) rectangle +(.1,.4);
\draw[fill=white] (-.5,-.6) rectangle +(-.1,.4);
\draw[fill=white] (.5,.6) rectangle +(.1,-.4);
\draw[fill=white] (-.5,.6) rectangle +(-.1,-.4);
\node at (.5,-.6)[right]{$\scriptstyle{1}$};
\node at (-.5,-.6)[left]{$\scriptstyle{1}$};
\node at (.5,.6)[right]{$\scriptstyle{n}$};
\node at (-.5,.6)[left]{$\scriptstyle{n}$};
}\,\Bigg\rangle_{\! 3}
=q^{\frac{j}{3}}\Bigg\langle\,\tikz[baseline=-.6ex]{
\draw 
(-.5,.4) -- +(-.2,0)
(.5,-.4) -- +(.2,0)
(-.5,-.4) -- +(-.2,0)
(.5,.4) -- +(.2,0);
\draw (-.5,-.5) -- (.5,-.5);
\draw[white, double=black, double distance=0.4pt, ultra thick] 
(-.5,-.3) to[out=east, in=west] (.0,.4);
\draw[white, double=black, double distance=0.4pt, ultra thick] 
(-.5,.4) to[out=east, in=west] (.0,-.4);
\draw[white, double=black, double distance=0.4pt, ultra thick] 
(0,-.4) to[out=east, in=west] (.5,.4);
\draw[white, double=black, double distance=0.4pt, ultra thick] 
(0,.4) to[out=east, in=west] (.5,-.3);
\draw[fill=white] (.5,-.6) rectangle +(.1,.4);
\draw[fill=white] (-.5,-.6) rectangle +(-.1,.4);
\draw[fill=white] (.5,.6) rectangle +(.1,-.4);
\draw[fill=white] (-.5,.6) rectangle +(-.1,-.4);
\node at (.5,-.6)[right]{$\scriptstyle{i}$};
\node at (-.5,-.6)[left]{$\scriptstyle{i}$};
\node at (.5,.6)[right]{$\scriptstyle{j}$};
\node at (-.5,.6)[left]{$\scriptstyle{j}$};
\node at (.0,.4)[above]{$\scriptstyle{i-1}$};
\node at (.0,-.5)[below]{$\scriptstyle{1}$};
}\,\Bigg\rangle_{\! 3}
+(1-q^j)q^{-\frac{2}{3}(i+j)-\frac{1}{3}}
\Bigg\langle\,\tikz[baseline=-.6ex]{
\draw 
(-.6,.4) -- +(-.2,0)
(.6,-.4) -- +(.2,0)
(-.6,-.4) -- +(-.2,0)
(.6,.4) -- +(.2,0);
\draw[white, double=black, double distance=0.4pt, ultra thick] 
(-.6,-.5) to[out=east, in=west] (.0,.4);
\draw[white, double=black, double distance=0.4pt, ultra thick] 
(-.6,.5) to[out=east, in=west] (.0,-.4);
\draw[white, double=black, double distance=0.4pt, ultra thick] 
(0,-.4) to[out=east, in=west] (.6,.5);
\draw[white, double=black, double distance=0.4pt, ultra thick] 
(0,.4) to[out=east, in=west] (.6,-.5);
\draw (-.6,.3) to[out=east, in=north] 
(-.4,.0) to[out=south, in=east] (-.6,-.3);
\draw (.6,.3) to[out=west, in=north] 
(.4,.0) to[out=south, in=west](.6,-.3);
\draw[fill=white] (.6,-.6) rectangle +(.1,.4);
\draw[fill=white] (-.6,-.6) rectangle +(-.1,.4);
\draw[fill=white] (.6,.6) rectangle +(.1,-.4);
\draw[fill=white] (-.6,.6) rectangle +(-.1,-.4);
\node at (.6,-.6)[right]{$\scriptstyle{i}$};
\node at (-.6,-.6)[left]{$\scriptstyle{i}$};
\node at (.6,.6)[right]{$\scriptstyle{j}$};
\node at (-.6,.6)[left]{$\scriptstyle{j}$};
\node at (.0,.4)[above]{$\scriptstyle{i-1}$};
\node at (.0,-.4)[below]{$\scriptstyle{j-1}$};
\node at (-.4,.0)[left]{$\scriptstyle{1}$};
\node at (.4,.0)[right]{$\scriptstyle{1}$};
}\,\Bigg\rangle_{\! 2}
\]
for any non-negative integers $i$ and $j$.
We set a clasped $A_2$ web $\langle\sigma^2(k,l;n)\rangle_3$ as
\[
\langle\sigma^2(k,l;n)\rangle_3=
\Bigg\langle\,\tikz[baseline=-.6ex]{
\draw 
(-.6,.4) -- +(-.2,0)
(.6,-.4) -- +(.2,0)
(-.6,-.4) -- +(-.2,0)
(.6,.4) -- +(.2,0);
\draw[->-=.5]
(-.6,-.5) -- (.6,-.5);
\draw[->-=1, white, double=black, double distance=0.4pt, ultra thick] 
(-.6,-.4) to[out=east, in=west] (.0,.4);
\draw[-<-=1, white, double=black, double distance=0.4pt, ultra thick] 
(-.6,.5) to[out=east, in=west] (.0,-.3);
\draw[white, double=black, double distance=0.4pt, ultra thick] 
(0,-.3) to[out=east, in=west] (.6,.5);
\draw[white, double=black, double distance=0.4pt, ultra thick] 
(0,.4) to[out=east, in=west] (.6,-.4);
\draw[-<-=.5] (-.6,.3) to[out=east, in=north] 
(-.5,.0) to[out=south, in=east] (-.6,-.3);
\draw[->-=.5] (.6,.3) to[out=west, in=north] 
(.5,.0) to[out=south, in=west](.6,-.3);
\draw[fill=white] (.6,-.6) rectangle +(.1,.4);
\draw[fill=white] (-.6,-.6) rectangle +(-.1,.4);
\draw[fill=white] (.6,.6) rectangle +(.1,-.4);
\draw[fill=white] (-.6,.6) rectangle +(-.1,-.4);
\node at (.6,-.6)[right]{$\scriptstyle{n}$};
\node at (-.6,-.6)[left]{$\scriptstyle{n}$};
\node at (.6,.6)[right]{$\scriptstyle{n}$};
\node at (-.6,.6)[left]{$\scriptstyle{n}$};
\node at (.0,.4)[above]{$\scriptstyle{n-k-l}$};
\node at (.0,-.6)[above]{$\scriptstyle{n-l}$};
\node at (.0,-.5)[below]{$\scriptstyle{k}$};
\node at (-.5,.0)[left]{$\scriptstyle{l}$};
\node at (.5,.0)[right]{$\scriptstyle{l}$};
}\,\Bigg\rangle_{\! 3},
\]
and obtain
\begin{equation}\label{A2fullres}
\langle\sigma^2(k,l;n)\rangle_3
=q^{\frac{n-l}{3}}\langle\sigma^2(k+1,l;n)\rangle_3
+(1-q^{n-l})q^{-\frac{4}{3}(n-l)}q^{\frac{2k-1}{3}}\langle\sigma^2(k,l+1;n)\rangle_3
\end{equation}
for non-negative integers $k$ and $l$ such that $k+l\leq n$.
We make $\langle\sigma^2(k,l;n)\rangle_3$ correspond to lattice point $(k,l)$ in a similar way to the proof of Proposition~\ref{A1full}.
The coefficient of $\langle\sigma^2(k,l;n)\rangle_3$ with $k+l=n$ can be calculated by using the resolution (\ref{A2fullres}) and Figure~\ref{A2fullshift}:
\[
\prod_{i=1}^{l-1}(1-q^{n-i})q^{-\frac{4}{3}(n-i)}q^{-\frac{1}{3}}\prod_{j=0}^{k-1}q^{\frac{n-l}{3}}
\sum_{\lambda\in\mathcal{P}(k,l)}q^{\left|\lambda\right|}
=q^{\frac{n^2}{3}}q^{k^2-n^2+k-n}\frac{(q;q)_n}{(q;q)_k}{n \choose k}_q.
\]
\begin{figure}
\centering
\begin{tikzpicture}
\draw[->-=.5] (0,0) -- (2,0);
\node at (1,0)[below]{$\scriptstyle{q^{\frac{n-l}{3}}}$};
\draw[->-=.5] (2,0) -- (2,2);
\node at (2,1)[right]{$\scriptstyle{(1-q^{n-l})q^{-\frac{4}{3}(n-l)}q^{\frac{2(k+1)-1}{3}}}$};
\draw[->-=.5] (0,0) -- (0,2);
\node at (0,1)[left]{$\scriptstyle{(1-q^{n-l})q^{-\frac{4}{3}(n-l)}q^{\frac{2k-1}{3}}}$};
\draw[->-=.5] (0,2) -- (2,2);
\draw[->, very thick, magenta](.5,1.5) -- (1.5,.5);
\node at (1,1)[right]{$\times q$};
\node at (1,2)[above]{$\scriptstyle{q^{\frac{n-(l+1)}{3}}}$};
\node at (0,0)[below left]{$\scriptstyle{(k,l)}$};
\node at (2,0)[below right]{$\scriptstyle{(k+1,l)}$};
\node at (0,2)[above left]{$\scriptstyle{(k,l+1)}$};
\node at (2,2)[above right]{$\scriptstyle{(k,l)}$};
\fill 
(0,0) circle (1.2pt)
(2,0) circle (1.2pt)
(2,2) circle (1.2pt)
(0,2) circle (1.2pt);
\end{tikzpicture}
\caption{shift of a path}
\label{A2fullshift}
\end{figure}
\end{proof}

\begin{LEM}\label{A2slide}
\[
\Bigg\langle\,\tikz[baseline=-.6ex]{
\draw 
(-.5,.4) -- +(-.2,0)
(-.5,-.4) -- +(-.2,0);
\draw[-<-=.7] (.0,-.5) -- +(.4,0);
\draw[->-=.7] (.0,.5) -- +(.4,0);
\draw[->-=.2, white, double=black, double distance=0.4pt, ultra thick] 
(-.5,-.4) to[out=east, in=west] (.0,.4);
\draw[-<-=.2, white, double=black, double distance=0.4pt, ultra thick] 
(-.5,.4) to[out=east, in=west] (.0,-.4);
\draw[->-=.5] (.1,.4) to[out=east, in=east] (.1,-.4);
\draw[fill=white] (.0,-.6) rectangle +(.1,.4);
\draw[fill=white] (-.5,-.6) rectangle +(-.1,.4);
\draw[fill=white] (.0,.6) rectangle +(.1,-.4);
\draw[fill=white] (-.5,.6) rectangle +(-.1,-.4);
\node at (.4,-.6)[right]{$\scriptstyle{k}$};
\node at (-.5,-.6)[left]{$\scriptstyle{n}$};
\node at (.4,.6)[right]{$\scriptstyle{k}$};
\node at (-.5,.6)[left]{$\scriptstyle{n}$};
\node at (.3,0)[right]{$\scriptstyle{n-k}$};
\node at (-.1,.4)[above]{$\scriptstyle{n}$};
\node at (-.1,-.4)[below]{$\scriptstyle{n}$};
}\,\Bigg\rangle_{\! 3}
=q^{-\frac{n^2-k^2+3n-3k}{3}}\Bigg\langle\,\tikz[baseline=-.6ex]{
\draw (-.4,.4) -- +(-.2,0);
\draw (.4,-.4) -- +(.2,0);
\draw (-.4,-.4) -- +(-.2,0);
\draw (.4,.4) -- +(.2,0);
\draw[->-=.3, white, double=black, double distance=0.4pt, ultra thick] 
(-.4,-.5) to[out=east, in=west] (.4,.4);
\draw[-<-=.3, white, double=black, double distance=0.4pt, ultra thick] 
(-.4,.5) to[out=east, in=west] (.4,-.4);
\draw[-<-=.5] (-.4,.3) to[out=east, in=east] (-.4,-.3);
\draw[fill=white] (.4,-.6) rectangle +(.1,.4);
\draw[fill=white] (-.4,-.6) rectangle +(-.1,.4);
\draw[fill=white] (.4,.6) rectangle +(.1,-.4);
\draw[fill=white] (-.4,.6) rectangle +(-.1,-.4);
\node at (.4,-.6)[right]{$\scriptstyle{k}$};
\node at (-.4,-.6)[left]{$\scriptstyle{n}$};
\node at (.4,.6)[right]{$\scriptstyle{k}$};
\node at (-.4,.6)[left]{$\scriptstyle{n}$};
\node at (-.4,0)[left]{$\scriptstyle{n-k}$};
}\,\Bigg\rangle_{\! 3}
\]
\end{LEM}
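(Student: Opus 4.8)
The plan is to mimic the one-line proof of the $A_1$ slide lemma (Lemma~\ref{A1slide}): I would transport the two right-hand $A_2$ clasps, together with the $(n-k)$-colored turnback, leftward across the $A_2$ crossing of the two $n$-cables, so that they end up in the position shown on the right-hand side of the claimed identity, and then read off the resulting scalar using Lemma~\ref{A2clasplem}. The left-hand and right-hand diagrams differ only in the position of the $(n-k)$-turnback relative to the crossing, so the entire computation reduces to understanding what local data is created when this turnback is pulled through the crossing, while the $k$-colored clasps pass straight along the cables.

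First I would use the recursive definition of the $A_2$ clasp together with Lemma~\ref{coloredvertex} to resolve each $n$-cable into its $k$-colored ``through'' part and its $(n-k)$-colored ``turnback'' part, so that the crossing of the two $n$-cables becomes a crossing of colored strands meeting the clasps. Sliding the $k$-colored clasps straight through and dragging the $(n-k)$-turnback across the crossing then leaves behind only local features, namely a crossing absorbed into a clasp and a curl on the colored edges. I would evaluate these by the first and third items of Lemma~\ref{A2clasplem}: the crossing-absorption item contributes intermediate factors of the form $q^{\pm k(n-k)/3}$, while the curl item contributes the self-twist factor $q^{\pm(m^2+3m)/3}$ for the relevant color $m$.

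Collecting the exponents, I expect the accumulated factors of the first kind to cancel and the net scalar to be exactly the ratio of the self-twist (framing) factors of color $k$ and color $n$, namely $q^{(k^2+3k)/3}q^{-(n^2+3n)/3} = q^{-\frac{n^2-k^2+3n-3k}{3}}$, which is the claimed coefficient. In contrast to the $A_1$ statement of Lemma~\ref{A1slide}, there is no sign here, and this is forced by Lemma~\ref{A2clasplem}, whose curl and crossing-absorption factors carry no sign, whereas their $A_1$ analogues in Lemma~\ref{A1clasplem} carry the signs $(-1)^n$ and $(-1)^k$ responsible for the $(-1)^{n-k}$ there.

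The hard part will be the bookkeeping of orientations rather than any genuinely new estimate. Because $A_2$ webs are bipartite and directed, each crossing and each curl produced during the slide has a definite handedness, and an incorrect choice of orientation would flip the sign of the corresponding exponent in Lemma~\ref{A2clasplem} and hence change the power of $q$ in the final answer. I would therefore track the source/sink structure carefully at every step, choose compatible directions for the colored tri- and $4$-valent vertices as in the convention fixed before Lemma~\ref{coloredvertex}, and confirm at the end, using the vanishing property of the $A_2$ clasp, that after the slide the diagram is again a clasped $A_2$ web, so that only the accumulated scalar survives.
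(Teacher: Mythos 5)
Your proposal is correct and follows essentially the same route as the paper: the paper's proof of Lemma~\ref{A2slide} simply says to argue as for Lemma~\ref{A1slide}, i.e.\ slide the two right-hand clasps (with the $(n-k)$-turnback) across the crossing and evaluate the resulting curls and absorbed crossings via Lemma~\ref{A2clasplem}, which is exactly your plan. Your identification of the coefficient as the ratio $q^{-(n^2+3n)/3}/q^{-(k^2+3k)/3}$ of twist factors, and your remark that the sign $(-1)^{n-k}$ of the $A_1$ case disappears because the $A_2$ curl coefficients carry no sign, are both consistent with the paper.
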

\begin{proof}
We can prove it in a similar way to the proof of Lemma~\ref{A1slide}.
\end{proof}

\begin{THM}[$m$ full twists formula for the $A_2$ bracket]\label{A2mfull}
\begin{align*}
\Bigg\langle\,\tikz[baseline=-.6ex]{
\begin{scope}[xshift=-1cm]
\draw 
(-.5,.4) -- +(-.2,0)
(-.5,-.4) -- +(-.2,0);
\draw[->-=1, white, double=black, double distance=0.4pt, ultra thick] 
(-.5,-.4) to[out=east, in=west] (.0,.4);
\draw[-<-=1, white, double=black, double distance=0.4pt, ultra thick] 
(-.5,.4) to[out=east, in=west] (.0,-.4);
\draw[white, double=black, double distance=0.4pt, ultra thick] 
(0,-.4) to[out=east, in=west] (.5,.4);
\draw[white, double=black, double distance=0.4pt, ultra thick] 
(0,.4) to[out=east, in=west] (.5,-.4);
\draw[fill=white] (-.5,-.6) rectangle +(-.1,.4);
\draw[fill=white] (-.5,.6) rectangle +(-.1,-.4);
\node at (-.5,-.6)[left]{$\scriptstyle{n}$};
\node at (-.5,.6)[left]{$\scriptstyle{n}$};
\end{scope}
\node at (.0,.0){$\cdots$};
\node at (.0,-.4)[below]{$\scriptstyle{m\text{ full twists}}$};
\begin{scope}[xshift=1cm]
\draw
(.5,-.4) -- +(.2,0)
(.5,.4) -- +(.2,0);
\draw[->-=1, white, double=black, double distance=0.4pt, ultra thick] 
(-.5,-.4) to[out=east, in=west] (.0,.4);
\draw[-<-=1, white, double=black, double distance=0.4pt, ultra thick] 
(-.5,.4) to[out=east, in=west] (.0,-.4);
\draw[white, double=black, double distance=0.4pt, ultra thick] 
(0,-.4) to[out=east, in=west] (.5,.4);
\draw[white, double=black, double distance=0.4pt, ultra thick] 
(0,.4) to[out=east, in=west] (.5,-.4);
\draw[fill=white] (.5,-.6) rectangle +(.1,.4);
\draw[fill=white] (.5,.6) rectangle +(.1,-.4);
\node at (.5,-.6)[right]{$\scriptstyle{n}$};
\node at (.5,.6)[right]{$\scriptstyle{n}$};
\end{scope}
}\,\Bigg\rangle_{\! 3}
&=q^{-\frac{2m}{3}(n^2+3n)}
\sum_{0\leq k_m\leq \cdots\leq k_1\leq n}
q^{n-k_m}
q^{\sum_{i=1}^{m}(k_i^2+2k_i)}\\
&\qquad\times\frac{(q)_n}{(q)_{k_m}}
{n \choose k_1',k_2',\dots,k_m',k_m}_{q}
\Bigg\langle\,\tikz[baseline=-.6ex]{
\draw
(-.4,.4) -- +(-.2,0)
(.4,-.4) -- +(.2,0)
(-.4,-.4) -- +(-.2,0)
(.4,.4) -- +(.2,0);
\draw[-<-=.5] (-.4,.5) -- (.4,.5);
\draw[->-=.5] (-.4,-.5) -- (.4,-.5);
\draw[-<-=.5] (-.4,.3) to[out=east, in=east] (-.4,-.3);
\draw[->-=.5] (.4,.3) to[out=west, in=west] (.4,-.3);
\draw[fill=white] (.4,-.6) rectangle +(.1,.4);
\draw[fill=white] (-.4,-.6) rectangle +(-.1,.4);
\draw[fill=white] (.4,.6) rectangle +(.1,-.4);
\draw[fill=white] (-.4,.6) rectangle +(-.1,-.4);
\node at (.4,-.6)[right]{$\scriptstyle{n}$};
\node at (-.4,-.6)[left]{$\scriptstyle{n}$};
\node at (.4,.6)[right]{$\scriptstyle{n}$};
\node at (-.4,.6)[left]{$\scriptstyle{n}$};
\node at (0,.5)[above]{$\scriptstyle{k_m}$};
\node at (0,-.5)[below]{$\scriptstyle{k_m}$};
\node at (-.2,0)[left]{$\scriptstyle{n-k_m}$};
\node at (.2,0)[right]{$\scriptstyle{n-k_m}$};
}\,\Bigg\rangle_{\! 3},
\end{align*}
where $k_i, k_i'$ are integers such that $k_0=n$, $k_{i+1}'=k_i-k_{i+1}$ for $i=0,1,\dots,m-1$.
\end{THM}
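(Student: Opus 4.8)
The plan is to prove Theorem~\ref{A2mfull} by the same inductive bookkeeping that established the $A_1$ twist formulas in Propositions~\ref{A1mhalf} and~\ref{A1mfull}, now feeding in the single full twist formula of Proposition~\ref{A2full} in place of the colored skein relation and the $A_2$ sliding lemma, Lemma~\ref{A2slide}, in place of Lemma~\ref{A1slide}. First I would introduce the family of intermediate clasped $A_2$ webs: for $i=0,1,\dots,m$ and $k_i=0,1,\dots,n$, let $\langle\sigma^{2(m-i)}(k_i;n)\rangle_3$ denote the $A_2$ web built from $m-i$ full twists whose two twisting cables have effective color $k_i$ (so that a color $n-k_i$ is turned back by an outer $A_2$ clasp on each side), in exact analogy with $\langle\sigma^{m-i}(k_i;n)\rangle_2$ from the proof of Proposition~\ref{A1mhalf} and with $\langle\sigma^2(k,l;n)\rangle_3$ from Proposition~\ref{A2full}. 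With $k_0=n$ the left-hand side of the theorem is $\langle\sigma^{2m}(k_0;n)\rangle_3$, while each summand on the right-hand side is $\langle\sigma^{0}(k_m;n)\rangle_3$.

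The core step is a one-level recursion obtained by resolving the rightmost of the $m-i$ full twists. Applying Proposition~\ref{A2full} with $n$ replaced by the effective color $k_i$ expands that twist as a sum over $k_{i+1}=0,1,\dots,k_i$ of webs carrying a fresh $A_2$ clasp of color $k_{i+1}$, with coefficient $q^{\frac{k_i^2}{3}}q^{k_{i+1}^2-k_i^2+k_{i+1}-k_i}\frac{(q)_{k_i}}{(q)_{k_{i+1}}}{k_i\choose k_{i+1}}_q$. This new clasp is then slid leftward past the remaining $m-i-1$ full twists, each slide contributing the factor $q^{-\frac{k_i^2-k_{i+1}^2+3k_i-3k_{i+1}}{3}}$ supplied by Lemma~\ref{A2slide} (with colors $k_i$ and $k_{i+1}$). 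Collecting these gives
\[
\langle\sigma^{2(m-i)}(k_i;n)\rangle_3
=\sum_{k_{i+1}=0}^{k_i} c(k_i,k_{i+1};m-i)\,\langle\sigma^{2(m-i-1)}(k_{i+1};n)\rangle_3,
\]
where $c(k_i,k_{i+1};m-i)$ is the product of the Proposition~\ref{A2full} coefficient above with the $(m-i-1)$-fold slide factor.

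Iterating this recursion for $i=0,1,\dots,m-1$ collapses the nested sums into a single sum over chains $0\leq k_m\leq\cdots\leq k_1\leq n$, and the remaining work is purely algebraic. The Pochhammer ratios telescope, $\prod_{i=0}^{m-1}\frac{(q)_{k_i}}{(q)_{k_{i+1}}}=\frac{(q)_n}{(q)_{k_m}}$, and the product of $q$-binomials collapses to a single $q$-multinomial via the standard identity $\prod_{i=0}^{m-1}{k_i\choose k_{i+1}}_q={n\choose k_1',k_2',\dots,k_m',k_m}_q$ with $k_{i+1}'=k_i-k_{i+1}$, exactly as in Proposition~\ref{A1mfull}. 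In parallel I would add up all the $q$-exponents coming from the prefactors $q^{k_i^2/3}$, the terms $k_{i+1}^2-k_i^2+k_{i+1}-k_i$, and the $(m-i-1)$ copies of the slide exponent, and reorganize the total into $q^{-\frac{2m}{3}(n^2+3n)}q^{n-k_m}q^{\sum_{i=1}^m(k_i^2+2k_i)}$.

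I expect this exponent bookkeeping to be the only genuine obstacle. The $A_2$ twist and slide formulas carry $\frac13$-fractional quadratic and linear exponents in the running colors $k_i$, and one must check that the quadratic pieces $k_i^2$ telescope correctly against the $(m-i-1)$ slide contributions so that only the boundary terms $n^2$, $k_m$, and the clean sum $\sum(k_i^2+2k_i)$ survive; keeping the auxiliary indices $k_i'=k_i-k_{i+1}$ consistent and confirming that exactly $(q)_n/(q)_{k_m}$ remains after the Pochhammer telescoping is the delicate part. Everything else is a direct transcription of the completed $A_1$ argument.
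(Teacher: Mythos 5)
Your proposal is correct and follows essentially the same route as the paper, which proves Theorem~\ref{A2mfull} by running the inductive scheme of Proposition~\ref{A1mhalf} with Proposition~\ref{A2full} supplying the one-twist expansion and Lemma~\ref{A2slide} supplying the sliding factor. The intermediate webs, the recursion coefficient, the telescoping of the Pochhammer symbols and $q$-binomials into the multinomial, and the exponent bookkeeping you describe all match what the paper intends (and a check at $m=1$ confirms your exponents reproduce Proposition~\ref{A2full}).
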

\begin{proof}
We can prove it in the same way as proof of Proposition~\ref{A1mhalf} by use of Proposition~\ref{A2full} and Lemma~\ref{A2slide}.
\end{proof}

\section{Bubble skein expansion formulas}
In this section,
we consider the bubble skein expansion formula.
In the case of clasped $A_1$ web spaces,
Hajij~\cite{Hajij14} proved the formula.
First, 
we rewrite coefficients of the formula by using quantum binomial coefficients.
Next, 
we give the bubble skein expansion formula for clasped $A_2$ web spaces.
\subsection{The Kauffman bracket bubble skein expansion formula}
Let $\Delta_n$ denote the coefficients of 
$
\Big\langle\,\tikz[baseline=-.6ex]{
\draw (0,0) circle [radius=.3];
\draw[fill=white] (.1,-.05) rectangle (.5,.05);
\node at (.3,0)[above right]{$\scriptstyle{n}$};
}\,\Big\rangle_{\! 2}
$, that is, 
$\Delta_n=(-1)^n\left[n+1\right]$.
\begin{THM}[The Kauffman bracket bubble skein expansion formula by Hajij~\cite{Hajij14}]\label{A1bubble}
Let $m,n\geq k,l$ be positive integers.
\[
\Bigg\langle\,\tikz[baseline=-.6ex, scale=0.8]{
\draw (-.4,.5) -- +(-.2,0);
\draw (.4,-.5) -- +(.2,0);
\draw (-.4,-.5) -- +(-.2,0);
\draw (.4,.5) -- +(.2,0);
\draw (-.4,.5) -- (0,.5);
\draw (0,.5) -- (.4,.5);
\draw (-.4,-.5) -- (0,-.5);
\draw (0,-.5) -- (.4,-.5);
\draw (.05,.3) to[out=east, in=east] (.05,.-.3);
\draw (-.05,.3) to[out=west, in=west] (-.05,-.3);
\draw[fill=white] (-.4,.3) rectangle +(-.1,.3);
\draw[fill=white] (.4,.3) rectangle +(.1,.3);
\draw[fill=white] (-.4,-.3) rectangle +(-.1,-.3);
\draw[fill=white] (.4,-.3) rectangle +(.1,-.3);
\draw[fill=white] (-.05,.2) rectangle +(.1,.4);
\draw[fill=white] (-.05,-.2) rectangle +(.1,-.4);
\node at (.4,-.5)[below right]{$\scriptstyle{m-l}$};
\node at (-.4,-.5)[below left]{$\scriptstyle{m-k}$};
\node at (.4,.5)[above right]{$\scriptstyle{n-l}$};
\node at (-.4,.5)[above left]{$\scriptstyle{n-k}$};
\node at (-.2,0)[left]{$\scriptstyle{k}$};
\node at (.2,0)[right]{$\scriptstyle{l}$};
\node at (0,-.5)[below]{$\scriptstyle{m}$};
\node at (0,.5)[above]{$\scriptstyle{n}$};
}\,\Bigg\rangle_{\! 2}
=
\sum_{t=\max\{k, l\}}^{\min\{k+l, n, m\}}
(-1)^{t-k-l}
\frac{{n\brack t}{m\brack t}{t\brack k}{t\brack l}{n+m-t+1\brack n+m-k-l+1}}{{n\brack k}{m\brack k}{n\brack l}{m\brack l}}
\Bigg\langle\,\tikz[baseline=-.6ex, scale=0.8]{
\draw (-.4,.4) -- +(-.2,0);
\draw (.4,-.4) -- +(.2,0);
\draw (-.4,-.4) -- +(-.2,0);
\draw (.4,.4) -- +(.2,0);
\draw (-.4,.5) -- (0,.5);
\draw (0,.5) -- (.4,.5);
\draw (-.4,-.5) -- (0,-.5);
\draw (0,-.5) -- (.4,-.5);
\draw (-.4,.3) to[out=east, in=east] (-.4,.-.3);
\draw (.4,.3) to[out=west, in=west] (.4,-.3);
\draw[fill=white] (-.4,.2) rectangle +(-.1,.4);
\draw[fill=white] (.4,.2) rectangle +(.1,.4);
\draw[fill=white] (-.4,-.2) rectangle +(-.1,-.4);
\draw[fill=white] (.4,-.2) rectangle +(.1,-.4);
\node at (.4,-.6)[right]{$\scriptstyle{m-l}$};
\node at (-.4,-.6)[left]{$\scriptstyle{m-k}$};
\node at (.4,.6)[right]{$\scriptstyle{n-l}$};
\node at (-.4,.6)[left]{$\scriptstyle{n-k}$};
\node at (-.2,0)[left]{$\scriptstyle{t-k}$};
\node at (.2,0)[right]{$\scriptstyle{t-l}$};
\node at (0,-.5)[below]{$\scriptstyle{m-t}$};
\node at (0,.5)[above]{$\scriptstyle{n-t}$};
}\,\Bigg\rangle_{\! 2}
\]
\end{THM}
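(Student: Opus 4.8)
The plan is to treat both sides as elements of the clasped $A_1$ web space $W_{(n-k,\,n-l,\,m-l,\,m-k)}$ determined by the four outer boundary colors, and to expand the left-hand bubble in the \emph{fusion basis} furnished by the right-hand webs. First I would record the admissibility condition: writing $B_t$ for the right-hand diagram with internal colors $t-k$, $t-l$, $n-t$, $m-t$, the four trivalent junctions force $t-k\ge 0$, $t-l\ge 0$, $n-t\ge 0$, $m-t\ge 0$ and $t\le k+l$, which is exactly $\max\{k,l\}\le t\le\min\{k+l,n,m\}$. Granting that the $B_t$ span this web space, the left-hand side is some combination $\sum_t c_t B_t$, and the entire content of Theorem~\ref{A1bubble} is the identification of the scalars $c_t$.

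To isolate a single $c_t$ I would use orthogonality of the fusion basis under the pairing obtained by stacking a web against its vertical mirror and closing all strands. Concretely, I would glue the mirror web $\bar B_t$ onto both sides and reduce the resulting closed $A_1$ network using the absorption property of the clasp together with the cap evaluation in the second bullet of Lemma~\ref{A1clasplem}. On the right-hand side every term with $t'\neq t$ vanishes, since an internal clasp then meets a turnback of strictly smaller color, so only the $B_t$-term survives, producing a product of ``theta'' networks together with the loop value $\Delta$; this furnishes the denominator. On the left-hand side the same gluing collapses the double bubble into a single computable closed network of theta/tetrahedron type, furnishing the numerator. Hence each $c_t$ is expressed as a ratio of explicit closed-network evaluations.

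The remaining and most delicate step is to evaluate these networks in closed form and recognize the outcome as the displayed quantum-binomial expression. By Lemma~\ref{A1clasplem} and the recursive definition of the idempotent, each theta network and each turnback coefficient is a ratio of products of quantum integers; assembling them, dividing by the $\Delta$-normalizations, and cancelling common factors should, after repeated application of the identities in Lemma~\ref{qinteger}, telescope into $\dfrac{{n\brack t}{m\brack t}{t\brack k}{t\brack l}{n+m-t+1\brack n+m-k-l+1}}{{n\brack k}{m\brack k}{n\brack l}{m\brack l}}$, with the sign $(-1)^{t-k-l}$ coming from the internal turnbacks. The hard part will be precisely this final simplification: tracking the exact quantum-integer factors contributed by each vertex and showing that the unwieldy ratio collapses to the symmetric $q$-binomial form.

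Since the expansion itself is due to Hajij~\cite{Hajij14}, a more economical alternative I would keep in reserve is to start from his coefficients and verify only that they coincide with the quantum-binomial expression above; this bypasses the network bookkeeping entirely and reduces the problem to manipulation of $q$-binomial coefficients using Lemma~\ref{qinteger} and the defining identities for ${n\choose k}_q$ and ${n\brack k}$. In either route I would settle the summation range first, then the network-ratio computation (or the translation of Hajij's coefficients), leaving the quantum-integer algebra for last, in the same spirit as the path-summation arguments used for Proposition~\ref{coloredA1} and Proposition~\ref{A1full}.
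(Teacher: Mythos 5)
Your ``economical alternative'' held in reserve is, in fact, the paper's entire proof: the author simply quotes Hajij's expansion from \cite{Hajij14}, whose coefficients are written as a signed ratio of products of loop values $\Delta_j$ times a single ${l\brack i}$, and then states that these coefficients are easily rewritten as the displayed ratio of quantum binomials; no independent skein-theoretic derivation appears. Your primary route --- expanding the bubble over the webs $B_t$ and extracting each coefficient by pairing with the mirror web $\bar B_t$ --- is therefore a genuinely different argument: it would re-prove Hajij's theorem from scratch via closed theta/tetrahedron evaluations rather than translate it. What it buys is independence from \cite{Hajij14}; what it costs is exactly the step you defer, the closed-form tetrahedron evaluation and its telescoping into the symmetric binomial form, which is comparable in effort to Hajij's original induction and is not reachable from Lemma~\ref{qinteger} alone without the explicit recoupling formulas of \cite{KauffmanLins94}. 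If you do pursue it, one claim needs repair: the $B_t$ do \emph{not} span the clasped web space $W_{(n-k,\,n-l,\,m-l,\,m-k)}$ in general --- that space has $\min\{n,m\}-\max\{k,l\}+1$ basis webs while there are only $\min\{k+l,n,m\}-\max\{k,l\}+1$ admissible $t$ --- and pairing against the $\bar B_t$ cannot detect a component of the left-hand side orthogonal to their span. So you must first argue that the bubble itself lies in $\operatorname{span}\{B_t\}$, e.g.\ because it factors through the $k+l$ strands at its waist and hence only involves internal colors at most $k+l$ in the fusion decomposition; with that supplied, the orthogonality computation correctly isolates each $c_t$. Given the bookkeeping this entails, the paper's translation of Hajij's $\Delta$-coefficients is the shorter and safer write-up.
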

\begin{proof}
Hajij gave this formula in~\cite{Hajij14} as follows. 
Let $M,N,M',N'$ be non-negative integers and $k\geq l \geq 1$ positive integers.
Then, 
\begin{align*}
&\Bigg\langle\,\tikz[baseline=-.6ex, scale=0.8]{
\draw (-.4,.5) -- +(-.2,0);
\draw (.4,-.5) -- +(.2,0);
\draw (-.4,-.5) -- +(-.2,0);
\draw (.4,.5) -- +(.2,0);
\draw (-.4,.5) -- (0,.5);
\draw (0,.5) -- (.4,.5);
\draw (-.4,-.5) -- (0,-.5);
\draw (0,-.5) -- (.4,-.5);
\draw (.05,.3) to[out=east, in=east] (.05,.-.3);
\draw (-.05,.3) to[out=west, in=west] (-.05,-.3);
\draw[fill=white] (-.4,.3) rectangle +(-.1,.3);
\draw[fill=white] (.4,.3) rectangle +(.1,.3);
\draw[fill=white] (-.4,-.3) rectangle +(-.1,-.3);
\draw[fill=white] (.4,-.3) rectangle +(.1,-.3);
\draw[fill=white] (-.05,.2) rectangle +(.1,.4);
\draw[fill=white] (-.05,-.2) rectangle +(.1,-.4);
\node at (.4,-.5)[below right]{$\scriptstyle{M'}$};
\node at (-.4,-.5)[below left]{$\scriptstyle{M}$};
\node at (.4,.5)[above right]{$\scriptstyle{N'}$};
\node at (-.4,.5)[above left]{$\scriptstyle{N}$};
\node at (-.2,0)[left]{$\scriptstyle{k}$};
\node at (.2,0)[right]{$\scriptstyle{l}$};
}\,\Bigg\rangle_{\! 2}\\
&=
\sum_{i=0}^{\min\{M,N,l\}}
(-1)^{i(i-l)}
\frac{(\prod_{j=0}^{l-i-1}\Delta_{k-j-1}\Delta_{M+N+k-i-j})(\prod_{j=0}^{i-1}\Delta_{N-j-1}\Delta_{M-j-1})}{\prod_{j=0}^{l-1}\Delta_{N+k-t-1}\Delta_{M+k-t-1}}{l\brack i}
\Bigg\langle\,\tikz[baseline=-.6ex, scale=0.8]{
\draw (-.4,.4) -- +(-.2,0);
\draw (.4,-.4) -- +(.2,0);
\draw (-.4,-.4) -- +(-.2,0);
\draw (.4,.4) -- +(.2,0);
\draw (-.4,.5) -- (0,.5);
\draw (0,.5) -- (.4,.5);
\draw (-.4,-.5) -- (0,-.5);
\draw (0,-.5) -- (.4,-.5);
\draw (-.4,.3) to[out=east, in=east] (-.4,.-.3);
\draw (.4,.3) to[out=west, in=west] (.4,-.3);
\draw[fill=white] (-.4,.2) rectangle +(-.1,.4);
\draw[fill=white] (.4,.2) rectangle +(.1,.4);
\draw[fill=white] (-.4,-.2) rectangle +(-.1,-.4);
\draw[fill=white] (.4,-.2) rectangle +(.1,-.4);
\node at (.4,-.6)[right]{$\scriptstyle{M'}$};
\node at (-.4,-.6)[left]{$\scriptstyle{M}$};
\node at (.4,.6)[right]{$\scriptstyle{N'}$};
\node at (-.4,.6)[left]{$\scriptstyle{N}$};
\node at (-.2,0)[left]{$\scriptstyle{i}$};
\node at (.2,0)[right]{$\scriptstyle{k-l+i}$};
}\,\Bigg\rangle_{\! 2}.
\end{align*}
We can easily rewrite coefficients of the above formula and obtain our formula.
\end{proof}
\subsection{The $A_2$ bracket bubble skein expansion formula}
\begin{THM}[The $A_2$ bracket bubble skein expansion formula]\label{A2bubble}
\[
\Bigg\langle\,\tikz[baseline=-.6ex, scale=0.8]{
\draw (-.4,.5) -- +(-.2,0);
\draw (.4,-.5) -- +(.2,0);
\draw (-.4,-.5) -- +(-.2,0);
\draw (.4,.5) -- +(.2,0);
\draw[-<-=.5] (-.4,.5) -- (0,.5);
\draw[-<-=.5] (0,.5) -- (.4,.5);
\draw[->-=.5] (-.4,-.5) -- (0,-.5);
\draw[->-=.5] (0,-.5) -- (.4,-.5);
\draw[-<-=.5] (.05,.3) to[out=east, in=east] (.05,.-.3);
\draw[->-=.5] (-.05,.3) to[out=west, in=west] (-.05,-.3);
\draw[fill=white] (-.4,.3) rectangle +(-.1,.3);
\draw[fill=white] (.4,.3) rectangle +(.1,.3);
\draw[fill=white] (-.4,-.3) rectangle +(-.1,-.3);
\draw[fill=white] (.4,-.3) rectangle +(.1,-.3);
\draw[fill=white] (-.05,.2) rectangle +(.1,.4);
\draw[fill=white] (-.05,-.2) rectangle +(.1,-.4);
\node at (.4,-.5)[below right]{$\scriptstyle{m-l}$};
\node at (-.4,-.5)[below left]{$\scriptstyle{m-k}$};
\node at (.4,.5)[above right]{$\scriptstyle{n-l}$};
\node at (-.4,.5)[above left]{$\scriptstyle{n-k}$};
\node at (-.2,0)[left]{$\scriptstyle{k}$};
\node at (.2,0)[right]{$\scriptstyle{l}$};
\node at (0,-.6)[below]{$\scriptstyle{m}$};
\node at (0,.6)[above]{$\scriptstyle{n}$};
}\,\Bigg\rangle_{\! 3}
=
\sum_{t=\max\{k, l\}}^{\min\{k+l, n, m\}}
\frac{{n\brack t}{m\brack t}{t\brack k}{t\brack l}{n+m-t+2\brack n+m-k-l+2}}{{n\brack k}{m\brack k}{n\brack l}{m\brack l}}
\Bigg\langle\,\tikz[baseline=-.6ex, scale=0.8]{
\draw (-.4,.4) -- +(-.2,0);
\draw (.4,-.4) -- +(.2,0);
\draw (-.4,-.4) -- +(-.2,0);
\draw (.4,.4) -- +(.2,0);
\draw[-<-=.5] (-.4,.5) -- (.4,.5);
\draw[->-=.5] (-.4,-.5)  -- (.4,-.5);
\draw[-<-=.5] (-.4,.3) to[out=east, in=east] (-.4,.-.3);
\draw[->-=.5] (.4,.3) to[out=west, in=west] (.4,-.3);
\draw[fill=white] (-.4,.2) rectangle +(-.1,.4);
\draw[fill=white] (.4,.2) rectangle +(.1,.4);
\draw[fill=white] (-.4,-.2) rectangle +(-.1,-.4);
\draw[fill=white] (.4,-.2) rectangle +(.1,-.4);
\node at (.4,-.6)[right]{$\scriptstyle{m-l}$};
\node at (-.4,-.6)[left]{$\scriptstyle{m-k}$};
\node at (.4,.6)[right]{$\scriptstyle{n-l}$};
\node at (-.4,.6)[left]{$\scriptstyle{n-k}$};
\node at (-.2,0)[left]{$\scriptstyle{t-k}$};
\node at (.2,0)[right]{$\scriptstyle{t-l}$};
\node at (0,-.5)[below]{$\scriptstyle{m-t}$};
\node at (0,.5)[above]{$\scriptstyle{n-t}$};
}\,\Bigg\rangle_{\! 3}
\]
\end{THM}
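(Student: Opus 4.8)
The plan is to follow the structure of Hajij's proof of the $A_1$ bubble formula (Theorem~\ref{A1bubble}), transplanted to the $A_2$ web space. First I would prove a more flexible local identity in which the four boundary colors are treated as independent non-negative integers $M,N,M',N'$ instead of being coupled to $n,m,k,l$; the stated theorem is then recovered as the specialization $N=n-k$, $N'=n-l$, $M=m-k$, $M'=m-l$. Freeing the boundary colors is precisely what permits an induction, because the inductive step alters them. I would also normalize at the outset so that the two innermost clasps meeting the central bigon are fused into the four external clasps using the clasp-fusion and clasp-through-vertex identities of Lemma~\ref{coloredvertex}, bringing the configuration into a shape to which a single recursive move applies.

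The engine is a two-term recursion obtained by peeling one strand off the central bigon. Concretely I would apply the recursive definition of the $A_2$ clasp to the clasp of color $n$ abutting the bubble; this splits the web into an identity term, in which a strand passes straight through and the internal bigon color is unchanged, and a single turn-back term carrying the coefficient $-[n-1]/[n]$, in which that strand is reabsorbed and the internal color is raised by one. Using the bigon evaluation of Lemma~\ref{A2clasplem}, which contributes the characteristic $A_2$ factor $\tfrac{[n+1][n+2]}{[n-k+1][n-k+2]}$, together with the vertex-manipulation identities of Lemma~\ref{coloredvertex} and the strand-through move of Lemma~\ref{A2through}, I would reduce each of the two terms to a bubble of the same shape but with shifted internal color. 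The outcome is a recursion of exactly the type of equations~(\ref{A1halfres}) and~(\ref{A2fullres}), in which the coefficient attached to internal color $t$ is expressed through those of $t$ and $t+1$ at the previous stage by explicit $q$-powers and a ratio of quantum integers.

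With the recursion established I would solve it by the lattice-path / Young-diagram device already used for Proposition~\ref{coloredA1}(1) and Theorem~\ref{coloredA2}(2). Encoding each internal color as the endpoint of a monotone lattice path, the product of the local coefficients along a path depends on the path only through the area beneath it, so that the sum over all paths with a fixed endpoint produces $\sum_{\lambda}q^{|\lambda|}$, which Lemma~\ref{partition} collapses into a single $q$-binomial; this is the source of the distinguished factor ${n+m-t+2\brack n+m-k-l+2}$. The shift by $2$, rather than by $1$ as in the $A_1$ formula, and the absence of the sign $(-1)^{t-k-l}$ are the direct imprint of the $A_2$ loop value $\tfrac{[n+1][n+2]}{[2]}$ from Theorem~\ref{coloredA2}(5) replacing the $A_1$ loop value $(-1)^n[n+1]$. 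Finally I would gather the surviving prefactors and rewrite the full coefficient as the stated ratio of five quantum binomials, telescoping the accumulated products with the quantum-integer identities of Lemma~\ref{qinteger}.

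The main obstacle is the coefficient bookkeeping in the inductive step. Where the $A_1$ computation produces a single quantum integer $\Delta_n$ at each local reduction, every $A_2$ bigon reduction produces the product $[n+1][n+2]$, so the telescoping that must collapse the accumulated products into the four denominator binomials ${n\brack k}{m\brack k}{n\brack l}{m\brack l}$ is considerably more delicate, and confirming that no spurious sign survives requires careful control of the intermediate $A_2$ web evaluations. A secondary point needing care is the summation range: I must verify that the turn-back terms keep $t$ within $\max\{k,l\}\le t\le\min\{k+l,n,m\}$, with the extreme values $t=\max\{k,l\}$ and $t=\min\{k+l,n,m\}$ furnishing the base of the induction.
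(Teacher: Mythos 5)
Your setup matches the paper's: you unfold one $A_2$ clasp via its recursive definition to get a two-term recursion for the bubble (this is the paper's equation~(\ref{bubbleeq})), you isolate the case $k=1$ or $l=1$ as a separately-computed base case (the paper's Lemma~\ref{bubblelem}), and you correctly flag the boundary values of $t$ as needing separate attention. The problem is the step you rely on to close the argument.

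The lattice-path/Young-diagram device of Lemma~\ref{partition} cannot solve this recursion. That method works in Propositions~\ref{coloredA1}, \ref{A1full} and Theorem~\ref{coloredA2}~$(2)$ precisely because every local coefficient there is a \emph{monomial} in $q$ (or a monomial times a factor $(1-q^{j})$ attached to only one of the two branches), so the product of decorations along a path depends on the path only through the area under it, and the sum over paths collapses to $\sum_{\lambda}q^{|\lambda|}={k+l\choose k}_q$. In the bubble recursion the two branch coefficients are $\frac{[n+m-l+2][l]}{[n][m]}$ and $\frac{[n-l][m-l]}{[n][m]}$ --- genuine ratios of quantum integers, not $q$-powers --- and moreover each branch also shifts the ambient colors $n,m$, so the local weight at a lattice site depends on the whole history of the path, not just on its endpoint and area. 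Consequently the sum over paths is not a partition generating function, and the target coefficient $\frac{{n\brack t}{m\brack t}{t\brack k}{t\brack l}{n+m-t+2\brack n+m-k-l+2}}{{n\brack k}{m\brack k}{n\brack l}{m\brack l}}$ is a ratio of products of five quantum binomials, not of the shape $q^{\bullet}{n\choose k}_q$ that the path-counting method can ever produce. The paper closes the argument differently: it inducts on $\max\{m,n\}$, substitutes the induction hypothesis into both terms of the recursion~(\ref{bubbleeq2}), and verifies that the two contributions to the coefficient of a fixed $t$ recombine into the claimed expression using the additive quantum-integer identity $\left[a\right]\left[b-c\right]+\left[c\right]\left[a-b\right]=\left[b\right]\left[a-c\right]$ of Lemma~\ref{qinteger}~$(3)$ (in the form $[n+m-l+2][k+l-t]+[t-l][n+m-k-l+2]=[k][n+m+2-t]$). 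You would need to replace your path-summation step with this kind of direct inductive verification; the rest of your outline is sound.
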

Firstly, 
we calculate the $A_2$ web appearing in the right-hand side of Theorem~\ref{A2bubble}, we call it an $A_2$ bubble skein element, when $k=1$ or $l=1$.
\begin{LEM}\label{bubblelem}\ 
\begin{enumerate}
\item 
$
\Bigg\langle\,\tikz[baseline=-.6ex, scale=0.8]{
\draw (-.4,.5) -- +(-.2,0);
\draw (.4,-.5) -- +(.2,0);
\draw (-.4,-.5) -- +(-.2,0);
\draw (.4,.5) -- +(.2,0);
\draw[-<-=.5] (-.4,.5) -- (0,.5);
\draw[-<-=.5] (0,.5) -- (.4,.5);
\draw[->-=.5] (-.4,-.5) -- (0,-.5);
\draw[->-=.5] (0,-.5) -- (.4,-.5);
\draw[-<-=.5] (.05,.3) to[out=east, in=east] (.05,.-.3);
\draw[->-=.5] (-.05,.3) to[out=west, in=west] (-.05,-.3);
\draw[fill=white] (-.4,.3) rectangle +(-.1,.3);
\draw[fill=white] (.4,.3) rectangle +(.1,.3);
\draw[fill=white] (-.4,-.3) rectangle +(-.1,-.3);
\draw[fill=white] (.4,-.3) rectangle +(.1,-.3);
\draw[fill=white] (-.05,.2) rectangle +(.1,.4);
\draw[fill=white] (-.05,-.2) rectangle +(.1,-.4);
\node at (.4,-.5)[below right]{$\scriptstyle{m-l}$};
\node at (-.4,-.5)[below left]{$\scriptstyle{m-1}$};
\node at (.4,.5)[above right]{$\scriptstyle{n-l}$};
\node at (-.4,.5)[above left]{$\scriptstyle{n-1}$};
\node at (-.2,0)[left]{$\scriptstyle{1}$};
\node at (.2,0)[right]{$\scriptstyle{l}$};
\node at (0,-.6)[below]{$\scriptstyle{m}$};
\node at (0,.6)[above]{$\scriptstyle{n}$};
}\,\Bigg\rangle_{\! 3}
=
\frac{\left[n+m-l+2\right]\left[l\right]}{\left[n\right]\left[m\right]}
\Bigg\langle\,\tikz[baseline=-.6ex, scale=0.8]{
\draw (-.4,.4) -- +(-.2,0);
\draw (-.4,-.4) -- +(-.2,0);
\draw[-<-=.5] (-.4,.5) -- (.0,.5);
\draw[->-=.5] (-.4,-.5)  -- (.0,-.5);
\draw[-<-=.5] (-.4,.3) to[out=east, in=east] (-.4,.-.3);
\draw[fill=white] (-.4,.2) rectangle +(-.1,.4);
\draw[fill=white] (-.4,-.2) rectangle +(-.1,-.4);
\node at (-.4,-.6)[left]{$\scriptstyle{m-1}$};
\node at (-.4,.6)[left]{$\scriptstyle{n-1}$};
\node at (-.2,0)[left]{$\scriptstyle{l-1}$};
\node at (0,-.5)[below]{$\scriptstyle{m-l}$};
\node at (0,.5)[above]{$\scriptstyle{n-l}$};
}\,\Bigg\rangle_{\! 3}
+
\frac{\left[n-l\right]\left[m-l\right]}{\left[n\right]\left[m\right]}
\Bigg\langle\,\tikz[baseline=-.6ex, scale=0.8]{
\draw (-.4,.4) -- +(-.2,0);
\draw (.4,-.4) -- +(.2,0);
\draw (-.4,-.4) -- +(-.2,0);
\draw (.4,.4) -- +(.2,0);
\draw[-<-=.5] (-.4,.5) -- (.4,.5);
\draw[->-=.5] (-.4,-.5)  -- (.4,-.5);
\draw[-<-=.5] (-.4,.3) to[out=east, in=east] (-.4,.-.3);
\draw[->-=.5] (.4,.3) to[out=west, in=west] (.4,-.3);
\draw[fill=white] (-.4,.2) rectangle +(-.1,.4);
\draw[fill=white] (.4,.2) rectangle +(.1,.4);
\draw[fill=white] (-.4,-.2) rectangle +(-.1,-.4);
\draw[fill=white] (.4,-.2) rectangle +(.1,-.4);
\node at (.4,-.6)[right]{$\scriptstyle{m-l}$};
\node at (-.4,-.6)[left]{$\scriptstyle{m-1}$};
\node at (.4,.6)[right]{$\scriptstyle{n-l}$};
\node at (-.4,.6)[left]{$\scriptstyle{n-1}$};
\node at (-.2,0)[left]{$\scriptstyle{l}$};
\node at (.2,0)[right]{$\scriptstyle{1}$};
\node at (0,-.5)[below]{$\scriptstyle{m-l-1}$};
\node at (0,.5)[above]{$\scriptstyle{n-l-1}$};
}\,\Bigg\rangle_{\! 3}
$
\item
$
\Bigg\langle\,\tikz[baseline=-.6ex, scale=0.8]{
\draw (-.4,.5) -- +(-.2,0);
\draw (.4,-.5) -- +(.2,0);
\draw (-.4,-.5) -- +(-.2,0);
\draw (.4,.5) -- +(.2,0);
\draw[-<-=.5] (-.4,.5) -- (0,.5);
\draw[-<-=.5] (0,.5) -- (.4,.5);
\draw[->-=.5] (-.4,-.5) -- (0,-.5);
\draw[->-=.5] (0,-.5) -- (.4,-.5);
\draw[-<-=.5] (.05,.3) to[out=east, in=east] (.05,.-.3);
\draw[->-=.5] (-.05,.3) to[out=west, in=west] (-.05,-.3);
\draw[fill=white] (-.4,.3) rectangle +(-.1,.3);
\draw[fill=white] (.4,.3) rectangle +(.1,.3);
\draw[fill=white] (-.4,-.3) rectangle +(-.1,-.3);
\draw[fill=white] (.4,-.3) rectangle +(.1,-.3);
\draw[fill=white] (-.05,.2) rectangle +(.1,.4);
\draw[fill=white] (-.05,-.2) rectangle +(.1,-.4);
\node at (.4,-.5)[below right]{$\scriptstyle{m-l}$};
\node at (-.4,-.5)[below left]{$\scriptstyle{m-1}$};
\node at (.4,.5)[above right]{$\scriptstyle{n-l}$};
\node at (-.4,.5)[above left]{$\scriptstyle{n-1}$};
\node at (-.2,0)[left]{$\scriptstyle{k}$};
\node at (.2,0)[right]{$\scriptstyle{1}$};
\node at (0,-.6)[below]{$\scriptstyle{m}$};
\node at (0,.6)[above]{$\scriptstyle{n}$};
}\,\Bigg\rangle_{\! 3}
=
\frac{\left[n+m-l+2\right]\left[l\right]}{\left[n\right]\left[m\right]}
\Bigg\langle\,\tikz[baseline=-.6ex, scale=0.8, xscale=-1]{
\draw (-.4,.4) -- +(-.2,0);
\draw (-.4,-.4) -- +(-.2,0);
\draw[-<-=.5] (-.4,.5) -- (.0,.5);
\draw[->-=.5] (-.4,-.5)  -- (.0,-.5);
\draw[-<-=.5] (-.4,.3) to[out=east, in=east] (-.4,.-.3);
\draw[fill=white] (-.4,.2) rectangle +(-.1,.4);
\draw[fill=white] (-.4,-.2) rectangle +(-.1,-.4);
\node at (-.4,-.6)[right]{$\scriptstyle{m-1}$};
\node at (-.4,.6)[right]{$\scriptstyle{n-1}$};
\node at (-.2,0)[right]{$\scriptstyle{k-1}$};
\node at (0,-.5)[below]{$\scriptstyle{m-k}$};
\node at (0,.5)[above]{$\scriptstyle{n-k}$};
}\,\Bigg\rangle_{\! 3}
+
\frac{\left[n-l\right]\left[m-l\right]}{\left[n\right]\left[m\right]}
\Bigg\langle\,\tikz[baseline=-.6ex, scale=0.8]{
\draw (-.4,.4) -- +(-.2,0);
\draw (.4,-.4) -- +(.2,0);
\draw (-.4,-.4) -- +(-.2,0);
\draw (.4,.4) -- +(.2,0);
\draw[-<-=.5] (-.4,.5) -- (.4,.5);
\draw[->-=.5] (-.4,-.5)  -- (.4,-.5);
\draw[-<-=.5] (-.4,.3) to[out=east, in=east] (-.4,.-.3);
\draw[->-=.5] (.4,.3) to[out=west, in=west] (.4,-.3);
\draw[fill=white] (-.4,.2) rectangle +(-.1,.4);
\draw[fill=white] (.4,.2) rectangle +(.1,.4);
\draw[fill=white] (-.4,-.2) rectangle +(-.1,-.4);
\draw[fill=white] (.4,-.2) rectangle +(.1,-.4);
\node at (.4,-.6)[right]{$\scriptstyle{m-1}$};
\node at (-.4,-.6)[left]{$\scriptstyle{m-k}$};
\node at (.4,.6)[right]{$\scriptstyle{n-1}$};
\node at (-.4,.6)[left]{$\scriptstyle{n-k}$};
\node at (-.2,0)[left]{$\scriptstyle{1}$};
\node at (.2,0)[right]{$\scriptstyle{k}$};
\node at (0,-.5)[below]{$\scriptstyle{m-k-1}$};
\node at (0,.5)[above]{$\scriptstyle{n-k-1}$};
}\,\Bigg\rangle_{\! 3}
$
\end{enumerate}
\end{LEM}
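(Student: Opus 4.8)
The plan is to prove part~(1) by a single graphical expansion and then to obtain part~(2) from it by symmetry. The feature that makes the computation tractable is that in part~(1) the left arc of the bubble has color $k=1$, so it is a single edge; this is exactly the strand that is split off in the recursive definition of the $A_2$ clasp. First I would apply the recursive definition of the $A_2$ clasp to the color-$n$ clasp sitting on top of the bubble, writing it as the color-$(n-1)$ clasp running parallel to this single strand minus $\frac{[n-1]}{[n]}$ times the turnback term, and similarly for the color-$m$ clasp on the bottom. Lemma~\ref{A2through}, which lets a single strand pass across a clasped bigon, is what lets the two clasp reductions be carried out compatibly, so that the surviving diagrams are exactly the two webs on the right-hand side.

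Expanding the clasps produces a small number of diagrams. In the term where the single strand runs straight through, I would absorb it into the neighboring colored trivalent and $4$-valent vertices using Lemma~\ref{coloredvertex}, recognizing the result as the second bubble skein element on the right-hand side (with internal colors decreased by one). In each turnback term the single strand closes up against the color-$l$ arc, creating a bigon or a small triangle; by the defining property that $A_2$ clasps annihilate turnbacks, together with the bigon and triangle evaluations of Lemma~\ref{A2clasplem}, these contract to scalar multiples of the first bubble skein element on the right-hand side.

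The scalars emerging from this process are products of the recursion coefficients $\frac{[n-1]}{[n]}$, $\frac{[m-1]}{[m]}$ and of the quantum-integer ratios coming from the bigon evaluations. The main obstacle is the bookkeeping that assembles these competing contributions into the stated coefficients $\frac{[n+m-l+2][l]}{[n][m]}$ and $\frac{[n-l][m-l]}{[n][m]}$; for this I would invoke the identities of Lemma~\ref{qinteger}, and in particular $[a][b]-[a-c][b-c]=[a+b-c][c]$, which is precisely what is needed to merge two products of quantum integers into a single product of the required shape.

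Finally, part~(2), where the single strand is the right arc $l=1$, follows from part~(1) by the left--right reflection of the diagram, which interchanges the roles of $k$ and $l$ and of the two external sides; since the $A_2$ bracket is compatible with this reflection (up to the orientation-reversing involution already used to resolve the two crossings), the computation of part~(1) transports directly and yields the coefficients of part~(2) with $k$ and $l$ interchanged. Alternatively, part~(2) can be established by the identical expansion applied to the right-hand pair of clasps.
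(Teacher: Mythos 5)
Your toolkit is the right one, but the central step as you describe it does not go through. You claim that a single expansion of the top and bottom clasps, splitting off the color-$1$ left strand, already produces ``exactly the two webs on the right-hand side.'' It does not: one round of the clasp recursion only yields the one-step identity that the paper records as (\ref{bubbleeq}), in which the first term carries the coefficient $\left[n+m+1\right]/(\left[n\right]\left[m\right])$ rather than $\left[n+m-l+2\right]\left[l\right]/(\left[n\right]\left[m\right])$, and the second term is a concatenation of two small bubbles that still contains a bubble with arc colors $1$ and $l-1$ --- it is not the second web of the statement. The factor $\left[l\right]$ in the target coefficient, which expands as a sum of $l$ quantum integers, is the unmistakable sign that $l$ successive reductions are required: the paper specializes (\ref{bubbleeq}) to $k=1$ and applies it repeatedly, the first-term coefficients accumulate to the telescoping sum $\left[n+m+1\right]+\left[n+m-1\right]+\dots+\left[n+m-2l+3\right]$, and only then does Lemma~\ref{qinteger}~(1) convert this sum into $\left[n+m-l+2\right]\left[l\right]$, while the prefactors of the residual term telescope to $\left[n-l\right]\left[m-l\right]/(\left[n\right]\left[m\right])$. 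Your proposal sets up no induction on $l$ and no mechanism for transporting the single strand across all $l$ strands of the opposite arc, so the bookkeeping you defer to Lemma~\ref{qinteger} cannot be completed from the diagrams that a single expansion actually produces. (The identity $\left[a\right]\left[b\right]-\left[a-c\right]\left[b-c\right]=\left[a+b-c\right]\left[c\right]$ you cite is indeed used by the paper, but in the induction of Theorem~\ref{A2bubble}; it could equally drive an induction on $l$ here via $\left[n+m+1\right]+\left[n+m-l+1\right]\left[l-1\right]=\left[n+m-l+2\right]\left[l\right]$, but you would need to set that induction up explicitly.)

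Your treatment of part~(2) by left--right reflection is consistent with the paper, which proves only part~(1) and observes that the other case follows symmetrically; note only that there are no crossings in these diagrams, so the involution you invoke is simply the mirror image of the planar web together with the corresponding orientation conventions.
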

\begin{proof}
We only prove $(1)$.
For any integers $k$ and $l$ such that $1\leq k,l\leq \min\{k,l\}$,
\begin{align*}
\Bigg\langle\,\tikz[baseline=-.6ex, scale=0.8]{
\draw (-.4,.5) -- +(-.2,0);
\draw (.4,-.5) -- +(.2,0);
\draw (-.4,-.5) -- +(-.2,0);
\draw (.4,.5) -- +(.2,0);
\draw[-<-=.5] (-.4,.5) -- (0,.5);
\draw[-<-=.5] (0,.5) -- (.4,.5);
\draw[->-=.5] (-.4,-.5) -- (0,-.5);
\draw[->-=.5] (0,-.5) -- (.4,-.5);
\draw[-<-=.5] (.05,.3) to[out=east, in=east] (.05,.-.3);
\draw[->-=.5] (-.05,.3) to[out=west, in=west] (-.05,-.3);
\draw[fill=white] (-.4,.3) rectangle +(-.1,.3);
\draw[fill=white] (.4,.3) rectangle +(.1,.3);
\draw[fill=white] (-.4,-.3) rectangle +(-.1,-.3);
\draw[fill=white] (.4,-.3) rectangle +(.1,-.3);
\draw[fill=white] (-.05,.2) rectangle +(.1,.4);
\draw[fill=white] (-.05,-.2) rectangle +(.1,-.4);
\node at (.4,-.5)[below right]{$\scriptstyle{m-l}$};
\node at (-.4,-.5)[below left]{$\scriptstyle{m-k}$};
\node at (.4,.5)[above right]{$\scriptstyle{n-l}$};
\node at (-.4,.5)[above left]{$\scriptstyle{n-k}$};
\node at (-.2,0)[left]{$\scriptstyle{k}$};
\node at (.2,0)[right]{$\scriptstyle{l}$};
\node at (0,-.6)[below]{$\scriptstyle{m}$};
\node at (0,.6)[above]{$\scriptstyle{n}$};
}\,\Bigg\rangle_{\! 3}
&=\frac{\left[m+2\right]}{\left[m\right]}
\Bigg\langle\,\tikz[baseline=-.6ex, scale=0.8]{
\draw (-.4,.5) -- +(-.2,0);
\draw (.4,-.5) -- +(.2,0);
\draw (-.4,-.5) -- +(-.2,0);
\draw (.4,.5) -- +(.2,0);
\draw[-<-=.5] (-.4,.5) -- (0,.5);
\draw[-<-=.5] (0,.5) -- (.4,.5);
\draw[->-=.5] (-.4,-.5) -- (0,-.5);
\draw[->-=.5] (0,-.5) -- (.4,-.5);
\draw[-<-=.5] (.05,.3) to[out=east, in=east] (.05,.-.3);
\draw[->-=.5] (-.05,.3) to[out=west, in=west] (-.05,-.3);
\draw[fill=white] (-.4,.3) rectangle +(-.1,.3);
\draw[fill=white] (.4,.3) rectangle +(.1,.3);
\draw[fill=white] (-.4,-.3) rectangle +(-.1,-.3);
\draw[fill=white] (.4,-.3) rectangle +(.1,-.3);
\draw[fill=white] (-.05,.2) rectangle +(.1,.4);
\draw[fill=white] (-.05,-.2) rectangle +(.1,-.4);
\node at (-.2,0)[left]{$\scriptstyle{k-1}$};
\node at (.2,0)[right]{$\scriptstyle{l-1}$};
\node at (0,-.6)[below]{$\scriptstyle{m}$};
\node at (0,.6)[above]{$\scriptstyle{n}$};
}\,\Bigg\rangle_{\! 3}
-\frac{\left[n-1\right]}{\left[n\right]}
\Bigg\langle\,\tikz[baseline=-.6ex, scale=0.8]{
\draw (-.4,.5) -- +(-.2,0);
\draw (.4,-.5) -- +(.2,0);
\draw (-.4,-.5) -- +(-.2,0);
\draw (.4,.5) -- +(.2,0);
\draw[-<-=.5] (-.4,.5) -- (0,.5);
\draw (0,.5) -- (.4,.5);
\draw[->-=.5] (-.4,-.5) -- (0,-.5);
\draw (0,-.5) -- (.4,-.5);
\draw[->-=.5] (-.05,.3) to[out=west, in=west] (-.05,-.4);
\draw[fill=white] (-.4,.3) rectangle +(-.1,.3);
\draw[fill=white] (-.4,-.3) rectangle +(-.1,-.3);
\draw[fill=white] (-.05,.2) rectangle +(.1,.4);
\node at (-.4,-.5)[below]{$\scriptstyle{m-k}$};
\node at (-.4,.5)[above]{$\scriptstyle{n-k}$};
\node at (-.2,.0)[left]{$\scriptstyle{k-1}$};
\node at (.3,-.1){$\scriptstyle{1}$};
\begin{scope}[xshift=1.2cm]
\draw (-.4,.5) -- +(-.2,0);
\draw (.4,-.5) -- +(.2,0);
\draw (-.4,-.5) -- +(-.2,0);
\draw (.4,.5) -- +(.2,0);
\draw (-.4,.5) -- (0,.5);
\draw[-<-=.5] (0,.5) -- (.4,.5);
\draw (-.4,-.5) -- (0,-.5);
\draw[->-=.5] (0,-.5) -- (.4,-.5);
\draw[-<-=.5] (.05,.3) to[out=east, in=east] (.05,-.4);
\draw[fill=white] (.4,.3) rectangle +(.1,.3);
\draw[fill=white] (.4,-.3) rectangle +(.1,-.3);
\draw[fill=white] (-.05,.2) rectangle +(.1,.4);
\node at (.4,-.5)[below]{$\scriptstyle{m-l}$};
\node at (.4,.5)[above]{$\scriptstyle{n-l}$};
\node at (-.3,-.1){$\scriptstyle{1}$};
\node at (.2,0)[right]{$\scriptstyle{l-1}$};
\end{scope}
\draw (.05,.3) -- (1.15,.3);
\draw (-.05,-.4) -- (1.25,-.4);
\draw (.75,.3) -- (.75,-.3) -- (.65,-.3);
\draw (.45,.3) -- (.45,-.3) -- (.55,-.3);
\draw[fill=white] (.65,-.2) rectangle +(-.1,-.4);
\node at (.6,.5)[above]{$\scriptstyle{n-1}$};
\node at (.6,-.5)[below]{$\scriptstyle{m}$};
}\,\Bigg\rangle_{\! 3}\\
&=\left(\frac{\left[m+2\right]}{\left[m\right]}-\left[2\right]\frac{\left[n-1\right]}{\left[n\right]}\right)
\Bigg\langle\,\tikz[baseline=-.6ex, scale=0.8]{
\draw (-.4,.5) -- +(-.2,0);
\draw (.4,-.5) -- +(.2,0);
\draw (-.4,-.5) -- +(-.2,0);
\draw (.4,.5) -- +(.2,0);
\draw[-<-=.5] (-.4,.5) -- (0,.5);
\draw[-<-=.5] (0,.5) -- (.4,.5);
\draw[->-=.5] (-.4,-.5) -- (0,-.5);
\draw[->-=.5] (0,-.5) -- (.4,-.5);
\draw[-<-=.5] (.05,.3) to[out=east, in=east] (.05,.-.3);
\draw[->-=.5] (-.05,.3) to[out=west, in=west] (-.05,-.3);
\draw[fill=white] (-.4,.3) rectangle +(-.1,.3);
\draw[fill=white] (.4,.3) rectangle +(.1,.3);
\draw[fill=white] (-.4,-.3) rectangle +(-.1,-.3);
\draw[fill=white] (.4,-.3) rectangle +(.1,-.3);
\draw[fill=white] (-.05,.2) rectangle +(.1,.4);
\draw[fill=white] (-.05,-.2) rectangle +(.1,-.4);
\node at (-.2,0)[left]{$\scriptstyle{k-1}$};
\node at (.2,0)[right]{$\scriptstyle{l-1}$};
\node at (0,-.6)[below]{$\scriptstyle{m}$};
\node at (0,.6)[above]{$\scriptstyle{n}$};
}\,\Bigg\rangle_{\! 3}\\
&\qquad+\frac{\left[n-1\right]\left[m-1\right]}{\left[n\right]\left[m\right]}
\Bigg\langle\,\tikz[baseline=-.6ex, scale=0.8]{
\draw (-.4,.5) -- +(-.2,0);
\draw (.4,-.5) -- +(.2,0);
\draw (-.4,-.5) -- +(-.2,0);
\draw (.4,.5) -- +(.2,0);
\draw[-<-=.5] (-.4,.5) -- (0,.5);
\draw (0,.5) -- (.4,.5);
\draw[->-=.5] (-.4,-.5) -- (0,-.5);
\draw (0,-.5) -- (.4,-.5);
\draw[->-=.5] (-.05,.3) to[out=west, in=west] (-.05,-.3);
\draw[fill=white] (-.4,.3) rectangle +(-.1,.3);
\draw[fill=white] (-.4,-.3) rectangle +(-.1,-.3);
\draw[fill=white] (-.05,.2) rectangle +(.1,.4);
\draw[fill=white] (-.05,-.2) rectangle +(.1,-.4);
\node at (-.4,-.5)[below]{$\scriptstyle{m-k}$};
\node at (-.4,.5)[above]{$\scriptstyle{n-k}$};
\node at (-.2,.0)[left]{$\scriptstyle{k-1}$};
\node at (.3,.0){$\scriptstyle{1}$};
\begin{scope}[xshift=1.2cm]
\draw (-.4,.5) -- +(-.2,0);
\draw (.4,-.5) -- +(.2,0);
\draw (-.4,-.5) -- +(-.2,0);
\draw (.4,.5) -- +(.2,0);
\draw (-.4,.5) -- (0,.5);
\draw[-<-=.5] (0,.5) -- (.4,.5);
\draw (-.4,-.5) -- (0,-.5);
\draw[->-=.5] (0,-.5) -- (.4,-.5);
\draw[-<-=.5] (.05,.3) to[out=east, in=east] (.05,-.3);
\draw[fill=white] (.4,.3) rectangle +(.1,.3);
\draw[fill=white] (.4,-.3) rectangle +(.1,-.3);
\draw[fill=white] (-.05,.2) rectangle +(.1,.4);
\draw[fill=white] (-.05,-.2) rectangle +(.1,-.4);
\node at (.4,-.5)[below]{$\scriptstyle{m-l}$};
\node at (.4,.5)[above]{$\scriptstyle{n-l}$};
\node at (-.3,.0){$\scriptstyle{1}$};
\node at (.2,0)[right]{$\scriptstyle{l-1}$};
\end{scope}
\draw (.05,.3) -- (1.15,.3);
\draw (.05,-.3) -- (1.15,-.3);
\draw (.75,.3) -- (.75,-.3);
\draw (.45,.3) -- (.45,-.3);
\node at (.6,.5)[above]{$\scriptstyle{n-1}$};
\node at (.6,-.5)[below]{$\scriptstyle{m}$};
}\,\Bigg\rangle_{\! 3}\\
&=\left(\frac{\left[n\right]\left[m+2\right]-\left[2\right]\left[n-1\right]\left[m\right]+\left[n-1\right]\left[m-1\right]}{\left[n\right]\left[m\right]}\right)
\Bigg\langle\,\tikz[baseline=-.6ex, scale=0.8]{
\draw (-.4,.5) -- +(-.2,0);
\draw (.4,-.5) -- +(.2,0);
\draw (-.4,-.5) -- +(-.2,0);
\draw (.4,.5) -- +(.2,0);
\draw[-<-=.5] (-.4,.5) -- (0,.5);
\draw[-<-=.5] (0,.5) -- (.4,.5);
\draw[->-=.5] (-.4,-.5) -- (0,-.5);
\draw[->-=.5] (0,-.5) -- (.4,-.5);
\draw[-<-=.5] (.05,.3) to[out=east, in=east] (.05,.-.3);
\draw[->-=.5] (-.05,.3) to[out=west, in=west] (-.05,-.3);
\draw[fill=white] (-.4,.3) rectangle +(-.1,.3);
\draw[fill=white] (.4,.3) rectangle +(.1,.3);
\draw[fill=white] (-.4,-.3) rectangle +(-.1,-.3);
\draw[fill=white] (.4,-.3) rectangle +(.1,-.3);
\draw[fill=white] (-.05,.2) rectangle +(.1,.4);
\draw[fill=white] (-.05,-.2) rectangle +(.1,-.4);
\node at (.4,-.5)[below right]{$\scriptstyle{m-l}$};
\node at (-.4,-.5)[below left]{$\scriptstyle{m-k}$};
\node at (.4,.5)[above right]{$\scriptstyle{n-l}$};
\node at (-.4,.5)[above left]{$\scriptstyle{n-k}$};
\node at (-.2,0)[left]{$\scriptstyle{k-1}$};
\node at (.2,0)[right]{$\scriptstyle{l-1}$};
\node at (0,-.6)[below]{$\scriptstyle{m-1}$};
\node at (0,.6)[above]{$\scriptstyle{n-1}$};
}\,\Bigg\rangle_{\! 3}\\
&\qquad+\frac{\left[n-1\right]\left[m-1\right]}{\left[n\right]\left[m\right]}
\Bigg\langle\,\tikz[baseline=-.6ex, scale=0.8]{
\draw (-.4,.5) -- +(-.2,0);
\draw (.4,-.5) -- +(.2,0);
\draw (-.4,-.5) -- +(-.2,0);
\draw (.4,.5) -- +(.2,0);
\draw[-<-=.5] (-.4,.5) -- (0,.5);
\draw[-<-=.5] (0,.5) -- (.4,.5);
\draw[->-=.5] (-.4,-.5) -- (0,-.5);
\draw[->-=.5] (0,-.5) -- (.4,-.5);
\draw[-<-=.5] (.05,.3) to[out=east, in=east] (.05,.-.3);
\draw[->-=.5] (-.05,.3) to[out=west, in=west] (-.05,-.3);
\draw[fill=white] (-.4,.3) rectangle +(-.1,.3);
\draw[fill=white] (-.4,-.3) rectangle +(-.1,-.3);
\draw[fill=white] (-.05,.2) rectangle +(.1,.4);
\draw[fill=white] (-.05,-.2) rectangle +(.1,-.4);
\node at (-.4,-.5)[below]{$\scriptstyle{m-k}$};
\node at (-.4,.5)[above]{$\scriptstyle{n-k}$};
\node at (-.2,0)[left]{$\scriptstyle{k-1}$};
\node at (.2,0)[right]{$\scriptstyle{1}$};
\begin{scope}[xshift=1.2cm]
\draw (-.4,.5) -- +(-.2,0);
\draw (.4,-.5) -- +(.2,0);
\draw (-.4,-.5) -- +(-.2,0);
\draw (.4,.5) -- +(.2,0);
\draw[-<-=.5] (-.4,.5) -- (0,.5);
\draw[-<-=.5] (0,.5) -- (.4,.5);
\draw[->-=.5] (-.4,-.5) -- (0,-.5);
\draw[->-=.5] (0,-.5) -- (.4,-.5);
\draw[-<-=.5] (.05,.3) to[out=east, in=east] (.05,.-.3);
\draw[->-=.5] (-.05,.3) to[out=west, in=west] (-.05,-.3);
\draw[fill=white] (.4,.3) rectangle +(.1,.3);
\draw[fill=white] (.4,-.3) rectangle +(.1,-.3);
\draw[fill=white] (-.05,.2) rectangle +(.1,.4);
\draw[fill=white] (-.05,-.2) rectangle +(.1,-.4);
\node at (.4,-.5)[below]{$\scriptstyle{m-l}$};
\node at (.4,.5)[above]{$\scriptstyle{n-l}$};
\node at (-.2,0)[left]{$\scriptstyle{1}$};
\node at (.2,0)[right]{$\scriptstyle{l-1}$};
\end{scope}
\node at (.6,.5)[above]{$\scriptstyle{n-2}$};
\node at (.6,-.5)[below]{$\scriptstyle{m-2}$};
}\,\Bigg\rangle_{\! 3}.
\end{align*}
We use the definition of $A_2$ clasp and Lemma~\ref{A2clasplem} in the first equality.
By Lemma~\ref{qinteger}~(1) and (2), 
we obtain $\left[2\right]\left[m\right]=\left[m+1\right]+\left[m-1\right]$ and $\left[n\right]\left[m+2\right]-\left[n-1\right]\left[m+1\right]=\left[n+m+1\right]$.
Thus,
\begin{equation}\label{bubbleeq}
\Bigg\langle\,\tikz[baseline=-.6ex, scale=0.8]{
\draw (-.4,.5) -- +(-.2,0);
\draw (.4,-.5) -- +(.2,0);
\draw (-.4,-.5) -- +(-.2,0);
\draw (.4,.5) -- +(.2,0);
\draw[-<-=.5] (-.4,.5) -- (0,.5);
\draw[-<-=.5] (0,.5) -- (.4,.5);
\draw[->-=.5] (-.4,-.5) -- (0,-.5);
\draw[->-=.5] (0,-.5) -- (.4,-.5);
\draw[-<-=.5] (.05,.3) to[out=east, in=east] (.05,.-.3);
\draw[->-=.5] (-.05,.3) to[out=west, in=west] (-.05,-.3);
\draw[fill=white] (-.4,.3) rectangle +(-.1,.3);
\draw[fill=white] (.4,.3) rectangle +(.1,.3);
\draw[fill=white] (-.4,-.3) rectangle +(-.1,-.3);
\draw[fill=white] (.4,-.3) rectangle +(.1,-.3);
\draw[fill=white] (-.05,.2) rectangle +(.1,.4);
\draw[fill=white] (-.05,-.2) rectangle +(.1,-.4);
\node at (.4,-.5)[below right]{$\scriptstyle{m-l}$};
\node at (-.4,-.5)[below left]{$\scriptstyle{m-k}$};
\node at (.4,.5)[above right]{$\scriptstyle{n-l}$};
\node at (-.4,.5)[above left]{$\scriptstyle{n-k}$};
\node at (-.2,0)[left]{$\scriptstyle{k}$};
\node at (.2,0)[right]{$\scriptstyle{l}$};
\node at (0,-.6)[below]{$\scriptstyle{m}$};
\node at (0,.6)[above]{$\scriptstyle{n}$};
}\,\Bigg\rangle_{\! 3}
=\frac{\left[n+m+1\right]}{\left[n\right]\left[m\right]}
\Bigg\langle\,\tikz[baseline=-.6ex, scale=0.8]{
\draw (-.4,.5) -- +(-.2,0);
\draw (.4,-.5) -- +(.2,0);
\draw (-.4,-.5) -- +(-.2,0);
\draw (.4,.5) -- +(.2,0);
\draw[-<-=.5] (-.4,.5) -- (0,.5);
\draw[-<-=.5] (0,.5) -- (.4,.5);
\draw[->-=.5] (-.4,-.5) -- (0,-.5);
\draw[->-=.5] (0,-.5) -- (.4,-.5);
\draw[-<-=.5] (.05,.3) to[out=east, in=east] (.05,.-.3);
\draw[->-=.5] (-.05,.3) to[out=west, in=west] (-.05,-.3);
\draw[fill=white] (-.4,.3) rectangle +(-.1,.3);
\draw[fill=white] (.4,.3) rectangle +(.1,.3);
\draw[fill=white] (-.4,-.3) rectangle +(-.1,-.3);
\draw[fill=white] (.4,-.3) rectangle +(.1,-.3);
\draw[fill=white] (-.05,.2) rectangle +(.1,.4);
\draw[fill=white] (-.05,-.2) rectangle +(.1,-.4);
\node at (.4,-.5)[below right]{$\scriptstyle{m-l}$};
\node at (-.4,-.5)[below left]{$\scriptstyle{m-k}$};
\node at (.4,.5)[above right]{$\scriptstyle{n-l}$};
\node at (-.4,.5)[above left]{$\scriptstyle{n-k}$};
\node at (-.2,0)[left]{$\scriptstyle{k-1}$};
\node at (.2,0)[right]{$\scriptstyle{l-1}$};
\node at (0,-.6)[below]{$\scriptstyle{m-1}$};
\node at (0,.6)[above]{$\scriptstyle{n-1}$};
}\,\Bigg\rangle_{\! 3}
+\frac{\left[n-1\right]\left[m-1\right]}{\left[n\right]\left[m\right]}
\Bigg\langle\,\tikz[baseline=-.6ex, scale=0.8]{
\draw (-.4,.5) -- +(-.2,0);
\draw (.4,-.5) -- +(.2,0);
\draw (-.4,-.5) -- +(-.2,0);
\draw (.4,.5) -- +(.2,0);
\draw[-<-=.5] (-.4,.5) -- (0,.5);
\draw[-<-=.5] (0,.5) -- (.4,.5);
\draw[->-=.5] (-.4,-.5) -- (0,-.5);
\draw[->-=.5] (0,-.5) -- (.4,-.5);
\draw[-<-=.5] (.05,.3) to[out=east, in=east] (.05,.-.3);
\draw[->-=.5] (-.05,.3) to[out=west, in=west] (-.05,-.3);
\draw[fill=white] (-.4,.3) rectangle +(-.1,.3);
\draw[fill=white] (-.4,-.3) rectangle +(-.1,-.3);
\draw[fill=white] (-.05,.2) rectangle +(.1,.4);
\draw[fill=white] (-.05,-.2) rectangle +(.1,-.4);
\node at (-.4,-.5)[below]{$\scriptstyle{m-k}$};
\node at (-.4,.5)[above]{$\scriptstyle{n-k}$};
\node at (-.2,0)[left]{$\scriptstyle{k-1}$};
\node at (.2,0)[right]{$\scriptstyle{1}$};
\begin{scope}[xshift=1.2cm]
\draw (-.4,.5) -- +(-.2,0);
\draw (.4,-.5) -- +(.2,0);
\draw (-.4,-.5) -- +(-.2,0);
\draw (.4,.5) -- +(.2,0);
\draw[-<-=.5] (-.4,.5) -- (0,.5);
\draw[-<-=.5] (0,.5) -- (.4,.5);
\draw[->-=.5] (-.4,-.5) -- (0,-.5);
\draw[->-=.5] (0,-.5) -- (.4,-.5);
\draw[-<-=.5] (.05,.3) to[out=east, in=east] (.05,.-.3);
\draw[->-=.5] (-.05,.3) to[out=west, in=west] (-.05,-.3);
\draw[fill=white] (.4,.3) rectangle +(.1,.3);
\draw[fill=white] (.4,-.3) rectangle +(.1,-.3);
\draw[fill=white] (-.05,.2) rectangle +(.1,.4);
\draw[fill=white] (-.05,-.2) rectangle +(.1,-.4);
\node at (.4,-.5)[below]{$\scriptstyle{m-l}$};
\node at (.4,.5)[above]{$\scriptstyle{n-l}$};
\node at (-.2,0)[left]{$\scriptstyle{1}$};
\node at (.2,0)[right]{$\scriptstyle{l-1}$};
\end{scope}
\node at (.6,.5)[above]{$\scriptstyle{n-2}$};
\node at (.6,-.5)[below]{$\scriptstyle{m-2}$};
}\,\Bigg\rangle_{\! 3}.
\end{equation}
We substitute $k=1$ in (\ref{bubbleeq}). 
The $A_2$ web appearing in the second term of the right-hand side has only an $A_2$ bubble skein element decorated with $1$ and $l-1$. 
By using (\ref{bubbleeq}) of $k=1$ repeatedly,
we can obtain
\begin{align*}
\Bigg\langle\,\tikz[baseline=-.6ex, scale=0.8]{
\draw (-.4,.5) -- +(-.2,0);
\draw (.4,-.5) -- +(.2,0);
\draw (-.4,-.5) -- +(-.2,0);
\draw (.4,.5) -- +(.2,0);
\draw[-<-=.5] (-.4,.5) -- (0,.5);
\draw[-<-=.5] (0,.5) -- (.4,.5);
\draw[->-=.5] (-.4,-.5) -- (0,-.5);
\draw[->-=.5] (0,-.5) -- (.4,-.5);
\draw[-<-=.5] (.05,.3) to[out=east, in=east] (.05,.-.3);
\draw[->-=.5] (-.05,.3) to[out=west, in=west] (-.05,-.3);
\draw[fill=white] (-.4,.3) rectangle +(-.1,.3);
\draw[fill=white] (.4,.3) rectangle +(.1,.3);
\draw[fill=white] (-.4,-.3) rectangle +(-.1,-.3);
\draw[fill=white] (.4,-.3) rectangle +(.1,-.3);
\draw[fill=white] (-.05,.2) rectangle +(.1,.4);
\draw[fill=white] (-.05,-.2) rectangle +(.1,-.4);
\node at (.4,-.5)[below right]{$\scriptstyle{m-l}$};
\node at (-.4,-.5)[below left]{$\scriptstyle{m-1}$};
\node at (.4,.5)[above right]{$\scriptstyle{n-l}$};
\node at (-.4,.5)[above left]{$\scriptstyle{n-1}$};
\node at (-.2,0)[left]{$\scriptstyle{1}$};
\node at (.2,0)[right]{$\scriptstyle{l}$};
\node at (0,-.6)[below]{$\scriptstyle{m}$};
\node at (0,.6)[above]{$\scriptstyle{n}$};
}\,\Bigg\rangle_{\! 3}
&=
\frac{\left[n+m+1\right]+\left[n+m-1\right]+\dots+\left[n+m-2l+3\right]}{\left[n\right]\left[m\right]}
\Bigg\langle\,\tikz[baseline=-.6ex, scale=0.8]{
\draw (-.4,.4) -- +(-.2,0);
\draw (-.4,-.4) -- +(-.2,0);
\draw[-<-=.5] (-.4,.5) -- (.0,.5);
\draw[->-=.5] (-.4,-.5)  -- (.0,-.5);
\draw[-<-=.5] (-.4,.3) to[out=east, in=east] (-.4,.-.3);
\draw[fill=white] (-.4,.2) rectangle +(-.1,.4);
\draw[fill=white] (-.4,-.2) rectangle +(-.1,-.4);
\node at (-.4,-.6)[left]{$\scriptstyle{m-1}$};
\node at (-.4,.6)[left]{$\scriptstyle{n-1}$};
\node at (-.2,0)[left]{$\scriptstyle{l-1}$};
\node at (0,-.5)[below]{$\scriptstyle{m-l}$};
\node at (0,.5)[above]{$\scriptstyle{n-l}$};
}\,\Bigg\rangle_{\! 3}\\
&\qquad+
\frac{\left[n-l\right]\left[m-l\right]}{\left[n\right]\left[m\right]}
\Bigg\langle\,\tikz[baseline=-.6ex, scale=0.8]{
\draw (-.4,.4) -- +(-.2,0);
\draw (.4,-.4) -- +(.2,0);
\draw (-.4,-.4) -- +(-.2,0);
\draw (.4,.4) -- +(.2,0);
\draw[-<-=.5] (-.4,.5) -- (.4,.5);
\draw[->-=.5] (-.4,-.5)  -- (.4,-.5);
\draw[-<-=.5] (-.4,.3) to[out=east, in=east] (-.4,.-.3);
\draw[->-=.5] (.4,.3) to[out=west, in=west] (.4,-.3);
\draw[fill=white] (-.4,.2) rectangle +(-.1,.4);
\draw[fill=white] (.4,.2) rectangle +(.1,.4);
\draw[fill=white] (-.4,-.2) rectangle +(-.1,-.4);
\draw[fill=white] (.4,-.2) rectangle +(.1,-.4);
\node at (.4,-.6)[right]{$\scriptstyle{m-l}$};
\node at (-.4,-.6)[left]{$\scriptstyle{m-1}$};
\node at (.4,.6)[right]{$\scriptstyle{n-l}$};
\node at (-.4,.6)[left]{$\scriptstyle{n-1}$};
\node at (-.2,0)[left]{$\scriptstyle{l}$};
\node at (.2,0)[right]{$\scriptstyle{1}$};
\node at (0,-.5)[below]{$\scriptstyle{m-l-1}$};
\node at (0,.5)[above]{$\scriptstyle{n-l-1}$};
}\,\Bigg\rangle_{\! 3},
\end{align*}
and confirm that $\left[n+m+1\right]+\left[n+m-1\right]+\dots+\left[n+m-2l+3\right]=\left[n+m-l+2\right]\left[l\right]$ (see Lemma~\ref{qinteger}~(1)).
\end{proof}

\begin{proof}[Proof of Theorem~\ref{A2bubble}]
We assume that $0\leq k\leq l\leq\min\{n,m\}$.
By substituting Lemma~\ref{bubblelem} (1) for (\ref{bubbleeq}),
\begin{align}
\Bigg\langle\,\tikz[baseline=-.6ex, scale=0.8]{
\draw (-.4,.5) -- +(-.2,0);
\draw (.4,-.5) -- +(.2,0);
\draw (-.4,-.5) -- +(-.2,0);
\draw (.4,.5) -- +(.2,0);
\draw[-<-=.5] (-.4,.5) -- (0,.5);
\draw[-<-=.5] (0,.5) -- (.4,.5);
\draw[->-=.5] (-.4,-.5) -- (0,-.5);
\draw[->-=.5] (0,-.5) -- (.4,-.5);
\draw[-<-=.5] (.05,.3) to[out=east, in=east] (.05,.-.3);
\draw[->-=.5] (-.05,.3) to[out=west, in=west] (-.05,-.3);
\draw[fill=white] (-.4,.3) rectangle +(-.1,.3);
\draw[fill=white] (.4,.3) rectangle +(.1,.3);
\draw[fill=white] (-.4,-.3) rectangle +(-.1,-.3);
\draw[fill=white] (.4,-.3) rectangle +(.1,-.3);
\draw[fill=white] (-.05,.2) rectangle +(.1,.4);
\draw[fill=white] (-.05,-.2) rectangle +(.1,-.4);
\node at (.4,-.5)[below right]{$\scriptstyle{m-l}$};
\node at (-.4,-.5)[below left]{$\scriptstyle{m-k}$};
\node at (.4,.5)[above right]{$\scriptstyle{n-l}$};
\node at (-.4,.5)[above left]{$\scriptstyle{n-k}$};
\node at (-.2,0)[left]{$\scriptstyle{k}$};
\node at (.2,0)[right]{$\scriptstyle{l}$};
\node at (0,-.6)[below]{$\scriptstyle{m}$};
\node at (0,.6)[above]{$\scriptstyle{n}$};
}\,\Bigg\rangle_{\! 3}
&=\frac{\left[n+m-l+2\right]\left[l\right]}{\left[n\right]\left[m\right]}
\Bigg\langle\,\tikz[baseline=-.6ex, scale=0.8]{
\draw (-.4,.5) -- +(-.2,0);
\draw (.4,-.5) -- +(.2,0);
\draw (-.4,-.5) -- +(-.2,0);
\draw (.4,.5) -- +(.2,0);
\draw[-<-=.5] (-.4,.5) -- (0,.5);
\draw[-<-=.5] (0,.5) -- (.4,.5);
\draw[->-=.5] (-.4,-.5) -- (0,-.5);
\draw[->-=.5] (0,-.5) -- (.4,-.5);
\draw[-<-=.5] (.05,.3) to[out=east, in=east] (.05,.-.3);
\draw[->-=.5] (-.05,.3) to[out=west, in=west] (-.05,-.3);
\draw[fill=white] (-.4,.3) rectangle +(-.1,.3);
\draw[fill=white] (.4,.3) rectangle +(.1,.3);
\draw[fill=white] (-.4,-.3) rectangle +(-.1,-.3);
\draw[fill=white] (.4,-.3) rectangle +(.1,-.3);
\draw[fill=white] (-.05,.2) rectangle +(.1,.4);
\draw[fill=white] (-.05,-.2) rectangle +(.1,-.4);
\node at (.4,-.5)[below right]{$\scriptstyle{m-l}$};
\node at (-.4,-.5)[below left]{$\scriptstyle{m-k}$};
\node at (.4,.5)[above right]{$\scriptstyle{n-l}$};
\node at (-.4,.5)[above left]{$\scriptstyle{n-k}$};
\node at (-.2,0)[left]{$\scriptstyle{k-1}$};
\node at (.2,0)[right]{$\scriptstyle{l-1}$};
\node at (0,-.6)[below]{$\scriptstyle{m-1}$};
\node at (0,.6)[above]{$\scriptstyle{n-1}$};
}\,\Bigg\rangle_{\! 3}\label{bubbleeq2}\\
&\qquad+\frac{\left[n-l\right]\left[m-l\right]}{\left[n\right]\left[m\right]}
\Bigg\langle\,\tikz[baseline=-.6ex, scale=0.8]{
\draw (-.4,.5) -- +(-.2,0);
\draw (.4,-.5) -- +(.2,0);
\draw (-.4,-.5) -- +(-.2,0);
\draw (.4,.5) -- +(.2,0);
\draw[-<-=.5] (-.4,.5) -- (0,.5);
\draw[-<-=.5] (0,.5) -- (.4,.5);
\draw[->-=.5] (-.4,-.5) -- (0,-.5);
\draw[->-=.5] (0,-.5) -- (.4,-.5);
\draw[-<-=.5] (.05,.3) to[out=east, in=east] (.05,.-.3);
\draw[->-=.5] (-.05,.3) to[out=west, in=west] (-.05,-.3);
\draw[fill=white] (-.4,.3) rectangle +(-.1,.3);
\draw[fill=white] (-.4,-.3) rectangle +(-.1,-.3);
\draw[fill=white] (-.05,.2) rectangle +(.1,.4);
\draw[fill=white] (-.05,-.2) rectangle +(.1,-.4);
\node at (-.4,-.5)[below]{$\scriptstyle{m-k}$};
\node at (-.4,.5)[above]{$\scriptstyle{n-k}$};
\node at (-.2,0)[left]{$\scriptstyle{k-1}$};
\node at (.2,0)[right]{$\scriptstyle{l}$};
\begin{scope}[xshift=1.2cm]
\draw (-.4,.5) -- +(-.2,0);
\draw (-.4,-.5) -- +(-.2,0);
\draw[-<-=.5] (-.4,.5) -- (0,.5);
\draw[-<-=.5] (0,.5) -- (.4,.5);
\draw[->-=.5] (-.4,-.5) -- (0,-.5);
\draw[->-=.5] (0,-.5) -- (.4,-.5);
\draw[->-=.5] (-.05,.3) to[out=west, in=west] (-.05,-.3);
\draw[fill=white] (-.05,.2) rectangle +(.1,.4);
\draw[fill=white] (-.05,-.2) rectangle +(.1,-.4);
\node at (.0,-.5)[below right]{$\scriptstyle{m-l}$};
\node at (.0,.5)[above right]{$\scriptstyle{n-l}$};
\node at (-.2,0)[left]{$\scriptstyle{1}$};
\end{scope}
\node at (.6,.5)[above]{$\scriptstyle{n-l-1}$};
\node at (.6,-.5)[below]{$\scriptstyle{m-l-1}$};
}\,\Bigg\rangle_{\! 3}.\notag
\end{align}
We used 
\begin{align*}
\left[n+m+1\right]+\left[n+m-l+1\right]\left[l-1\right]
&=\left[n+m+1\right]\left[1\right]-\left[n+m+1-l\right]\left[1-l\right]\\
&=\left[n+m-l+2\right]\left[l\right]
\end{align*} 
in the above equation (see Lemma~\ref{qinteger}~(2)).
We prove this theorem by the induction on $\max\{m,n\}$.
From (\ref{bubbleeq2}) and the induction hypothesis,
\begin{align*}
&\Bigg\langle\,\tikz[baseline=-.6ex, scale=0.8]{
\draw (-.4,.5) -- +(-.2,0);
\draw (.4,-.5) -- +(.2,0);
\draw (-.4,-.5) -- +(-.2,0);
\draw (.4,.5) -- +(.2,0);
\draw[-<-=.5] (-.4,.5) -- (0,.5);
\draw[-<-=.5] (0,.5) -- (.4,.5);
\draw[->-=.5] (-.4,-.5) -- (0,-.5);
\draw[->-=.5] (0,-.5) -- (.4,-.5);
\draw[-<-=.5] (.05,.3) to[out=east, in=east] (.05,.-.3);
\draw[->-=.5] (-.05,.3) to[out=west, in=west] (-.05,-.3);
\draw[fill=white] (-.4,.3) rectangle +(-.1,.3);
\draw[fill=white] (.4,.3) rectangle +(.1,.3);
\draw[fill=white] (-.4,-.3) rectangle +(-.1,-.3);
\draw[fill=white] (.4,-.3) rectangle +(.1,-.3);
\draw[fill=white] (-.05,.2) rectangle +(.1,.4);
\draw[fill=white] (-.05,-.2) rectangle +(.1,-.4);
\node at (.4,-.5)[below right]{$\scriptstyle{m-l}$};
\node at (-.4,-.5)[below left]{$\scriptstyle{m-k}$};
\node at (.4,.5)[above right]{$\scriptstyle{n-l}$};
\node at (-.4,.5)[above left]{$\scriptstyle{n-k}$};
\node at (-.2,0)[left]{$\scriptstyle{k}$};
\node at (.2,0)[right]{$\scriptstyle{l}$};
\node at (0,-.6)[below]{$\scriptstyle{m}$};
\node at (0,.6)[above]{$\scriptstyle{n}$};
}\,\Bigg\rangle_{\! 3}\\
&=\frac{\left[n+m-l+2\right]\left[l\right]}{\left[n\right]\left[m\right]}
\sum_{t=l-1}^{\min\{k+l-2, n-1, m-1\}}
\frac{{n-1\brack t}{m-1\brack t}{t\brack k-1}{t\brack l-1}{n+m-t\brack n+m-k-l+2}}{{n-1\brack k-1}{m-1\brack k-1}{n-1\brack l-1}{m-1\brack l-1}}
\Bigg\langle\,\tikz[baseline=-.6ex, scale=0.8]{
\draw (-.4,.4) -- +(-.2,0);
\draw (.4,-.4) -- +(.2,0);
\draw (-.4,-.4) -- +(-.2,0);
\draw (.4,.4) -- +(.2,0);
\draw[-<-=.5] (-.4,.5) -- (.4,.5);
\draw[->-=.5] (-.4,-.5)  -- (.4,-.5);
\draw[-<-=.5] (-.4,.3) to[out=east, in=east] (-.4,.-.3);
\draw[->-=.5] (.4,.3) to[out=west, in=west] (.4,-.3);
\draw[fill=white] (-.4,.2) rectangle +(-.1,.4);
\draw[fill=white] (.4,.2) rectangle +(.1,.4);
\draw[fill=white] (-.4,-.2) rectangle +(-.1,-.4);
\draw[fill=white] (.4,-.2) rectangle +(.1,-.4);
\node at (.4,-.6)[right]{$\scriptstyle{m-l}$};
\node at (-.4,-.6)[left]{$\scriptstyle{m-k}$};
\node at (.4,.6)[right]{$\scriptstyle{n-l}$};
\node at (-.4,.6)[left]{$\scriptstyle{n-k}$};
\node at (-.2,0)[left]{$\scriptstyle{t-k+1}$};
\node at (.2,0)[right]{$\scriptstyle{t-l+1}$};
\node at (0,-.5)[below]{$\scriptstyle{m-t-1}$};
\node at (0,.5)[above]{$\scriptstyle{n-t-1}$};
}\,\Bigg\rangle_{\! 3}\\
&+\frac{\left[n-l\right]\left[m-l\right]}{\left[n\right]\left[m\right]}
\sum_{t=l}^{\min\{k+l-1, n-1, m-1\}}
\frac{{n-1\brack t}{m-1\brack t}{t\brack k-1}{t\brack l}{n+m-t\brack n+m-k-l+1}}{{n-1\brack k-1}{m-1\brack k-1}{n-1\brack l}{m-1\brack l}}
\Bigg\langle\,\tikz[baseline=-.6ex, scale=0.8]{
\draw (-.4,.4) -- +(-.2,0);
\draw (.4,-.4) -- +(.2,0);
\draw (-.4,-.4) -- +(-.2,0);
\draw (.4,.4) -- +(.2,0);
\draw[-<-=.5] (-.4,.5) -- (.4,.5);
\draw[->-=.5] (-.4,-.5)  -- (.4,-.5);
\draw[-<-=.5] (-.4,.3) to[out=east, in=east] (-.4,.-.3);
\draw[->-=.5] (.4,.3) to[out=west, in=west] (.4,-.3);
\draw[fill=white] (-.4,.2) rectangle +(-.1,.4);
\draw[fill=white] (.4,.2) rectangle +(.1,.4);
\draw[fill=white] (-.4,-.2) rectangle +(-.1,-.4);
\draw[fill=white] (.4,-.2) rectangle +(.1,-.4);
\node at (.4,-.6)[right]{$\scriptstyle{m-l}$};
\node at (-.4,-.6)[left]{$\scriptstyle{m-k}$};
\node at (.4,.6)[right]{$\scriptstyle{n-l}$};
\node at (-.4,.6)[left]{$\scriptstyle{n-k}$};
\node at (-.2,0)[left]{$\scriptstyle{t-k+1}$};
\node at (.2,0)[right]{$\scriptstyle{t-l+1}$};
\node at (0,-.5)[below]{$\scriptstyle{m-t-1}$};
\node at (0,.5)[above]{$\scriptstyle{n-t-1}$};
}\,\Bigg\rangle_{\! 3}\\
\end{align*}
For $t>l$ and $\min\{k+l,n,m\}\neq k+l$, 
\begin{align*}
&\text{the coefficient of}\ 
\Bigg\langle\,\tikz[baseline=-.6ex, scale=0.8]{
\draw (-.4,.4) -- +(-.2,0);
\draw (.4,-.4) -- +(.2,0);
\draw (-.4,-.4) -- +(-.2,0);
\draw (.4,.4) -- +(.2,0);
\draw[-<-=.5] (-.4,.5) -- (.4,.5);
\draw[->-=.5] (-.4,-.5)  -- (.4,-.5);
\draw[-<-=.5] (-.4,.3) to[out=east, in=east] (-.4,.-.3);
\draw[->-=.5] (.4,.3) to[out=west, in=west] (.4,-.3);
\draw[fill=white] (-.4,.2) rectangle +(-.1,.4);
\draw[fill=white] (.4,.2) rectangle +(.1,.4);
\draw[fill=white] (-.4,-.2) rectangle +(-.1,-.4);
\draw[fill=white] (.4,-.2) rectangle +(.1,-.4);
\node at (.4,-.6)[right]{$\scriptstyle{m-l}$};
\node at (-.4,-.6)[left]{$\scriptstyle{m-k}$};
\node at (.4,.6)[right]{$\scriptstyle{n-l}$};
\node at (-.4,.6)[left]{$\scriptstyle{n-k}$};
\node at (-.2,0)[left]{$\scriptstyle{t-k}$};
\node at (.2,0)[right]{$\scriptstyle{t-l}$};
\node at (0,-.5)[below]{$\scriptstyle{m-t}$};
\node at (0,.5)[above]{$\scriptstyle{n-t}$};
}\,\Bigg\rangle_{\! 3}\\
&\quad=\left(\frac{\left[n+m-l+2\right]\left[l\right]}{\left[n\right]\left[m\right]}\right)
\left(\frac{{n-1\brack t-1}{m-1\brack t-1}{t-1\brack k-1}{t-1\brack l-1}{n+m-t+1\brack n+m-k-l+2}}{{n-1\brack k-1}{m-1\brack k-1}{n-1\brack l-1}{m-1\brack l-1}}\right)\\
&\qquad+\left(\frac{\left[n-l\right]\left[m-l\right]}{\left[n\right]\left[m\right]}\right)
\left(\frac{{n-1\brack t-1}{m-1\brack t-1}{t-1\brack k-1}{t-1\brack l}{n+m-t+1\brack n+m-k-l+1}}{{n-1\brack k-1}{m-1\brack k-1}{n-1\brack l}{m-1\brack l}}\right)\\
&\quad=\left(\frac{\left[n+m-l+2\right]\left[k+l-t\right]+\left[t-l\right]\left[n+m-k-l+2\right]}{\left[n+m-k-l+2\right]}\right)\\
&\qquad\times\left(\frac{\left[l\right]}{\left[n\right]\left[m\right]}\right)
\left(\frac{{n-1\brack t-1}{m-1\brack t-1}{t-1\brack k-1}{t-1\brack l}{n+m-t+1\brack n+m-k-l+1}}{{n-1\brack k-1}{m-1\brack k-1}{n-1\brack l}{m-1\brack l}}\right)\\
&\quad=\left(\frac{\left[n+m+2-t\right]\left[k\right]}{\left[n+m-k-l+2\right]}\right)
\left(\frac{\left[l\right]}{\left[n\right]\left[m\right]}\right)
\left(\frac{\left[n\right]\left[m\right]}{\left[k\right]\left[l\right]}\right)
\left(\frac{{n\brack t}{m\brack t}{t\brack k}{t\brack l}{n+m-t+1\brack n+m-k-l+1}}{{n\brack k}{m\brack k}{n\brack l}{m\brack l}}\right)\\
&\quad=\frac{{n\brack t}{m\brack t}{t\brack k}{t\brack l}{n+m-t+2\brack n+m-k-l+2}}{{n\brack k}{m\brack k}{n\brack l}{m\brack l}}.
\end{align*}
We used 
\[
\left[n+m-l+2\right]\left[k-(t-l)\right]+\left[t-l\right]\left[n+m-l+2-k\right]=\left[k\right]\left[n+m+2-t\right]
\]
in the third equality (see Lemma~\ref{qinteger}~(3)).
If $t=l$, 
then the coefficient of 
$\Bigg\langle\,\tikz[baseline=-.6ex, scale=0.8]{
\draw (-.4,.4) -- +(-.2,0);
\draw (-.4,-.4) -- +(-.2,0);
\draw[-<-=.5] (-.4,.5) -- (.0,.5);
\draw[->-=.5] (-.4,-.5)  -- (.0,-.5);
\draw[-<-=.5] (-.4,.3) to[out=east, in=east] (-.4,.-.3);
\draw[fill=white] (-.4,.2) rectangle +(-.1,.4);
\draw[fill=white] (-.4,-.2) rectangle +(-.1,-.4);
\node at (-.4,-.6)[left]{$\scriptstyle{m-k}$};
\node at (-.4,.6)[left]{$\scriptstyle{n-k}$};
\node at (-.2,0)[left]{$\scriptstyle{l-k}$};
\node at (0,-.5)[below]{$\scriptstyle{m-l}$};
\node at (0,.5)[above]{$\scriptstyle{n-l}$};
}\,\Bigg\rangle_{\! 3}$ is
\[
\left(\frac{\left[n+m-l+2\right]\left[l\right]}{\left[n\right]\left[m\right]}\right)
\left(\frac{{n-1\brack l-1}{m-1\brack l-1}{l-1\brack k-1}{n+m-l+1\brack n+m-k-l+2}}{{n-1\brack k-1}{m-1\brack k-1}{n-1\brack l-1}{m-1\brack l-1}}\right)
=\frac{{n\brack l}{m\brack l}{l\brack k}{n+m-l+2\brack n+m-k-l+2}}{{n\brack k}{m\brack k}{n\brack l}{m\brack l}}.
\]
When $\min\{k+l,n,m\}=k+l$, 
we need to confirm the coefficient of 
$\Bigg\langle\,\tikz[baseline=-.6ex, scale=0.8]{
\draw (-.4,.4) -- +(-.2,0);
\draw (.4,-.4) -- +(.2,0);
\draw (-.4,-.4) -- +(-.2,0);
\draw (.4,.4) -- +(.2,0);
\draw[-<-=.5] (-.4,.5) -- (.4,.5);
\draw[->-=.5] (-.4,-.5)  -- (.4,-.5);
\draw[-<-=.5] (-.4,.3) to[out=east, in=east] (-.4,.-.3);
\draw[->-=.5] (.4,.3) to[out=west, in=west] (.4,-.3);
\draw[fill=white] (-.4,.2) rectangle +(-.1,.4);
\draw[fill=white] (.4,.2) rectangle +(.1,.4);
\draw[fill=white] (-.4,-.2) rectangle +(-.1,-.4);
\draw[fill=white] (.4,-.2) rectangle +(.1,-.4);
\node at (.4,-.6)[right]{$\scriptstyle{m-l}$};
\node at (-.4,-.6)[left]{$\scriptstyle{m-k}$};
\node at (.4,.6)[right]{$\scriptstyle{n-l}$};
\node at (-.4,.6)[left]{$\scriptstyle{n-k}$};
\node at (-.2,0)[left]{$\scriptstyle{l}$};
\node at (.2,0)[right]{$\scriptstyle{k}$};
\node at (0,-.5)[below]{$\scriptstyle{m-k-l}$};
\node at (0,.5)[above]{$\scriptstyle{n-k-l}$};
}\,\Bigg\rangle_{\! 3}$:
\[
\left(\frac{\left[n-l\right]\left[m-l\right]}{\left[n\right]\left[m\right]}\right)
\left(\frac{{n-1\brack k+l-1}{m-1\brack k+l-1}{k+l-1\brack k-1}{k+l-1\brack l}}{{n-1\brack k-1}{m-1\brack k-1}{n-1\brack l}{m-1\brack l}}\right)
=\frac{{n\brack k+l}{m\brack k+l}{k+l\brack k}{k+l\brack l}}{{n\brack k}{m\brack k}{n\brack l}{m\brack l}}.
\]
We remark that we can prove when $0\leq l\leq k\leq\min\{n,m\}$ through the use of Lemma~\ref{bubblelem} (2) and (\ref{bubbleeq}).
\end{proof}
\begin{RMK}
We can prove directly Theorem~\ref{A1bubble} in a similar way to the proof of Theorem~\ref{A2bubble}.
\end{RMK}

\section{Colored Jones polynomials of $2$-bridge links}
In this section,
we compute $\mathfrak{sl}_2$ and $\mathfrak{sl}_3$ colored Jones polynomials for $2$-bridge links by using twist formulas. 
An explicit formula for the $n+1$ dimensional $\mathfrak{sl}_2$ Jones polynomial of twist knots was given by Masbaum~\cite{Masbaum03} and, more generally, for $2$-bridge knots was given by Takata~\cite{Takata08}.
Both of these formulas were derived from the linear skein theory for the Kauffman bracket.
We will give a more explicit formula of the $\mathfrak{sl}_2$ Jones polynomial for $2$-bridge links in a similar way to \cite{Masbaum03} .
\subsection{2-bridge knot and link diagrams}
We briefly recall link diagrams of the $2$-bridge knots and links. 
You can find details on definitions of the $2$-bridge knots and links and the classification by Schubert~\cite{Schubert56} in \cite{BurdeZieschang85} and \cite{Murasugi93}.
Let $m$ be an integer. 
A boxed $m$ implies $m$ half twists (see Figure~\ref{boxedm}).
\begin{figure}
\centering
\begin{align*}
\tikz[baseline=-.6ex]{
\draw (-.5,.2) -- (.5,.2);
\draw (-.5,-.2) -- (.5,-.2);
\draw[fill=white] (-.2,-.3) rectangle (.2,.3);
\node at (.0,.0) {${\scriptstyle m}$};
}\,
=
\begin{cases}
\,\tikz[baseline=-.6ex]{
\begin{scope}[xshift=-.5cm]
\draw (-.5,.2) -- +(-.2,0);
\draw (-.5,-.2) -- +(-.2,0);
\draw[white, double=black, double distance=0.4pt, ultra thick] 
(-.5,-.2) to[out=east, in=west] (.0,.2);
\draw[white, double=black, double distance=0.4pt, ultra thick] 
(-.5,.2) to[out=east, in=west] (.0,-.2);
\end{scope}
\node at (.0,.0){$\cdots$};
\node at (.0,-.2)[below]{$\scriptstyle{\text{right-handed }m\text{ half twists}}$};
\begin{scope}[xshift=.5cm]
\draw
(.5,-.2) -- +(.2,0)
(.5,.2) -- +(.2,0);
\draw[white, double=black, double distance=0.4pt, ultra thick] 
(0,-.2) to[out=east, in=west] (.5,.2);
\draw[white, double=black, double distance=0.4pt, ultra thick] 
(0,.2) to[out=east, in=west] (.5,-.2);
\end{scope}
}\,  & \text{if $m>$0,}\\
\,\tikz[baseline=-.6ex]{
\begin{scope}[xshift=-.5cm]
\draw (-.5,.2) -- +(-.2,0);
\draw (-.5,-.2) -- +(-.2,0);
\draw[white, double=black, double distance=0.4pt, ultra thick] 
(-.5,.2) to[out=east, in=west] (.0,-.2);
\draw[white, double=black, double distance=0.4pt, ultra thick] 
(-.5,-.2) to[out=east, in=west] (.0,.2);
\end{scope}
\node at (.0,.0){$\cdots$};
\node at (.0,-.2)[below]{$\scriptstyle{\text{left-handed } m\text{ half twists}}$};
\begin{scope}[xshift=.5cm]
\draw
(.5,-.2) -- +(.2,0)
(.5,.2) -- +(.2,0);
\draw[white, double=black, double distance=0.4pt, ultra thick] 
(0,.2) to[out=east, in=west] (.5,-.2);
\draw[white, double=black, double distance=0.4pt, ultra thick] 
(0,-.2) to[out=east, in=west] (.5,.2);
\end{scope}
}\, & \text{if $m<0$,}\\
\quad\tikz[baseline=-.6ex]{
\draw (-1.2,.1) -- (1.2,.1);
\draw (-1.2,-.2) -- (1.2,-.2);
}\, & \text{if $m=0$.}
\end{cases}
\end{align*}
\caption{$m$ half twists}
\label{boxedm}
\end{figure}

\begin{LEM}\label{2bridgepre}
The $2$-bridge knots and links have the following standard presentation.
\begin{align*}
l\ \text{is odd :}&
\ \tikz[baseline=-.6ex, scale=1.2]{
\begin{scope}
\draw (-.3,.3) to[out=west, in=west] (-.3,.1);
\draw (-.3,-.1) to[out=west, in=west] (-.3,-.3);
\draw[-<-=.5] (-.3,.3) -- (.3,.3);
\draw (-.3,.1) -- (.3,.1);
\draw (-.3,-.1) -- (.3,-.1);
\draw[->-=.5] (-.3,-.3) -- (.3,-.3);
\draw[fill=white] (-.2,-.2) rectangle (.2,.2);
\node at (.0,.0) {$\scriptstyle{2a_1}$};
\end{scope}
\begin{scope}[xshift=.6cm]
\draw (-.3,.3) -- (.3,.3);
\draw (-.3,.1) -- (.3,.1);
\draw (-.3,-.1) -- (.3,-.1);
\draw (-.3,-.3) -- (.3,-.3);
\draw[fill=white] (-.2,-.4) rectangle (.2,.0);
\node at (.0,-.2) {$\scriptstyle{2a_2}$};
\end{scope}
\begin{scope}[xshift=1.2cm]
\draw (-.3,.3) -- (.3,.3);
\draw (-.3,.1) -- (.3,.1);
\draw (-.3,-.1) -- (.3,-.1);
\draw (-.3,-.3) -- (.3,-.3);
\draw[fill=white] (-.2,-.2) rectangle (.2,.2);
\node at (.0,.0) {$\scriptstyle{2a_3}$};
\end{scope}
\begin{scope}[xshift=1.8cm]
\draw (-.3,.3) -- (.3,.3);
\draw (-.3,.1) -- (.3,.1);
\draw (-.3,-.1) -- (.3,-.1);
\draw (-.3,-.3) -- (.3,-.3);
\draw[fill=white] (-.2,-.4) rectangle (.2,.0);
\node at (.0,-.2) {$\scriptstyle{2a_4}$};
\end{scope}
\begin{scope}[xshift=2.4cm]
\draw (-.3,.3) -- (.3,.3);
\node at (.0,.0) {$\cdots$};
\node at (.0,-.2) {$\cdots$};
\end{scope}
\begin{scope}[xshift=3cm]
\draw (.3,.3) to[out=east, in=east] (.3,.1);
\draw (.3,-.1) to[out=east, in=east] (.3,-.3);
\draw (-.3,.3) -- (.3,.3);
\draw (-.3,.1) -- (.3,.1);
\draw (-.3,-.1) -- (.3,-.1);
\draw (-.3,-.3) -- (.3,-.3);
\draw[fill=white] (-.2,-.2) rectangle (.2,.2);
\node at (.0,.0) {$\scriptstyle{2a_l}$};
\end{scope}
}\\
l\ \text{is even :}&
\ \tikz[baseline=-.6ex, scale=1.2]{
\begin{scope}
\draw (-.3,.3) to[out=west, in=west] (-.3,.1);
\draw (-.3,-.1) to[out=west, in=west] (-.3,-.3);
\draw[-<-=.5] (-.3,.3) -- (.3,.3);
\draw (-.3,.1) -- (.3,.1);
\draw (-.3,-.1) -- (.3,-.1);
\draw[->-=.5] (-.3,-.3) -- (.3,-.3);
\draw[fill=white] (-.2,-.2) rectangle (.2,.2);
\node at (.0,.0) {$\scriptstyle{2a_1}$};
\end{scope}
\begin{scope}[xshift=.6cm]
\draw (-.3,.3) -- (.3,.3);
\draw (-.3,.1) -- (.3,.1);
\draw (-.3,-.1) -- (.3,-.1);
\draw (-.3,-.3) -- (.3,-.3);
\draw[fill=white] (-.2,-.4) rectangle (.2,.0);
\node at (.0,-.2) {$\scriptstyle{2a_2}$};
\end{scope}
\begin{scope}[xshift=1.2cm]
\draw (-.3,.3) -- (.3,.3);
\draw (-.3,.1) -- (.3,.1);
\draw (-.3,-.1) -- (.3,-.1);
\draw (-.3,-.3) -- (.3,-.3);
\draw[fill=white] (-.2,-.2) rectangle (.2,.2);
\node at (.0,.0) {$\scriptstyle{2a_3}$};
\end{scope}
\begin{scope}[xshift=1.8cm]
\draw (-.3,.3) -- (.3,.3);
\draw (-.3,.1) -- (.3,.1);
\draw (-.3,-.1) -- (.3,-.1);
\draw (-.3,-.3) -- (.3,-.3);
\draw[fill=white] (-.2,-.4) rectangle (.2,.0);
\node at (.0,-.2) {$\scriptstyle{2a_4}$};
\end{scope}
\begin{scope}[xshift=2.4cm]
\draw (-.3,.3) -- (.3,.3);
\node at (.0,.0) {$\cdots$};
\node at (.0,-.2) {$\cdots$};
\end{scope}
\begin{scope}[xshift=3cm]
\draw (.3,.3) to[out=east, in=east] (.3,-.3);
\draw (.3,.1) to[out=east, in=east] (.3,-.1);
\draw (-.3,.3) -- (.3,.3);
\draw (-.3,.1) -- (.3,.1);
\draw (-.3,-.1) -- (.3,-.1);
\draw (-.3,-.3) -- (.3,-.3);
\draw[fill=white] (-.2,-.4) rectangle (.2,.0);
\node at (.0,-.2) {$\scriptstyle{2a_l}$};
\end{scope}
}
\end{align*}
The above $a_1,a_2,\dots,a_l$ are non-zero integers. 
We denote this link diagram by $[2a_1,2a_2,\dots,2a_l]$.
\end{LEM}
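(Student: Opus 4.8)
The plan is to obtain this presentation from Schubert's normal form for $2$-bridge links together with the even continued fraction expansion of a rational number. Recall that every $2$-bridge link is, up to isotopy, the link $\mathfrak{b}(\alpha,\beta)$ associated with a pair of coprime integers $(\alpha,\beta)$, and that $\mathfrak{b}(\alpha,\beta)$ is presented as the plat closure of a rational (two-string) tangle determined by the fraction $\beta/\alpha$; this is the content of Schubert's classification~\cite{Schubert56}, for which we follow~\cite{BurdeZieschang85, Murasugi93}. By Conway's correspondence between rational tangles and $\mathbb{Q}\cup\{\infty\}$, adding horizontal half twists and adding vertical half twists to a tangle realize the two elementary steps of the continued fraction algorithm applied to $\beta/\alpha$.

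First I would record the number-theoretic input: every rational number admits a continued fraction expansion
\[
 \frac{\beta}{\alpha}=2a_1+\cfrac{1}{2a_2+\cfrac{1}{2a_3+\cdots+\cfrac{1}{2a_l}}}
\]
with all partial quotients even and with nonzero integers $a_1,\dots,a_l$. This is a standard fact, proved by running the Euclidean-type algorithm while choosing the even quotient at each step (allowing it to be negative); the evenness is exactly what forces each twist region to contain an even number $2a_i$ of half twists, and hence the notation $[2a_1,2a_2,\dots,2a_l]$.

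Next I would translate this expansion into a diagram. Reading the expansion from left to right, I assign to each partial quotient $2a_i$ a block of $2a_i$ half twists in the sense of Figure~\ref{boxedm}, placing the odd-indexed blocks $2a_1,2a_3,\dots$ on one pair of strands and the even-indexed blocks $2a_2,2a_4,\dots$ on the adjacent pair, so that consecutive blocks act on overlapping strands. This alternation of twist regions is precisely the geometric counterpart of the nesting in the continued fraction, and the two arcs on the left close up the bridges of the tangle, producing the fixed left-hand caps shown in both displayed diagrams.

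Finally I would handle the two cases according to the parity of $l$. The number of partial quotients determines how the rightmost twist region is closed off: when $l$ is odd the last block $2a_l$ lies on the same pair of strands as $2a_1$ and is capped by two arcs on that pair, while when $l$ is even it lies on the other pair and is capped accordingly, matching the two pictures. The main obstacle is bookkeeping rather than conceptual: one must check that the strand-overlap pattern, the chosen orientations, and the closing arcs agree with the stated diagram in both parities, and that the even expansion—unique up to the standard normalizations on $(\alpha,\beta)$—reproduces every $2$-bridge link and only $2$-bridge links. Once the dictionary between the continued fraction steps and the twist blocks is fixed, these verifications are routine.
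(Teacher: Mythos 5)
Your overall strategy is the standard one, and it is essentially the proof the paper itself defers to by citing \cite{Schubert56}, \cite{BurdeZieschang85}, \cite{Murasugi93} and Chapter~2 of \cite{Kawauchi96}: Schubert normal form, Conway's dictionary between rational tangles and continued fractions, an all-even continued fraction expansion, and parity bookkeeping for the plat closure. The paper offers no written argument beyond the citation, so there is nothing to diverge from in method.

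There is, however, one genuinely false intermediate claim in your write-up: it is not true that \emph{every} rational number admits a continued fraction expansion with all partial quotients even. The continuants $p_k/q_k$ of an all-even expansion satisfy a parity constraint — $(p_k,q_k)$ alternates between $(\mathrm{even},\mathrm{odd})$ and $(\mathrm{odd},\mathrm{even})$ — so a fraction with both numerator and denominator odd, such as $1/3$, has no such expansion. This is exactly where the real content of the lemma sits. What saves the argument is the freedom in Schubert's classification: for an unoriented $2$-bridge link $\mathfrak{b}(\alpha,\beta)$ one may replace $\beta$ by any $\beta'\equiv\beta^{\pm1}\pmod{\alpha}$, and since $\gcd(\alpha,\beta)=1$ one can always choose a representative for which the defining fraction is of type $(\mathrm{odd},\mathrm{even})$ or $(\mathrm{even},\mathrm{odd})$; for instance the trefoil $\mathfrak{b}(3,1)=\mathfrak{b}(3,2)$ uses $3/2=2+1/(-2)$, giving $[2,-2]$. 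You should state the number-theoretic lemma in this corrected form (an all-even expansion exists precisely when $\alpha$ and $\beta$ are not both odd, and a suitable representative can always be chosen) rather than as a fact about arbitrary rationals; with that repair, the rest of your translation into twist blocks and the parity-of-$l$ case analysis is routine and correct.
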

\begin{proof}
See, for example, \cite[Chapter~$2$]{Kawauchi96}.
\end{proof}

\subsection{The $\mathfrak{sl}_2$ colored Jones polynomial}
We introduce the $\mathfrak{sl}_2$ colored Jones polynomial.
Let $L$ be an oriented link diagram in $D_0$ with ordered link components $(L_1,L_2,\dots,L_r)$. 
We denote by $\bar{L}=(\bar{L}_1,\bar{L}_2,\dots,\bar{L}_r)$ an unoriented link diagram obtained by forgetting the orientation of $L$.
For each link component $\bar{L}_i$ ($i=1,2,\dots,r$) of $\bar{L}$,
we cut away a short strand from $\bar{L}_i$ and substitute
\tikz[baseline=-.6ex]{
\draw (-.5,0) -- (.5,0);
\draw[fill=white] (-.1,-.3) rectangle (.1,.3);
\node at (.1,0) [above right]{${\scriptstyle n}$};
}\,
for it.
Thus,
we obtain the unoriented link diagram colored by $n$. 
We denote it by $\bar{L}(n)$. (See Figure~\ref{L(n)}.)
\begin{figure}
\centering
\begin{tikzpicture}
\draw[rotate=120] (90:.7) to[out=east, in=east] (90:1.3);
\draw[->-=.5] (-30:.3) to[out=-120, in=-60] (210:1);
\draw[rotate=120] (-30:.3) to[out=-120, in=-60] (210:1);
\draw[rotate=240] (-30:.3) to[out=-120, in=-60] (210:1);
\draw[white, double=black, double distance=0.4pt, ultra thick] 
(90:1) to[out=0, in=60] (-30:.3);
\draw[white, double=black, double distance=0.4pt, ultra thick, rotate=120] 
(90:1) to[out=0, in=60] (-30:.3);
\draw[white, double=black, double distance=0.4pt, ultra thick, rotate=240] 
(90:1) to[out=0, in=60] (-30:.3);
\draw[-<-=1, white, double=black, double distance=0.4pt, ultra thick, rotate=120] 
(90:.7) to[out=west, in=west] (90:1.3);
\node at (-90:1.2){$L$};
\end{tikzpicture}
\begin{tikzpicture}
\draw[rotate=120] (90:.7) to[out=east, in=east] (90:1.3);
\draw (-30:.3) to[out=-120, in=-60] (210:1);
\draw[rotate=120] (-30:.3) to[out=-120, in=-60] (210:1);
\draw[rotate=240] (-30:.3) to[out=-120, in=-60] (210:1);
\draw[white, double=black, double distance=0.4pt, ultra thick] 
(90:1) to[out=0, in=60] (-30:.3);
\draw[white, double=black, double distance=0.4pt, ultra thick, rotate=120] 
(90:1) to[out=0, in=60] (-30:.3);
\draw[white, double=black, double distance=0.4pt, ultra thick, rotate=240] 
(90:1) to[out=0, in=60] (-30:.3);
\draw[white, double=black, double distance=0.4pt, ultra thick, rotate=120] 
(90:.7) to[out=west, in=west] (90:1.3);
\draw[cyan, dashed] (90:1) circle (.2);
\draw[cyan, dashed] (210:1.3) circle (.2);
\node at (-90:1.2){$\bar{L}$};
\end{tikzpicture}
\begin{tikzpicture}
\draw[rotate=120] (90:.7) to[out=east, in=east] (90:1.3);
\draw (-30:.3) to[out=-120, in=-60] (210:1);
\draw[rotate=120] (-30:.3) to[out=-120, in=-60] (210:1);
\draw[rotate=240] (-30:.3) to[out=-120, in=-60] (210:1);
\draw[white, double=black, double distance=0.4pt, ultra thick] 
(90:1) to[out=0, in=60] (-30:.3);
\draw[white, double=black, double distance=0.4pt, ultra thick, rotate=120] 
(90:1) to[out=0, in=60] (-30:.3);
\draw[white, double=black, double distance=0.4pt, ultra thick, rotate=240] 
(90:1) to[out=0, in=60] (-30:.3);
\draw[white, double=black, double distance=0.4pt, ultra thick, rotate=120] 
(90:.7) to[out=west, in=west] (90:1.3);
\draw[fill=white] (-.05,.8) rectangle (.05,1.2);
\draw[fill=white, rotate=120] (-.05,1.1) rectangle (.05,1.5);
\node at (90:1) [above right]{${\scriptstyle n}$};
\node at (210:1.3) [below right]{${\scriptstyle n}$};
\node at (-90:1.2) {$\bar{L}(n)$};
\end{tikzpicture}\\
Replace 
\,\tikz[baseline=-.6ex]{
\draw (-.3,.0) -- (.3,.0);
\draw[cyan, dashed] (.0,.0) circle (.3);
}\,
with
\,\tikz[baseline=-.6ex]{
\draw (-.3,.0) -- (.3,.0);
\draw[fill=white] (-.05,-.2) rectangle (.05,.2);
\draw[cyan, dashed] (.0,.0) circle (.3);
}\,
\caption{A Link diagram colored by $n$}
\label{L(n)}
\end{figure}

\begin{DEF}
The $\mathfrak{sl}_2$ colored Jones polynomial $J_{n+1}^{\mathfrak{sl}_2}(L;q)$ of a link represented by $L$ is defined by
\[
J_{n+1}^{\mathfrak{sl}_2}(L;q)
=((-1)^nq^{\frac{n^2+2n}{4}})^{-\mathop{w}(L)}
\langle \bar{L}(n)\rangle_{2}/\langle\,\tikz[baseline=-.6ex, scale=.5]{
\draw (0,0) circle [radius=.3];
\draw[fill=white] (.1,-.05) rectangle (.5,.05);
\node at (.3,0)[above right]{$\scriptstyle{n}$};
}\,\rangle_{2},
\]
where $\mathop{w}(L)$ is the writhe of $L$.
\end{DEF}

\begin{LEM}\label{A1closure}
\[
\Bigg\langle\,\tikz[baseline=-1.8ex]{
\draw[rounded corners=.1cm] (-.4,.3) -- ++(-.4,0) -- ++(.0,-1.2) -- ++(1.6,.0) -- ++(.0,1.2) -- (.4,.3);
\draw[rounded corners=.1cm] (-.4,-.3) -- ++(-.2,0) -- ++(.0,-.4) -- ++(1.2,.0) -- ++(.0,.4) -- (.4,-.3);
\draw (-.4,.4) -- (.4,.4);
\draw (-.4,-.4) -- (.4,-.4);
\draw (-.4,.2) to[out=east, in=east] (-.4,-.2);
\draw (.4,.2) to[out=west, in=west] (.4,-.2);
\draw[fill=white] (.4,-.5) rectangle +(.1,.4);
\draw[fill=white] (-.4,-.5) rectangle +(-.1,.4);
\draw[fill=white] (.4,.5) rectangle +(.1,-.4);
\draw[fill=white] (-.4,.5) rectangle +(-.1,-.4);
\node at (.4,-.1)[right]{$\scriptstyle{n}$};
\node at (-.4,-.1)[left]{$\scriptstyle{n}$};
\node at (.4,.5)[right]{$\scriptstyle{n}$};
\node at (-.4,.5)[left]{$\scriptstyle{n}$};
\node at (0,.4)[above]{$\scriptstyle{n-k}$};
\node at (0,-.4)[above]{$\scriptstyle{n-k}$};
}\,\Bigg\rangle_{\! 2}
=(-1)^{n-k}q^{\frac{k-n}{2}}\frac{(1-q^{n+1})}{(1-q^{k+1})}
\Big\langle\,\tikz[baseline=-.6ex]{
\draw (0,0) circle [radius=.3];
\draw[fill=white] (.1,-.05) rectangle (.5,.05);
\node at (.3,0)[above right]{$\scriptstyle{n}$};
}\,\Big\rangle_{\! 2}
\]
\end{LEM}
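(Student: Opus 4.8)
The left-hand side is a closed $A_1$ web and therefore a scalar, so the plan is to evaluate it by applying the turn-back (bubble) identity for the Jones--Wenzl idempotent. First I would read off the combinatorial structure of the diagram. The $n$-cable running through the four $f_n$ boxes splits into a bundle of $k$ strands that genuinely joins the top half of the picture to the bottom half, together with a complementary bundle of $n-k$ strands that is capped off (turned back) on each half. Using the absorption property of the $A_1$ clasp (the first item of the Kauffman--Lins Lemma, \cite{KauffmanLins94}) I would merge the two adjacent clasps along the top into a single $f_n$, and likewise along the bottom, so that the closed web becomes two copies of $f_n$ joined by $k$ parallel strands, each $f_n$ carrying an $(n-k)$-strand turn-back.

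Next I would apply the turn-back formula of Lemma~\ref{A1clasplem} (the identity $(-1)^k\tfrac{[n+1]}{[n-k+1]}$, with the roles of $k$ and $n-k$ interchanged): capping $n-k$ of the $n$ strands of an $f_n$ produces the factor $(-1)^{n-k}\tfrac{[n+1]}{[k+1]}$ and replaces $f_n$ by $f_k$. Carrying this out once on the top idempotent and once on the bottom idempotent contributes $\bigl((-1)^{n-k}\tfrac{[n+1]}{[k+1]}\bigr)^2$ and leaves two copies of $f_k$ joined into a single $k$-colored circle. That circle evaluates to $\Delta_k=(-1)^k[k+1]$ by Proposition~\ref{coloredA1}~(2). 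Multiplying the contributions gives $\bigl(\tfrac{[n+1]}{[k+1]}\bigr)^2(-1)^k[k+1]=(-1)^k\tfrac{[n+1]^2}{[k+1]}$.

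It then remains to reconcile this closed form with the coefficient in the statement. Since $\Delta_n=(-1)^n[n+1]$, the value above equals $(-1)^{n-k}\tfrac{[n+1]}{[k+1]}\Delta_n$, and the elementary identity $1-q^{m}=-q^{m/2}\{m\}$ yields $q^{(k-n)/2}\tfrac{1-q^{n+1}}{1-q^{k+1}}=\tfrac{\{n+1\}}{\{k+1\}}=\tfrac{[n+1]}{[k+1]}$. Substituting this back converts $(-1)^{n-k}\tfrac{[n+1]}{[k+1]}\Delta_n$ into the printed form $(-1)^{n-k}q^{(k-n)/2}\tfrac{1-q^{n+1}}{1-q^{k+1}}\Delta_n$, completing the identification. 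This last step is purely a change of notation into the $q$-Pochhammer language used in Section~5.

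The genuine difficulty is not in the two reduction steps, which are routine once the picture is set up, but in correctly reading the turn-back structure off the closed diagram: one must check that exactly $n-k$ strands cap off on \emph{each} of the two halves (rather than some other split of the $k$ and $n-k$ bundles among the loops and cups) and that the surviving $k$ strands form a single circle, so that the turn-back factor is genuinely squared and the residual loop carries color $k$ rather than $n$. A useful consistency check I would run on the final formula is the two extreme cases: $k=n$ forces no turn-back and a single $n$-circle of value $\Delta_n$, while $k=0$ forces everything to turn back, leaving two disjoint $n$-circles of value $\Delta_n^2$. Both are reproduced by the stated coefficient and together pin down the intended turn-back structure.
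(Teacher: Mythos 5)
Your argument is correct and is exactly the route the paper intends: the paper states Lemma~\ref{A1closure} without proof, but its $A_2$ counterpart (Lemma~\ref{A2closure}) is proved by the one-line instruction ``Use Lemma~\ref{A2clasplem}'', and your merge-then-partial-trace computation is precisely the $A_1$ analogue of that, i.e.\ an application of the absorption property together with the second item of Lemma~\ref{A1clasplem} followed by the closed-loop evaluation. The reading of the turn-back structure, the coefficient bookkeeping, the rewriting $q^{(k-n)/2}\tfrac{1-q^{n+1}}{1-q^{k+1}}=\tfrac{[n+1]}{[k+1]}$, and the consistency checks at $k=n$ and $k=0$ are all correct.
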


\begin{THM}\label{A12bridge}
Let $a_1, a_2, \dots, a_l$ be non-zero integers.
\begin{align*}
&J_{n+1}^{\mathfrak{sl}_2}(\left[2a_1,2a_2,\dots,2a_l\right];q)\\
&\quad=\prod_{j=0}^{l-1}\sum_{0\leq k_{\left|a_{j+1}\right|}^{(j+1)}\leq\dots\leq k_1^{(j+1)}\leq K_j}
(-1)^{K_j-k_{\left|a_{j+1}\right|}^{(j+1)}}
q^{a_{j+1}(n^2+2n)}
q^{\frac{\varepsilon_{j+1}}{2}(K_j-k_{\left|a_{j+1}\right|}^{(j+1)})}\\
&\qquad\times q^{\varepsilon_{j+1}\sum_{i=1}^{\left|a_{j+1}\right|}({k_{i}^{(j+1)}}^2+k_{i}^{(j+1)})}
\frac{(q^{\varepsilon_{j+1}})_{K_j}}{(q^{\varepsilon_{j+1}})_{k_{\left|a_{j+1}\right|}^{(j+1)}}}
{n\choose {k_{1}^{(j+1)}}',{k_{2}^{(j+1)}}',\dots,{k_{\left|a_{j+1}\right|}^{(j+1)}}',k_{\left|a_{j+1}\right|}^{(j+1)}}_{q^{\varepsilon_{j+1}}}\\
&\qquad\times(-1)^{n-K_l}q^{\frac{K_l-n}{2}}\frac{1-q^{n+1}}{1-q^{K_l+1}},
\end{align*}
where $\varepsilon_{j+1}=\frac{a_{j+1}}{\left|a_{j+1}\right|}$, $K_0=n, K_j=n-k_{\left|a_{j}\right|}^{(j)}$ and $k_0^{(j)}=K_j, {k_{i+1}^{(j+1)}}'=k_{i}^{(j)}-k_{i+1}^{(j)}$.
\end{THM}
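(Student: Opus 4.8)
The plan is to cable the standard diagram $[2a_1,\dots,2a_l]$ of Lemma~\ref{2bridgepre} by $n$, place a Jones--Wenzl idempotent on each of the four strands, and then resolve the $l$ twist boxes one at a time. Since the box labelled $2a_j$ is $\lvert a_j\rvert$ full twists, right-handed for $a_j>0$ and left-handed for $a_j<0$, I would apply the $m$ full twists formula (Proposition~\ref{A1mfull}) with $m=\lvert a_j\rvert$, invoking the substitution $q\mapsto q^{-1}$ from the Remark after that proposition in the left-handed case. This is exactly the source of the parameter $\varepsilon_j=a_j/\lvert a_j\rvert$ and of the uniform occurrence of $q^{\varepsilon_j}$, the symbols $(q^{\varepsilon_j})_\bullet$, and $\binom{\cdot}{\cdot}_{q^{\varepsilon_j}}$ in the summands. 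Each resolution rewrites a box as a sum over a nested chain $k_0^{(j)}\ge k_1^{(j)}\ge\dots\ge k_{\lvert a_j\rvert}^{(j)}\ge 0$ of the turnback webs $\langle\sigma^{\lvert a_j\rvert}\rangle_2$ of Proposition~\ref{A1mfull}, namely the web with $k_{\lvert a_j\rvert}^{(j)}$ strands running straight through and the remaining strands turning back.

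The core of the argument is an induction on the number of boxes that records how the color propagates through the alternating horizontal and vertical twist regions. I would show that after resolving boxes $1,\dots,j$ the partial diagram is a single turnback web whose edge leading into box $j+1$ carries color $K_j=n-k_{\lvert a_j\rvert}^{(j)}$, weighted by the product over the first $j$ boxes of the coefficients in the statement, with $K_0=n$. For the inductive step I apply Proposition~\ref{A1mfull} to box $j+1$ and fuse the result with the color-$K_j$ turnback left by the preceding boxes, using the absorption property of the $A_1$ clasp together with the slide of Lemma~\ref{A1slide} to return to standard turnback form. The truncation $k_1^{(j+1)}\le K_j$ of the summation range arises here: summands whose top index exceeds the available color $K_j$ are annihilated by the Jones--Wenzl idempotent via its turnback-killing property, and the intrinsic factors of Proposition~\ref{A1mfull}---the sign $(-1)^{K_j-k_{\lvert a_{j+1}\rvert}^{(j+1)}}$, the half-power $q^{\varepsilon_{j+1}(K_j-k_{\lvert a_{j+1}\rvert}^{(j+1)})/2}$, the Pochhammer ratio $(q^{\varepsilon_{j+1}})_{K_j}/(q^{\varepsilon_{j+1}})_{k_{\lvert a_{j+1}\rvert}^{(j+1)}}$, and the remaining $q$-multinomial weight---appear with the propagated color $K_j$ controlling the sign, half-power, Pochhammer ratio, and range, precisely as written.

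Once all $l$ boxes are resolved the diagram becomes the plat closure determined by the caps at the two ends of the diagram in Lemma~\ref{2bridgepre}. The web that remains is exactly the closure evaluated in Lemma~\ref{A1closure} with connecting color $K_l$, so after dividing by the value of the $n$-colored loop, as the definition of $J_{n+1}^{\mathfrak{sl}_2}$ requires, it contributes the trailing factor $(-1)^{n-K_l}q^{(K_l-n)/2}(1-q^{n+1})/(1-q^{K_l+1})$. It then remains to assemble the framing contributions: I collect the prefactor produced by each application of Proposition~\ref{A1mfull} and combine it with the writhe normalization $((-1)^n q^{(n^2+2n)/4})^{-\mathop{w}(L)}$, where $\mathop{w}(L)$ for $[2a_1,\dots,2a_l]$ is read off from the orientations that fix the signs $\varepsilon_j$; after simplification this yields the per-box power $q^{a_{j+1}(n^2+2n)}$ and reconciles the remaining $(-1)$ factors. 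Throughout, the elementary identities for quantum integers in Lemma~\ref{qinteger} and the transformation formulas relating $\{k\}!$, $(q;q)_k$, and the two $q$-binomials handle the residual manipulations.

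The step I expect to be the main obstacle is the inductive composition of the second paragraph: verifying that the turnback of box $j$ feeds correctly into box $j+1$ so that the through-color becomes $K_j$, that the off-range cross terms vanish by the turnback-killing property, and that the per-box coefficients multiply cleanly with no cross interference. The second delicate point is the framing bookkeeping of the third paragraph, where one must pin down $\mathop{w}(L)$ and the orientation conventions carefully enough that the $\varepsilon_j$ and the exponent $q^{a_{j+1}(n^2+2n)}$ come out consistently; everything past these two points is routine $q$-series computation.
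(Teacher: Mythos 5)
Your proposal follows essentially the same route as the paper's proof: resolve each twist box with the $m$ full twists formula (Proposition~\ref{A1mfull}) applied at the propagated color $K_j$, use Lemma~\ref{A1slide} to move the turnback inherited from the previous box through the twist region, close up with Lemma~\ref{A1closure}, and normalize by the writhe $-2(a_1+\cdots+a_l)$. The only cosmetic difference is one of ordering: the paper slides the turnback through the crossings first (collecting the $\delta_n(K_j;q^{\varepsilon_{j+1}})_2$ factors) and then expands the twists on strands already reduced to color $K_j$, rather than expanding at color $n$ and discarding out-of-range terms afterwards.
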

\begin{proof}
We calculate the following clasped $A_1$ web using Lemma~\ref{A1slide} and Proposition~\ref{A1mfull}.
\begin{align*}
&\Bigg\langle\,\tikz[baseline=-.6ex]{
\draw[rounded corners] 
(-.5,.65) -- (-1.5,.65) -- (-1.5,-.65) -- (-.5,-.65);
\draw (-.5,.55) to[out=west, in=west] (-.5,.25);
\draw (-.5,.15) to[out=west, in=west] (-.5,-.15);
\draw (-.5,-.25) to[out=west, in=west] (-.5,-.55);
\draw (-.5,.6) -- (.8,.6);
\draw (-.5,.2) -- (.8,.2);
\draw (-.5,-.2) -- (.8,-.2);
\draw (-.5,-.6) -- (.8,-.6);
\draw[fill=white] (-.5,.5) rectangle +(.1,.2);
\draw[fill=white] (-.5,.1) rectangle +(.1,.2);
\draw[fill=white] (-.5,.-.3) rectangle +(.1,.2);
\draw[fill=white] (-.5,.-.7) rectangle +(.1,.2);
\draw[fill=white] (.8,.5) rectangle +(.1,.2);
\draw[fill=white] (.8,.1) rectangle +(.1,.2);
\draw[fill=white] (.8,.-.3) rectangle +(.1,.2);
\draw[fill=white] (.8,.-.7) rectangle +(.1,.2);
\draw[fill=white] (-.2,-.3) rectangle (.6,.3);
\node at (.2,.0) {$\scriptstyle{2a_{j+1}}$};
\node at (-.5,.4) [left]{$\scriptstyle{K_j}$};
\node at (-.5,.0) [left]{$\scriptstyle{n-K_j}$};
\node at (-.5,-.4) [left]{$\scriptstyle{K_j}$};
\node at (-1.5,.0) [left]{$\scriptstyle{n-K_j}$};
}\,\Bigg\rangle_{\! 2}\\
&\quad=\delta_n(K_j;q^{\varepsilon_{j+1}})_2^{2\left|a_{j+1}\right|}
\Bigg\langle\,\tikz[baseline=-.6ex]{
\draw[rounded corners] 
(-.5,.65) -- (-1.5,.65) -- (-1.5,-.65) -- (-.5,-.65);
\draw (-.5,.55) to[out=west, in=west] (-.5,.25);
\draw (.8,.15) to[out=west, in=west] (.8,-.15);
\draw (-.5,-.25) to[out=west, in=west] (-.5,-.55);
\draw (-.5,.6) -- (.8,.6);
\draw (-.5,.25) -- (.8,.25);
\draw (-.5,-.25) -- (.8,-.25);
\draw (-.5,-.6) -- (.8,-.6);
\draw[fill=white] (-.5,.5) rectangle +(.1,.2);
\draw[fill=white] (-.5,.-.7) rectangle +(.1,.2);
\draw[fill=white] (.8,.5) rectangle +(.1,.2);
\draw[fill=white] (.8,.1) rectangle +(.1,.2);
\draw[fill=white] (.8,.-.3) rectangle +(.1,.2);
\draw[fill=white] (.8,.-.7) rectangle +(.1,.2);
\draw[fill=white] (-.2,-.3) rectangle (.6,.3);
\node at (.2,.0) {$\scriptstyle{2a_{j+1}}$};
\node at (-.5,.4) [left]{$\scriptstyle{K_j}$};
\node at (.8,.0) [right]{$\scriptstyle{n-K_j}$};
\node at (-.5,-.4) [left]{$\scriptstyle{K_j}$};
\node at (-1.5,.0) [left]{$\scriptstyle{n-K_j}$};
}\,\Bigg\rangle_{\! 2}\\
&\quad=\delta_n(K_j;q^{\varepsilon_{j+1}})_2^{2\left|a_{j+1}\right|}
\sum_{0\leq k_{\left|a_{j+1}\right|}^{(j+1)}\leq\dots\leq k_1^{(j+1)}\leq K_j}
\gamma_{K_j}(k_1^{(j+1)},k_2^{(j+1)},\dots,k_{\left|a_{j+1}\right|}^{(j+1)};q^{\varepsilon_{j+1}})_2\\
&\qquad\times\Bigg\langle\,\tikz[baseline=-.6ex]{
\draw[rounded corners] 
(-.5,.65) -- (-1.8,.65) -- (-1.8,-.65) -- (-.5,-.65);
\draw (-.5,.55) to[out=west, in=west] (-.5,.25);
\draw (.8,.15) to[out=west, in=west] (.8,-.15);
\draw (-.5,-.25) to[out=west, in=west] (-.5,-.55);
\draw (-.5,.6) -- (.8,.6);
\draw (-.5,.25) -- (.8,.25);
\draw (-.5,-.25) -- (.8,-.25);
\draw (-.5,-.6) -- (.8,-.6);
\draw[fill=white] (-.5,.5) rectangle +(.1,.2);
\draw[fill=white] (-.5,.-.7) rectangle +(.1,.2);
\draw[fill=white] (.8,.5) rectangle +(.1,.2);
\draw[fill=white] (.8,.1) rectangle +(.1,.2);
\draw[fill=white] (.8,.-.3) rectangle +(.1,.2);
\draw[fill=white] (.8,.-.7) rectangle +(.1,.2);
\node at (-.5,.35) [left]{$\scriptstyle{k_{\left|a_{j+1}\right|}^{(j+1)}}$};
\node at (.8,.0) [right]{$\scriptstyle{n-k_{\left|a_{j+1}\right|}^{(j+1)}}$};
\node at (-.5,-.35) [left]{$\scriptstyle{k_{\left|a_{j+1}\right|}^{(j+1)}}$};
\node at (-1.8,.0) [left]{$\scriptstyle{n-k_{\left|a_{j+1}\right|}^{(j+1)}}$};
}\,\Bigg\rangle_{\! 2}\\
&\quad=\delta_n(K_j;q^{\varepsilon_{j+1}})_2^{\left|a_{j+1}\right|}
\sum_{0\leq k_{\left|a_{j+1}\right|}^{(j+1)}\leq\dots\leq k_1^{(j+1)}\leq K_j}
\gamma_{K_j}(k_1^{(j+1)},k_2^{(j+1)},\dots,k_{\left|a_{j+1}\right|}^{(j+1)};q^{\varepsilon_{j+1}})_2\\
&\qquad\times\Bigg\langle\,\tikz[baseline=-.6ex]{
\draw[rounded corners] 
(-.5,.65) -- (-1.8,.65) -- (-1.8,-.65) -- (-.5,-.65);
\draw (-.5,.55) to[out=west, in=west] (-.5,.25);
\draw (-.5,.15) to[out=west, in=west] (-.5,-.15);
\draw (-.5,-.25) to[out=west, in=west] (-.5,-.55);
\draw[fill=white] (-.5,.5) rectangle +(.1,.2);
\draw[fill=white] (-.5,.1) rectangle +(.1,.2);
\draw[fill=white] (-.5,.-.3) rectangle +(.1,.2);
\draw[fill=white] (-.5,.-.7) rectangle +(.1,.2);
\node at (-.5,.4) [left]{$\scriptstyle{n-K_{j+1}}$};
\node at (-.5,.0) [left]{$\scriptstyle{K_{j+1}}$};
\node at (-.5,-.4) [left]{$\scriptstyle{n-K_{j+1}}$};
\node at (-1.8,.0) [left]{$\scriptstyle{K_{j+1}}$};
}\,\Bigg\rangle_{\! 2},
\end{align*}
where
\begin{align*}
\delta_n(K_j;q)_2&=(-1)^{n-K_j}q^{-\frac{n^2+2n-K_j^2-2K_j}{4}}\quad\text{and}\\
\gamma_{K_j}(k_1^{(j+1)},k_2^{(j+1)},\dots,k_{\left|a_{j+1}\right|}^{(j+1)};q)_2
&=(-1)^{K_j-k_{\left|a_{j+1}\right|}^{(j+1)}}
q^{-\frac{\left|a_{j+1}\right|}{2}({k_{\left|a_{j+1}\right|}^{(j+1)}}^2+2{k_{\left|a_{j+1}\right|}^{(j+1)}})}\\
&\qquad\times q^{\frac{1}{2}(K_j-k_{\left|a_{j+1}\right|}^{(j+1)})}
q^{\sum_{i=1}^{\left|a_{j+1}\right|}({k_{i}^{(j+1)}}^2+k_{i}^{(j+1)})}\\
&\qquad\times \frac{(q)_{K_j}}{(q)_{k_{\left|a_{j+1}\right|}^{(j+1)}}}
{n\choose {k_{1}^{(j+1)}}',{k_{2}^{(j+1)}}',\dots,{k_{\left|a_{j+1}\right|}^{(j+1)}}',k_{\left|a_{j+1}\right|}^{(j+1)}}_{q}.
\end{align*}
We also obtain the following in the same way.
\begin{align*}
&\Bigg\langle\,\tikz[baseline=-.6ex]{
\draw[rounded corners] 
(-.5,.65) -- (-1.5,.65) -- (-1.5,-.65) -- (-.5,-.65);
\draw (-.5,.55) to[out=west, in=west] (-.5,.25);
\draw (-.5,.15) to[out=west, in=west] (-.5,-.15);
\draw (-.5,-.25) to[out=west, in=west] (-.5,-.55);
\draw (-.5,.6) -- (.8,.6);
\draw (-.5,.2) -- (.8,.2);
\draw (-.5,-.2) -- (.8,-.2);
\draw (-.5,-.6) -- (.8,-.6);
\draw[fill=white] (-.5,.5) rectangle +(.1,.2);
\draw[fill=white] (-.5,.1) rectangle +(.1,.2);
\draw[fill=white] (-.5,.-.3) rectangle +(.1,.2);
\draw[fill=white] (-.5,.-.7) rectangle +(.1,.2);
\draw[fill=white] (.8,.5) rectangle +(.1,.2);
\draw[fill=white] (.8,.1) rectangle +(.1,.2);
\draw[fill=white] (.8,.-.3) rectangle +(.1,.2);
\draw[fill=white] (.8,.-.7) rectangle +(.1,.2);
\draw[fill=white] (-.2,-.7) rectangle (.6,-.1);
\node at (.2,-.4) {$\scriptstyle{2a_{j+1}}$};
\node at (-.5,.4) [left]{$\scriptstyle{n-K_j}$};
\node at (-.5,.0) [left]{$\scriptstyle{K_j}$};
\node at (-.5,-.4) [left]{$\scriptstyle{n-K_j}$};
\node at (-1.5,.0) [left]{$\scriptstyle{K_j}$};
}\,\Bigg\rangle_{\! 2}\\
&\quad=\delta_n(K_j;q^{\varepsilon_{j+1}})_2^{2\left|a_{j+1}\right|}
\sum_{0\leq k_{\left|a_{j+1}\right|}^{(j+1)}\leq\dots\leq k_1^{(j+1)}\leq K_j}
\gamma_{K_j}(k_1^{(j+1)},k_2^{(j+1)},\dots,k_{\left|a_{j+1}\right|}^{(j+1)};q^{\varepsilon_{j+1}})_2\\
&\qquad\times\Bigg\langle\,\tikz[baseline=-.6ex]{
\draw[rounded corners] 
(-.5,.65) -- (-1.8,.65) -- (-1.8,-.65) -- (-.5,-.65);
\draw (-.5,.55) to[out=west, in=west] (-.5,.25);
\draw (-.5,.15) to[out=west, in=west] (-.5,-.15);
\draw (-.5,-.25) to[out=west, in=west] (-.5,-.55);
\draw[fill=white] (-.5,.5) rectangle +(.1,.2);
\draw[fill=white] (-.5,.1) rectangle +(.1,.2);
\draw[fill=white] (-.5,.-.3) rectangle +(.1,.2);
\draw[fill=white] (-.5,.-.7) rectangle +(.1,.2);
\node at (-.5,.4) [left]{$\scriptstyle{K_{j+1}}$};
\node at (-.5,.0) [left]{$\scriptstyle{n-K_{j+1}}$};
\node at (-.5,-.4) [left]{$\scriptstyle{K_{j+1}}$};
\node at (-1.8,.0) [left]{$\scriptstyle{n-K_{j+1}}$};
}\,\Bigg\rangle_{\! 2}.
\end{align*}
Therefore,
\begin{align*}
&\langle \bar{L}(n)\rangle_2\\
&\quad=\prod_{j=0}^{l-1}\sum_{0\leq k_{\left|a_{j+1}\right|}^{(j+1)}\leq\dots\leq k_1^{(j+1)}\leq K_j}
\delta_n(K_j;q^{\varepsilon_{j+1}})_2^{2\left|a_{j+1}\right|}
\gamma_{K_j}(k_1^{(j+1)},k_2^{(j+1)},\dots,k_{\left|a_{j+1}\right|}^{(j+1)};q^{\varepsilon_{j+1}})_2\\
&\qquad\times
\Bigg\langle\,\tikz[baseline=-1.8ex, scale=1.2]{
\draw[rounded corners=.1cm] (-.4,.3) -- ++(-.4,0) -- ++(.0,-1.2) -- ++(1.6,.0) -- ++(.0,1.2) -- (.4,.3);
\draw[rounded corners=.1cm] (-.4,-.3) -- ++(-.2,0) -- ++(.0,-.4) -- ++(1.2,.0) -- ++(.0,.4) -- (.4,-.3);
\draw (-.4,.4) -- (.4,.4);
\draw (-.4,-.4) -- (.4,-.4);
\draw (-.4,.2) to[out=east, in=east] (-.4,-.2);
\draw (.4,.2) to[out=west, in=west] (.4,-.2);
\draw[fill=white] (.4,-.5) rectangle +(.1,.4);
\draw[fill=white] (-.4,-.5) rectangle +(-.1,.4);
\draw[fill=white] (.4,.5) rectangle +(.1,-.4);
\draw[fill=white] (-.4,.5) rectangle +(-.1,-.4);
\node at (.4,-.1)[right]{$\scriptstyle{n}$};
\node at (-.4,-.1)[left]{$\scriptstyle{n}$};
\node at (.4,.5)[right]{$\scriptstyle{n}$};
\node at (-.4,.5)[left]{$\scriptstyle{n}$};
\node at (0,.4)[above]{$\scriptstyle{n-K_l}$};
\node at (0,-.4)[above]{$\scriptstyle{n-K_l}$};
}\,\Bigg\rangle_{\! 2}
\end{align*}
for $L=\left[2a_1,2a_2,\dots,2a_l\right]$.
The writhe of $\left[2a_1,2a_2,\dots,2a_l\right]$ is $-2(a_1+a_2+\dots+a_l)$. 
Lemma~\ref{A1closure} and explicit calculation of the coefficient imply the formula in this theorem.
\end{proof}

\subsection{The $\mathfrak{sl}_3$ colored Jones polynomials}
We introduce the $\mathfrak{sl}_3$ colored Jones polynomial of type $(n,0)$.
Let $L$ be an oriented link diagram in $D_0$ with ordered link components $(L_1,L_2,\dots,L_r)$. 
We replace a part of $L_i$ with 
\tikz[baseline=-.6ex]{
\draw[->-=.8] (-.5,0) -- (.5,0);
\draw[fill=white] (-.1,-.3) rectangle (.1,.3);
\node at (.1,0) [above right]{${\scriptstyle n}$};
}\, 
for $i=1,2,\dots,r$ (see Figure~\ref{L(n,0)}).
We denote this oriented link diagram decorated with white boxes by $L(n,0)$.
\begin{figure}
\centering
\begin{tikzpicture}
\draw[rotate=120] (90:.7) to[out=east, in=east] (90:1.3);
\draw (-30:.3) to[out=-120, in=-60] (210:1);
\draw[rotate=120] (-30:.3) to[out=-120, in=-60] (210:1);
\draw[rotate=240] (-30:.3) to[out=-120, in=-60] (210:1);
\draw[->-=0, white, double=black, double distance=0.4pt, ultra thick] 
(90:1) to[out=0, in=60] (-30:.3);
\draw[white, double=black, double distance=0.4pt, ultra thick, rotate=120] 
(90:1) to[out=0, in=60] (-30:.3);
\draw[white, double=black, double distance=0.4pt, ultra thick, rotate=240] 
(90:1) to[out=0, in=60] (-30:.3);
\draw[-<-=1, white, double=black, double distance=0.4pt, ultra thick, rotate=120] 
(90:.7) to[out=west, in=west] (90:1.3);
\draw[cyan, dashed] (90:1) circle (.2);
\draw[cyan, dashed] (210:1.3) circle (.2);
\node at (-90:1.2){$L$};
\end{tikzpicture}
\begin{tikzpicture}
\draw[rotate=120] (90:.7) to[out=east, in=east] (90:1.3);
\draw (-30:.3) to[out=-120, in=-60] (210:1);
\draw[rotate=120] (-30:.3) to[out=-120, in=-60] (210:1);
\draw[rotate=240] (-30:.3) to[out=-120, in=-60] (210:1);
\draw[->-=.15, white, double=black, double distance=0.4pt, ultra thick] 
(90:1) to[out=0, in=60] (-30:.3);
\draw[white, double=black, double distance=0.4pt, ultra thick, rotate=120] 
(90:1) to[out=0, in=60] (-30:.3);
\draw[white, double=black, double distance=0.4pt, ultra thick, rotate=240] 
(90:1) to[out=0, in=60] (-30:.3);
\draw[-<-=.85, white, double=black, double distance=0.4pt, ultra thick, rotate=120] 
(90:.7) to[out=west, in=west] (90:1.3);
\draw[fill=white] (-.05,.8) rectangle (.05,1.2);
\draw[fill=white, rotate=120] (-.05,1.1) rectangle (.05,1.5);
\node at (90:1) [above right]{${\scriptstyle n}$};
\node at (210:1.3) [below right]{${\scriptstyle n}$};
\node at (-90:1.2) {$L(n,0)$};
\end{tikzpicture}\\
Replace 
\,\tikz[baseline=-.6ex]{
\draw[->-=.8] (-.3,.0) -- (.3,.0);
\draw[cyan, dashed] (.0,.0) circle (.3);
}\,
with
\,\tikz[baseline=-.6ex]{
\draw[->-=.8] (-.3,.0) -- (.3,.0);
\draw[fill=white] (-.05,-.2) rectangle (.05,.2);
\draw[cyan, dashed] (.0,.0) circle (.3);
}\,
\caption{A Link diagram colored by $(n,0)$}
\label{L(n,0)}
\end{figure}
\begin{DEF}
The colored $\mathfrak{sl}_3$ Jones polynomial $J_{(n,0)}^{\mathfrak{sl}_2}(L;q)$ of a link represented by a link diagram $L$ is defined by
\[
J_{(n,0)}^{\mathfrak{sl}_3}(L;q)
=(q^{\frac{n^2+3n}{3}})^{-\mathop{w}(L)}
\langle L(n,0)\rangle_{3}/\langle\,\tikz[baseline=-.6ex, scale=.5]{
\draw[->-=.5] (0,0) circle [radius=.3];
\draw[fill=white] (.1,-.05) rectangle (.5,.05);
\node at (.3,0)[above right]{$\scriptstyle{n}$};
}\,\rangle_{3},
\]
where $\mathop{w}(L)$ is the writhe of $L$.
\end{DEF}
\begin{LEM}\label{A2closure}
\[
\Bigg\langle\,\tikz[baseline=-1.8ex, scale=1.2]{
\draw[rounded corners=.1cm] (-.4,.3) -- ++(-.4,0) -- ++(.0,-1.2) -- ++(1.6,.0) -- ++(.0,1.2) -- (.4,.3);
\draw[rounded corners=.1cm] (-.4,-.3) -- ++(-.2,0) -- ++(.0,-.4) -- ++(1.2,.0) -- ++(.0,.4) -- (.4,-.3);
\draw[-<-=.5] (-.4,.4) -- (.4,.4);
\draw[->-=.5] (-.4,-.4) -- (.4,-.4);
\draw[-<-=.5] (-.4,.2) to[out=east, in=east] (-.4,-.2);
\draw[->-=.5] (.4,.2) to[out=west, in=west] (.4,-.2);
\draw[fill=white] (.4,-.5) rectangle +(.1,.4);
\draw[fill=white] (-.4,-.5) rectangle +(-.1,.4);
\draw[fill=white] (.4,.5) rectangle +(.1,-.4);
\draw[fill=white] (-.4,.5) rectangle +(-.1,-.4);
\node at (.4,-.1)[right]{$\scriptstyle{n}$};
\node at (-.4,-.1)[left]{$\scriptstyle{n}$};
\node at (.4,.5)[right]{$\scriptstyle{n}$};
\node at (-.4,.5)[left]{$\scriptstyle{n}$};
\node at (0,.4)[above]{$\scriptstyle{n-k}$};
\node at (0,-.4)[above]{$\scriptstyle{n-k}$};
}\,\Bigg\rangle_{\! 3}
=q^{-(n-k)}\frac{(1-q^{n+1})(1-q^{n+2})}{(1-q^{k+1})(1-q^{k+2})}
\Big\langle\,\tikz[baseline=-.6ex]{
\draw[->-=.5] (0,0) circle [radius=.3];
\draw[fill=white] (.1,-.05) rectangle (.5,.05);
\node at (.3,0)[above right]{$\scriptstyle{n}$};
}\,\Big\rangle_{\! 3}
\]
\end{LEM}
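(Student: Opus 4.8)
The plan is to recognize the closed web on the left as the trace closure of the $A_2$ bubble generator and to collapse it in two symmetric steps, each governed by the capping lemma for $A_2$ clasps (the ``easy calculation'' lemma preceding this one, which reduces a capped $n$-clasp to an $(n-k)$-clasp), and then to finish with the value of a single clasped circle from Theorem~\ref{coloredA2}~(5). First I would read off the combinatorial structure of the diagram: the two nested outer arcs close the upper pair of $n$-colored legs to each other and the lower pair to each other, so that each pair of clasps joined by a closure arc, together with the adjacent middle strand colored $n-k$, forms a bigon whose two edges are colored $n$ (the closure arc) and $n-k$ (the middle strand), while a strand colored $k$ (a side turnback) leaves each of its two corners and runs to the opposite bigon.

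The key step is the collapse of the top bigon. I would slide one of the two $n$-clasps along its closure arc until it meets the other and merge the two by clasp idempotency (the first bullet of the properties of $A_2$ clasps). After the merge the middle strand colored $n-k$ turns back on the single surviving clasp, so the configuration is exactly the left-hand side of the capping lemma with turnback $n-k$ and through-color $k$. Applying it (the cap lemma with its ``$k$'' taken to be $n-k$) produces the factor $\frac{\left[n+1\right]\left[n+2\right]}{\left[k+1\right]\left[k+2\right]}$ and replaces the whole top by a single strand colored $k$ joining the two side turnbacks. Running the identical argument on the bottom bigon yields a second factor $\frac{\left[n+1\right]\left[n+2\right]}{\left[k+1\right]\left[k+2\right]}$ and joins the two turnbacks below as well, so that the residual web is a single circle colored $k$, whose bracket is $\frac{\left[k+1\right]\left[k+2\right]}{\left[2\right]}$ by Theorem~\ref{coloredA2}~(5).

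Assembling the three factors gives
\[
\left(\frac{\left[n+1\right]\left[n+2\right]}{\left[k+1\right]\left[k+2\right]}\right)^{2}\frac{\left[k+1\right]\left[k+2\right]}{\left[2\right]}
=\frac{\left[n+1\right]\left[n+2\right]}{\left[k+1\right]\left[k+2\right]}\cdot\frac{\left[n+1\right]\left[n+2\right]}{\left[2\right]}
=\frac{\left[n+1\right]\left[n+2\right]}{\left[k+1\right]\left[k+2\right]}\Big\langle\text{circle}_n\Big\rangle_{3},
\]
and a one-line rewriting via $1-q^{m}=-q^{m/2}\{m\}$ (so that the prefactors of $q$ contribute $q^{-(n-k)}$) turns $\frac{\left[n+1\right]\left[n+2\right]}{\left[k+1\right]\left[k+2\right]}$ into $q^{-(n-k)}\frac{(1-q^{n+1})(1-q^{n+2})}{(1-q^{k+1})(1-q^{k+2})}$, which is precisely the asserted coefficient.

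The main obstacle is the merge-and-cap step: I must justify that sliding an $n$-clasp around its closure arc and merging it by idempotency produces exactly the capped-clasp configuration of the capping lemma, with the correct colors ($n-k$ turning back and $k$ passing through), and that the directed edges in the figure admit the compatible orientations on clasps and turnbacks under which that lemma is stated. This is a short isotopy-plus-idempotency argument, but it is where the bookkeeping must be done carefully; once it is in place, the remaining manipulations and the quantum-integer arithmetic are routine.
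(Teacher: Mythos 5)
Your strategy --- collapse the two $(n,n-k)$-bigons with the partial-trace formula for $A_2$ clasps (the second bullet of Lemma~\ref{A2clasplem}) and evaluate the leftover circle by Theorem~\ref{coloredA2}~(5) --- is the computation the paper intends (its entire proof is the line ``Use Lemma~\ref{A2clasplem}''), and your arithmetic is correct, including the final conversion of $\frac{\left[n+1\right]\left[n+2\right]}{\left[k+1\right]\left[k+2\right]}$ into $q^{-(n-k)}\frac{(1-q^{n+1})(1-q^{n+2})}{(1-q^{k+1})(1-q^{k+2})}$.

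The gap is in the step you yourself flag, and for one of the two bigons it is more than bookkeeping. The two bigons are not interchangeable: the bottom one (bottom middle strand together with the inner closure arc) bounds a bounded, empty face, so after merging the two bottom clasps along the inner arc the resulting $(n-k)$-loop bounds that empty face and the capping lemma applies verbatim inside a disk. The top bigon, however, is the unbounded face: the outer closure arc runs around the outside of the whole diagram, so when you merge the two top clasps along it, the $(n-k)$-loop you create encircles the entire remainder of the web. There is then no disk in $D_0$ containing the merged clasp and its loop and meeting the rest of the diagram only in the $2k$ through-strands, so the capping lemma --- a local relation in a disk --- does not apply there as stated; the configuration is only ``the left-hand side of the capping lemma'' after an isotopy of $S^2$, and invariance of the closed-web evaluation under sphere isotopy is nowhere established in the paper. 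The clean fix is to avoid the second capping: cut the diagram along the outer arc, so that it is the trace closure of $C_n\cdot(1_{n-k}\otimes P)\cdot C_n$ with $P$ the bottom bigon; your (valid) first capping gives $P=\frac{\left[n+1\right]\left[n+2\right]}{\left[k+1\right]\left[k+2\right]}\,C_k$, the $k$-clasp is absorbed by the adjacent $n$-clasps, and the closure of $C_n$ is the $n$-colored circle. This is precisely the regrouping $\bigl(\tfrac{\left[n+1\right]\left[n+2\right]}{\left[k+1\right]\left[k+2\right]}\bigr)^{2}\tfrac{\left[k+1\right]\left[k+2\right]}{\left[2\right]}=\tfrac{\left[n+1\right]\left[n+2\right]}{\left[k+1\right]\left[k+2\right]}\cdot\tfrac{\left[n+1\right]\left[n+2\right]}{\left[2\right]}$ that you already wrote down.
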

\begin{proof}
Use Lemma~\ref{A2clasplem}.
\end{proof}
\begin{THM}\label{A22bridge}
\begin{align*}
&J_{(n,0)}^{\mathfrak{sl}_3}(\left[2a_1,2a_2,\dots,2a_l\right];q)\\
&\quad=\prod_{j=0}^{l-1}\sum_{0\leq k_{\left|a_{j+1}\right|}^{(j+1)}\leq\dots\leq k_1^{(j+1)}\leq K_j}
q^{\varepsilon_{j+1}(K_j-k_{\left|a_{j+1}\right|}^{(j+1)})}
q^{\varepsilon_{j+1}\sum_{i=1}^{\left|a_{j+1}\right|}({k_{i}^{(j+1)}}^2+2k_{i}^{(j+1)})}\\
&\qquad\times\frac{(q^{\varepsilon_{j+1}})_{K_j}}{(q^{\varepsilon_{j+1}})_{k_{\left|a_{j+1}\right|}^{(j+1)}}}
{n\choose {k_{1}^{(j+1)}}',{k_{2}^{(j+1)}}',\dots,{k_{\left|a_{j+1}\right|}^{(j+1)}}',k_{\left|a_{j+1}\right|}^{(j+1)}}_{q^{\varepsilon_{j+1}}}\\
&\qquad\times q^{-(n-K_l)}\frac{(1-q^{n+1})(1-q^{n+2})}{(1-q^{K_l+1})(1-q^{K_l+2})},
\end{align*}
where $\varepsilon_{j+1}=\frac{a_{j+1}}{\left|a_{j+1}\right|}$, $K_0=n, K_j=n-k_{\left|a_{j}\right|}^{(j)}$ and $k_0^{(j)}=K_j, {k_{\left|a_{i+1}\right|}^{(j+1)}}'=k_{i}^{(j)}-k_{i+1}^{(j)}$.
\end{THM}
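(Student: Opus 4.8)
The plan is to follow the proof of Theorem~\ref{A12bridge} essentially line by line, replacing each ingredient of the clasped $A_1$ theory by its clasped $A_2$ counterpart. Concretely, Lemma~\ref{A1slide} is replaced by Lemma~\ref{A2slide}, the full-twist expansion of Proposition~\ref{A1mfull} by the $m$ full twists formula for the $A_2$ bracket, Theorem~\ref{A2mfull}, and the closure Lemma~\ref{A1closure} by Lemma~\ref{A2closure}. The normalization $(q^{\frac{n^2+3n}{3}})^{-\mathop{w}(L)}$ and the definition of $J_{(n,0)}^{\mathfrak{sl}_3}$ take the place of the $\mathfrak{sl}_2$ normalization, and since the diagram $[2a_1,\dots,2a_l]$ is literally the same as in the $\mathfrak{sl}_2$ case, its writhe is again $-2(a_1+a_2+\dots+a_l)$.

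First I would fix $j$ and compute the clasped $A_2$ web coming from the box $2a_{j+1}$, i.e.\ $|a_{j+1}|$ right- or left-handed full twists according to the sign $\varepsilon_{j+1}=a_{j+1}/|a_{j+1}|$, sitting on $K_j$ of the $n$ strands. Pulling the two outer $A_2$ clasps inward by Lemma~\ref{A2slide} and then expanding the $|a_{j+1}|$ full twists by Theorem~\ref{A2mfull}, this web becomes a sum over $0\le k_{|a_{j+1}|}^{(j+1)}\le\dots\le k_1^{(j+1)}\le K_j$ of clasped $A_2$ webs carrying the new index $K_{j+1}=n-k_{|a_{j+1}|}^{(j+1)}$, each with an explicit scalar. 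I would package this scalar as a product $\delta_n(K_j;q^{\varepsilon_{j+1}})_3\,\gamma_{K_j}(k_1^{(j+1)},\dots,k_{|a_{j+1}|}^{(j+1)};q^{\varepsilon_{j+1}})_3$, where $\delta_n(\cdot)_3$ collects the slide/self-twist prefactors and $\gamma$ is the $q$-multinomial factor read off from Theorem~\ref{A2mfull}; note that, unlike Proposition~\ref{A1mfull}, Theorem~\ref{A2mfull} carries no $(-1)$'s, which is exactly why the target formula is sign-free. The same computation applies verbatim to the lower box, as in the two parallel displays of the $\mathfrak{sl}_2$ proof. Iterating over $j=0,1,\dots,l-1$ then expresses $\langle L(n,0)\rangle_3$ as the advertised product of sums, closed off by the doubled $A_2$ clasp of colour $K_l$, whose value is supplied by Lemma~\ref{A2closure} with $k=K_l$, namely $q^{-(n-K_l)}\frac{(1-q^{n+1})(1-q^{n+2})}{(1-q^{K_l+1})(1-q^{K_l+2})}$ times the circle value.

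The main obstacle will be the exponent bookkeeping rather than any new diagrammatic input. I expect the delicate point to be verifying that the $q$-powers produced by Theorem~\ref{A2mfull} (which contributes a prefactor of the form $q^{-\frac{2}{3}|a_{j+1}|(K_j^2+3K_j)}$ for the $K_j$ strands carrying the box), together with the slide factors $q^{-\frac{1}{3}(n^2-k^2+3n-3k)}$ from Lemma~\ref{A2slide}, all absorbed into $\delta_n(K_j;q^{\varepsilon_{j+1}})_3$, telescope through the chain of indices $K_0=n,\,K_1,\dots,K_l$ and cancel against the writhe normalization $(q^{\frac{n^2+3n}{3}})^{2(a_1+\dots+a_l)}$, so that no residual $(n^2+3n)$-term survives in the answer. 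This step relies on $\mathop{w}([2a_1,\dots,2a_l])=-2(a_1+\dots+a_l)$ and on $\sum_j\varepsilon_{j+1}|a_{j+1}|=a_1+\dots+a_l$. Once the telescoping and this cancellation are confirmed, substituting the closed form from Lemma~\ref{A2closure} and rewriting the $q$-Pochhammer and $q$-multinomial factors by the transformation formulas of Section~2 (and Lemma~\ref{qinteger} where convenient) yields precisely the stated expression for $J_{(n,0)}^{\mathfrak{sl}_3}([2a_1,\dots,2a_l];q)$.
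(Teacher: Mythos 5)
Your proposal matches the paper's proof, which is exactly the statement that Theorem~\ref{A22bridge} ``is proved in the same way as the proof of Theorem~\ref{A12bridge}'' with Lemma~\ref{A2slide}, Theorem~\ref{A2mfull} and Lemma~\ref{A2closure} in place of their $A_1$ counterparts, and with the coefficients repackaged as $\delta_n(K_j;q)_3$ and $\gamma_{K_j}(\cdot)_3$. Your observations about the absence of signs in the $A_2$ full-twist expansion and about the writhe $-2(a_1+\dots+a_l)$ cancelling the $q^{\frac{n^2+3n}{3}}$ normalization are precisely the bookkeeping the paper's $\delta_3$, $\gamma_3$ formulas encode.
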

\begin{proof}
It is proved in the same way as the proof of Theorem~\ref{A12bridge} using Proposition~\ref{A2mfull} and Lemma~\ref{A2closure}. 
Instead of $\delta_n(K_j;q)_2$ and $\gamma_{K_j}(k_1^{(j+1)},k_2^{(j+1)},\dots,k_{\left|a_{j+1}\right|}^{(j+1)};q)_2
$, 
we use 
\begin{align*}
\delta_n(K_j;q)_3&=q^{-\frac{n^2+3n-K_j^2-3K_j}{3}}\quad\text{and}\\
\gamma_{K_j}(k_1^{(j+1)},k_2^{(j+1)},\dots,k_{\left|a_{j+1}\right|}^{(j+1)};q)_3
&=q^{-\frac{2\left|a_{j+1}\right|}{3}({k_{\left|a_{j+1}\right|}^{(j+1)}}^2+3{k_{\left|a_{j+1}\right|}^{(j+1)}})}\\
&\quad\times q^{K_j-k_{\left|a_{j+1}\right|}^{(j+1)}}
q^{\sum_{i=1}^{\left|a_{j+1}\right|}({k_{i}^{(j+1)}}^2+2k_{i}^{(j+1)})}\\
&\quad\times\frac{(q)_{K_j}}{(q)_{k_{\left|a_{j+1}\right|}^{(j+1)}}}
{n\choose {k_{1}^{(j+1)}}',{k_{2}^{(j+1)}}',\dots,{k_{\left|a_{j+1}\right|}^{(j+1)}}',k_{\left|a_{j+1}\right|}^{(j+1)}}_{q},
\end{align*}
respectively.
\end{proof}
\begin{RMK}
Wedrich gave a formula for the HOMFLY polynomials of $2$-bridge links colored with one-column Young diagrams in \cite{Wedrich14}. 
We consider that our formula should conicide with specialized his formula. 
He also suggest us the relationship between our calculus and the calculus of MOY graphs~\cite{MurakamiOhtsukiYamada98} thorough a work of Tubbenhauer, Vaz and Wedrich~\cite{TubbenhauerVazWedrich15}. 
However, 
we have not confirmedk the specific relationship.
\end{RMK}

\subsection*{Acknowledgment}
The author would like to express his gratitude to his adviser, Professor Hisaaki Endo, for his encouragement. He also would like to thank Paul Wedrich for helpful comments. 

\bibliographystyle{amsplain}
\bibliography{yuasa3}
\end{document}